\def\myarabic#1{\normalfont(\roman{#1})}
\newlist{theoremlist}{enumerate}{1}
\setlist[theoremlist]{label=\myarabic{theoremlisti},ref={\myarabic{theoremlisti}},itemindent=0pt,labelindent=0pt,
leftmargin=*,noitemsep}
\renewcommand{\p@theoremlisti}{\perh@ps{\thetheorem}}
\protected\def\perh@ps#1#2{\textup{#1#2}}
\newcommand{\itemrefperh@ps}[2]{\textup{#2}}
\newcommand{\itemref}[1]{\begingroup\let\perh@ps\itemrefperh@ps\ref{#1}\endgroup}
\theoremstyle{plain}
\newtheorem{theorem}{Theorem}[section]
\newtheorem{proposition}[theorem]{Proposition}
\newtheorem{lemma}[theorem]{Lemma}
\newtheorem{corollary}[theorem]{Corollary}
\newtheorem{conjecture}[theorem]{Conjecture}
\theoremstyle{definition}
\newtheorem{definition}[theorem]{Definition}
\newtheorem{remark}[theorem]{Remark}
\newtheorem{example}[theorem]{Example}
\newtheorem{notation}[theorem]{Notation}
\crefname{figure}{Figure}{Figures}
\def\Z{\mathbb{Z}}
\def\R{\mathbb{R}}
\def\v{{\bm{v}}}
\def\t{{\bm{t}}}
\def\x{{\bm{x}}}
\def\y{{\bm{y}}}
\def\p{{\bm{p}}}
\def\q{{\bm{q}}}
\def\u{{\bm{u}}}
\def\Pl{\Delta}
\def\A{\mathcal{A}}
\def\Acal{\mathcal{A}}
\def\V{\mathcal{V}}
\def\h{{\bm{h}}}
\def\ddelta{\bm{\delta}}
\def\ggamma{\bm{\gamma}}
\def\e{{\bm{e}}}
\def\a{{\bm{a}}}
\def\conv{\mathrm{conv}}
\def\Conv{\mathrm{conv}}
\def\Tr{\tau}
\def\Zon{\mathcal{Z}}
\def\ZV{\Zon_\V}
\def\cube{\mbox{\,\mancube\,}}
\def\Vol{\mathrm{Vol}}
\def\level{\mathrm{level}}
\def\Level{\operatorname{Level}}
\newcommand*{\parallelogram}{%
  \rlap{\rotatebox{-30}{\rule[.05ex]{.4pt}{.77em}}}%
  \kern.04em%
  \rlap{\kern.36em\raisebox{0.649519052835em}{\rule{.6em}{.4pt}}}%
  \rule{.6em}{.4pt}\kern-.04em%
  \rotatebox{-30}{\rule[.05ex]{.4pt}{.77em}}}
\def\GKZ{{\operatorname{GKZ}}}
\def\vert{\mathrm{vert}}
\def\vertHSP{\wh\vert}
\def\vertHSP{\widehat{\vert}}
\def\vertGKZ{\vert^\GKZ}
\def\vertFib{\vert^\fiber}
\def\HSP{\widehat\Upsigma}
\def\Sigma{\HSP}
\def\PT{\operatorname{PT}}
\def\fiber{{\operatorname{fib}}}
\def\FibPoly{\Upsigma^\fiber}
\def\SigmaGKZ{\Upsigma_\A^{\GKZ}}
\def\pito{\overset \pi \to}
\def\VC{\V}
\def\Bcal{\mathcal{B}}
\def\Ccal{\mathcal{C}}
\def\Mcal{\mathcal{M}}
\def\Tcal{\mathcal{T}}
\def\Delabs_#1{\Vol(#1)}
\def\suppX{{\underline{X}}}
\def\suppC{{\underline{C}}}
\def\Tiling{\Tcal}
\def\tile{\Pi}
\newcommand\shifteq{\stackrel{\mathclap{\normalfont\mbox{\scalebox{0.6}{shift}}}}{=\joinrel=}}
\newcommand\summa[2]{\sum_{\substack{#1 \\ #2}}}
\def\sumlevel#1{\summa{\tile_{A,B}\in\Tiling}{|A|=#1}}
\def\sumcond#1{\summa{\tile_{A,B}\in\Tiling}{#1}}
\def\Voldd{\Vol^{d-1}}
\def\Vold{\Vol^d}
\def\VolX(#1){\Vol^d(\tile_{#1})}
\def\VolB{\VolX(B)}
\def\VolDB{\Vol^{d-1}(\Delta_B)}
\def\WS{{\mathcal{D}}}
\def\eltminus{\setminus}
\let\subset\subseteq
\let\supset\supseteq
\def\boundnoderadius{1pt}
\def\boundnode(#1){
\draw[blue, line width=\plabiclw,fill=white] (#1) circle (\boundnoderadius); 
% \node[anchor=west] (X) at (#1.center) {#1};
}
\def\boundLabel_#1{#1}
\newcommand{\Euli}{\genfrac{\langle}{\rangle}{0pt}{}}
\def\Eul(#1,#2){\Euli{#1}{#2}}
\title{Higher Secondary Polytopes and Regular Plabic Graphs}
\author{Pavel Galashin}
\address{Department of Mathematics, University of California, Los Angeles, CA 90095, USA}
\email{{\href{mailto:galashin@math.ucla.edu}{galashin@math.ucla.edu}}}
\author{Alexander Postnikov}
\address{Department of Mathematics, Massachusetts Institute of Technology, Cambridge, MA 02139, USA}
\email{{\href{mailto:apost@mit.edu}{apost@mit.edu}}}
\author{Lauren Williams}
\address{Department of Mathematics, Harvard University, Cambridge, MA 02138, USA}
\email{{\href{mailto:williams@math.harvard.edu}{williams@math.harvard.edu}}}
\date{\today}
\subjclass[2010]{
  Primary:
  52C22. 
  Secondary: 13F60, 35Q53.
}
\keywords{Fiber polytope, secondary polytope, associahedron, zonotopal tiling, totally nonnegative Grassmannian, plabic graph, soliton graph, KP equation.}
\begin{document}

\begin{abstract}
Given a configuration $\A$ of $n$ points in $\R^{d-1}$, we introduce the \emph{higher secondary polytopes} $\Sigma_{\A,1},\dots, \Sigma_{\A,n-d}$, which have the property that 
	$\Sigma_{\A,1}$ agrees with  the secondary polytope of Gelfand--Kapranov--Zelevinsky, while the Minkowski sum of these polytopes agrees with Billera--Sturmfels' fiber zonotope associated with (a lift of) $\A$. 
	In a special case when $d=3$, we refer to our polytopes as \emph{higher associahedra}. They turn out to be related to the theory of total positivity, specifically, to certain combinatorial objects called \emph{plabic graphs}, introduced by the second author in his study of the totally positive Grassmannian.  We define a subclass of \emph{regular} plabic graphs and show that they correspond to the vertices of the higher associahedron $\Sigma_{\A,k}$, while \emph{square moves} connecting them correspond to the edges of $\Sigma_{\A,k}$. Finally we connect our polytopes to \emph{soliton graphs}, the contour plots of soliton solutions to the KP equation, which were recently studied by Kodama and the third author.  In particular, we confirm their conjecture that when the higher times evolve, soliton graphs change according to the moves for plabic graphs. 
\end{abstract}

\maketitle
\setcounter{tocdepth}{1}
\tableofcontents

\numberwithin{equation}{section}
\def\Gr{\operatorname{Gr}}
\def\Grtp{\Gr_{>0}}
\def\ngon{\text{$n$-gon}}

\def\pslope{0.6}
\def\parallelogramNew{
\begin{tikzpicture}[yscale=0.25,xscale=0.2]
\draw[line width=0.5pt] (0,0)--(\pslope,1)--(1+\pslope,1)--(1,0)--cycle;
\end{tikzpicture}\!
}

\section{Introduction}
\label{sec:intro}

Motivated by the study of discriminants, Gelfand, Kapranov, and Zelevinsky
\cite{GKZ} introduced the \emph{secondary polytope} $\SigmaGKZ$
for a configuration $\A$ of $n$ points in $\R^{d-1}$.  Vertices of this
remarkable polytope correspond to  regular triangulations of the convex
hull of $\A$, and its faces correspond to  regular polyhedral
subdivisions.  Billera and Sturmfels \cite{BS} defined a more general
notion of a \emph{fiber polytope} $\FibPoly(P\pito Q)$ 
for any linear projection $\pi:P\to Q$ of polytopes.
Secondary polytopes are exactly the fiber polytopes in the case when $P$ is a
simplex.

In this paper, we extend the notion of a secondary polytope and define the
\emph{higher secondary polytopes} $\Sigma_{\A,1}, \dots,\Sigma_{\A,n-d}$ so
that $\Sigma_{\A,1}$ coincides with the secondary polytope $\SigmaGKZ$ up to affine translation and dilation. An example of a higher secondary polytope is shown in \cref{fig:Gr_3_6}.

\begin{figure}

\rotatebox{90}{
\begin{tikzpicture}%
	[xscale=1.3,x={(0.895821cm, 0.211046cm)},
	y={(-0.170607cm, 0.975929cm)},
	z={(0.410364cm, -0.054974cm)},
	scale=2.000000,
	back/.style={dashed, line width=0.6pt},
	wrong/.style={dotted, line width=0.5pt,red},
	wrongnode/.style={scale=0.8,inner sep=-1pt,circle,draw=red,fill=red!50,thick,anchor=base,text=black},
	goodnode/.style={scale=0.8,inner sep=-1pt,circle,draw=green!25!black,fill=green!75!black,thick,anchor=base,text=black},
	edge/.style={color=blue!95!black, line width=0.9pt},
	facet/.style={fill=green,fill opacity=0.300000},
	vertex/.style={inner sep=1.5pt,circle,draw=green!25!black,fill=green!75!black,thick,anchor=base}]
\def\bot{-0.11339}
\def\bat{0.08661}
\def\top{1.86184}
\def\tap{1.66184}
\def\batsymbol{\rotatebox{-90}{\textcolor{black}{$b\strut$}}}
\def\tapsymbol{\rotatebox{-90}{\textcolor{black}{$c\strut$}}}
\def\topsymbol{\rotatebox{-90}{\textcolor{black}{$d\strut$}}}
\def\botsymbol{\rotatebox{-90}{\textcolor{black}{$a\strut$}}}
% \def\botsymbol{\topsymbol}
%
%
%% Coordinate of the vertices:
%%
\coordinate (-0.04450, -4.15573, 0.02242) at (-0.04450, -4.15573, 0.02242);
\coordinate (-0.47100, -2.39801, 0.02242) at (-0.47100, -2.39801, 0.02242);
\coordinate (-0.34716, -3.02138, -3.40209) at (-0.34716, -3.02138, -3.40209);
\coordinate (-0.06517, -4.15257, -2.46364) at (-0.06517, -4.15257, -2.46364);
\coordinate (-0.60396, -1.90506, -1.64437) at (-0.60396, -1.90506, -1.64437);
\coordinate (2.35240, -2.00559, -1.67273) at (2.35240, -2.00559, -1.67273);
\coordinate (\bot, -1.33316, -3.40209) at (\bot, -1.33316, -3.40209);%BOTTOM
\coordinate (\bat, -1.33316, -3.40209) at (\bat, -1.33316, -3.40209);%BATTOM
\coordinate (-0.27019, -0.21683, -1.64437) at (-0.27019, -0.21683, -1.64437);
\coordinate (-0.02564, -1.32583, 1.71064) at (-0.02564, -1.32583, 1.71064);
\coordinate (0.00000, 0.00000, 0.00000) at (0.00000, 0.00000, 0.00000);
\coordinate (0.91859, 0.78396, 0.00000) at (0.91859, 0.78396, 0.00000);
\coordinate (0.30684, 0.28923, -3.27002) at (0.30684, 0.28923, -3.27002);
\coordinate (0.89792, 0.78712, -2.48606) at (0.89792, 0.78712, -2.48606);
\coordinate (0.85550, -3.36878, -5.02774) at (0.85550, -3.36878, -5.02774);
\coordinate (0.22987, -2.51532, -5.02774) at (0.22987, -2.51532, -5.02774);
\coordinate (0.56363, -0.82710, -5.02774) at (0.56363, -0.82710, -5.02774);
\coordinate (1.79295, 0.24509, -3.33951) at (1.79295, 0.24509, -3.33951);
\coordinate (\top, -2.57749, 0.08499) at (\top, -2.57749, 0.08499);%TOP
\coordinate (\tap, -2.57749, 0.08499) at (\tap, -2.57749, 0.08499);%TAP
\coordinate (0.89294, -0.54187, 1.71064) at (0.89294, -0.54187, 1.71064);
\coordinate (2.01863, -3.69381, -1.67273) at (2.01863, -3.69381, -1.67273);
\coordinate (1.34759, -0.82710, -5.02774) at (1.34759, -0.82710, -5.02774);
\coordinate (1.51858, -1.39532, 1.71064) at (1.51858, -1.39532, 1.71064);
\coordinate (1.77409, -2.58482, -5.02774) at (1.77409, -2.58482, -5.02774);
\coordinate (0.82986, -4.69460, -3.31710) at (0.82986, -4.69460, -3.31710);
\coordinate (2.09561, -0.88926, 0.08499) at (2.09561, -0.88926, 0.08499);
\coordinate (1.18481, -3.08354, 1.71064) at (1.18481, -3.08354, 1.71064);
\coordinate (0.40085, -3.08354, 1.71064) at (0.40085, -3.08354, 1.71064);
\coordinate (1.44161, -4.19987, -0.04708) at (1.44161, -4.19987, -0.04708);
\coordinate (1.81362, 0.24193, -0.85345) at (1.81362, 0.24193, -0.85345);
\coordinate (0.85053, -4.69776, -0.83104) at (0.85053, -4.69776, -0.83104);
\coordinate (2.21945, -1.51263, -3.33951) at (2.21945, -1.51263, -3.33951);
\coordinate (1.74845, -3.91064, -3.31710) at (1.74845, -3.91064, -3.31710);
%%
%%
%% Drawing edges in the back
%%
\draw[edge,back] (2.35240, -2.00559, -1.67273) -- (2.01863, -3.69381, -1.67273);
\draw[edge,back] (2.35240, -2.00559, -1.67273) -- (2.09561, -0.88926, 0.08499);
\draw[edge,back] (2.35240, -2.00559, -1.67273) -- (2.21945, -1.51263, -3.33951);
\draw[edge,back] (0.89792, 0.78712, -2.48606) -- (1.79295, 0.24509, -3.33951);
\draw[edge,back] (0.85550, -3.36878, -5.02774) -- (0.22987, -2.51532, -5.02774);
\draw[edge,back] (0.85550, -3.36878, -5.02774) -- (1.77409, -2.58482, -5.02774);
\draw[edge,back] (0.85550, -3.36878, -5.02774) -- (0.82986, -4.69460, -3.31710);
\draw[edge,back] (0.56363, -0.82710, -5.02774) -- (1.34759, -0.82710, -5.02774);
\draw[edge,back] (1.79295, 0.24509, -3.33951) -- (1.34759, -0.82710, -5.02774);
\draw[edge,back] (1.79295, 0.24509, -3.33951) -- (1.81362, 0.24193, -0.85345);
\draw[edge,back] (1.79295, 0.24509, -3.33951) -- (2.21945, -1.51263, -3.33951);
\draw[edge,back] (\top, -2.57749, 0.08499) -- (2.01863, -3.69381, -1.67273);
\draw[edge,back] (\top, -2.57749, 0.08499) -- (2.09561, -0.88926, 0.08499);
\draw[edge,back] (\top, -2.57749, 0.08499) -- (1.18481, -3.08354, 1.71064);
\draw[edge,back] (2.01863, -3.69381, -1.67273) -- (1.44161, -4.19987, -0.04708);
\draw[edge,back] (2.01863, -3.69381, -1.67273) -- (1.74845, -3.91064, -3.31710);
\draw[edge,back] (1.34759, -0.82710, -5.02774) -- (1.77409, -2.58482, -5.02774);
\draw[edge,back] (1.77409, -2.58482, -5.02774) -- (2.21945, -1.51263, -3.33951);
\draw[edge,back] (1.77409, -2.58482, -5.02774) -- (1.74845, -3.91064, -3.31710);
\draw[edge,back] (0.82986, -4.69460, -3.31710) -- (1.74845, -3.91064, -3.31710);

% \draw[edge,back] (-0.34716, -3.02138, -3.40209) -- (\bat, -1.33316, -3.40209);
% \draw[edge,back] (\bat, -1.33316, -3.40209) -- (-0.27019, -0.21683, -1.64437);
% \draw[edge,back] (\bat, -1.33316, -3.40209) -- (0.56363, -0.82710, -5.02774);
\draw[edge,back,wrong] (-0.60396, -1.90506, -1.64437) -- (\bat, -1.33316, -3.40209);%5
\draw[edge,back,wrong] (\bat, -1.33316, -3.40209) -- (0.22987, -2.51532, -5.02774);%d
\draw[edge,back,wrong] (\bat, -1.33316, -3.40209) -- (0.30684, 0.28923, -3.27002);%b

\draw[edge,back,wrong] (\tap, -2.57749, 0.08499) -- (1.51858, -1.39532, 1.71064);%g
\draw[edge,back,wrong] (\tap, -2.57749, 0.08499) -- (1.44161, -4.19987, -0.04708) ;%l
\draw[edge,back,wrong] (\tap, -2.57749, 0.08499) -- (2.35240, -2.00559, -1.67273);%o

%%
%%
%% Drawing vertices in the back
%%
\node[vertex] at (2.35240, -2.00559, -1.67273)     {};
\node[vertex] at (2.01863, -3.69381, -1.67273)     {};
\node[vertex] at (1.77409, -2.58482, -5.02774)     {};
\node[vertex] at (2.21945, -1.51263, -3.33951)     {};
\node[vertex] at (1.74845, -3.91064, -3.31710)     {};
\node[goodnode] at (\top, -2.57749, 0.08499)     {\topsymbol};
\node[wrongnode] at (\tap, -2.57749, 0.08499)     {\tapsymbol};
\node[vertex] at (1.79295, 0.24509, -3.33951)     {};
\node[vertex] at (0.85550, -3.36878, -5.02774)     {};
\node[vertex] at (1.34759, -0.82710, -5.02774)     {};

% \node[vertex] at (2.35240, -2.00559, -1.67273)     {o};
% \node[vertex] at (2.01863, -3.69381, -1.67273)     {p};
% \node[vertex] at (1.77409, -2.58482, -5.02774)     {q};
% \node[vertex] at (2.21945, -1.51263, -3.33951)     {r};
% \node[vertex] at (1.74845, -3.91064, -3.31710)     {s};
% \node[vertex] at (\top, -2.57749, 0.08499)     {t};
% \node[vertex] at (1.79295, 0.24509, -3.33951)     {u};
% \node[vertex] at (0.85550, -3.36878, -5.02774)     {v};
% \node[vertex] at (1.34759, -0.82710, -5.02774)     {w};

\node[wrongnode] at (\bat, -1.33316, -3.40209)     {\batsymbol};
%%
%%
%% Drawing the facets
%%
\fill[facet] (-0.34716, -3.02138, -3.40209) -- (-0.60396, -1.90506, -1.64437) -- (-0.47100, -2.39801, 0.02242) -- (-0.04450, -4.15573, 0.02242) -- (-0.06517, -4.15257, -2.46364) -- cycle {};
\fill[facet] (-0.27019, -0.21683, -1.64437) -- (-0.60396, -1.90506, -1.64437) -- (-0.34716, -3.02138, -3.40209) -- (\bot, -1.33316, -3.40209) -- cycle {};
\fill[facet] (0.00000, 0.00000, 0.00000) -- (-0.27019, -0.21683, -1.64437) -- (-0.60396, -1.90506, -1.64437) -- (-0.47100, -2.39801, 0.02242) -- (-0.02564, -1.32583, 1.71064) -- cycle {};
\fill[facet] (0.89294, -0.54187, 1.71064) -- (-0.02564, -1.32583, 1.71064) -- (0.00000, 0.00000, 0.00000) -- (0.91859, 0.78396, 0.00000) -- cycle {};
\fill[facet] (0.56363, -0.82710, -5.02774) -- (\bot, -1.33316, -3.40209) -- (-0.27019, -0.21683, -1.64437) -- (0.30684, 0.28923, -3.27002) -- cycle {};
\fill[facet] (0.85053, -4.69776, -0.83104) -- (-0.04450, -4.15573, 0.02242) -- (0.40085, -3.08354, 1.71064) -- (1.18481, -3.08354, 1.71064) -- (1.44161, -4.19987, -0.04708) -- cycle {};
\fill[facet] (0.89792, 0.78712, -2.48606) -- (0.91859, 0.78396, 0.00000) -- (0.00000, 0.00000, 0.00000) -- (-0.27019, -0.21683, -1.64437) -- (0.30684, 0.28923, -3.27002) -- cycle {};
\fill[facet] (0.56363, -0.82710, -5.02774) -- (\bot, -1.33316, -3.40209) -- (-0.34716, -3.02138, -3.40209) -- (0.22987, -2.51532, -5.02774) -- cycle {};
\fill[facet] (0.85053, -4.69776, -0.83104) -- (-0.04450, -4.15573, 0.02242) -- (-0.06517, -4.15257, -2.46364) -- (0.82986, -4.69460, -3.31710) -- cycle {};
\fill[facet] (1.81362, 0.24193, -0.85345) -- (0.91859, 0.78396, 0.00000) -- (0.89294, -0.54187, 1.71064) -- (1.51858, -1.39532, 1.71064) -- (2.09561, -0.88926, 0.08499) -- cycle {};
\fill[facet] (0.40085, -3.08354, 1.71064) -- (-0.02564, -1.32583, 1.71064) -- (0.89294, -0.54187, 1.71064) -- (1.51858, -1.39532, 1.71064) -- (1.18481, -3.08354, 1.71064) -- cycle {};
\fill[facet] (0.40085, -3.08354, 1.71064) -- (-0.04450, -4.15573, 0.02242) -- (-0.47100, -2.39801, 0.02242) -- (-0.02564, -1.32583, 1.71064) -- cycle {};
%%
%%
%% Drawing edges in the front
%%

\draw[edge] (-0.04450, -4.15573, 0.02242) -- (-0.47100, -2.39801, 0.02242);
\draw[edge] (-0.04450, -4.15573, 0.02242) -- (-0.06517, -4.15257, -2.46364);
\draw[edge] (-0.04450, -4.15573, 0.02242) -- (0.40085, -3.08354, 1.71064);
\draw[edge] (-0.04450, -4.15573, 0.02242) -- (0.85053, -4.69776, -0.83104);
\draw[edge] (-0.47100, -2.39801, 0.02242) -- (-0.60396, -1.90506, -1.64437);
\draw[edge] (-0.47100, -2.39801, 0.02242) -- (-0.02564, -1.32583, 1.71064);
\draw[edge] (-0.34716, -3.02138, -3.40209) -- (-0.06517, -4.15257, -2.46364);
\draw[edge] (-0.34716, -3.02138, -3.40209) -- (-0.60396, -1.90506, -1.64437);
\draw[edge] (-0.34716, -3.02138, -3.40209) -- (\bot, -1.33316, -3.40209);
\draw[edge] (-0.34716, -3.02138, -3.40209) -- (0.22987, -2.51532, -5.02774);
\draw[edge] (-0.06517, -4.15257, -2.46364) -- (0.82986, -4.69460, -3.31710);
\draw[edge] (-0.60396, -1.90506, -1.64437) -- (-0.27019, -0.21683, -1.64437);
\draw[edge] (\bot, -1.33316, -3.40209) -- (-0.27019, -0.21683, -1.64437);
\draw[edge] (\bot, -1.33316, -3.40209) -- (0.56363, -0.82710, -5.02774);
\draw[edge] (-0.27019, -0.21683, -1.64437) -- (0.00000, 0.00000, 0.00000);
\draw[edge] (-0.27019, -0.21683, -1.64437) -- (0.30684, 0.28923, -3.27002);
\draw[edge] (-0.02564, -1.32583, 1.71064) -- (0.00000, 0.00000, 0.00000);
\draw[edge] (-0.02564, -1.32583, 1.71064) -- (0.89294, -0.54187, 1.71064);
\draw[edge] (-0.02564, -1.32583, 1.71064) -- (0.40085, -3.08354, 1.71064);
\draw[edge] (0.00000, 0.00000, 0.00000) -- (0.91859, 0.78396, 0.00000);
\draw[edge] (0.91859, 0.78396, 0.00000) -- (0.89792, 0.78712, -2.48606);
\draw[edge] (0.91859, 0.78396, 0.00000) -- (0.89294, -0.54187, 1.71064);
\draw[edge] (0.91859, 0.78396, 0.00000) -- (1.81362, 0.24193, -0.85345);
\draw[edge] (0.30684, 0.28923, -3.27002) -- (0.89792, 0.78712, -2.48606);
\draw[edge] (0.30684, 0.28923, -3.27002) -- (0.56363, -0.82710, -5.02774);
\draw[edge] (0.22987, -2.51532, -5.02774) -- (0.56363, -0.82710, -5.02774);
\draw[edge] (0.89294, -0.54187, 1.71064) -- (1.51858, -1.39532, 1.71064);
\draw[edge] (1.51858, -1.39532, 1.71064) -- (2.09561, -0.88926, 0.08499);
\draw[edge] (1.51858, -1.39532, 1.71064) -- (1.18481, -3.08354, 1.71064);
\draw[edge] (0.82986, -4.69460, -3.31710) -- (0.85053, -4.69776, -0.83104);
\draw[edge] (2.09561, -0.88926, 0.08499) -- (1.81362, 0.24193, -0.85345);
\draw[edge] (1.18481, -3.08354, 1.71064) -- (0.40085, -3.08354, 1.71064);
\draw[edge] (1.18481, -3.08354, 1.71064) -- (1.44161, -4.19987, -0.04708);
\draw[edge] (1.44161, -4.19987, -0.04708) -- (0.85053, -4.69776, -0.83104);
%%
%%
%% Drawing the vertices in the front
%%
\node[vertex] at (-0.04450, -4.15573, 0.02242)     {};
\node[vertex] at (-0.47100, -2.39801, 0.02242)     {};
\node[vertex] at (-0.34716, -3.02138, -3.40209)     {};
\node[vertex] at (-0.06517, -4.15257, -2.46364)     {};
\node[vertex] at (-0.60396, -1.90506, -1.64437)     {};
\node[goodnode] at (\bot, -1.33316, -3.40209)     {\botsymbol};
\node[vertex] at (-0.27019, -0.21683, -1.64437)     {};
\node[vertex] at (-0.02564, -1.32583, 1.71064)     {};
\node[vertex] at (0.00000, 0.00000, 0.00000)     {};
\node[vertex] at (0.91859, 0.78396, 0.00000)     {};
\node[vertex] at (0.30684, 0.28923, -3.27002)     {};
\node[vertex] at (0.89792, 0.78712, -2.48606)     {};
\node[vertex] at (0.22987, -2.51532, -5.02774)     {};
\node[vertex] at (0.56363, -0.82710, -5.02774)     {};
\node[vertex] at (0.89294, -0.54187, 1.71064)     {};
\node[vertex] at (1.51858, -1.39532, 1.71064)     {};
\node[vertex] at (0.82986, -4.69460, -3.31710)     {};
\node[vertex] at (2.09561, -0.88926, 0.08499)     {};
\node[vertex] at (1.18481, -3.08354, 1.71064)     {};
\node[vertex] at (0.40085, -3.08354, 1.71064)     {};
\node[vertex] at (1.44161, -4.19987, -0.04708)     {};
\node[vertex] at (1.81362, 0.24193, -0.85345)     {};
\node[vertex] at (0.85053, -4.69776, -0.83104)     {};

% \node[vertex] at (-0.04450, -4.15573, 0.02242)     {1};
% \node[vertex] at (-0.47100, -2.39801, 0.02242)     {2};
% \node[vertex] at (-0.34716, -3.02138, -3.40209)     {3};
% \node[vertex] at (-0.06517, -4.15257, -2.46364)     {4};
% \node[vertex] at (-0.60396, -1.90506, -1.64437)     {5};
% \node[vertex] at (\bot, -1.33316, -3.40209)     {6};
% \node[vertex] at (-0.27019, -0.21683, -1.64437)     {7};
% \node[vertex] at (-0.02564, -1.32583, 1.71064)     {8};
% \node[vertex] at (0.00000, 0.00000, 0.00000)     {9};
% \node[vertex] at (0.91859, 0.78396, 0.00000)     {a};
% \node[vertex] at (0.30684, 0.28923, -3.27002)     {b};
% \node[vertex] at (0.89792, 0.78712, -2.48606)     {c};
% \node[vertex] at (0.22987, -2.51532, -5.02774)     {d};
% \node[vertex] at (0.56363, -0.82710, -5.02774)     {e};
% \node[vertex] at (0.89294, -0.54187, 1.71064)     {f};
% \node[vertex] at (1.51858, -1.39532, 1.71064)     {g};
% \node[vertex] at (0.82986, -4.69460, -3.31710)     {h};
% \node[vertex] at (2.09561, -0.88926, 0.08499)     {i};
% \node[vertex] at (1.18481, -3.08354, 1.71064)     {j};
% \node[vertex] at (0.40085, -3.08354, 1.71064)     {k};
% \node[vertex] at (1.44161, -4.19987, -0.04708)     {l};
% \node[vertex] at (1.81362, 0.24193, -0.85345)     {m};
% \node[vertex] at (0.85053, -4.69776, -0.83104)     {n};
%%
%%
\end{tikzpicture}
}
  \caption{ \label{fig:Gr_3_6} The higher secondary polytope $\Sigma_{\A,k}$ for $n=6$, $d=3$, $k=2$, where $\A\subset \R^2$ is the set of vertices of a generic convex hexagon. Thus $\Sigma_{\A,k}$ is a \emph{higher associahedron}. The polytope  $\Sigma_{\A,k}$ has $32$ vertices, and two points in the interior of $\Sigma_{\A,k}$ (labeled by $b$ and $c$), corresponding to non-regular fine zonotopal tilings, are shown in red. The $34$ points shown in this picture correspond to the $34$ bipartite plabic graphs for $\Gr(3,6)$, and the edges connecting them represent square moves of plabic graphs. See \cref{sec:high-assoc-plab} and \cref{ex:hexagons} for more details.}
\end{figure}

Our main motivation for the introduction of polytopes $\Sigma_{\A, k}$ comes from
total positivity.  \cite{Pos06} constructed a parametrization of 
the totally positive part $\Grtp(k,n)$ of the Grassmannian using 
\emph{plabic graphs}, which are certain graphs drawn in a disk with vertices 
colored in two colors.  These graphs have interesting combinatorial, algebraic,
and geometric features.
Remarkably, plabic graphs play a role in several different areas of mathematics
and physics: cluster algebras \cite{Scott}, quantum minors \cite{Scott2}, soliton solutions
of  Kadomtsev-Petviashvili (KP) equation \cite{KW, KW2}, scattering amplitudes 
in $\mathcal{N}=4$ supersymmetric Yang-Mills (SYM) theory \cite{abcgpt},
electrical networks~\cite{Lam}, the Ising model~\cite{Ising}, and many other areas.

Plabic graphs are also closely related to polyhedral geometry.  There are two
variations of plabic graphs: trivalent plabic graphs and bipartite plabic
graphs.  \cite{Gal} showed that trivalent plabic graphs can be identified with
sections of fine zonotopal tilings of 3-dimensional cyclic zonotopes.   A
related construction \cite{PosICM} identified trivalent plabic graphs with
$\pi$-induced subdivisions for a projection $\pi$ from the hypersimplex
$\Delta_{k,n}$ to an $n$-gon.  From both points of view, it is natural to define
the subclass of \emph{regular} plabic graphs.  Such regular plabic graphs can be
explicitly constructed from a vector $\h\in\R^n$.  Regular trivalent plabic graphs
correspond to (1) sections of regular fine zonotopal tilings 
of a 3-dimensional cyclic zonotope, 
and (2) vertices of the fiber polytope $\FibPoly(\Delta_{k,n}\pito \ngon)$ associated to a projection of a hypersimplex $\Delta_{k,n}$ to a convex $n$-gon.% $\ngon$.

While regular trivalent plabic graphs correspond to vertices of 
the fiber polytope $\FibPoly(\Delta_{k,n}\pito \ngon)$,
regular bipartite plabic graphs also correspond to vertices of certain 
polytopes, which do not fit into the framework of fiber polytopes.
In general, these polytopes are \emph{deformations} of fiber polytopes, obtained by contracting certain 
edges of fiber polytopes. These polytopes, whose vertices correspond to regular bipartite plabic graphs,
are the higher secondary polytopes $\Sigma_{\A,k}$, where $\A$
is the configuration of vertices of a convex $n$-gon.
We call these polytopes \emph{higher associahedra}, because, 
for $k=1$, they are the usual secondary polytopes of $n$-gons, 
which are exactly the celebrated associahedra of Stasheff~\cite{Tamari,Stasheff}.

The study of soliton solutions of the Kadomtsev-Petviashvili (KP) equation
also leads to regular trivalent plabic graphs \cite{KW, KW2, KK}, which
were called \emph{realizable plabic graphs} in \cite{KK}, in the case that 
$\mathcal{A} = ((\kappa_1,\kappa_1^2),\dots, (\kappa_n, \kappa_n^2)).$ 
 To understand a soliton solution
$u_A(x,y,t)$ of the KP equation coming from a point $A$ in the positive Grassmannian,
one fixes the time $t$ and plots the points
where $u_A(x,y,t)$ has a local maximum.  This gives rise to a tropical curve
in the $xy$-plane; as soliton solutions model shallow water waves, such as 
beach waves, this tropical curve shows the positions in the plane 
where the corresponding wave has a peak.  As was shown in \cite{KW, KW2},
this tropical curve is a reduced plabic graph, and hence the Pl\"ucker coordinates
naturally labeling the regions of the curve form a \emph{cluster} for the 
cluster structure on the Grassmannian; the authors moreover speculated in \cite{KW}
that when the time $t$ varies, one observes the face labels of the 
soliton graph change by \emph{cluster transformations}, see 
\cref{fig:mutation}.
%=============================================================
\begin{figure} 
 \begin{center}
   \includegraphics[width=.45\textwidth]{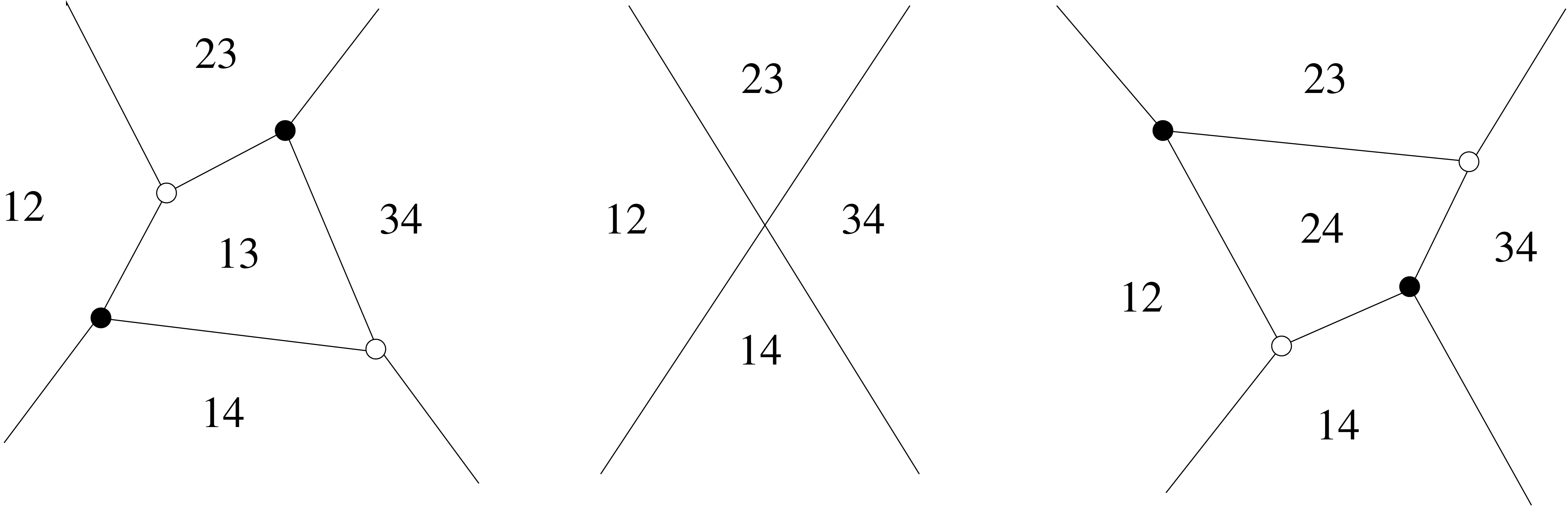}
	\caption{A contour plot coming from a point in $\Grtp(2,4)$ undergoing a cluster mutation as time varies.\label{fig:mutation}}
\end{center}
\end{figure}
%=============================================================
We prove this conjecture using the connection between 
soliton graphs and regular plabic graphs.

\subsection*{Acknowledgements}
This project grew out of discussions
during the Fall of 2017, while all authors were in residence at the Mathematical Sciences Research Institute in Berkeley, CA.  They are grateful to MSRI for providing an ideal work environment.  
The first author is grateful to Miriam Farber for discussions regarding \cref{fig:Gr_3_6} during the development of~\cite{FG}. 
The third author would like to thank Yuji Kodama for their joint
work on KP solitons, which provided part of the motivation for this project.
This work was 
 partially supported 
by the National Science
Foundation under Grant
No. DMS-1764370, No. DMS-1440140, 
No. DMS-1600447, and No. DMS-1854512.
Any opinions, findings
and conclusions or recommendations expressed in this material are those of
the authors and do not necessarily reflect the views of the National
Science Foundation.

We now discuss our constructions and results in more detail. 

\section{Main results}\label{sec:main-results}
\subsection{Background on secondary and fiber polytopes}
Let $\A=(\a_1,\dots,\a_n)$ be a configuration of $n$ points in $\R^{d-1}$, and let $Q\subset\R^{d-1}$ be the convex hull of $\A$. We assume that the points in $\A$ affinely span $\R^{d-1}$. An \emph{$\A$-triangulation}
	is a polyhedral subdivision of $Q$ formed by simplices of the form
 $\Delta_{B}:=\conv\{\a_i\mid i \in B\}$ 
for $d$-element subsets $B$ of $[n]:=\{1,\dots,n\}$.
We view such simplices $\Delta_B$ as labeled by subsets $B$, see \cref{rmk:labeled}. To every $\A$-triangulation $\Tr$, Gelfand--Kapranov--Zelevinsky~\cite{GKZ} associated a point $\vertGKZ(\Tr)\in\R^n$ defined by
\begin{equation}	\label{eq:defGKZ}
\vertGKZ(\Tr):=\sum_{\Delta_B\in \Tr} \VolDB\cdot  \e_B,
\end{equation}
where $\Vol^{d-1}$ is the usual Euclidean volume in $\R^{d-1}$, $\e_1,\e_2,\dots,\e_n$ is the standard basis of $\R^n$, and we set  $\e_B:=\sum_{i\in B}\e_i$ for $B\subset [n]$. The \emph{secondary polytope} $\SigmaGKZ$ of $\A$ is defined as the convex hull of vectors $\vertGKZ(\Tr)$ where $\Tr$ ranges over all $\A$-triangulations. It turns out~\cite[Chapter~7, Theorem~1.7]{GKZ} that the vertices of $\SigmaGKZ$ correspond precisely to \emph{regular $\A$-triangulations}, defined in \cref{sec:regular}.

 Billera and Sturmfels~\cite{BS}  introduced a more general notion of a \emph{fiber
polytope} $\FibPoly(P\pito Q)$ 
for any affine projection of polytopes 
$\pi:P\to Q$,
 which we review in 
\cref{sec:fiber-polytopes}.
If $P:=\Delta^{n-1}=\conv(\e_1,\dots,\e_n)$ is the standard
$(n-1)$-dimensional simplex in $\R^n$, $Q:=\conv{\A}$, and $\pi$ is defined by $\pi(\e_i) = \a_i$ for all $i$, 
then the fiber polytope 
$\FibPoly(\Delta^{n-1}\pito Q)$ is a dilation of the secondary 
polytope $\SigmaGKZ$, see~\cite[Theorem~2.5]{BS}. Therefore the vertices of
	$\FibPoly(\Delta^{n-1}\pito Q)$
correspond to regular $\A$-triangulations. 

Another interesting case is when $P=\cube_n=[0,1]^n$ is
the standard $n$-cube. Let us denote by 
$\VC:=(\v_1,\dots,\v_n)$ the \emph{lift of $\A$}, i.e., the vector configuration in $\R^d$ obtained from $\A$ by setting $\v_i:=(\a_i,1)\in\R^d$ for $i=1,\dots, n$, and let 
$\Zon_\VC:=\sum_{i=1}^n[0,\v_i]\subset \R^d$ be the zonotope associated to $\VC$. 
We have a projection 
$\cube_n\pito \Zon_\VC$, defined by $\pi(\e_i) = \v_i$ for all $i$, 
and in this case, the fiber polytope $\FibPoly(\cube_n\pito \Zon_\VC)$ is called
the \emph{fiber zonotope} of $\ZV$.  Its vertices correspond to \emph{regular fine zonotopal tilings} of the zonotope $\Zon_\VC$, discussed below.
Restricting this projection map $\pi$ to the \emph{hypersimplex} $\Delta_{k,n}:=\cube_n\cap \{\x\in\R^n\mid x_1+\dots+x_n=k\}$, and denoting its image  by $Q_k:=\pi(\Delta_{k,n})=\Zon_\VC\cap \{\y\in\R^d\mid y_d=k\}$, we obtain a fiber polytope $\FibPoly(\Delta_{k,n}\pito Q_k)$ which has recently appeared in the theory of total positivity for Grassmannians~\cite{Gal,PosICM} and was studied further in~\cite{OS}.

\subsection{Higher secondary polytopes}
Given a configuration of $n$ points $\A \subset \R^{d-1}$ and its lift 
$\V \subset \R^d$ as above, we introduce a family of polytopes $\Sigma_{\A,1},\dots,
\Sigma_{\A,n-d}$, called \emph{higher secondary polytopes}, defined as follows.
For a $d$-element subset $B$ of  $[n]$, let
$\VolB:=|\det(\v_i)_{i\in B}|$ be the volume of the parallelepiped $\tile_B$
spanned by the vectors $\{\v_i\mid i\in B\}$. For a pair of disjoint subsets
$A,B$ of $[n]$ such that $|B|=d$  and $\VolB>0$ (i.e., such that $B$ is a
\emph{basis} of $\VC$), define the shifted \emph{parallelepiped}
$\tile_{A,B}\subset\Zon_\VC$ by 
\[
\tile_{A,B}:=\sum_{a\in A} \v_a+\sum_{b\in B}
[0,\v_b].
\]  
Clearly $\Vol^d(\tile_{A,B}) = \VolB$ for any $A$.  A \emph{fine zonotopal
tiling} of $\Zon_\VC$ is (roughly speaking) a collection $\Tiling$ of
parallelepipeds $\tile_{A,B}$ that form a polyhedral subdivision of $\Zon_\VC$,
see \cref{def:finetiling},
and we say that $\Tiling$ is \emph{regular} if it can be obtained as a projection of
the upper boundary of a $(d+1)$-dimensional zonotope onto $\Zon_\VC$, see 
\cref{dfn:regular_tiling}.

\begin{definition}\label{def:HSP} 
 For a fine zonotopal tiling $\Tiling$ of $\Zon_\VC$ and $k\in\Z$, we introduce a vector
\begin{equation}\label{eq:HSP}
\vertHSP_k(\Tiling):=\sumlevel{k} \VolB\cdot  \e_A \in\R^n.
\end{equation}
\noindent It is clear that $\vertHSP_k(\Tiling)=0$ if $k\notin [n-d]$. For $k\in[n-d]$, the \emph{higher secondary polytope} $\Sigma_{\A,k}$ is defined by
\[\Sigma_{\A,k}:=\Conv\left\{\vertHSP_k(\Tiling)\;\middle|\;  \text{$\Tiling$ is a fine \emph{regular} zonotopal tiling of $\Zon_\VC$} \right\}.\]
\end{definition}
\noindent We expect that the word \emph{regular} can be omitted from the above definition, see \cref{mainconj}. 
 As we will see in \cref{prop:HSP_dim}, for each $k\in[n-d]$, the polytope $\Sigma_{\A,k}$ has dimension $n-d$. An example of a higher secondary polytope is shown in \cref{fig:Gr_3_6}.

For simplicity, we formulate the following result modulo affine translation.
A more precise formulation will be given in~\eqref{eq:proof}. For polytopes
$P,P'\subset\R^m$, we write $P\shifteq P'$ if $P=P'+{\ggamma}$ for some
$\ggamma\in\R^m$. 

\begin{theorem}\label{thm:sigma} 
Let $\A\subset\R^{d-1}$ be a point configuration. Recall that
$Q = \conv \A$, $\VC\subset\R^d$ is the lift of $\A$, $\Zon_\VC$ is the zonotope of $\VC$, and 
 $Q_k=\Zon_\VC\cap \{\y\in\R^d\mid y_d=k\}$ is the $k$-th section of $\Zon_\VC$. 
Then we have the following.
	\begin{theoremlist}
\item\label{thm:sigma_GKZ} 
	$\SigmaGKZ\shifteq \frac1{(d-1)!}\Sigma_{\A,1}$, equivalently, $\FibPoly(\Delta^{n-1}\pito Q)    \shifteq	\frac1{d! \Voldd(Q)}\Sigma_{\A,1}$.
	\item\label{thm:FibPoly}
  $\FibPoly(\cube_n\pito \Zon_\VC)\shifteq \frac1{\Vold(\Zon_\VC)} \left(\Sigma_{\A,1} + \cdots + \Sigma_{\A,n-d}\right)$.
\item\label{thm:FibPoly_k}
$\FibPoly(\Delta_{k,n}\pito Q_k) \shifteq
\frac1{\Voldd(Q_k)} \left(p_{0,d}\Sigma_{\A,k} + p_{1,d}\Sigma_{\A,k-1} + \dots + p_{d-1,d}\Sigma_{\A,k-d+1}\right)$ for all $k\in[n-1]$, where $p_{r,d}$ is the probability that a random permutation in $S_d$ has $r$ descents.
\item\label{thm:duality} 
\emph{Duality:} $\Sigma_{\A,k} \shifteq - \Sigma_{\A,n-d-k+1}$ for all $k\in[n-d]$.
\end{theoremlist}
\end{theorem}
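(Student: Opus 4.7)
I would prove the four parts together, with Part~(iii) as the main computation; Parts~(i) and~(iv) follow as specializations and consequences, and Part~(ii) is a parallel but easier calculation. The unifying tool is the Billera--Sturmfels formula: the vertex of $\FibPoly(P\pito Q)$ corresponding to a regular coherent subdivision $\{F_i\}$ of $Q$ equals $\tfrac{1}{\Vol(Q)}\sum_i\Vol(F_i)\cdot c_i$, where $c_i$ is the centroid of the face of $P$ that $\pi$-covers $F_i$.

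For Part~(ii) applied to $\pi:\cube_n\to\Zon_\V$, the preimage of $\tile_{A,B}$ is the face $\{x_a=1\text{ for }a\in A,\ x_i=0\text{ for }i\notin A\cup B\}$ of $\cube_n$, with centroid $\e_A+\tfrac12\e_B$. Hence the vertex corresponding to a regular fine zonotopal tiling $\Tiling$ is
\[
\tfrac{1}{\Vold(\Zon_\V)}\sum_{\tile_{A,B}\in\Tiling}\VolB\bigl(\e_A+\tfrac12\e_B\bigr).
\]
The $\e_A$-part equals $\sum_{k=1}^{n-d}\vertHSP_k(\Tiling)$, and the $\e_B$-part equals $\tfrac12\sum_{B\text{ basis of }\V}\VolB\,\e_B$, which is independent of $\Tiling$ because, by Bohne--Dress, each basis of $\V$ labels exactly one parallelepiped in any fine zonotopal tiling. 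This yields~(ii).

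Part~(iii) proceeds analogously for $\pi:\Delta_{k,n}\to Q_k$: the preimage of the cell $\tile_{A,B}\cap Q_k$ is $\e_A+\Delta_{k-|A|,B}$ with centroid $\e_A+\tfrac{k-|A|}{d}\e_B$, and its $(d-1)$-volume equals $\VolB$ times an Eulerian fraction $p_{k-|A|-1,\,d-1}$ from the classical hypersimplex volume formula. Applying the Eulerian Pascal recurrence $p_{r,d}=\tfrac{r+1}{d}p_{r,d-1}+\tfrac{d-r}{d}p_{r-1,d-1}$ regroups the $\e_A$-part into $\tfrac{1}{\Voldd(Q_k)}\sum_r p_{r,d}\vertHSP_{k-r}(\Tiling)$, matching the right-hand side of~(iii). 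The main obstacle is to show that the residual $\e_B$-contributions (together with the $\e_A$ terms not absorbed by the recurrence) collect into a $\Tiling$-independent vector. I would prove this combinatorial invariance by showing that it is preserved under flips of fine zonotopal tilings: a flip exchanges two parallelepipeds inside a hexagonal subconfiguration, and the resulting local change in the residual sum vanishes by a direct Eulerian identity. Since the flip graph is connected (by Bohne--Dress), the residual is globally $\Tiling$-independent.

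Part~(i) is the $k=1$ specialization of~(iii): $\Delta_{1,n}=\Delta^{n-1}$, $Q_1\cong Q$, and $\Sigma_{\A,j}=\{0\}$ for $j\notin[n-d]$, so only the $r=0$ summand $p_{0,d}\Sigma_{\A,1}=\tfrac{1}{d!}\Sigma_{\A,1}$ survives; combining this with the standard Billera--Sturmfels identification $\FibPoly(\Delta^{n-1}\pito Q)\shifteq\tfrac{1}{d\,\Voldd(Q)}\SigmaGKZ$ gives the two equivalent formulations in~(i). Part~(iv) is deduced from~(iii) via the hypersimplex duality $x\mapsto\mathbf{1}-x:\Delta_{k,n}\xrightarrow{\sim}\Delta_{n-k,n}$, which corresponds to the antipodal reflection $y\mapsto\v_{[n]}-y$ on $\Zon_\V$ (so $Q_k\leftrightarrow Q_{n-k}$, with equal $(d-1)$-volumes by the symmetry of $\Zon_\V$). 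This yields $\FibPoly(\Delta_{k,n}\pito Q_k)\shifteq-\FibPoly(\Delta_{n-k,n}\pito Q_{n-k})$; substituting~(iii) on both sides and reversing the summation index via the palindromic symmetry $p_{r,d}=p_{d-1-r,d}$ produces $\sum_r p_{r,d}\Sigma_{\A,k-r}\shifteq-\sum_r p_{r,d}\Sigma_{\A,n-d-k+1+r}$, from which the identity $\Sigma_{\A,k}\shifteq-\Sigma_{\A,n-d-k+1}$ is extracted term by term (by downward induction on $k$, using that the identity holds simultaneously for all $k\in[n-d]$).
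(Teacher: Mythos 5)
Your centroid computation for $\FibPoly(\cube_n\pito\Zon_\VC)$ and $\FibPoly(\Delta_{k,n}\pito Q_k)$ is exactly the starting point the paper uses (Propositions 3.2 and 3.5), and your Eulerian regrouping of the $\e_A$-part reproduces the paper's identity (4.9). Your route to part~(iv)---hypersimplex duality $\Delta_{k,n}\leftrightarrow\Delta_{n-k,n}$, the palindromy $p_{r,d}=p_{d-1-r,d}$, and downward Minkowski cancellation---is genuinely different from the paper's proof, which instead shows the pointwise identity $\vertHSP_k(\Tiling)+\vertHSP_{n-d-k+1}(\TilingOP)=\text{const}$ via the opposite tiling $\TilingOP$. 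Your route is valid (Minkowski cancellation for convex bodies is legitimate, and $\Voldd(Q_k)=\Voldd(Q_{n-k})$ by central symmetry of $\Zon_\VC$), and deriving (i) as the $k=1$ case of (iii) is a tidy shortcut. Likewise your observation that in part~(ii) the full $\e_B$-contribution is $\tfrac12\sum_{B\in\Bcal(\Mcal)}\VolB\,\e_B$, tiling-independent simply because $\tile_{A,B}\mapsto B$ is a bijection $\Tiling\to\Bcal(\Mcal)$, is a cleaner way to see tiling-independence there than the paper's $\ddelta$-bookkeeping (which is needed for (iii), where the levels are not all summed).

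There is, however, a genuine gap: you pass from an identity among \emph{vertex vectors} of the form $\vertFib(\Tiling)\shifteq\sum_j\text{(coeff)}\,\vertHSP_j(\Tiling)$, valid for each regular $\Tiling$, to a \emph{Minkowski-sum identity of polytopes}, and this step is not automatic. The inclusion $\FibPoly\subseteq\sum_j\text{(coeff)}\,\Sigma_{\A,j}+\text{const}$ is immediate, but the reverse inclusion requires knowing that the ``mixed'' points $\sum_j\vertHSP_j(\Tiling_j)$, with different $\Tiling_j$ for different $j$, still lie in the fiber polytope. What rescues this is that for every generic $\h$, the $\h$-maximizing face of $\Sigma_{\A,j}$ is \emph{exactly} $\vertHSP_j(\Tiling_\h)$ for the \emph{same} tiling $\Tiling_\h$---this is Proposition 6.9(iv) of the paper, proved via the flip analysis of Section 5 (the formula for $\vertHSP_k(\Tiling)-\vertHSP_k(\Tiling')$ across a flip, together with a ray-walking argument in height space). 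Your proposal implicitly uses this when it ``matches the right-hand side of (iii),'' but never establishes it. Relatedly, you invoke Bohne--Dress for flip-connectivity of the tiling graph, but that theorem gives the correspondence between fine zonotopal tilings and single-element lifts, not flip-connectivity; in fact flip-connectivity fails in general for non-regular zonotopal tilings. You only need connectivity for \emph{regular} tilings, which holds by the secondary-fan/ray-crossing argument (Billera--Sturmfels; Lemma 6.4 in the paper), so the statement you want is true, but you must restrict to regular tilings and cite the correct result.
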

\noindent Here we assume that $\Sigma_{\A,k}$ is a single point if $k\notin [n-d]$. The volume forms $\Vold$ and $\Voldd$ on $\R^d$ are scaled so that $\Vold([0,1]^d)=\Voldd([0,1]^{d-1}\times\{y_d\})=1$ for any $y_d\in\R$. The numbers $p_{r,d}$ are given by the formula $p_{r,d}=\frac{\Eul(d,r)}{d!}$, where $\Eul(d,r)$ is the \emph{Eulerian number}, i.e., the number of permutations of $1,2,\dots,d$ with exactly $r$ descents.

\begin{remark}\label{rem:notobvious}
	\cref{thm:sigma_GKZ}	is not an obvious consequence of the definitions: 
	it says that $\SigmaGKZ$ (defined by~\eqref{eq:defGKZ}) 
	is the convex hull of points
\begin{equation}\label{eq2:GKZ}
	\frac1{(d-1)!}\sumlevel{1}
	\VolB \cdot \e_A
\end{equation}
for all regular fine zonotopal tilings $\Tiling$ of $\Zon_\VC$. The formulae~\eqref{eq:defGKZ} 
	and~\eqref{eq2:GKZ} are quite different: we have $\e_B$ in~\eqref{eq:defGKZ} as opposed to $\e_A$ in~\eqref{eq2:GKZ}, and we have $|A|=0$ in~\eqref{eq:defGKZ} as 
	opposed to $|A|=1$ in~\eqref{eq2:GKZ}.

On the other hand, it is easy to see from the definitions that 
the \emph{last} higher secondary polytope $\Sigma_{\A,n-d}$ satisfies $\SigmaGKZ\shifteq -\frac1{(d-1)!}\Sigma_{\A,n-d}$. Thus \cref{thm:sigma_GKZ} 
follows from \cref{thm:duality}.
\end{remark}

\begin{remark}
 The polytope $\Sigma_{\A,k}$ in \cref{fig:Gr_3_6} is centrally symmetric, in agreement with \cref{thm:duality}: we have $k=2=n-k-d+1$, thus $\Sigma_{\A,k}\shifteq -\Sigma_{\A,k}$.
\end{remark}

\begin{example}\label{ex:d1} 
Let $d=1$ and let $\A$ be the configuration of $n$ points
$\a_1=\cdots = \a_n = 0\in\R^0$.
Then $\V$ is the configuration of $n$ vectors
$\v_1 = \dots = \v_n = (1)\in \R^1$, and the zonotope
$\ZV$ is the interval $[0,n] \subset \R^1$.
	There are $n!$ fine zonotopal tilings of $\ZV$
	(see \cref{def:finetiling}), 
in bijection with the permutations $w\in S_n$.  More specifically,
for each $w\in S_n$, we have the following fine zonotopal tiling $\Tiling_w$ of $\ZV$:
\[\Tiling_w := \left\{\tile_{\emptyset, \{w_1\}}, 
\tile_{\{w_1\}, \{w_2\}}, \dots,
\tile_{\{w_1,\dots,w_{n-1}\}, \{w_n\}}\right\}.\]
Even though geometrically the tilings $\Tiling_w$ are the same for all $w\in S_n$, we treat them as different tilings because we take into account the labels of the tiles, see \cref{rmk:labeled}. 
\def\Perm{\operatorname{Perm}}
 We have $\vertHSP_k(\Tiling_w) = \e_{\{w_1,\dots,w_k\}}$, thus $\Sigma_{\A,k}$ is the 
	hypersimplex $\Delta_{k,n}$. It is straightforward to see from the definitions (cf.~\cite[Example~5.4]{BS} or~\cite[Example~9.8]{Ziegler}) that $n\cdot \FibPoly(\cube_n\pito \Zon_\VC)$ is the \emph{permutohedron} $\Perm_n:=\Conv\{(w_1,\dots,w_n)\mid w\in S_n\}$. Thus \cref{thm:FibPoly} recovers the following well known decomposition~\cite[Section~16]{PostnikovPAB} (implicit in~\cite{GS}) 
of the permutohedron as a Minkowski sum of hypersimplices:
\[\Perm_n=\Delta_{1,n}+\Delta_{2,n}+\dots+\Delta_{n-1,n}.\]
\end{example}

More generally, one can consider the case\footnote{Even more generally, we could choose a sequence of $n$ vectors such that $\det(\v_i)_{i\in B}>0$ for all $B\subset [n]$ of size $k$.} where $\VC$ is a  \emph{cyclic vector configuration} $C(n,d)$, i.e., is given by $\v_i = (u_i^{d-1}, \dots, u_i, 1)$ for $i\in[n]$ and $0 < u_1 < u_2 < \dots < u_n \in \R$. 
Thus \cref{ex:d1}  corresponds to the case $d=1$. If $d=2$, 
then the zonotope $\ZV$ is a $2n$-gon, and fine zonotopal tilings are 
exactly the rhombus tilings of the $2n$-gon.  They correspond to commutation
classes of reduced decompositions of the longest permutation $w_0\in S_n$ \cite{Elnitsky}.
 It would be interesting to understand the structure of the associated higher secondary polytopes in more detail.

 \begin{remark}\label{rem:higher}
 	For a cyclic vector configuration $C(n,d)$, Ziegler \cite{ZieglerHigher} identified the 
 fine zonotopal tilings of the cyclic zonotope $\ZV$ with elements of 
 Manin-Shekhtman's \emph{higher Bruhat order} $B(n,d)$ \cite{MS}, also studied by 
 	Voevodsky and Kapranov \cite{VK}.  Note that $B(n,1)$ coincides with the weak Bruhat 
 	order on permutations, corresponding to the case $d=1$ 
 in \cref{ex:d1}. 
 \end{remark}

We next proceed to the case $d=3$.

\def\Grtnn{\Gr_{\geq0}}
\def\BIP{{\operatorname{bip}}}
\def\bip#1{#1^{\BIP}}
\def\Gbip{\bip{G}}

\subsection{Higher associahedra and plabic graphs}\label{sec:high-assoc-plab}
Our main motivating example is the case when $\Sigma_{\A,k}$ is a \emph{higher associahedron}, that is, when $d=3$ and 
$\A$ is the configuration of vertices of a convex $n$-gon in $\R^2$. For example, one could take the points in $\A$ lying on a parabola, in which case the lift $\VC$ of $\A$ is a cyclic vector configuration $C(n,3)$. It turns out that the combinatorics of higher associahedra is directly related to \emph{bipartite plabic graphs} that were introduced in~\cite{Pos06} in the study of the \emph{totally nonnegative Grassmannian} $\Grtnn(k,n)$. 

A \emph{plabic graph} is a planar graph embedded in a disk 
such that every boundary vertex has degree $1$ and every interior vertex is colored either black or white. A plabic graph is called \emph{trivalent} if every interior vertex has degree $3$, and it is called \emph{bipartite} if no two interior vertices of the same color are connected by an edge. Note that  taking a trivalent plabic graph $G$ and contracting all edges between interior vertices of the same color produces a bipartite plabic graph denoted $\Gbip$.

There is a special class of \emph{$(k,n)$-plabic graphs} (cf.~\cref{def:kn}), that were used in~\cite{Pos06} to parametrize the top-dimensional cell of $\Grtnn(k,n)$. Each $(k,n)$-plabic graph has $n$ boundary vertices and $k(n-k)+1$ faces, and its \emph{face labels} (cf.~\cref{def:faces}) form a cluster in the cluster algebra structure on the coordinate ring of the Grassmannian~\cite{Scott}.

\def\plabiclw{1pt}\def\gridlw{1pt}
\def\gridop{0.25}
\def\gscl{1.5}

\begin{figure}
\def\boundnoderadius{1.5pt}

\scalebox{0.7}{
\setlength{\tabcolsep}{12pt}
\begin{tabular}{cc}
\scalebox{0.8}{
\begin{tikzpicture}[scale=1.0] %
\coordinate (bnode1236n1) at (7.67,5.67);
\coordinate (wnode23n1) at (6.00,7.00);
\coordinate (wnode16n1) at (7.33,3.67);
\coordinate (bnode1236n2) at (7.33,4.67);
\coordinate (wnode16n2) at (7.00,2.67);
\coordinate (bnode1346n1) at (5.33,4.00);
\coordinate (wnode36n1) at (5.33,5.00);
\coordinate (bnode1456n1) at (4.67,2.00);
\coordinate (wnode46n1) at (3.67,3.33);
\coordinate (bnode2346n1) at (4.33,6.33);
\coordinate (wnode34n1) at (2.67,6.00);
\coordinate (bnode3456n1) at (2.00,4.00);
\draw[dashed, line width=1pt,black!70] (5.00,4.50) circle (4.60);
\coordinate (bound_6_1 2_3) at (9.40,5.84);
\coordinate (bound_1_2 3_4) at (6.55,8.83);
\coordinate (bound_2_3 4_5) at (1.62,7.63);
\coordinate (bound_3_4 5_6) at (0.49,3.59);
\coordinate (bound_4_5 6_1) at (3.96,0.02);
\coordinate (bound_5_1 6_2) at (8.76,1.84);
\coordinate (node12) at (9.40,5.84);
\coordinate (node16) at (7.00,3.00);
\coordinate (node23) at (6.00,7.00);
\coordinate (node34) at (2.67,6.00);
\coordinate (node36) at (5.33,5.00);
\coordinate (node45) at (0.49,3.59);
\coordinate (node46) at (3.67,3.33);
\coordinate (node56) at (3.96,0.02);
\coordinate (node1234) at (6.55,8.83);
\coordinate (node1236) at (7.50,5.25);
\coordinate (node1256) at (8.76,1.84);
\coordinate (node1346) at (5.33,4.00);
\coordinate (node1456) at (4.67,2.00);
\coordinate (node2345) at (1.62,7.63);
\coordinate (node2346) at (4.33,6.33);
\coordinate (node3456) at (2.00,4.00);
\draw[blue, line width=\plabiclw] (node12).. controls (8.50,5.50) .. (bnode1236n1);
\draw[blue, line width=\plabiclw] (wnode16n1).. controls (8.00,4.00) .. (bnode1236n2);
\draw[blue, line width=\plabiclw] (node23).. controls (7.00,6.50) .. (bnode1236n1);
\draw[blue, line width=\plabiclw] (node36).. controls (6.50,5.00) .. (bnode1236n2);

\draw[blue, line width=\plabiclw] (wnode16n1).. controls (6.50,3.50) .. (node1346);
\draw[blue, line width=\plabiclw] (wnode16n2).. controls (6.00,2.00) .. (node1456);
\draw[blue, line width=\plabiclw] (wnode16n2).. controls (7.50,2.50) .. (node1256);

\draw[blue, line width=\plabiclw] (wnode16n2) to[bend right=30] (wnode16n1);
\draw[blue, line width=\plabiclw] (bnode1236n1) to[bend right=-30] (bnode1236n2);

\draw[blue, line width=\plabiclw] (node23).. controls (6.00,7.50) .. (node1234);
\draw[blue, line width=\plabiclw] (node23).. controls (5.00,7.00) .. (node2346);
\draw[blue, line width=\plabiclw] (node34).. controls (2.50,6.50) .. (node2345);
\draw[blue, line width=\plabiclw] (node34).. controls (2.00,5.00) .. (node3456);
\draw[blue, line width=\plabiclw] (node34).. controls (3.50,6.50) .. (node2346);
\draw[blue, line width=\plabiclw] (node36).. controls (4.50,5.50) .. (node2346);
\draw[blue, line width=\plabiclw] (node36).. controls (5.00,4.50) .. (node1346);
\draw[blue, line width=\plabiclw] (node45).. controls (1.50,3.50) .. (node3456);
\draw[blue, line width=\plabiclw] (node46).. controls (4.50,4.00) .. (node1346);
\draw[blue, line width=\plabiclw] (node46).. controls (2.50,3.50) .. (node3456);
\draw[blue, line width=\plabiclw] (node46).. controls (4.00,2.50) .. (node1456);
\draw[blue, line width=\plabiclw] (node56).. controls (4.00,1.50) .. (node1456);
\draw[blue, line width=\plabiclw] (node1234).. controls (6.00,7.50) .. (node23);
% \draw[blue, line width=\plabiclw] (node1236).. controls (8.50,5.50) .. (node12);
% \draw[blue, line width=\plabiclw] (node1236).. controls (8.00,4.00) .. (node16);
% \draw[blue, line width=\plabiclw] (node1236).. controls (6.50,5.00) .. (node36);
% \draw[blue, line width=\plabiclw] (node1236).. controls (7.00,6.50) .. (node23);
% \draw[blue, line width=\plabiclw] (node1256).. controls (7.50,2.50) .. (node16);
% \draw[blue, line width=\plabiclw] (node1346).. controls (6.50,3.50) .. (node16);
\draw[blue, line width=\plabiclw] (node1346).. controls (4.50,4.00) .. (node46);
\draw[blue, line width=\plabiclw] (node1346).. controls (5.00,4.50) .. (node36);
% \draw[blue, line width=\plabiclw] (node1456).. controls (6.00,2.00) .. (node16);
\draw[blue, line width=\plabiclw] (node1456).. controls (4.00,1.50) .. (node56);
\draw[blue, line width=\plabiclw] (node1456).. controls (4.00,2.50) .. (node46);
\draw[blue, line width=\plabiclw] (node2345).. controls (2.50,6.50) .. (node34);
\draw[blue, line width=\plabiclw] (node2346).. controls (5.00,7.00) .. (node23);
\draw[blue, line width=\plabiclw] (node2346).. controls (4.50,5.50) .. (node36);
\draw[blue, line width=\plabiclw] (node2346).. controls (3.50,6.50) .. (node34);
\draw[blue, line width=\plabiclw] (node3456).. controls (2.00,5.00) .. (node34);
\draw[blue, line width=\plabiclw] (node3456).. controls (2.50,3.50) .. (node46);
\draw[blue, line width=\plabiclw] (node3456).. controls (1.50,3.50) .. (node45);

\draw[blue, line width=\plabiclw,fill=white] (wnode16n1) circle (3pt);
\draw[blue, line width=\plabiclw,fill=white] (wnode16n2) circle (3pt);
\draw[blue, line width=\plabiclw,fill=blue] (bnode1236n1) circle (3pt);
\draw[blue, line width=\plabiclw,fill=blue] (bnode1236n2) circle (3pt);

% \boundnode(node16)

% \draw[blue, line width=\plabiclw,fill=white] (node12) circle (3.0pt);
% \draw[blue, line width=\plabiclw,fill=white] (node16) circle (3.0pt);
\draw[blue, line width=\plabiclw,fill=white] (node23) circle (3.0pt);
\draw[blue, line width=\plabiclw,fill=white] (node34) circle (3.0pt);
\draw[blue, line width=\plabiclw,fill=white] (node36) circle (3.0pt);
% \draw[blue, line width=\plabiclw,fill=white] (node45) circle (3.0pt);
\draw[blue, line width=\plabiclw,fill=white] (node46) circle (3.0pt);
% \draw[blue, line width=\plabiclw,fill=white] (node56) circle (3.0pt);
% \draw[blue, line width=\plabiclw,fill=white] (node1234) circle (3.0pt);
% \draw[blue, line width=\plabiclw,fill=blue] (node1236) circle (3.0pt);
% \draw[blue, line width=\plabiclw,fill=white] (node1256) circle (3.0pt);
\draw[blue, line width=\plabiclw,fill=blue] (node1346) circle (3.0pt);
\draw[blue, line width=\plabiclw,fill=blue] (node1456) circle (3.0pt);
% \draw[blue, line width=\plabiclw,fill=white] (node2345) circle (3.0pt);
\draw[blue, line width=\plabiclw,fill=blue] (node2346) circle (3.0pt);
\draw[blue, line width=\plabiclw,fill=blue] (node3456) circle (3.0pt);

\boundnode(node12)
\boundnode(node45)
\boundnode(node56)
\boundnode(node1234)
\boundnode(node2345)
\boundnode(node1256)
\end{tikzpicture}}
&
\scalebox{0.8}{
\begin{tikzpicture}[scale=1.0] %
\coordinate (bnode1236n1) at (7.67,5.67);
\coordinate (wnode23n1) at (6.00,7.00);
\coordinate (wnode16n1) at (7.33,3.67);
\coordinate (bnode1236n2) at (7.33,4.67);
\coordinate (wnode16n2) at (7.00,2.67);
\coordinate (bnode1346n1) at (5.33,4.00);
\coordinate (wnode36n1) at (5.33,5.00);
\coordinate (bnode1456n1) at (4.67,2.00);
\coordinate (wnode46n1) at (3.67,3.33);
\coordinate (bnode2346n1) at (4.33,6.33);
\coordinate (wnode34n1) at (2.67,6.00);
\coordinate (bnode3456n1) at (2.00,4.00);
\draw[dashed, line width=1pt,black!70] (5.00,4.50) circle (4.60);
\coordinate (bound_6_1 2_3) at (9.40,5.84);
\coordinate (bound_1_2 3_4) at (6.55,8.83);
\coordinate (bound_2_3 4_5) at (1.62,7.63);
\coordinate (bound_3_4 5_6) at (0.49,3.59);
\coordinate (bound_4_5 6_1) at (3.96,0.02);
\coordinate (bound_5_1 6_2) at (8.76,1.84);
\coordinate (node12) at (9.40,5.84);
\coordinate (node16) at (7.00,3.00);
\coordinate (node23) at (6.00,7.00);
\coordinate (node34) at (2.67,6.00);
\coordinate (node36) at (5.33,5.00);
\coordinate (node45) at (0.49,3.59);
\coordinate (node46) at (3.67,3.33);
\coordinate (node56) at (3.96,0.02);
\coordinate (node1234) at (6.55,8.83);
\coordinate (node1236) at (7.50,5.25);
\coordinate (node1256) at (8.76,1.84);
\coordinate (node1346) at (5.33,4.00);
\coordinate (node1456) at (4.67,2.00);
\coordinate (node2345) at (1.62,7.63);
\coordinate (node2346) at (4.33,6.33);
\coordinate (node3456) at (2.00,4.00);
\draw[blue, line width=\plabiclw] (node12).. controls (8.50,5.50) .. (node1236);
\draw[blue, line width=\plabiclw] (node16).. controls (8.00,4.00) .. (node1236);
\draw[blue, line width=\plabiclw] (node16).. controls (6.50,3.50) .. (node1346);
\draw[blue, line width=\plabiclw] (node16).. controls (6.00,2.00) .. (node1456);
\draw[blue, line width=\plabiclw] (node16).. controls (7.50,2.50) .. (node1256);
\draw[blue, line width=\plabiclw] (node23).. controls (6.00,7.50) .. (node1234);
\draw[blue, line width=\plabiclw] (node23).. controls (5.00,7.00) .. (node2346);
\draw[blue, line width=\plabiclw] (node23).. controls (7.00,6.50) .. (node1236);
\draw[blue, line width=\plabiclw] (node34).. controls (2.50,6.50) .. (node2345);
\draw[blue, line width=\plabiclw] (node34).. controls (2.00,5.00) .. (node3456);
\draw[blue, line width=\plabiclw] (node34).. controls (3.50,6.50) .. (node2346);
\draw[blue, line width=\plabiclw] (node36).. controls (6.50,5.00) .. (node1236);
\draw[blue, line width=\plabiclw] (node36).. controls (4.50,5.50) .. (node2346);
\draw[blue, line width=\plabiclw] (node36).. controls (5.00,4.50) .. (node1346);
\draw[blue, line width=\plabiclw] (node45).. controls (1.50,3.50) .. (node3456);
\draw[blue, line width=\plabiclw] (node46).. controls (4.50,4.00) .. (node1346);
\draw[blue, line width=\plabiclw] (node46).. controls (2.50,3.50) .. (node3456);
\draw[blue, line width=\plabiclw] (node46).. controls (4.00,2.50) .. (node1456);
\draw[blue, line width=\plabiclw] (node56).. controls (4.00,1.50) .. (node1456);
\draw[blue, line width=\plabiclw] (node1234).. controls (6.00,7.50) .. (node23);
\draw[blue, line width=\plabiclw] (node1236).. controls (8.50,5.50) .. (node12);
\draw[blue, line width=\plabiclw] (node1236).. controls (8.00,4.00) .. (node16);
\draw[blue, line width=\plabiclw] (node1236).. controls (6.50,5.00) .. (node36);
\draw[blue, line width=\plabiclw] (node1236).. controls (7.00,6.50) .. (node23);
\draw[blue, line width=\plabiclw] (node1256).. controls (7.50,2.50) .. (node16);
\draw[blue, line width=\plabiclw] (node1346).. controls (6.50,3.50) .. (node16);
\draw[blue, line width=\plabiclw] (node1346).. controls (4.50,4.00) .. (node46);
\draw[blue, line width=\plabiclw] (node1346).. controls (5.00,4.50) .. (node36);
\draw[blue, line width=\plabiclw] (node1456).. controls (6.00,2.00) .. (node16);
\draw[blue, line width=\plabiclw] (node1456).. controls (4.00,1.50) .. (node56);
\draw[blue, line width=\plabiclw] (node1456).. controls (4.00,2.50) .. (node46);
\draw[blue, line width=\plabiclw] (node2345).. controls (2.50,6.50) .. (node34);
\draw[blue, line width=\plabiclw] (node2346).. controls (5.00,7.00) .. (node23);
\draw[blue, line width=\plabiclw] (node2346).. controls (4.50,5.50) .. (node36);
\draw[blue, line width=\plabiclw] (node2346).. controls (3.50,6.50) .. (node34);
\draw[blue, line width=\plabiclw] (node3456).. controls (2.00,5.00) .. (node34);
\draw[blue, line width=\plabiclw] (node3456).. controls (2.50,3.50) .. (node46);
\draw[blue, line width=\plabiclw] (node3456).. controls (1.50,3.50) .. (node45);
% \draw[blue, line width=\plabiclw,fill=white] (node12) circle (3.0pt);
\draw[blue, line width=\plabiclw,fill=white] (node16) circle (3.0pt);
\draw[blue, line width=\plabiclw,fill=white] (node23) circle (3.0pt);
\draw[blue, line width=\plabiclw,fill=white] (node34) circle (3.0pt);
\draw[blue, line width=\plabiclw,fill=white] (node36) circle (3.0pt);
% \draw[blue, line width=\plabiclw,fill=white] (node45) circle (3.0pt);
\draw[blue, line width=\plabiclw,fill=white] (node46) circle (3.0pt);
% \draw[blue, line width=\plabiclw,fill=white] (node56) circle (3.0pt);
% \draw[blue, line width=\plabiclw,fill=white] (node1234) circle (3.0pt);
\draw[blue, line width=\plabiclw,fill=blue] (node1236) circle (3.0pt);
% \draw[blue, line width=\plabiclw,fill=white] (node1256) circle (3.0pt);
\draw[blue, line width=\plabiclw,fill=blue] (node1346) circle (3.0pt);
\draw[blue, line width=\plabiclw,fill=blue] (node1456) circle (3.0pt);
% \draw[blue, line width=\plabiclw,fill=white] (node2345) circle (3.0pt);
\draw[blue, line width=\plabiclw,fill=blue] (node2346) circle (3.0pt);
\draw[blue, line width=\plabiclw,fill=blue] (node3456) circle (3.0pt);

\boundnode(node12)
\boundnode(node45)
\boundnode(node56)
\boundnode(node1234)
\boundnode(node2345)
\boundnode(node1256)
\end{tikzpicture}}
\\

\end{tabular}

}
\def\boundnoderadius{1pt}

\caption{\label{fig:plabic_graph} A plabic graph and its bipartite version.}
\end{figure}
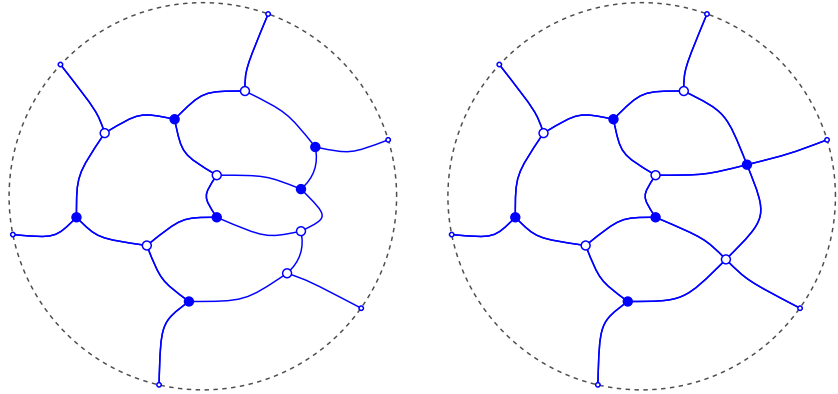

\def\movescl{0.4}
\def\bluecircscl{1}

\def\plabiclww{1.4pt}

\begin{figure}

\begin{tabular}{c|c|c} 

\scalebox{\movescl}{
 \begin{tikzpicture}
  \coordinate(a) at (-2,1);
  \coordinate(b) at (-2,-1);
  \coordinate(c) at (2,0);
  \coordinate(d) at (4,0);
  \draw[blue,line width=\plabiclww] (a) -- (b);
  \draw[blue,line width=\plabiclww] (c) -- (d);
  \draw[blue,line width=\plabiclww] (a) -- (-3,2);
  \draw[blue,line width=\plabiclww] (a) -- (-1,2);
  \draw[blue,line width=\plabiclww] (b) -- (-3,-2);
  \draw[blue,line width=\plabiclww] (b) -- (-1,-2);
  \draw[<->,line width=2pt] (-1,0) -- (1,0);

  \draw[blue,line width=\plabiclww] (c) -- (1,1);
  \draw[blue,line width=\plabiclww] (c) -- (1,-1);
  \draw[blue,line width=\plabiclww] (d) -- (5,1);
  \draw[blue,line width=\plabiclww] (d) -- (5,-1);
  
 \node[fill=white,draw=blue,circle,scale=\bluecircscl,line width=\plabiclww] (AAA) at (a) {};
 \node[fill=white,draw=blue,circle,scale=\bluecircscl,line width=\plabiclww] (AAA) at (b) {};
 \node[fill=white,draw=blue,circle,scale=\bluecircscl,line width=\plabiclww] (AAA) at (c) {};
 \node[fill=white,draw=blue,circle,scale=\bluecircscl,line width=\plabiclww] (AAA) at (d) {};
 \end{tikzpicture}} & 

\scalebox{\movescl}{
 \begin{tikzpicture}

 \draw[blue,line width=\plabiclww] (-4,1) -- (-5,2);
 \draw[blue,line width=\plabiclww] (-4,-1) -- (-5,-2);
 \draw[blue,line width=\plabiclww] (-2,1) -- (-1,2);
 \draw[blue,line width=\plabiclww] (-2,-1) -- (-1,-2);
 \draw[blue,line width=\plabiclww] (4,1) -- (5,2);
 \draw[blue,line width=\plabiclww] (4,-1) -- (5,-2);
 \draw[blue,line width=\plabiclww] (2,1) -- (1,2);
 \draw[blue,line width=\plabiclww] (2,-1) -- (1,-2);
 \draw[blue,line width=\plabiclww] (-4,1) -- (-2,1) -- (-2,-1) -- (-4,-1) -- cycle;
 \draw[blue,line width=\plabiclww] (4,1) -- (2,1) -- (2,-1) -- (4,-1) -- cycle;
 
 \draw[<->,line width=2pt] (-1,0) -- (1,0);
 
 \node[fill=blue,draw=blue,draw=blue,circle,scale=\bluecircscl,line width=\plabiclww] (AAA) at (-4,-1) {};
 \node[fill=white,draw=blue,circle,scale=\bluecircscl,line width=\plabiclww] (AAA) at (-4,1) {};
 \node[fill=blue,draw=blue,circle,scale=\bluecircscl,line width=\plabiclww] (AAA) at (-2,1) {};
 \node[fill=white,draw=blue,circle,scale=\bluecircscl,line width=\plabiclww] (AAA) at (-2,-1) {};
 \node[fill=blue,draw=blue,circle,scale=\bluecircscl,line width=\plabiclww] (AAA) at (4,-1) {};
 \node[fill=white,draw=blue,circle,scale=\bluecircscl,line width=\plabiclww] (AAA) at (4,1) {};
 \node[fill=blue,draw=blue,circle,scale=\bluecircscl,line width=\plabiclww] (AAA) at (2,1) {};
 \node[fill=white,draw=blue,circle,scale=\bluecircscl,line width=\plabiclww] (AAA) at (2,-1) {}; 
 \end{tikzpicture}}
&
\scalebox{\movescl}{
 \begin{tikzpicture}
  \coordinate(a) at (-2,1);
  \coordinate(b) at (-2,-1);
  \coordinate(c) at (2,0);
  \coordinate(d) at (4,0);
  \draw[blue,line width=\plabiclww] (a) -- (b);
  \draw[blue,line width=\plabiclww] (c) -- (d);
  \draw[blue,line width=\plabiclww] (a) -- (-3,2);
  \draw[blue,line width=\plabiclww] (a) -- (-1,2);
  \draw[blue,line width=\plabiclww] (b) -- (-3,-2);
  \draw[blue,line width=\plabiclww] (b) -- (-1,-2);
  \draw[<->,line width=2pt] (-1,0) -- (1,0);

  \draw[blue,line width=\plabiclww] (c) -- (1,1);
  \draw[blue,line width=\plabiclww] (c) -- (1,-1);
  \draw[blue,line width=\plabiclww] (d) -- (5,1);
  \draw[blue,line width=\plabiclww] (d) -- (5,-1);

 \node[fill=blue,draw=blue,circle,scale=\bluecircscl,line width=\plabiclww] (AAA) at (a) {};
 \node[fill=blue,draw=blue,circle,scale=\bluecircscl,line width=\plabiclww] (AAA) at (b) {};
 \node[fill=blue,draw=blue,circle,scale=\bluecircscl,line width=\plabiclww] (AAA) at (c) {};
 \node[fill=blue,draw=blue,circle,scale=\bluecircscl,line width=\plabiclww] (AAA) at (d) {};
 \end{tikzpicture}}\\
(M1) & (M2) & (M3)
\end{tabular}
  \caption{\label{fig:3moves} Moves on plabic graphs.}
\end{figure}
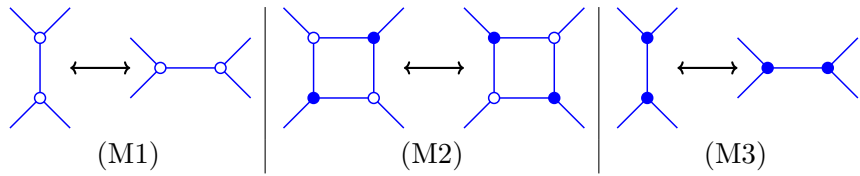

Given a plabic graph, one can apply certain moves to it,
as shown in \cref{fig:3moves}. Any two trivalent $(k,n)$-plabic graphs can be connected using moves (M1)--(M3), see~\cite[Theorem~13.4]{Pos06}. Since applying the moves (M1) and (M3) to $G$ does not change its bipartite version $\Gbip$, it follows that any two bipartite $(k,n)$-plabic graphs can be connected using only the \emph{square move} (M2).\footnote{We make the convention that applying a square move (M2) to a bipartite graph $\Gbip$ means first uncontracting some vertices of $\Gbip$ so that the vertices of the square become trivalent, then performing the square move, and then taking the bipartite version of the resulting graph.}
For example, there are $34$ bipartite $(3,6)$-plabic graphs corresponding to the $34$ points in \cref{fig:Gr_3_6} (including the two points labeled by $b$ and $c$), and square moves between them correspond to the edges in \cref{fig:Gr_3_6}.

Building on the work of Oh--Postnikov--Speyer~\cite{OPS}, it was shown in~\cite{Gal} that trivalent $(k,n)$-plabic graphs are exactly the 
planar duals of the  horizontal sections of fine zonotopal tilings of 
the zonotope $\ZV$ (where $\VC\subset \R^3$ is the lift of $\A$ as above), see \cref{thm:Gal}.
It was later observed in~\cite{PosICM} that trivalent $(k,n)$-plabic graphs correspond to $\pi$-induced subdivisions for the map $\pi:\Delta_{k,n}\to Q_k$.

 We say that a trivalent $(k,n)$-plabic graph $G$ is \emph{$\A$-regular} if it is the planar dual of a horizontal section of some regular fine zonotopal tiling of $\ZV$, or equivalently, if it corresponds to a regular $\pi$-induced subdivision of $Q_k$. We say that a bipartite $(k,n)$-plabic graph $G'$ is \emph{$\A$-regular} if it equals to $\Gbip$ for some $\A$-regular trivalent $(k,n)$-plabic graph $G$. For example, if $\A$ is the set of vertices of a generic hexagon, then there are $32$ $\A$-regular bipartite $(3,6)$-plabic graphs, corresponding to the $32$ vertices of the polytope shown in \cref{fig:Gr_3_6}. See \cref{ex:hexagons} for more details.

\begin{theorem}\label{thm:regular}
Let $d=3$ and $\A$ be the configuration of vertices of
a convex $n$-gon.  Then:
\begin{theoremlist}
\item\label{thm:regular_bip} For each $k\in[n-3]$, the vertices of $\Sigma_{\A, k}$  correspond
to $\A$-regular \emph{bipartite} $(k+1,n)$-plabic graphs, and the square moves connecting them correspond to the edges of $\Sigma_{\A,k}$.
\item\label{thm:regular_triv}  For each $k\in[n-1]$, the vertices of $\Sigma_{\A, k}+\Sigma_{\A, k-1} + \Sigma_{\A, k-2}$ (equivalently, of $\FibPoly(\Delta_{k,n}\pito Q_k)$) correspond to $\A$-regular \emph{trivalent} $(k,n)$-plabic graphs, and the moves (M1)--(M3) connecting them correspond to the edges of 
	$\Sigma_{\A, k}+\Sigma_{\A, k-1} + \Sigma_{\A, k-2}$.
\end{theoremlist}
\end{theorem}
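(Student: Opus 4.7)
The plan is to combine \cref{thm:FibPoly_k} with the Billera--Sturmfels theorem \cite{BS} and the identification of trivalent $(k,n)$-plabic graphs with horizontal sections of fine zonotopal tilings of $\Zon_\V$ established in \cite{Gal, PosICM}.

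For part \itemref{thm:regular_triv}, we specialize \cref{thm:FibPoly_k} to $d=3$ and use the positivity of the Eulerian numbers $\Eul(3,0)=1$, $\Eul(3,1)=4$, $\Eul(3,2)=1$ to obtain that $\FibPoly(\Delta_{k,n}\pito Q_k)$ is a positive scalar multiple of $\Sigma_{\A,k}+4\Sigma_{\A,k-1}+\Sigma_{\A,k-2}$. Since rescaling a Minkowski summand by a positive constant does not change the normal fan, this has the same face lattice as $\Sigma_{\A,k}+\Sigma_{\A,k-1}+\Sigma_{\A,k-2}$. By \cite{BS}, the vertices of $\FibPoly(\Delta_{k,n}\pito Q_k)$ correspond to regular $\pi$-coherent polyhedral subdivisions of $Q_k$, and its edges correspond to elementary flips between them. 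By \cite{Gal, PosICM}, these subdivisions are planar duals of $\A$-regular trivalent $(k,n)$-plabic graphs, so it remains to match the three types of bistellar flips between neighboring $\pi$-coherent subdivisions with the three moves (M1)--(M3) on trivalent plabic graphs via a direct local analysis.

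For part \itemref{thm:regular_bip}, we write $k'=k+1$ and apply part \itemref{thm:regular_triv} with $k$ replaced by $k'$. The fiber polytope $\FibPoly(\Delta_{k',n}\pito Q_{k'})$ has vertices labeled by $\A$-regular trivalent $(k',n)$-plabic graphs and decomposes (up to rescaling) as the Minkowski sum $\Sigma_{\A,k'}+4\Sigma_{\A,k}+\Sigma_{\A,k-1}$. The normal fan of the middle summand $\Sigma_{\A,k}$ is a coarsening of that of this Minkowski sum: two vertices of $\FibPoly(\Delta_{k',n}\pito Q_{k'})$ corresponding to tilings $\Tiling_1,\Tiling_2$ are identified in $\Sigma_{\A,k}$ precisely when $\vertHSP_k(\Tiling_1)=\vertHSP_k(\Tiling_2)$. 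The key local claim is that the moves (M1) and (M3) on a trivalent $(k',n)$-plabic graph $G$ correspond to rearrangements of $\Tiling_G$ that preserve the multiset $\{\tile_{A,B}\in\Tiling_G\colon |A|=k\}$ and therefore preserve $\vertHSP_k(\Tiling_G)$, whereas a square move (M2) strictly alters this multiset and hence changes $\vertHSP_k$. Combined with the standard fact from \cite{Pos06} that two trivalent plabic graphs share the same bipartite version iff they are connected by a sequence of (M1) and (M3) moves, this gives a bijection between vertices of $\Sigma_{\A,k}$ and $\A$-regular bipartite $(k+1,n)$-plabic graphs, with edges corresponding to square moves.

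The main obstacle is the case-by-case local verification identifying moves (M1)--(M3) on a trivalent plabic graph with the corresponding local modifications of the associated fine zonotopal tiling of $\Zon_\V$, together with the computation of which of these modifications alter $\vertHSP_k(\Tiling)$. The critical distinction to establish is that a cubical flip induced by a square move modifies exactly one tile $\tile_{A,B}$ with $|A|=k$, whereas the tiling modifications induced by (M1) and (M3) leave all such tiles unchanged.
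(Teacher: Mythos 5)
Your high-level strategy matches the paper's: reduce both parts to the Minkowski sum decomposition of \cref{thm:FibPoly_k} and the correspondence between flips of zonotopal tilings and moves on plabic graphs. However, the ``critical distinction'' you identify as the crux — that (M1) and (M3) preserve the multiset $\{\tile_{A,B}\in\Tiling:|A|=k\}$, whereas (M2) ``modifies exactly one tile $\tile_{A,B}$ with $|A|=k$'' — is factually wrong in both halves, and this is a genuine gap.

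Consider an (M1) move on $G_{k+1}(\Tiling)$. By \cref{thm:flips_moves} this corresponds to a flip $F=(\Tiling\to\Tiling')$ along some circuit $C$ with $\level(F)=k+1$, i.e.\ $|A(F)|=k$. Writing $A:=A(F)$, \cref{prop:flip_generic} tells us that $\Tiling$ loses the two tiles $\{\tile_{A,\,\suppC\setminus j}\}_{j\in C^-}$ (all of which have $|A|=k$) and gains the two tiles $\{\tile_{A,\,\suppC\setminus j}\}_{j\in C^+}$ (also with $|A|=k$). Since $C^+$ and $C^-$ are disjoint, these are genuinely different tiles, so the multiset you claim is preserved is not. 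What is preserved is the vector $\vertHSP_k(\Tiling)=\sumlevel{k}\VolB\e_A$, but only via the nontrivial cancellation $\sum_{j\in C^+}\Vol^d(\tile_{\suppC\setminus j})=\sum_{j\in C^-}\Vol^d(\tile_{\suppC\setminus j})$, which follows from the circuit relation $\sum_j\alphaCx_j\v_j=0$ and the fact that every $\v_j$ has last coordinate $1$ — this is exactly the content of \eqref{eq:diff_0} in the proof of \cref{cor:flip2}. Similarly, an (M2) move on $G_{k+1}(\Tiling)$ is a flip of level $k$, which modifies \emph{four} tiles with $|A|\in\{k-1,k\}$ (two of each), not one. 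So the combinatorial invariant you want to track is not the multiset of labeled tiles but the weighted vector $\vertHSP_k$, and the correct statement you need is precisely \cref{cor:flip1}: $\vertHSP_k(\Tiling)-\vertHSP_k(\Tiling')$ equals $\alphaC\neq0$ when $\level(F)=k$ and $0$ otherwise.

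There is a second, smaller gap. To show that $\Gbip_{k+1}(\Tiling_1)=\Gbip_{k+1}(\Tiling_2)$ implies $\vertHSP_k(\Tiling_1)=\vertHSP_k(\Tiling_2)$, you propose tracing through a chain of (M1)/(M3) moves; but these moves are on plabic graphs, and you have no a priori guarantee that the intermediate tilings realizing them are $\A$-regular (you need each intermediate graph to arise from a height vector). The paper sidesteps this entirely with \cref{prop:area}, which expresses $\vertHSP_k(\Tiling)$ in terms of the areas of white faces of $\PT(\Gbip_{k+1}(\Tiling))$; this makes the dependence of $\vertHSP_k$ on $\Gbip_{k+1}$ manifest without any flip argument at all. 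You should use that formula, or else carefully argue (as the paper does in the converse direction, using \cref{item:deform_vert} and \cref{thm:flips_moves}) that $k$-equivalence of tilings is what governs the identification.
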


\begin{example}\label{ex:Catalan}
The number of bipartite $(2,n)$-plabic graphs equals to the number of trivalent $(1,n)$-plabic graphs, and is given by the Catalan number $C_{n-2}$, where $C_m:=\frac1{m+1}{2m\choose m}$. In both cases, all such plabic graphs are regular, and the corresponding polytope is $\Sigma_{\A,1}$ which by \cref{thm:sigma_GKZ} is a realization of the associahedron.
\end{example}

\begin{figure}
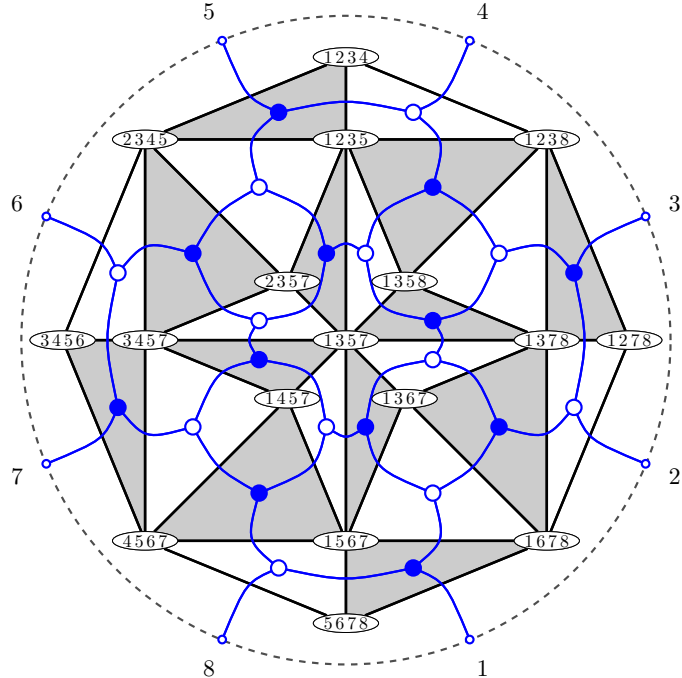


\scalebox{0.82}{
% [inline block 0: 1 envs, 22117 chars -> data_tex | \begin{tikzpicture}[scale=1.76] \node[draw, ellipse, black, fill=white, scale=0.75,inner sep=0.5pt] (node1234) at (0.00,...]
}

\caption{\label{fig:compass} {
A trivalent, bipartite $(4,8)$-plabic graph which admits only $4$ square moves,
superimposed onto its dual plabic tiling. This plabic graph is not $\A$-regular for any $\A$, see \cref{example:non_regular}.}}
\end{figure}

\begin{example}\label{example:non_regular}
	Since $\Sigma_{\A,k}$ has dimension $n-d$ by \cref{prop:HSP_dim}, it follows from \cref{thm:regular_bip} that every $\A$-regular bipartite $(k,n)$-plabic graph admits at least $n-d=n-3$ square moves. \cref{fig:compass} contains a (both trivalent and bipartite) $(4,8)$-plabic graph that admits only $4$ square moves, and therefore is not $\A$-regular for any $\A$. This plabic graph contains as a subgraph another plabic graph known in physics as the ``four-mass box'', see \cite[Section~11.1]{abcgpt}.  

An example of a trivalent $(9,18)$-plabic graph that is not $\A$-regular for any $\A$ was constructed in~\cite[Section~6]{KK}.
\end{example}

\def\opp{{\operatorname{op}}}
Let us say that the \emph{diameter} of a polytope is the maximal graph distance between its vertices in its $1$-skeleton. It would be interesting to find the diameter of a higher associahedron $\Sigma_{\A,k}$, which equals the maximal square move distance between two $\A$-regular plabic graphs. Finding the diameter of the usual associahedron $\Sigma_{\A,1}$ is a well-studied problem: answering a question of Sleator--Tarjan--Thurston~\cite{STT}, Pournin~\cite{Pournin} showed that it equals $2n-10$ for all $n>12$. The following conjecture is due to Miriam Farber.  
\begin{conjecture}[\cite{Miriam}]\label{conj:Miriam}
Let $n=2k$. Then the diameter of the higher associahedron $\Sigma_{\A,k-1}$ equals $\frac12 k(k-1)^2$. More generally, for any bipartite $(k,2k)$-plabic graph $G$, the minimal number of square moves needed to connect $G$ with $G^\opp$ equals $\frac12 k(k-1)^2$, where $G^\opp$ is obtained from $G$ by a $180^\circ$ rotation followed by changing the colors of all vertices.
\end{conjecture}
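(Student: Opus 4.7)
The plan is to exploit the self-antipodal structure of $\Sigma_{\A,k-1}$ when $n=2k$. Setting $n=2k$ and $d=3$ in \cref{thm:duality} gives $n-d-(k-1)+1=k-1$, so $\Sigma_{\A,k-1}\shifteq -\Sigma_{\A,k-1}$: the polytope is centrally symmetric, and the rotation-plus-recoloring $G\mapsto G^{\opp}$ on bipartite $(k,2k)$-plabic graphs corresponds under \cref{thm:regular_bip} to the central inversion of the vertex set. Thus every vertex $G$ has a canonical antipode $G^{\opp}$, and the conjecture decomposes into (a) showing that the graph distance $d(G,G^{\opp})$ in the $1$-skeleton of $\Sigma_{\A,k-1}$ equals $\tfrac12 k(k-1)^2$ for every $G$, and (b) showing that this maximum is not exceeded by any other pair. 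Claim (b) should follow from (a) together with a convexity/antipode argument: in a centrally symmetric polytope, any geodesic between two non-antipodal vertices can be lengthened by routing through an antipode, so the diameter is achieved between antipodal pairs.

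For the upper bound in (a), I would pick a convenient reference vertex $G_0$ — e.g.\ the bipartite $(k,2k)$-plabic graph dual to the staircase zonotopal tiling of $\Zon_\V$ (corresponding to the lexicographically minimal element of the higher Bruhat order $B(2k,3)$, in the spirit of \cref{rem:higher}) — and construct an explicit sequence of square moves realizing $G_0\rightsquigarrow G_0^{\opp}$. The target length $\tfrac12 k(k-1)^2 = (k-1)\binom{k}{2}$ naturally decomposes as $k-1$ successive ``sweeps'' of $\binom{k}{2}$ square moves each, corresponding to flipping $k-1$ parallel horizontal layers of the associated fine zonotopal tiling. By central symmetry, doing this for one vertex suffices: a path $G\rightsquigarrow G_0\rightsquigarrow G_0^{\opp}\rightsquigarrow G^{\opp}$ of length $\ell+\tfrac12 k(k-1)^2+\ell$ can be shortened by reversing the first segment past the center, giving at most $\tfrac12 k(k-1)^2$ overall.

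For the lower bound in (a) I would seek a monovariant: a function $\Phi$ on the vertices of $\Sigma_{\A,k-1}$ that changes by at most $1$ under each square move and satisfies $\Phi(G)-\Phi(G^{\opp})=\tfrac12 k(k-1)^2$. A natural place to look is in the ambient coordinates of \cref{def:HSP}: the vector $\vertHSP_{k-1}(\Tiling)-\vertHSP_{k-1}(\Tiling^{\opp})$ admits an evaluation by a carefully chosen linear functional on $\R^n$ (inspired by the probability weights $p_{r,d}$ appearing in \cref{thm:FibPoly_k}), and a square move on $G$ changes $\vertHSP_{k-1}$ by the indicator vector of a single pair of tiles, bounding $|\Phi(G)-\Phi(G')|\le 1$. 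An alternative, more combinatorial route is to count ``higher inversions'': for each triple of boundary indices, compare their cyclic order with respect to the local tiling structure at level $k-1$, summing to $\binom{k}{2}(k-1)$ inversions between $G$ and $G^{\opp}$, with each square move flipping exactly one.

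The main obstacle is the lower bound, and within it the construction of the correct monovariant. A purely local bound from the geometry of $\Sigma_{\A,k-1}$ will only yield a bound linear in the number of faces $k^2+1$, whereas $\tfrac12 k(k-1)^2$ is cubic in $k$; so one must exploit global structure, presumably the higher Bruhat order $B(2k,3)$ interpretation of fine zonotopal tilings (cf.\ \cref{rem:higher}), together with a careful analysis of which flips in $B(2k,3)$ descend to square moves on the horizontal section at level $k$ under the map of \cref{thm:regular_bip}. Once the correct monovariant is pinned down, extremal vertices where $\Phi$ is maximized and minimized should, by centrally-symmetric uniqueness, be exactly $G$ and $G^{\opp}$, thereby completing the proof.
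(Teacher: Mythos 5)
This statement is labeled as a \emph{conjecture} in the paper and is attributed to Miriam Farber; the paper offers no proof, only the partial lower-bound evidence from \cite{BW1} mentioned just afterwards (that for certain $G$ arising from double wiring diagrams, the square-move distance from $G$ to $G^\opp$ is at least $\tfrac12 k(k-1)^2$). So there is no ``paper's own proof'' to compare against, and your proposal should be read as an attempted resolution of an open problem.

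As such, there are several genuine gaps. First, the step ``in a centrally symmetric polytope, any geodesic between two non-antipodal vertices can be lengthened by routing through an antipode, so the diameter is achieved between antipodal pairs'' is not a true general fact about centrally symmetric polytopes, and you give no argument; this would need to be proved for $\Sigma_{\A,k-1}$ specifically. Second, and more fundamentally, the ``more generally'' clause of the conjecture concerns \emph{all} bipartite $(k,2k)$-plabic graphs and square moves between them, not just the $\A$-regular ones appearing as vertices of $\Sigma_{\A,k-1}$ (cf.\ \cref{example:non_regular} and \cref{ex:hexagons}, which show the vertex set is a proper subset and depends on $\A$). Your whole plan works inside the $1$-skeleton of $\Sigma_{\A,k-1}$, which both omits some plabic graphs and may be missing some square-move edges between the ones it keeps, so even a complete argument along your lines would only address the first sentence of the conjecture. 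Third, the monovariant $\Phi$ is the crux of any lower bound and you have only named two candidate shapes for it; the observation from~\cite{BW1} that such a bound is achieved for some $G$ shows the cubic growth is at least plausible, but promoting that to all $G$ is exactly what makes this an open problem. Finally, note that a square move changes $\vertHSP_{k-1}$ by the vector $\alphaXX(C,J)$ of~\eqref{eq:alphaCJ} (cf.\ \cref{cor:flip2}), whose entries depend on the geometry of $\A$, not by a fixed $0/1$ increment; so the claim that a linear functional applied to $\vertHSP_{k-1}$ changes by at most $1$ per move would need considerably more care, and likely fails for generic $\A$.
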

\noindent An example for $k=3$ is shown in \cref{fig:Gr_3_6}. The diameter of this polytope is equal to $\frac12 k(k-1)^2=6$, and moreover the graph distance between any vertex and its antipodal vertex is also equal to $6$.

It was shown in~\cite[Section~6]{BW1} that for certain bipartite $(k,2k)$-plabic graphs $G$ (coming from \emph{double wiring diagrams} of~\cite{FZDouble}), the square move distance between $G$ and $G^\opp$ is at least $\frac12 k(k-1)^2$, giving a lower bound on the diameter of $\Sigma_{\A,k-1}$ in \cref{conj:Miriam}. See~\cite{BW2} for related results. 

\subsection{Vertices, edges, and deformations}\label{sec:vert-edges-deform}
For simplicity, we assume here that $\A$ is a generic point configuration in $\R^{d-1}$. The extension of the results in this subsection to arbitrary point configurations will be given in 
\cref{sec2:vert-edges-deform}.

It is well known (cf. \cref{lemma:regular_flips}) that any two regular fine zonotopal tilings of $\ZV$ can be related to each other by a sequence of \emph{flips}. A \emph{flip} is an elementary transformation of a zonotopal tiling: if $\VC'$ consists of $d+1$ vectors that span $\R^d$ then $\Zon_{\VC'}$ admits precisely two fine zonotopal tilings. For general vector configurations $\VC$,
applying a flip $F=(\Tiling\to\Tiling')$ to a fine zonotopal tiling $\Tiling$ of $\ZV$ amounts to finding a shifted copy of a fine zonotopal tiling of $\Zon_{\VC'}$ for some $\VC'\subset \VC$ of size $d+1$, and replacing it with the other fine zonotopal tiling of $\Zon_{\VC'}$, which produces another fine zonotopal tiling $\Tiling'$ of $\ZV$,  see  \cref{fig:flips} (left). Flips can occur at different \emph{levels}: if 
the above copy of $\Zon_{\VC'}$ is shifted by $\y\in\R^d$,
then the last coordinate $y_d$ of $\y$ belongs to $\{0,1,\dots,n-d-1\}$, and we define the \emph{level} of the flip $F$ to be $\level(F):=y_d+1$. 
See \cref{def:level} and \cref{ex:flip}.

Since flips of regular fine zonotopal tilings correspond to the edges of the fiber zonotope $\FibPoly(\cube_n\pito \ZV)$, we define the \emph{level} of an edge of $\FibPoly(\cube_n\pito \ZV)$ to be the level of the corresponding flip.

Let us say that a polytope $P$ is a \emph{parallel deformation} of another polytope $P'$ if the normal fan of $P$ is a coarsening of the normal fan of $P'$, see e.g.~\cite[Theorem 15.3]{PRW} and ~\cite[Section 2.2]{ACEP}. Roughly speaking, $P$ is a parallel deformation of $P'$ if $P$ is obtained from $P'$ by moving its faces while preserving their direction. During this process, every edge of $P'$ stays parallel to itself but gets rescaled by some nonnegative real number.

We say that two fine zonotopal tilings $\Tiling$ and $\Tiling'$ of $\ZV$ are \emph{$k$-equivalent} if they can be connected by flips $F$ such that $\level(F)\neq k$. Similarly, we say that two flips $F=(\Tiling_1\to\Tiling_2)$ and $F'=(\Tiling'_1\to\Tiling'_2)$ of level $k$ are \emph{$k$-equivalent} 
if $\Tiling_1$ is $k$-equivalent to $\Tiling'_1$ and $\Tiling_2$ is $k$-equivalent to $\Tiling'_2$.

\begin{proposition}\label{prop:deform}
Let $\A$ be a generic configuration of $n$ points in $\R^{d-1}$, and let $k\in[n-d]$.
\begin{theoremlist}
\item\label{item:deform_vert} The vertices of the higher secondary polytope $\Sigma_{\A,k}$ are in bijection with $k$-equivalence classes of regular fine zonotopal tilings of $\ZV$.
\item\label{item:deform_edge} The edges of $\Sigma_{\A,k}$ correspond to $k$-equivalence classes of flips of level $k$.
\item\label{item:deform_deform} For any nonnegative real numbers $x_1,\dots,x_{n-d}$, the Minkowski sum
  \[\frac1{\Vold(\ZV)} \left(x_1 \Sigma_{\A,1}+\dots + x_{n-d} \Sigma_{\A,n-d}\right)\]
is a parallel deformation of the fiber zonotope $\FibPoly(\cube_n\pito \ZV)$, where edges of level $k$ are rescaled by $x_k$ for all $k=1,\dots,n-d$.
\end{theoremlist}  
\end{proposition}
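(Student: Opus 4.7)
The plan is to reduce all three parts to one local computation for a single flip. Recall from \cref{thm:FibPoly} that $\FibPoly(\cube_n\pito\ZV)\shifteq \tfrac{1}{\Vold(\ZV)}(\Sigma_{\A,1}+\dots+\Sigma_{\A,n-d})$. The key claim is the following ``edge-selection'' lemma: for any flip $F=(\Tiling\to\Tiling')$ of level $j$ and any $k\in[n-d]$,
\[
\vertHSP_k(\Tiling)-\vertHSP_k(\Tiling')=
\begin{cases}\mathbf{0},& k\ne j,\\ \text{a nonzero vector},& k=j.\end{cases}
\]
Granted this lemma, \itemref{item:deform_deform} is immediate: the normal fan of $\tfrac{1}{\Vold(\ZV)}\sum_j x_j\Sigma_{\A,j}$ coarsens that of the fiber zonotope (as a Minkowski-summand structure), hence is a parallel deformation; and since only the $k=j$ summand contributes a nontrivial edge segment at a flip of level $j$, that edge is scaled by exactly $x_j$. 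Parts \itemref{item:deform_vert} and \itemref{item:deform_edge} then follow by tracking which vertices/edges of $\FibPoly$ collapse in the Minkowski summand $\Sigma_{\A,k}$: two vertices of $\FibPoly$ lie in the same cone of the (coarsened) normal fan of $\Sigma_{\A,k}$ iff they are connected in the $1$-skeleton of $\FibPoly$ by edges on which $\vertHSP_k$ is constant, i.e.\ by flips of level $\ne k$, which is exactly $k$-equivalence; the edge statement is analogous.

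To prove the lemma, I would localize the flip. It takes place on a shifted copy of $\Zon_{\VC'}$ for some $\VC'\subset\VC$ of size $d+1$, with shift $\sum_{a\in A_0}\v_a$ for some $A_0\subset[n]\setminus\VC'$; the condition that each $\v$ has last coordinate $1$ gives $y_d=|A_0|$, so level $j$ means $|A_0|=j-1$. Outside this region $\Tiling$ and $\Tiling'$ agree. Inside, the affected tiles have the form $\tile_{A_0\cup A',B_{\v}}$ with $B_\v:=\VC'\setminus\{\v\}$ and $A'\in\{\emptyset,\{\v\}\}$; in particular $|A|\in\{j-1,j\}$, so the lemma is automatic for $k\notin\{j-1,j\}$. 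Let $\sum_{\v\in\VC'}\lambda_\v\v=0$ be the (up-to-scaling unique) linear dependence; taking last coordinates forces $\sum_\v\lambda_\v=0$. Partition $\VC'=\VC'_+\sqcup\VC'_-$ by sign of $\lambda_\v$ (both nonempty, by $\sum\lambda_\v=0$). The two tilings of $\Zon_{\VC'}$ differ by which side of the dependence is ``up'': in $\Tiling$, the tile with basis $B_\v$ has $A'=\emptyset$ for $\v\in\VC'_+$ and $A'=\{\v\}$ for $\v\in\VC'_-$, and the reverse in $\Tiling'$.

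For $k=j-1$ the net contribution to $\vertHSP_k(\Tiling)-\vertHSP_k(\Tiling')$ is $\e_{A_0}\bigl(\sum_{\v\in\VC'_+}\Vol(B_\v)-\sum_{\v\in\VC'_-}\Vol(B_\v)\bigr)$. Cramer's rule applied to the $d\times(d+1)$ matrix with columns $\VC'$ gives $\Vol(B_\v)=|\lambda_\v|\cdot D$ for a common constant $D>0$, and together with $\sum_\v\lambda_\v=0$ the two partial sums coincide, so the difference vanishes. For $k=j$ the contribution is $\sum_{\v\in\VC'_-}\Vol(B_\v)\,\e_{A_0\cup\{\v\}}-\sum_{\v\in\VC'_+}\Vol(B_\v)\,\e_{A_0\cup\{\v\}}$, which is nonzero since $\VC'_+\cap\VC'_-=\emptyset$ and every $\Vol(B_\v)>0$ in generic position. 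The main obstacle is precisely this cancellation at level $k=j-1$: it is the essential reason the higher secondary polytopes split the fiber zonotope cleanly by level, and it is also the combinatorial mechanism underlying the Eulerian-number identity in \cref{thm:FibPoly_k}.
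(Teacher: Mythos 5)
Your argument is correct and follows the same route as the paper's: part~\itemref{item:deform_deform} comes from \cref{thm:FibPoly} together with the fact that the normal fan of a Minkowski sum is the common refinement of the individual normal fans, with the edge-rescaling factor supplied by the local flip computation, and parts~\itemref{item:deform_vert} and~\itemref{item:deform_edge} are then read off by tracking which vertices and edges of the fiber zonotope collapse. Your ``edge-selection lemma'' is precisely \cref{cor:flip1} (proved in \cref{sec:flips-zonot-tilings} via \cref{prop:flip_generic}), and the Cramer's-rule cancellation at level $j-1$ that you identify as the crux is exactly the computation that gives that corollary --- so you could have cited it directly rather than re-deriving it.
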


\subsection{Soliton graphs} 

Finally we give applications of our previous results to the soliton graphs
\cite{KW, KW2, KK}
associated to 
 the Kadomtsev-Petviashvili (KP) equation. 
 To understand a soliton solution
$u_A(x,y,t)$ of the KP equation coming from a point $A$ in the positive Grassmannian,
one fixes the time $t$ and plots the points
where $u_A(x,y,t)$ has a local maximum.  This gives rise to a tropical curve
in the $xy$-plane.
By \cite{KW, KW2},
this tropical curve is a reduced plabic graph, and 
as discussed in \cite[Section 2.3]{KK}, it comes from a regular subdivision of 
a three-dimensional cyclic zonotope; we give a precise statement in 
\cref{cor:plabicsoliton}. 
 We then apply some of our previous results to classify the soliton 
graphs coming from the positive Grassmannian when the
time parameter $t$ tends to $\pm \infty$, and to show that 
generically, when the higher time parameters evolve, the 
face labels of soliton graphs change via
the square moves (cluster transformations) on plabic graphs.

\section{Fiber polytopes and zonotopal tilings}

We give further background on fiber polytopes of~\cite{BS} and discuss several specializations of their construction. More details can be found in~\cite{BS}, \cite[Chapter~7]{GKZ}, and~\cite[Lecture~9]{Ziegler}.

\subsection{Fiber polytopes}\label{sec:fiber-polytopes}

Let $P \subset \R^n$ be a polytope, and let 
$\pi:P\to Q$ be a linear projection of polytopes.  
We denote by $\{\p_i\}_{i\in [m]}$ the vertex set of $P$ (for some $m\geq1$). For $i\in [m]$, let $\q_i:=\pi(\p_i)$, and let $\Acal:=\{\q_i\}_{i\in[m]}$ be the associated point configuration. The \emph{fiber polytope} $\FibPoly(P\pito Q)$ is defined
as the \emph{Minkowski integral}
$$
\FibPoly(P\pito Q):=
\frac{1}{\Vol(Q)}\int_{\x\in Q}  (\pi^{-1}(\x)\cap P)\, d\x.
$$
\noindent Here $\Vol$ denotes the $\dim(Q)$-dimensional volume form on the affine span of $Q$, and the Minkowski integral can be understood in several ways,
for example, as the set of points $\int_{\x\in Q} \gamma(\x)\, d\x\in \R^n,$
where $\gamma:Q\to P$ runs over all sections of $\pi$ \cite{BS, BS2}.

However, instead of working with the Minkowski integral, we will use 
the following description of 
 $\FibPoly(P\pito Q)$ as a convex hull of points. Recall that an \emph{$\Acal$-triangulation} $\Tr=\{\Delta_B\}$ is a triangulation of $Q$ into simplices $\Delta_B:=\conv\{\q_i\mid i\in B\}$, where $B\subset [m]$ is a $(\dim(Q)+1)$-element subset.

\begin{proposition}[{\cite[Corollary~2.6]{BS}}]\label{prop:BS_triang}
  The fiber polytope $\FibPoly(P\pito Q)$ equals the convex hull \[\FibPoly(P\pito Q)=\conv\{\vertFib(\Tr)\mid \Tr\text{ is an $\Acal$-triangulation}\},\quad\text{where}\] 
\begin{equation}\label{eq:vertFib}
  \vertFib(\Tr):=\frac1{(\dim(Q)+1) \Vol(Q)} \sum_{\Delta_B\in \Tr}  \left(\Vol(\Delta_B) \cdot \sum_{i\in B} \p_i\right) \in \R^n.
\end{equation}
\end{proposition}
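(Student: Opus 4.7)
The plan is to establish both containments in the claimed equality by exploiting the section description of the Minkowski integral: $\FibPoly(P\pito Q)$ is the set of averages $\frac1{\Vol(Q)}\int_Q \gamma(\x)\,d\x$ as $\gamma:Q\to P$ ranges over measurable sections of $\pi$.

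For the containment $\conv\{\vertFib(\Tr)\}\subset \FibPoly(P\pito Q)$, I would associate to each $\Acal$-triangulation $\Tr$ an explicit piecewise affine section $\gamma_\Tr$ as follows: on each top-dimensional simplex $\Delta_B\in\Tr$, every $\x\in\Delta_B$ admits a unique barycentric decomposition $\x=\sum_{i\in B}\lambda_i\q_i$, and I set $\gamma_\Tr(\x):=\sum_{i\in B}\lambda_i\p_i$. This is a genuine section into $P$ by convexity and projects correctly since $\pi(\p_i)=\q_i$. Applying the mean-value identity for affine functions over a simplex,
\[
\int_{\Delta_B}\gamma_\Tr(\x)\,d\x \;=\; \Vol(\Delta_B)\cdot\gamma_\Tr\bigl(\mathrm{centroid}(\Delta_B)\bigr) \;=\; \frac{\Vol(\Delta_B)}{\dim(Q)+1}\sum_{i\in B}\p_i,
\]
then summing over $\Delta_B\in\Tr$ and dividing by $\Vol(Q)$ produces exactly $\vertFib(\Tr)$.

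For the reverse containment, I would use the fact that a compact convex set is the convex hull of its support-hyperplane maximizers, so it suffices to show that every linear functional $\psi\in(\R^n)^*$ attains its maximum over $\FibPoly(P\pito Q)$ at some $\vertFib(\Tr)$. Because the functional $\gamma\mapsto\int_Q\psi(\gamma(\x))\,d\x$ is maximized by pointwise maximization, the optimal $\gamma$ selects, at each $\x$, the point of $\pi^{-1}(\x)\cap P$ maximizing $\psi$. Assigning heights $w_i:=\psi(\p_i)$ to the points $\q_i$ and taking the upper envelope of $\conv\{(\q_i,w_i)\}\subset \R^{\dim(Q)+1}$ gives a regular polyhedral subdivision $\Tr_\psi$ of $Q$ such that this pointwise maximizer is piecewise affine over $\Tr_\psi$. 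For generic $\psi$ the subdivision $\Tr_\psi$ is an $\Acal$-triangulation, the optimal section coincides with $\gamma_{\Tr_\psi}$, and hence the maximum of $\psi$ on the fiber polytope equals $\psi(\vertFib(\Tr_\psi))$.

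The main obstacle is identifying the pointwise $\psi$-maximizer with the regular subdivision $\Tr_\psi$: one must verify that the upper-envelope function $f(\x):=\max\{\psi(\y):\y\in\pi^{-1}(\x)\cap P\}$ is a concave piecewise linear function whose domains of linearity are exactly the cells of $\Tr_\psi$, and that on each top-dimensional simplex $\Delta_B$ the unique $\psi$-maximizing section is the barycentric one $\gamma_{\Tr_\psi}$. This amounts to solving, at each $\x$, the linear program $\max\sum_i \mu_i w_i$ subject to $\sum_i\mu_i\q_i=\x$, $\sum_i\mu_i=1$, $\mu_i\ge 0$, and recognizing the resulting optimal-support structure as the regular subdivision. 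Degenerate $\psi$ (non-generic heights, or multiple $\p_i$ projecting to a common $\q_i$) require a small perturbation argument, but such cases do not affect the resulting vertex description of the polytope.
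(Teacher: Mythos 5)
The paper does not prove this proposition; it is cited directly from \cite[Corollary~2.6]{BS}, so there is no in-paper argument to compare against. Your two-sided proof is correct, and it is a version of the standard argument for this fact. One containment is exactly the construction of the piecewise-affine section $\gamma_\Tr$ together with the centroid identity for integrating an affine map over a simplex; the other uses that the support function of a Minkowski integral is the integral of the fiberwise support functions, which reduces to integrating the upper envelope of the lifted configuration $\{(\q_i,\psi(\p_i))\}$. Two remarks. First, the step you flag as ``the main obstacle'' is already resolved by the LP you write in your last sentence: since $\psi\bigl(\sum_i\mu_i\p_i\bigr)=\sum_i\mu_i\psi(\p_i)$, the feasible set of that LP, recorded in $(\x,\text{objective})$-coordinates, \emph{is} $\conv\{(\q_i,\psi(\p_i))\}$, so the fiberwise maximum $f(\x)$ is the upper envelope value by construction. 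Second, you do not actually need the stronger claim that the $\psi$-optimal section coincides with $\gamma_{\Tr_\psi}$ pointwise; it suffices that on each maximal simplex $\Delta_B\in\Tr_\psi$ the envelope function is the affine interpolation of the heights $\psi(\p_i)$, $i\in B$, whence the centroid identity gives $\frac{1}{\Vol(Q)}\int_Q f(\x)\,d\x = \psi(\vertFib(\Tr_\psi))$ directly, without constructing an optimal section at all. The concluding perturbation step is genuinely needed---not every supporting functional of the fiber polytope yields a triangulating subdivision---and closing it via continuity of the support functions as generic $\psi'\to\psi$, as you sketch, is the right way to finish.
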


\begin{definition}[{\cite[Definition~9.1]{Ziegler}}]
	\label{dfn:pi_induced}
Let $\pi:P\to Q$ be a projection of polytopes as above.
	A \emph{$\pi$-induced subdivision} of $Q$  is a collection $\Tiling$ of faces of $P$ such that
\begin{itemize}
\item the images $\{\pi(F)\mid F\in \Tiling\}$ form a polyhedral subdivision\footnote{Recall that a \emph{polyhedral subdivision}
of a polytope $Q$ is a polytopal complex $\mathcal{C}$
(any two elements of $\mathcal{C}$ intersect in a common face)
with underlying set $Q$.} of $Q$;
\item for any $F,F'\in\Tiling$ such that $\pi(F)\subset\pi(F')$, we have $F=F'\cap \pi^{-1}(\pi(F))$. 
\end{itemize}
A $\pi$-induced subdivision $\Tiling$ is called \emph{fine} if all of its faces have dimension at most $\dim(Q)$.
\end{definition}

\begin{definition}\label{dfn:pi_regular}
For a polytope $P\subset \R^n$ and a vector $\h\in\R^n$, let $(P)^\h$ denote the face of $P$ that maximizes the scalar product with $\h$. Every vector $\h\in\R^n$ gives rise to a $\pi$-induced subdivision $\Tiling_\h$ of $Q$ obtained as follows: for each point $\q\in Q$, consider the preimage $P\cap \pi^{-1}(\q)$ of $\q$ under $\pi$, and let $P_{\q,\h}$ be the unique minimal by inclusion face of $P$ that contains $(P\cap \pi^{-1}(\q))^\h$. The subdivision $\Tiling_\h$ consists of the faces $P_{\q,\h}$ for all $\q\in Q$. 
	A $\pi$-induced subdivision $\Tiling$ of $Q$ is called \emph{regular} if it equals  $\Tiling_\h$ for some $\h\in\R^n$.
\end{definition}
\noindent 
Our notion of a regular $\pi$-induced subdivision coincides 
with the notion of a \emph{$\pi$-coherent subdivision} 
from~\cite[Section 1]{BS} and \cite[Definition 9.2]{Ziegler}.

It turns out (see the paragraph before~\cite[Corollary~2.7]{BS}) that if $\Tiling$ is a fine $\pi$-induced subdivision then the vector $\vertFib(\Tr)$ is the same for any triangulation $\Tr$ of $\Tiling$. We denote this vector by $\vertFib(\Tiling)$.
\begin{corollary}[{\cite[Corollary~2.7]{BS}}]\label{cor:BS_tiling}
The fiber polytope $\FibPoly(P\pito Q)$ equals the convex hull
\begin{align}
\label{eq:BS_tiling}
 \FibPoly(P\pito Q)&=\conv\{\vertFib(\Tiling)\mid \Tiling\text{ is a fine $\pi$-induced subdivision of $Q$}\}.\\
\intertext{The vertices of $\FibPoly(P\to Q)$ are the vectors $\vertFib(\Tiling)$, where $\Tiling$ ranges over all regular fine $\pi$-induced subdivisions of $Q$, and in particular,}
\label{eq:BS_tiling_reg}
 \FibPoly(P\pito Q)&=\conv\{\vertFib(\Tiling)\mid \Tiling\text{ is a \emph{regular} fine $\pi$-induced subdivision of $Q$}\}.
\end{align}
\end{corollary}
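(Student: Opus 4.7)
The plan is to derive the Corollary from \cref{prop:BS_triang} by (i) showing that $\vertFib(\Tr)$ depends only on the coarsest fine $\pi$-induced subdivision $\Tiling$ refined by $\Tr$, so that a quantity $\vertFib(\Tiling)$ is well-defined and lies in $\FibPoly(P\pito Q)$; and (ii) identifying the vertices of $\FibPoly(P\pito Q)$ via linear programming against its Minkowski integral description. The main reformulation is that the sum in~\eqref{eq:vertFib} can be rewritten as $\frac{1}{\Vol(Q)}\int_Q \gamma_{\Tr}(\x)\,d\x$, where $\gamma_{\Tr}\colon Q \to P$ is the piecewise-affine section sending $\x \in \Delta_B$ to $\sum_{i \in B}\lambda_i(\x)\,\p_i$ with $(\lambda_i)$ the barycentric coordinates of $\x$ in $\Delta_B$; indeed $\int_{\Delta_B}\lambda_i(\x)\,d\x = \Vol(\Delta_B)/(\dim(Q)+1)$ by symmetry, so this integral matches~\eqref{eq:vertFib} term by term.

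For step (i), given a fine $\pi$-induced subdivision $\Tiling$, one first triangulates each of its top-dimensional cells to obtain an $\Acal$-triangulation $\Tr$ refining $\Tiling$. For every top-dimensional face $F \in \Tiling$ the fineness hypothesis forces $\dim F = \dim \pi(F)$, so $\pi|_F$ is an affine surjection between polytopes of equal dimension and hence an affine bijection; this yields a canonical section $\gamma_F := (\pi|_F)^{-1}$ on $\pi(F)$. Using the second axiom of \cref{dfn:pi_induced} one verifies that the vertices $\p_i$ of each sub-simplex of $\Tr$ sitting over $\pi(F)$ all lie in $F$, and therefore $\gamma_{\Tr}|_{\pi(F)} = \gamma_F$ regardless of how $\Tr$ subdivides $\pi(F)$. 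Consequently $\int_Q \gamma_{\Tr}$ is intrinsic to $\Tiling$; denoting it $\vertFib(\Tiling)$, we obtain $\vertFib(\Tiling) = \vertFib(\Tr) \in \FibPoly(P\pito Q)$ by \cref{prop:BS_triang}. I expect this geometric claim to be the main technical obstacle, since it is the only place where both axioms of \cref{dfn:pi_induced} and the fineness hypothesis must be used in an essential way.

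For step (ii), fix $\h \in \R^n$ and maximize $\langle \h, \cdot\rangle$ over $\FibPoly(P\pito Q)$. The Minkowski integral representation gives $\max_{\v \in \FibPoly(P\pito Q)}\langle \h, \v\rangle = \frac{1}{\Vol(Q)}\int_Q \max_{\p \in P \cap \pi^{-1}(\x)}\langle \h, \p\rangle\,d\x$, attained by any section $\gamma$ with $\gamma(\x) \in (P \cap \pi^{-1}(\x))^{\h}$ for almost every $\x$. For generic $\h$, the minimal face of $P$ containing these maximizers above each cell is precisely $P_{\q,\h}$ from \cref{dfn:pi_regular}, so the optimizing section coincides with the canonical $\gamma_{\Tiling_{\h}}$ constructed in step (i), and the maximum value equals $\langle \h, \vertFib(\Tiling_{\h})\rangle$. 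Hence every vertex of $\FibPoly(P\pito Q)$ has the form $\vertFib(\Tiling)$ for some regular fine $\pi$-induced subdivision $\Tiling$, and conversely each $\vertFib(\Tiling_{\h})$ arising from a generic $\h$ is uniquely attained and is therefore a vertex. Taking convex hulls and using step (i) to see that every $\vertFib(\Tiling)$ lies in $\FibPoly(P\pito Q)$ yields both~\eqref{eq:BS_tiling} and~\eqref{eq:BS_tiling_reg} at once.
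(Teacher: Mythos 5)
The paper does not prove this Corollary; it is cited verbatim from Billera--Sturmfels (\cite[Corollary~2.7]{BS}), and the paper only remarks in passing that $\vertFib(\Tr)$ is independent of the choice of triangulation $\Tr$ compatible with a fine $\pi$-induced subdivision $\Tiling$. Your proposal therefore cannot be compared against a paper proof, but it does correctly reconstruct the broad strategy of Billera--Sturmfels (Minkowski-integral representation, canonical sections, and an LP argument against generic functionals), and step (ii) is fine.

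There is, however, a genuine gap in step (i). You assert that if $\Delta_B\subset\pi(F)$ for a top-dimensional $F\in\Tiling$, then the second axiom of \cref{dfn:pi_induced} forces $\p_i\in F$ for all $i\in B$, so that $\gamma_\Tr|_{\pi(F)}=\gamma_F$ ``regardless of how $\Tr$ subdivides $\pi(F)$.'' This is false: the axioms of \cref{dfn:pi_induced} constrain only the faces $F,F'\in\Tiling$, not how an arbitrary $\Acal$-triangulation of $Q$ chooses its vertex labels. Concretely, let $P=[0,1]^2$ project onto $Q=[0,1]$ by forgetting the second coordinate, and let $\Tiling=\{F\}$ where $F$ is the bottom edge of $P$; this is a fine (indeed regular) $\pi$-induced subdivision. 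The vertex $\p_3=(0,1)$ of $P$ satisfies $\q_3=\pi(\p_3)=0\in\pi(F)$ but $\p_3\notin F$, and the $\Acal$-triangulation $\Tr=\{\Delta_{\{3,4\}}\}$ of $Q$ yields $\vertFib(\Tr)=(1/2,1)\ne(1/2,0)=\int_Q\gamma_F$. So the independence of $\vertFib(\Tr)$ from $\Tr$ holds only for triangulations $\Tr$ obtained by triangulating each face $F$ \emph{inside $P$} (i.e.\ using only indices $i$ with $\p_i\in F$), which is the implicit meaning of ``a triangulation of $\Tiling$'' in the paper's citation and in Billera--Sturmfels. Once you replace ``any $\Acal$-triangulation whose simplices refine $\pi(\Tiling)$'' with ``a triangulation of $\Tiling$ built from triangulations of each $F$ in $P$,'' the desired $\p_i\in F$ holds by construction, $\gamma_\Tr=\gamma_\Tiling$ on the nose, and the rest of your argument goes through.
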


We now specialize this construction to the case where $P$ is either a cube or a (hyper)simplex. In these cases, regular fine $\pi$-induced subdivisions recover well-studied objects such as regular triangulations and regular fine zonotopal tilings. We discuss them briefly here, and 
in more detail in \cref{sec:regular}.
In what follows, we will repeatedly use  the following notation.
\def\Hyp{H}

\begin{notation}\label{notation}
Let $\A = (\a_1,\dots,\a_n)$ be
a point configuration in $\R^{d-1}$ which affinely spans $\R^{d-1}$. 
Let $\VC:=(\v_1,\dots,\v_n)$ be the lift of $\A$, thus $\v_i:=(\a_i,1)\in\R^d$ 
for $i=1,\dots, n$.  
Then the endpoints of the vectors in $\VC$ belong to $\Hyp_1$,  
where 
the hyperplane $\Hyp_k$ is defined by $\Hyp_k:=\{\y\in\R^d\mid y_d=k\}$ in 
$\R^d$. 
The \emph{zonotope} $\Zon_\V$ is defined as the Minkowski sum of line segments:
$$
\Zon_\V:=\sum_{i=1}^n [0,\v_i] \subset \R^d.
$$
We also let $Q_k:=\ZV\cap \Hyp_k\subset \R^d$.
Let $\pi$ be the projection $\pi: \R^n \to \R^d$ defined by 
 $\pi(\e_i) = \v_i$ for all $i$, where 
 $\e_1,\dots,\e_n$ is the standard basis in $\R^n$. 
\end{notation}

\subsection{Fiber polytopes for projections of a cube: fiber zonotopes}
\label{sec:zonotopal_tilings}

Let $P = \cube_n:=[0,1]^n = \sum_{i=1}^n [0,\e_i]\subset \R^n$ 
be the standard $n$-dimensional cube.
We have a linear projection $\pi:\cube_n\to\Zon_\V$ given by  
 $\pi(\e_i) = \v_i$, for $i\in [n]$.
The \emph{fiber zonotope} of $\ZV$ is the fiber polytope $\FibPoly(\cube_n\pito \Zon_\VC)$.

 Recall that for $A\subset[n]$, we set $\e_A:=\sum_{i\in A} \e_i$.
Faces
$\square_{A,B}$ of the $n$-cube $\cube_n$ are labeled by pairs 
$(A,B)$ of disjoint subsets $A$ and $B$ of $[n]$.  They are given by

\[\square_{A,B} := \e_A + \sum_{b\in B} [0,\e_b] = \{(x_1,\dots,x_n)\in\cube_n \mid x_a = 
1\textrm{ for } a\in A, \textrm{ and } 
x_c = 0 \textrm{ for } c \in [n]\setminus (A\sqcup B)\}.\]

\begin{definition}\label{def:finetiling}
A \emph{fine zonotopal tiling} $\Tiling$ of $\Zon_\V$ is a collection 
of $d$-dimensional faces $\square_{A,B}$ of the $n$-cube such 
that
\begin{enumerate}
\item The images $\tile_{A,B}:=\pi(\square_{A,B})$, for all $\square_{A,B}\in
\Tiling$, 
are $d$-dimensional parallelepipeds that 
form a polyhedral subdivision of the zonotope $\Zon_\V$.  
\item 
For any two faces $\square_{A_1,B_1},\, \square_{A_2,B_2}\in \Tiling$, we have 
$$
\pi(\square_{A_1,B_1}\cap \square_{A_2,B_2}) = \tile_{A_1,B_1} \cap \tile_{A_2,B_2}.
$$
\end{enumerate}
\end{definition}

From our definition, it is clear that each fine zonotopal tiling is a fine $\pi$-induced subdivision. We say that a fine zonotopal tiling is \emph{regular} if the corresponding fine $\pi$-induced subdivision is regular. See \cref{sec:regular_zon_tilings} for several alternative definitions.

\begin{remark}\label{rmk:labeled}
We refer to the $d$-parallelepipeds $\tile_{A,B}=\pi(\square_{A,B})$ 
	as \emph{(labeled) tiles}.
It may happen that 
for two different pairs $(A_1,B_1)$ and $(A_2,B_2)$, 
the two tiles $\tile_{A_1,B_1}$ and $\tile_{A_2, B_2}$ 
coincide as subsets of $\R^d$.   
However, we regard them as different
\emph{labeled tiles}, because they are labeled by different pairs. 
We will identify a fine zonotopal tiling $\Tiling$ with the collection
of such labeled tiles $\tile_{A,B}$. 
\end{remark}

The fiber zonotope of $\ZV$ can be described explicitly as follows.

\begin{proposition}\label{prop:vertFib}
	Let $\V  \subset \R^d$ be as in \cref{notation}.
The fiber zonotope $\FibPoly(\cube_n\pito \Zon_\VC)$ equals the convex hull
\begin{align}
  \FibPoly(\cube_n\pito \Zon_\VC)&=\conv\{\vertFib(\Tiling)\mid \Tiling\text{ is a fine zonotopal tiling of $\Zon_\VC$}\},\quad\text{and} \\
\label{eq:vertFibzon}
  \vertFib(\Tiling)&=\frac1{\Vol(\ZV)} \sum_{\tile_{A,B}\in \Tiling} \VolB\cdot \left(\e_A+\frac12 \e_B \right).
\end{align}
\end{proposition}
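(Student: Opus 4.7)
The plan is to deduce the formula directly from \cref{cor:BS_tiling} after a short combinatorial computation on each tile. First I would verify that any fine zonotopal tiling $\Tiling$ of $\ZV$ (in the sense of \cref{def:finetiling}) is a fine $\pi$-induced subdivision in the sense of \cref{dfn:pi_induced}: the tiles $\tile_{A,B}=\pi(\square_{A,B})$ form a polyhedral subdivision by hypothesis, they have dimension $d=\dim \ZV$, and condition (2) of \cref{def:finetiling} together with the definition $\square_{A,B}=\e_A+\sum_{b\in B}[0,\e_b]$ ensures that for any $(A,B)$, the face $\square_{A,B}$ is determined by $\tile_{A,B}$ as $\square_{A,B}=\cube_n\cap\pi^{-1}(\tile_{A,B})$. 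Hence~\eqref{eq:BS_tiling} applies with $P=\cube_n$ and $Q=\ZV$, and it remains to evaluate $\vertFib(\Tiling)$ using \cref{prop:BS_triang} on any triangulation $\Tr$ refining $\Tiling$.

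Next I would triangulate each parallelepiped $\tile_{A,B}$ using the \emph{staircase} triangulation indexed by permutations $w=(w_1,\dots,w_d)$ of $B$: the simplex $\Delta_w$ has the $d+1$ vertices $\e_A+\e_{\{w_1,\dots,w_j\}}$ for $j=0,1,\dots,d$, and volume $\VolB/d!$. For a fixed $w$, the sum of its vertices equals
\[
\sum_{j=0}^d\bigl(\e_A+\e_{\{w_1,\dots,w_j\}}\bigr)=(d+1)\e_A+\sum_{i=1}^d (d-i+1)\,\e_{w_i},
\]
since $\e_{w_i}$ appears in $\e_{\{w_1,\dots,w_j\}}$ exactly for $j\ge i$. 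Summing over the $d!$ permutations of $B$, by symmetry each $b\in B$ occupies position $i$ in $(d-1)!$ permutations, so the coefficient of $\e_b$ becomes $(d-1)!\sum_{i=1}^d(d-i+1)=(d-1)!\cdot\frac{d(d+1)}{2}=\frac{d!(d+1)}{2}$. Therefore
\[
\sum_{w}\sum_{\p\in\Vertices(\Delta_w)}\p=d!(d+1)\,\e_A+\frac{d!(d+1)}{2}\,\e_B=d!(d+1)\Bigl(\e_A+\tfrac12\e_B\Bigr).
\]

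Multiplying by the common simplex volume $\VolB/d!$ yields the per-tile contribution $\VolB(d+1)(\e_A+\tfrac12\e_B)$. Plugging into the formula of \cref{prop:BS_triang}, the prefactor $\tfrac{1}{(\dim Q+1)\Vol(Q)}=\tfrac{1}{(d+1)\Vol(\ZV)}$ cancels the factor $(d+1)$, giving
\[
\vertFib(\Tiling)=\frac{1}{\Vol(\ZV)}\sum_{\tile_{A,B}\in\Tiling}\VolB\Bigl(\e_A+\tfrac12\e_B\Bigr),
\]
which is~\eqref{eq:vertFibzon}. I expect the only nontrivial step to be the symmetric summation over permutations of $B$; everything else is essentially bookkeeping against the definitions of $\cube_n$, $\tile_{A,B}$, and the staircase triangulation, together with the triangulation-independence of $\vertFib(\Tiling)$ noted before \cref{cor:BS_tiling}.
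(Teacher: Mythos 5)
Your main computation is correct and takes essentially the same route as the paper. You triangulate each tile $\tile_{A,B}$ by the staircase triangulation indexed by permutations of $B$, which is exactly Stanley's triangulation used in the paper; the paper then shortcuts the permutation sum with a symmetry pairing ($w\leftrightarrow\bar w$), whereas you compute the coefficient of each $\e_b$ directly and arrive at the same $d!(d+1)\left(\e_A+\tfrac12\e_B\right)$. Both give the per-tile contribution $\frac{\VolB}{\Vol(\ZV)}\left(\e_A+\tfrac12\e_B\right)$ after dividing by the prefactor from \cref{prop:BS_triang}.

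One slip worth flagging: your preliminary claim that $\square_{A,B}=\cube_n\cap\pi^{-1}(\tile_{A,B})$ is false whenever $n>d$. The set $\pi^{-1}(\tile_{A,B})$ is $n$-dimensional (the fiber of $\pi$ over any point of $\ZV$ has dimension $n-d$), so $\cube_n\cap\pi^{-1}(\tile_{A,B})$ is typically a full-dimensional polytope strictly containing the $d$-dimensional face $\square_{A,B}$. For instance with $n=2$, $d=1$, $\v_1=\v_2=1$, one gets a triangle rather than an edge. The condition in \cref{dfn:pi_induced} is not the literal preimage condition you wrote, but a compatibility among the faces $F,F'\in\Tiling$; the paper simply asserts (right after \cref{def:finetiling}) that fine zonotopal tilings are fine $\pi$-induced subdivisions. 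This slip does not affect the heart of the proposition, which is the formula for $\vertFib(\Tiling)$, and your computation of that formula is correct.
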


\begin{proof}
We use \eqref{eq:vertFib}, 
	and let $\tau$ be a triangulation of a fixed tile 
	$\tile_{A,B}$ of  
	$\Tiling$.  More specifically, we use Stanley's  
	triangulation \cite{Sta77} of $\cube_d$ into $d!$ 
	equal-volume simplices 
	$\nabla_w$ labeled by permutations $w\in S_d$:
	\begin{equation}\label{eq:Stanley}
		\nabla_w := \{(y_1,\dots,y_d) \in [0,1]^d \ \mid \ 
	0 < y_{w_1} < \dots < y_{w_d} < 1\}.
	\end{equation}
	This gives 
	rise to a triangulation $\tau$ of 
	 $\tile_{A,B}$ into $d!$ simplices, 
	 each of volume $\frac{\VolB}{d!}$. 
	 By symmetry, we know that the combined contribution of these simplices to~\eqref{eq:vertFib} has the form $x\cdot  \e_A+y\cdot \e_B$ for some $x,y\in\R$. Each simplex $\nabla_w$ contributes $\frac{\VolB}{d!\Vol(\ZV)} \e_A+\u(w)$ for some $\u(w)\in\R^n$. Let $\bar w\in S_d$ be the permutation given by $\bar w_i=w_{d+1-i}$ for all $i\in[d]$. It is easy to see that $\u(w)+ \u(\bar w)=\frac{\VolB}{d!\Vol(\ZV)} \e_B$, thus $x=\frac{\VolB}{\Vol(\ZV)}$ and $y=\frac{\VolB}{2\Vol(\ZV)}$. 
\end{proof}

\subsection{Fiber polytopes for projections of a simplex:  secondary polytopes.}\label{sec:fiber-secondary}

Let $\A$ and $\V$ be as in \cref{notation}.
Let $P=\Delta^{n-1}=\conv(\e_1,\dots,\e_n)$ be the standard $(n-1)$-dimensional
simplex in $\R^n$, 
and $\pi:P \to Q:=\conv{\A}$ the projection defined
by $\pi(\e_i) = \a_i$ for all $i$.

\begin{definition}[{\cite[Definition~1.6]{GKZ}}]
The \emph{secondary polytope} $\SigmaGKZ$ is defined as the convex hull
\begin{align}
\SigmaGKZ&:=\conv\{\vertGKZ(\Tr)\mid \Tr\text{ is an $\Acal$-triangulation}\},\quad\text{where} \\
\label{eq:vertGKZ}
  \vertGKZ(\Tr)&:= \sum_{\Delta_B\in \Tr} \VolDB\cdot \e_B.
\end{align}
\end{definition}
\noindent The relationship between the polytopes $\SigmaGKZ$ and $\FibPoly(\Delta^{n-1}\pito Q)$ is given in~\cite[Theorem 2.5]{BS}:
\begin{equation*}\label{eq:BS2}
	\FibPoly(\Delta^{n-1}\pito Q)  = \frac{1}{d \cdot \Voldd(Q)} \SigmaGKZ.
\end{equation*}

\begin{remark}\label{rem:GKZ}
Every fine zonotopal tiling $\Tiling$ gives rise to an $\Acal$-triangulation $\Tr:=\{\Delta_B\mid \tile_{\emptyset,B}\in\Tiling\}$, in which case we denote $\vertGKZ(\Tiling):=\vertGKZ(\Tr)$.
\end{remark}

\subsection{Fiber polytopes for projections of a hypersimplex: hypersecondary polytopes.}
Recall the definitions of $\V$, $\ZV$, $\pi$, $\Hyp_k$, and $Q_k$
from \cref{notation}.
Also recall that $\Delta_{k,n}=\cube_n\cap \{\x\in\R^n\mid x_1+\dots+x_n=k\}$. 
Note that if $k=1$, then $\Delta_{1,n} = \Delta^{n-1}$.  
We discuss the fiber polytope $\FibPoly(\Delta_{k,n}\pito Q_k)$. Such polytopes have been recently studied in~\cite{OS} under the name \emph{hypersecondary polytopes} (not to be confused with \emph{higher secondary polytopes} $\Sigma_{\A,k}$ introduced in this paper).

For integers $r$ and $d$, the  \emph{Eulerian number} $\Eul(d,r)$  is defined as the number of permutations in $S_d$ with $r$ descents, where a \emph{descent} of a permutation $w$ is a position $i$ such that $w_i>w_{i+1}$  (thus $\Eul(d,r)$ is zero if $r\notin[0,d-1]$). For example, we have $\Eul(3,0)=1$, $\Eul(3,1)=4$, $\Eul(3,2)=1$. 

\begin{lemma}\label{lemma:Eul}
Let $\Tiling$ be a fine zonotopal tiling of $\ZV \subset \R^d$.
	Then for all $r\in[d-1]$ and $\tile_{A,B}\in\Tiling$, we have
\begin{equation}\label{eq:Eul}
\Voldd(\tile_{A,B}\cap \Hyp_{|A|+r})=\frac{\Eul(d-1,r-1)}{(d-1)!} \VolB.
\end{equation}
\end{lemma}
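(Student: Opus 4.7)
The plan is to reduce the statement to a direct linear change of variables from the unit cube to the parallelepiped $\tile_B$, and then invoke the classical fact that cross sections of the unit cube by hyperplanes $\sum t_i = r$ are hypersimplices whose volumes are given by Eulerian numbers.

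First, since every $\v_i$ has last coordinate $1$, the shift $\sum_{a\in A}\v_a$ moves the last coordinate by exactly $|A|$. Hence $\tile_{A,B}=\tile_B+\sum_{a\in A}\v_a$ is a translate of $\tile_B:=\tile_{\emptyset,B}$, and $\tile_{A,B}\cap \Hyp_{|A|+r}$ is a translate of $\tile_B\cap \Hyp_r$. Translations preserve $\Voldd$, so it suffices to prove
\[
\Voldd(\tile_B\cap \Hyp_r)\;=\;\frac{\Eul(d-1,r-1)}{(d-1)!}\,\VolB.
\]

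Next, consider the linear isomorphism $\phi:\R^d\to\R^d$ sending $\e_i\mapsto \v_{b_i}$ (where $B=\{b_1,\dots,b_d\}$), so that $\phi([0,1]^d)=\tile_B$ and $|\det\phi|=\VolB$. Because every $\v_{b_i}$ has last coordinate $1$, the functional $y_d$ pulls back to $t_1+\dots+t_d$; therefore $\phi$ carries the hyperplane $\Hyp'_r:=\{\t\in\R^d\mid t_1+\dots+t_d=r\}$ onto $\Hyp_r$, and in particular it carries $[0,1]^d\cap \Hyp'_r$ bijectively onto $\tile_B\cap \Hyp_r$. I would now argue that the restriction $\phi|_{\Hyp'_r}\colon \Hyp'_r\to \Hyp_r$ scales $(d-1)$-volume (in the paper's normalization) by exactly $\VolB$. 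To see this, parametrize both hyperplanes by their first $d-1$ coordinates: the restriction becomes the affine map whose linear part is the $(d-1)\times(d-1)$ matrix $M$ with columns $(\v_{b_i}-\v_{b_d})_{i<d}$ truncated to their first $d-1$ entries. Subtracting the last column of the matrix $N=(\v_{b_1}\mid\cdots\mid \v_{b_d})$ from the others and expanding along the bottom row (which becomes $(0,\ldots,0,1)$) shows $\det N=\det M$; thus $|\det M|=\VolB$.

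Finally, the classical volume formula for hypersimplices gives
\[
\Voldd\bigl([0,1]^d\cap \Hyp'_r\bigr)\;=\;\Vol(\Delta_{r,d})\;=\;\frac{\Eul(d-1,r-1)}{(d-1)!},
\]
in exactly the normalization used in the paper (the unit cube cross section at integer height has unit $(d-1)$-volume). Combining this with the scaling factor $\VolB$ computed above yields the claimed identity.

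The only delicate step is the volume-scaling computation in the second paragraph, where one must verify that the normalizations on $\Hyp'_r$ and on $\Hyp_r$ match under $\phi$; everything else is either a translation-invariance remark or a standard identity. The column-reduction argument above bypasses any Gram-determinant or projected-area subtleties and reduces the claim to a one-line determinant identity, so I expect the full proof to be short.
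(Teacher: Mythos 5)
Your proof is correct and follows essentially the same route as the paper's: both reduce the claim to the classical Eulerian volume of the hypersimplex $\Delta_{r,d}\subset[0,1]^d$ (which the paper cites from Stanley as a unimodular triangulation into $\Eul(d-1,r-1)$ pieces), combined with a volume-scaling argument for the linear map sending the cube to $\tile_B$. The paper's version phrases the scaling as ``the image of each unimodular simplex under $\pi$ has volume $\VolB/(d-1)!$'' without elaboration, while you make this precise by restricting the map to the relevant hyperplanes, parametrizing each by its first $d-1$ coordinates, and computing the Jacobian via the column-reduction identity $\det N = \det M$ — a nice, self-contained justification of the step the paper leaves implicit. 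One small imprecision in your write-up: the parenthetical ``the unit cube cross section at integer height has unit $(d-1)$-volume'' describes the paper's normalization of $\Voldd$ on the horizontal hyperplanes $\Hyp_r$, but what you actually need for the hypersimplex step is that projecting $\Hyp'_r=\{\sum t_i=r\}$ to the first $d-1$ coordinates and using Lebesgue measure there gives $\Vol\bigl([0,1]^d\cap\Hyp'_r\bigr)=\Eul(d-1,r-1)/(d-1)!$; this is indeed the normalization consistent with your Jacobian computation (each unimodular simplex in Stanley's triangulation has volume $1/(d-1)!$ in these coordinates), so the argument is sound, but the wording conflates the two hyperplanes' normalizations and is worth tightening.
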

\begin{proof}
	We have $\tile_{A,B} = \pi(\cube_{A,B})$ for $A, B$ disjoint subsets
	and $|B| = d$.  The intersection $\tile_{A,B}\cap \Hyp_{|A|+r}$ is the image of a hypersimplex $\Delta_{r,d}\subset\square_{A,B}\cong\cube_d$ under $\pi$. By~\cite{Sta77}, $\Delta_{r,d}$ can be triangulated into $\Eul(d-1,r-1)$ equal-volume simplices, 
	and the image of each of these simplices under $\pi$ has volume $\frac{\VolB}{(d-1)!}$.
\end{proof}
\begin{proposition}
The fiber polytope $\FibPoly(\Delta_{k,n}\pito Q_k)$ equals the convex hull
\begin{align}\label{eq:FibPoly_Delta_k_n}
  \FibPoly(\Delta_{k,n}\pito Q_k)&=\conv\{\vertFib_k(\Tiling)\mid \Tiling\text{ is a fine zonotopal tiling of $\Zon_\VC$}\},\quad\text{where} \\
\label{eq:vertFib_k}
  \vertFib_k(\Tiling)&:=\frac1{d!\cdot \Voldd(Q_k)} \sum_{r=1}^{d-1} \sumlevel{k-r} \VolB\cdot \Eul(d-1,r-1)\cdot  (d\cdot \e_A+r\cdot  \e_B ).
\end{align}
\end{proposition}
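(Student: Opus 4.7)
The plan is to apply the Billera--Sturmfels formula (\cref{cor:BS_tiling}) to the projection $\pi\colon \Delta_{k,n}\to Q_k$, which realizes $\FibPoly(\Delta_{k,n}\pito Q_k)$ as the convex hull of vectors $\vertFib(\Tiling')$ ranging over fine $\pi$-induced subdivisions $\Tiling'$ of $Q_k$. The strategy has two parts: set up a correspondence between such subdivisions and fine zonotopal tilings of $\ZV$ via slicing at level $k$, and then compute $\vertFib(\Tiling')$ using a centroid identity that avoids any specific choice of triangulation.

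For the correspondence, each tile $\tile_{A,B}\in\Tiling$ with $r:=k-|A|\in[1,d-1]$ contributes the $(d-1)$-dimensional face $\square_{A,B}\cap\Delta_{k,n}=\e_A+\Delta_{r,d}^B$ of $\Delta_{k,n}$, whose image under $\pi$ is the slice $\tile_{A,B}\cap \Hyp_k$; these faces collectively form a fine $\pi$-induced subdivision $\Tiling':=\Tiling\cap\Hyp_k$ of $Q_k$. Conversely, any \emph{regular} fine $\pi$-induced subdivision of $Q_k$ is induced by some height vector $\h\in\R^n$, and the same $\h$ induces a regular fine zonotopal tiling of $\ZV$ whose level-$k$ slice recovers the given subdivision. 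Since vertices of $\FibPoly(\Delta_{k,n}\pito Q_k)$ come from regular subdivisions, this suffices to match the two convex hulls.

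For the computation of $\vertFib(\Tiling')$, fix a triangulation of each face $\e_A+\Delta_{r,d}^B$ (with $|A|=k-r$) into $(d-1)$-simplices whose vertices lie among the lattice points $\e_{A\cup C}$ ($C\subset B$, $|C|=r$); such triangulations exist by standard constructions, e.g., Stanley's. For a simplex $\tau$ with vertices $\e_{A\cup C_1},\dots,\e_{A\cup C_d}$, the identity $\sum_{i=1}^d \e_{A\cup C_i} = d\cdot\operatorname{centroid}(\tau)$ holds in $\R^n$. Since $\VolB>0$, the map $\pi$ is linear and injective on the $(d-1)$-dimensional affine span of $\e_A+\Delta_{r,d}^B$, so its Jacobian is a nonzero constant there, and the standard centroid identity $\sum_\tau \Vol(\tau)\operatorname{centroid}(\tau) = \Vol(P)\operatorname{centroid}(P)$ together with change of variables gives
\[\sum_\tau \Voldd(\pi(\tau))\cdot\sum_i \e_{A\cup C_i} \;=\; d\cdot \Voldd(\pi(\e_A+\Delta_{r,d}^B))\cdot \operatorname{centroid}(\e_A+\Delta_{r,d}^B).\]
The $S_d$-symmetry of $\Delta_{r,d}^B$ on the $B$-coordinates gives $\operatorname{centroid}(\e_A+\Delta_{r,d}^B)=\e_A+(r/d)\e_B$, and \cref{lemma:Eul} gives $\Voldd(\pi(\e_A+\Delta_{r,d}^B))=\Eul(d-1,r-1)\VolB/(d-1)!$. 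Hence the contribution of tile $\tile_{A,B}$ with $|A|=k-r$ simplifies to $\frac{\Eul(d-1,r-1)\VolB}{(d-1)!}(d\e_A+r\e_B)$. Multiplying by the Billera--Sturmfels prefactor $1/(d\cdot\Voldd(Q_k))$ and summing over all tiles of $\Tiling$ and $r\in[1,d-1]$ produces~\eqref{eq:vertFib_k}.

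The main obstacle is the surjectivity half of the correspondence: verifying that a height vector $\h\in\R^n$ inducing a regular fine $\pi$-induced subdivision of $Q_k$ (with respect to $\pi\colon\Delta_{k,n}\to Q_k$) also induces a regular fine zonotopal tiling of $\ZV$ (that is, a fine $\pi$-induced subdivision of $\ZV$ with respect to $\pi\colon\cube_n\to\ZV$) whose level-$k$ slice is the given subdivision. This requires care in handling the compatibility between the two projection domains, but is in spirit a routine height-function check. Once this is established, the centroid identity together with \cref{lemma:Eul} handles the rest without any dependence on the triangulation chosen.
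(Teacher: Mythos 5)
Your proof follows the same overall route as the paper's: slice a fine zonotopal tiling of $\Zon_\VC$ at $\Hyp_k$ to get a fine $\pi$-induced subdivision of $Q_k$, invoke \cref{cor:BS_tiling}, and compute the per-tile contribution to $\vertFib$; regularity of subdivisions is used to get the reverse containment. So this is essentially the same approach. The one genuine variation is in the local computation: where the paper triangulates each slice $\tile_{A,B}\cap\Hyp_k$ via Stanley's construction and appeals to an explicit pairing-by-symmetry argument (as in the proof of \cref{prop:vertFib}), you instead use the mass-centroid identity $\sum_\tau\Vol(\tau)\cdot\operatorname{centroid}(\tau)=\Vol(P)\cdot\operatorname{centroid}(P)$ together with the fact that the centroid of $\e_A+\Delta_{r,d}^B$ is $\e_A+\tfrac{r}{d}\e_B$ by $S_d$-symmetry. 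This is cleaner: it is triangulation-independent and doesn't require tracking what $\sum_i\e_{C_i}$ looks like on individual simplices. It is also worth noting that your intermediate per-tile contribution $\tfrac{\Eul(d-1,r-1)\VolB}{(d-1)!}(d\e_A+r\e_B)$ is the correct one; the corresponding sentence in the paper's proof drops the factor $\Eul(d-1,r-1)$ (an apparent typo, since the stated formula \eqref{eq:vertFib_k} retains it). The step you flag as the ``main obstacle''---that every regular fine $\pi$-induced subdivision of $Q_k$ arises as $\Tiling_\h\cap\Hyp_k$ for some regular fine zonotopal tiling $\Tiling_\h$---is the same step the paper dispatches with ``clearly arises''; it goes through because the cone of height vectors producing a fixed regular fine subdivision of $Q_k$ is full-dimensional and hence meets $\GHV$, and for $\h\in\GHV$ the fibers $\Delta_{k,n}\cap\pi^{-1}(\q)$ and $\cube_n\cap\pi^{-1}(\q)$ coincide for $\q\in Q_k$, so the two $\pi$-induced subdivisions are compatible.
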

\begin{proof}
Let $\Tiling$ be a fine zonotopal tiling of $\ZV$. Then $\Tiling\cap \Hyp_k:=\{\tile_{A,B}\cap \Hyp_k\mid \tile_{A,B}\in\Tiling\}$ is a fine $\pi$-induced subdivision for the projection $\Delta_{k,n}\pito Q_k$. A tile $\tile_{A,B}\in\Tiling$ has a full-dimensional intersection with $\Hyp_k$ whenever $|A|+r=k$ for some $r\in[d-1]$. In this case, $\tile_{A,B}\cap \Hyp_k$ can be triangulated into $\Eul(d-1,r-1)$ simplices as in the proof of \cref{lemma:Eul}. Proceeding as in the proof of \cref{prop:vertFib}, we find that the combined contribution of these simplices to~\eqref{eq:vertFib} is precisely $\frac{\VolB}{d!\cdot \Voldd(Q_k)}(d\cdot \e_A+r\cdot  \e_B )$. Thus we have shown that $\FibPoly(\Delta_{k,n}\pito Q_k)$ contains the right hand side of~\eqref{eq:FibPoly_Delta_k_n}. 

On the other hand, by~\eqref{eq:BS_tiling_reg}, it is enough to consider only \emph{regular} fine $\pi$-induced subdivisions, and every such subdivision clearly arises as $\Tiling\cap\Hyp_k$ for some regular fine zonotopal tiling $\Tiling$. This shows that the right hand side of~\eqref{eq:FibPoly_Delta_k_n} contains $\FibPoly(\Delta_{k,n}\pito Q_k)$.
\end{proof}

\begin{example}
For $d=2$, \eqref{eq:vertFib_k} becomes 
\begin{equation}\label{eq:vertFib_k_d=2}
 \vertFib_k(\Tiling):=\frac1{\Voldd(Q_k)}  \sumlevel{k-1} \VolB\cdot   \left( \e_A+\frac12  \e_B \right).
\end{equation}
\end{example}
\begin{example}
	Substituting $k=1$ into~\eqref{eq:vertFib_k} and comparing the result with~\eqref{eq:vertGKZ}, we find
\begin{align}
\label{eq:GKZ_FibPoly}
  \vertGKZ(\Tiling)
	&=d\cdot \Voldd(Q_1)\cdot \vertFib_1(\Tiling) \quad\text{and}\quad\SigmaGKZ=d\cdot \Voldd(Q_1)\cdot \FibPoly(\Delta_{1,n}\pito Q_1),
\end{align}
	in agreement with \eqref{eq:BS2}.
\end{example}

\section{Vertices of fiber polytopes and vertices of higher secondary polytopes}
Recall the definitions of $\V \subset \R^d, \ZV, H_k$, $Q_k$,
and $\pi$ from \cref{notation}.
Also 
recall that 
$\Delta_{k,n}=\cube_n\cap \{\x\in\R^n\mid x_1+\dots+x_n=k\}$. 
In this section, we 
prove \cref{prop:HSP_vert}, which gives a duality identity, and 
expresses the vertices of fiber polytopes $\FibPoly(\cube_n\pito \Zon_\VC)$, $\SigmaGKZ$, and $\FibPoly(\Delta_{k,n}\pito Q_k)$ as linear combinations of the vectors $\vertHSP_k(\Tiling)$ defined in~\eqref{eq:HSP}. This will constitute one of the main steps in the proof of \cref{thm:sigma}, which we give
in \cref{sec:higher}.

\def\Ar{\beta}
\def\Eupoly{A}
We start by giving a refinement of the 
simple fact that for any fine zonotopal tiling $\Tiling$, the sum $\sum_{\tile_{A,B}\in\Tiling} \VolB$ equals $\Vold(\ZV)$, and therefore does not depend on $\Tiling$. 
For $k\in[n-1]$, we let
 \begin{equation}
	\Ar_k:=\Voldd(Q_k),
\end{equation} 
and we set $\Ar_k:=0$ for $k\notin[n-1]$.

\begin{proposition}
Fix a vector configuration $\V \subset \R^d$ as in \cref{notation}.
	For each $k\in[0,n-d]$, there exists a number $\gamma_k^d(\VC) = \gamma_k(\VC)\in\R_{>0}$ such that for any fine zonotopal tiling $\Tiling$, we have
\begin{equation}\label{eq:gamma_k}
	\gamma_k^d(\VC) = \gamma_k(\VC)=\sumlevel{k} \VolB.
\end{equation}
\end{proposition}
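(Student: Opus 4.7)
The plan is to compute the $(d-1)$-volume of each horizontal slice $Q_k=\ZV\cap \Hyp_k$ in two ways and read off the desired identity. Fix a fine zonotopal tiling $\Tiling$ of $\ZV$ and write $S_j(\Tiling):=\sumlevel{j}\VolB$, with the convention $S_j(\Tiling)=0$ for $j\notin[0,n-d]$. Since $\Tiling$ is a polyhedral subdivision of $\ZV$, intersecting its tiles with $\Hyp_k$ produces a polyhedral subdivision of $Q_k$, so summing $(d-1)$-volumes and applying \cref{lemma:Eul} yields, for every $k\in[n-1]$,
\[
\Ar_k \;=\; \sum_{r=1}^{d-1} \frac{\Eul(d-1,r-1)}{(d-1)!}\, S_{k-r}(\Tiling).
\]
The left-hand side $\Ar_k=\Voldd(Q_k)$ depends only on $\VC$, so this relation constrains the $S_j(\Tiling)$.

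The next step is triangular inversion. The leading coefficient $\Eul(d-1,0)/(d-1)!=1/(d-1)!$ is nonzero, so running $k=1,2,\dots,n-d+1$ and solving for $S_{k-1}(\Tiling)$ at each stage recursively expresses every $S_j(\Tiling)$, for $j\in[0,n-d]$, as a $\Tiling$-independent linear combination of $\Ar_1,\dots,\Ar_{j+1}$ (starting from $S_0(\Tiling)=(d-1)!\,\Ar_1$). This shows that $\gamma_k(\VC):=S_k(\Tiling)$ is a well-defined quantity, depending only on $\VC$.

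I expect positivity to be the easier half: once $\gamma_k(\VC)=\sumlevel{k}\VolB$ is known for \emph{every} tiling, it suffices to exhibit a single fine zonotopal tiling containing at least one tile of level $k$, since then the sum is a sum of strictly positive numbers. For each $k\in[0,n-d]$ this can be done by a direct lexicographic/staircase construction that realizes every level $0,1,\dots,n-d$ simultaneously. The only mildly delicate point is the degenerate case $d=1$, where the $r$-range in \cref{lemma:Eul} is empty and the recursion gives no information; but then $\VC=((1),\dots,(1))$, every tile has $\VolB=1$, every fine zonotopal tiling contains exactly one tile at each level (cf.~\cref{ex:d1}), and $\gamma_k(\VC)=1$ follows directly.
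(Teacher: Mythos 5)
You follow the paper's route exactly: slice $\ZV$ by $\Hyp_k$, apply \cref{lemma:Eul} to expand $\Ar_k=\Voldd(Q_k)$ as a convolution of the level sums, and invert the resulting triangular system. (The paper also records the explicit inverse via the reciprocal of the Eulerian polynomial $\Eupoly_{d-1}$, but the recursion is the same.) You add two points the paper's proof passes over. The $d=1$ observation is a genuine, correct fix: for $d=1$ the range $r\in[d-1]$ in \cref{lemma:Eul} is empty, so the convolution identity would read $\Ar_k=0$, which is false, and indeed the preceding equality $\Ar_k=\sum_{\tile}\Vol^0(\tile\cap \Hyp_k)$ already double-counts shared endpoints; the paper's argument tacitly assumes $d\ge2$, and your direct disposal of $d=1$ via \cref{ex:d1} is the right patch. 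On positivity, your reduction is correct: once $\Tiling$-independence is known, $\gamma_k>0$ follows from exhibiting a single fine zonotopal tiling with at least one tile at level $k$, since each $\VolB$ is strictly positive. But ``a direct lexicographic/staircase construction'' is an assertion, not a proof. A concrete construction is the height vector built later in the proof of \cref{prop:HSP_dim} (heights very large and positive on $k-1$ designated indices, very large and negative outside a chosen circuit support, small on the support), which produces a regular tiling with tiles at levels $k-1$ and $k$; and level $0$ is always occupied by the tile at the origin. With that detail supplied, the proof is complete.
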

\begin{proof}
Let us temporarily denote  
\[\tilde{\gamma}_k(\Tiling, \VC) := \sumlevel{k} \VolB\]
for all $k\in\Z$. Then $\Ar_k=\Voldd(Q_k) = \Vol^{d-1}(\ZV \cap H_k) = 
	\sum_{\tile_{A,B}\in \Tiling}\Vol^{d-1}( \tile_{A,B} \cap H_k).$
	Applying~\eqref{eq:Eul}, we find that $\Ar_k=\sum_{r=1}^{d-1} \tilde{\gamma}_{k-r}(\Tiling, \VC)\cdot \frac{\Eul(d-1,r-1)}{(d-1)!}$. Since the coefficient of $\tilde{\gamma}_{k-1}$ in the right hand side is equal to $\frac1{(d-1)!}$, the numbers $\tilde{\gamma}_k(\Tiling, \VC)$ can be expressed in terms of the $\Ar_r$'s by induction for $k=0,1,\dots, n-d$. Explicitly, let $\Eupoly_{d-1}(x):=\sum_{r=0}^{d-2} \Eul(d-1,r)x^r$ be the \emph{Eulerian polynomial}, and let $c_0,c_1,\dots\in\Z$ be defined by $\frac1{\Eupoly_{d-1}(x)}=c_0+c_1x+c_2x^2+\dots$ (thus $c_0=1$). Then we have $\tilde{\gamma}_k(\Tiling, \VC)=c_0\Ar_{k+1}+c_1\Ar_k+c_2\Ar_{k-1}+\dots+c_{k+1}\Ar_0$ for all $k\in[0,n-d]$. It is clear that $\tilde{\gamma}_k(\Tiling, \VC)$ does not depend on $\Tiling$, and so we can refer to it as 
	$\gamma_k(\VC)$.
\end{proof}
\begin{example}
For $d=2,3,4$, we have respectively
	\begin{align}
\label{eq:gamma_d=2}
\gamma_k^2(\VC)&=\Ar_{k+1}, \\ 
  \gamma_k^3(\VC)&=\Ar_{k+1}-\Ar_{k}+\dots+(-1)^{k+1}\Ar_0, \\
\label{eq:Ar_d=4}
\gamma_k^4(\VC)&=\Ar_{k+1}-4\Ar_{k}+15\Ar_{k-1}-56\Ar_{k-2}+\dots, 
\end{align}
where the coefficients of~\eqref{eq:Ar_d=4} form the sequence \href{http://oeis.org/A125905}{A125905} in the OEIS~\cite{OEIS}.
\end{example}

For $i\in[n]$, let $\VC-i$ denote the vector configuration in $\R^d$ obtained from $\VC$ by omitting $\v_i$. For each $k\in[0,n-d]$, we introduce a vector $\ddelta(k,\VC)\in\R^n$ whose $i$th coordinate equals 
\begin{equation}\label{eq:delta}
  \delta_i(k,\VC):=\gamma_k(\VC)-\gamma_k(\VC-i) \quad\text{for all $i\in[n]$}.
\end{equation}
\noindent For $k\notin [0,n-d]$, we set $\gamma_k(\VC):=0\in\R$ and $\ddelta(k,\VC):=0\in\R^n$. Recall that the vectors of $\VC$ are assumed to linearly span $\R^d$. If the vectors of $\VC-i$ all belong to a lower-dimensional subspace of $\R^d$, we say that $i$ is a \emph{coloop} and set $\gamma_k(\VC-i):=0$ for all $k$.

 The following result will be useful in the proof of \cref{prop:HSP_vert}.
\begin{proposition}\label{prop:adjacent_levels}
For all $k\in[0,n-d]$, we have
\begin{equation}\label{eq:adjacent_levels}
\sumlevel{k} \VolB \cdot (\e_A+\e_B)=\sumlevel{k+1} \VolB \cdot \e_A +\ddelta(k,\VC).
\end{equation}
\end{proposition}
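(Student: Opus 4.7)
My plan is to reduce~\eqref{eq:adjacent_levels} to an identity about the \emph{deletion} of a vector from a fine zonotopal tiling, then invoke the tiling-independence of $\gamma_k(\VC)$ from~\eqref{eq:gamma_k}.

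First I will compare both sides of~\eqref{eq:adjacent_levels} coordinate by coordinate. Fix $i\in[n]$ and extract the coefficient of $\e_i$. Since $A$ and $B$ are disjoint, $\e_A+\e_B=\e_{A\cup B}$, so the coefficient of $\e_i$ on the left-hand side is
\[
L_i:=\sum_{\substack{\tile_{A,B}\in\Tiling\\|A|=k,\ i\in A\cup B}}\VolB,
\]
while on the right-hand side it is $R_i:=\sum_{\tile_{A,B}\in\Tiling,\,|A|=k+1,\,i\in A}\VolB+\gamma_k(\VC)-\gamma_k(\VC-i)$. Using the already-established identity $\gamma_k(\VC)=\sum_{\tile_{A,B}\in\Tiling,\,|A|=k}\VolB$, the equality $L_i=R_i$ rearranges (after a one-line cancellation) to the claim
\[
\gamma_k(\VC-i)=\sum_{\substack{\tile_{A,B}\in\Tiling\\|A|=k,\ i\notin A\cup B}}\VolB\;+\;\sum_{\substack{\tile_{A,B}\in\Tiling\\|A|=k+1,\ i\in A}}\VolB. \qquad(\star)
\]

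To prove $(\star)$, I will use the standard \emph{deletion} operation on fine zonotopal tilings: from $\Tiling$ one obtains a fine zonotopal tiling $\Tiling/i$ of $\Zon_{\VC-i}$ whose tiles are in volume-preserving bijection with those $\tile_{A,B}\in\Tiling$ having $i\notin B$, via the rule $\tile_{A,B}\mapsto\tile_{A\setminus\{i\},B}$. Geometrically, writing $\Zon_\VC=\Zon_{\VC-i}+[0,\v_i]$ and projecting out the $\v_i$-direction, tiles with $i\in A$ are translated by $-\v_i$, tiles with $i\in B$ are collapsed to $(d-1)$-dimensional faces and discarded, and the remaining tiles are untouched. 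Since the level of a tile drops by exactly $1$ when $i$ is removed from $A$ and is otherwise unchanged, the level-$k$ tiles of $\Tiling/i$ correspond precisely to the two classes on the right-hand side of $(\star)$. Applying the tiling-independence of $\gamma_k$ to the configuration $\VC-i$ then yields $(\star)$, and therefore~\eqref{eq:adjacent_levels}.

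The main point requiring care is the existence and correctness of the deletion tiling $\Tiling/i$, which is classical but needs to be stated at the right level of generality for fine zonotopal tilings (with labels); this is where I anticipate spending the most effort in writing out a full proof. One degenerate case must also be handled separately: when $i$ is a coloop, every basis $B$ of $\VC$ contains $i$, so both sums on the right of $(\star)$ vanish, while $\gamma_k(\VC-i)=0$ by the convention adopted after~\eqref{eq:delta}, and $(\star)$ holds trivially.
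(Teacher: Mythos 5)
Your proposal is correct and follows essentially the same route as the paper: you reduce the claim coordinate by coordinate, via the definition $\delta_i(k,\VC)=\gamma_k(\VC)-\gamma_k(\VC-i)$, to the deletion identity $(\star)$---which is precisely the paper's equation~\eqref{eq:adjacent_levels2}---and establish $(\star)$ using the deletion tiling $\Tiling-i$ of $\Zon_{\VC-i}$ together with the tiling-independence of $\gamma_k$, treating the coloop case separately. One small imprecision: your geometric gloss (``projecting out the $\v_i$-direction'') is not quite the right picture, since $\Zon_{\VC-i}$ is a full-dimensional subzonotope of $\Zon_\VC$ and tiles with $i\in A$ are simply \emph{translated} by $-\v_i$ into it rather than projected; your combinatorial description of $\Tiling/i$ is the correct one and matches the paper's.
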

\begin{proof}
Fix $i\in[n]$. We first show that 
\begin{equation}\label{eq:adjacent_levels2}
\sumlevel{k+1,\ i\in A} \VolB+\sumlevel{k,\ i\notin A\sqcup B} \VolB=\gamma_k(\VC-i).
\end{equation}
Assume that $i$ is a coloop, which means that the vectors in $\VC-i$ do not linearly span $\R^d$, in which case the right hand side of~\eqref{eq:adjacent_levels2} is zero. On the other hand, for each tile $\tile_{A,B}\in\Tiling$, we must have $i\in B$, which shows that the left hand side of~\eqref{eq:adjacent_levels2} is also zero. Assume now that $i$ is not a coloop. Then each fine zonotopal tiling $\Tiling$ of $\ZV$ gives rise to a fine zonotopal tiling $\Tiling-i$ of $\Zon_{\VC-i}$ defined by 
\[\Tiling-i:=\{\tile_{A\setminus\{i\},B}\mid \tile_{A,B}\in\Tiling,\  i\in A\}\sqcup \{\tile_{A,B}\mid \tile_{A,B}\in\Tiling,\  i\notin A\sqcup B\}.\]
Using this observation, we see that~\eqref{eq:adjacent_levels2} follows from the definition~\eqref{eq:gamma_k} of $\gamma_k(\VC-i)$. For the example in \cref{fig:deletion}, for $k=1$, the left hand side of~\eqref{eq:adjacent_levels2} is equal to $3+2$ as shown in \cref{fig:deletion} (middle) while the right hand side of~\eqref{eq:adjacent_levels2} is equal to $5$ as shown in \cref{fig:deletion} (right).

\begin{figure}

\scalebox{0.68}{

\begin{tikzpicture}

\def\gridlw{1pt}
\def\gridop{0.25}
\def\lw{1.5pt}
\def\lww{1.8pt}
\def\lwww{2.5pt}

\def\setscl{0.8}
\def\vscl{1.2}
\def\VCscl{1.5}
\def\Tilingscl{1.8}
\def\Xscl{1.3}
\def\tnode#1#2#3{\node[draw,fill,circle,scale=0.4](#1) at (#2,#3) {};}
\def\tcoord#1#2#3{\coordinate(N#1) at (#2,#3);}
\def\lnode#1#2{\node[anchor=180+#2,scale=\setscl] (A#1) at (N#1.#2) {$#1$};}
\def\lnodv#1#2{\node[anchor=180+#2,scale=\vscl] (A#1) at (N#1.#2) {$\v_{#1}$};}
\def\lnod#1#2#3{\node[anchor=-#2,scale=\setscl] (A#1) at (N#1.#2) {#3};}
\def\stp{9}

\def\blueop{0.6}

\def\xmin{0}
\def\xmax{6}
\def\ymin{0}
\def\ymax{5}

\draw[line width=\gridlw,opacity=\gridop] (\xmin,\ymin) grid (\xmax,\ymax);

\tnode a30
\tnode b51
\tnode c62
\tnode d63
\tnode e54
\tnode f35
\tnode g14
\tnode h03
\tnode i02
\tnode j11
\tnode k31
\tnode l52
\tnode m42
\tnode n33
\tnode o22
\tnode p21

\draw[line width=\lw] (o)--(p)--(a)--(b)--(c)--(d)--(e)--(f)--(g)--(h)--(i)--(j)--(a)--(k);
\draw[line width=\lw] (h)--(o)--(k)--(l)--(b);
\draw[line width=\lw] (l)--(d)--(m)--(k);
\draw[line width=\lw] (m)--(n)--(o);
\draw[line width=\lw] (g)--(n)--(e);
\draw[line width=\lw] (i)--(p);

\begin{scope}[xshift=\stp cm]

\draw[line width=\gridlw,opacity=\gridop] (\xmin,\ymin) grid (\xmax,\ymax);

\tnode a30
\tnode b51
\tnode c62
\tnode d63
\tnode e54
\tnode f35
\tnode g14
\tnode h03
\tnode i02
\tnode j11
\tnode k31
\tnode l52
\tnode m42
\tnode n33
\tnode o22
\tnode p21

\draw[fill=black,opacity=0.2,draw=white] (b.center)--(l.center)--(k.center)--(o.center)--(h.center)--(g.center)--(n.center)--(m.center)--(d.center)--(c.center)--cycle;

\draw[line width=\lw]  (o)--(p)--(a)--(b);
\draw[line width=\lw] (c)--(d)--(e)--(f)--(g);
\draw[line width=\lw] (h)--(i)--(j)--(a)--(k);
\draw[line width=\lw] (h)--(o)--(k)--(l)--(b);
\draw[line width=\lw] (d)--(m);
\draw[line width=\lw] (m)--(n);
\draw[line width=\lw] (g)--(n)--(e);
\draw[line width=\lw] (i)--(p);

\draw[line width=\lww,->,>=latex,red] (b)--(c);
\draw[line width=\lww,->,>=latex,red] (l)--(d);
\draw[line width=\lww,->,>=latex,red] (k)--(m);
\draw[line width=\lww,->,>=latex,red] (o)--(n);
\draw[line width=\lww,->,>=latex,red] (h)--(g);

\draw[line width=\lwww,blue,opacity=\blueop,dotted] (i)--(o);
\draw[line width=\lwww,blue,opacity=\blueop,dotted] (n)--(d);
\node[anchor=north west,scale=\VCscl,red,inner sep=1pt](A) at (5.5,1.5) {$\v_i$};
\node[scale=\Xscl,blue] (BOT) at (-1,-0.3){$\displaystyle\sumlevel{k,\ i\notin A\sqcup B} \VolB$};
\node[scale=\Xscl,blue] (TOP) at (7,5.2){$\displaystyle\sumlevel{k+1,\ i\in A} \VolB$};
\draw[line width=\lw,->,blue] (BOT.80)to[bend right=20] (1,1.9);
\draw[line width=\lw,->,blue] (TOP)to[bend right=40] (4.5,3.1);

\end{scope}

\begin{scope}[xshift=2*\stp cm]

\draw[line width=\gridlw,opacity=\gridop] (\xmin,\ymin) grid (\xmax,\ymax);

\tnode a30
\tnode b51
% \tnode c62
% \tnode d52
\tnode e43
\tnode f24
% \tnode g14
\tnode h03
\tnode i02
\tnode j11
\tnode k31
\tnode l52
% \tnode m42
% \tnode n33
\tnode o22
\tnode p21

\draw[line width=\lw] (a)--(b)--(l)--(e)--(f)--(h)--(i)--(j)--(a)--(k)--(o)--(h);
\draw[line width=\lw] (i)--(p)--(a);
\draw[line width=\lw] (p)--(o)--(e);
\draw[line width=\lw] (k)--(l);

\draw[line width=\lwww,blue,opacity=\blueop,dotted] (i)--(l);
% \draw[line width=\lwww,blue,opacity=\blueop,dotted] (o)--(l);

\node[scale=\Xscl,blue] (TOP) at (5,5.2){$\gamma_k(\VC-i)$};
% \draw[line width=\lw,->,blue] (BOT.80)to[bend right=20] (1,1.9);
\draw[line width=\lw,->,blue] (TOP)to[bend right=40] (2.7,2.1);

\end{scope}

\node[anchor=north,scale=\Tilingscl] (AA) at (3,-0.3) {$\Tiling$};
% % \node[anchor=north,scale=\VCscl] (AA) at (C.south) {$\Tiling$};
\node[anchor=north,scale=\Tilingscl] (AA) at (3+2*\stp,-0.3) {$\Tiling-i$};

\end{tikzpicture}

}

  \caption{\label{fig:deletion} Deleting $\v_i$ from $\VC$  and its effect on a tiling  $\Tiling$, see~\eqref{eq:adjacent_levels2}.}
\end{figure} 

To prove \eqref{eq:adjacent_levels}, it is enough to verify what it says
for the $i$th coordinate, which is:
\begin{equation}\label{eq:adjacent_levels3}
\sumlevel{k,\ i\in A\sqcup B} \VolB =\sumlevel{k+1,\ i\in A} \VolB+\delta_i(k,\VC).
\end{equation}
\noindent Adding $\displaystyle\sumlevel{k,\ i\notin A\sqcup B} \VolB$ to both sides of~\eqref{eq:adjacent_levels3} and applying~\eqref{eq:adjacent_levels2} gives $\gamma_k(\VC)=\gamma_k(\VC-i)+\delta_i(k,\VC)$, which is precisely the definition~\eqref{eq:delta} of $\ddelta(k,\VC)$.
\end{proof}

\begin{corollary}\label{cor:x_k_y_k}
Recall the definition of 
$\vertHSP_k(\Tiling)$
	from \eqref{eq:HSP}.
Let $K\subset \Z$ and choose some numbers $x_k,y_k\in \R$ for each $k\in K$. Then 
\begin{equation}\label{eq:cor_x_k_y_k}
\sumcond{k:=|A|\in K} \VolB\cdot (x_k\e_A+y_k\e_B)=  \sum_{k\in K} \left((x_k-y_k)\vertHSP_k(\Tiling) +y_k(\vertHSP_{k+1}(\Tiling)+\ddelta(k,\VC))\right).
\end{equation}
\end{corollary}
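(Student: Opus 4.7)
The plan is to split the left-hand side into the $x_k\e_A$ part and the $y_k\e_B$ part, and treat each separately.

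For the $x_k\e_A$ part, I would simply reorganize the double sum by grouping the tiles $\tile_{A,B}\in\Tiling$ according to the value $k=|A|\in K$. Since $\vertHSP_k(\Tiling)=\sumlevel{k}\VolB\cdot \e_A$ by definition~\eqref{eq:HSP}, this immediately gives
\[
\sumcond{k:=|A|\in K}\VolB\cdot x_k\e_A \;=\; \sum_{k\in K} x_k\vertHSP_k(\Tiling).
\]

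For the $y_k\e_B$ part, the key input is \cref{prop:adjacent_levels}. Rewriting~\eqref{eq:adjacent_levels} by subtracting $\vertHSP_k(\Tiling)=\sumlevel{k}\VolB\cdot \e_A$ from both sides yields the identity
\[
\sumlevel{k}\VolB\cdot \e_B \;=\; \vertHSP_{k+1}(\Tiling)-\vertHSP_k(\Tiling)+\ddelta(k,\VC),
\]
valid for all $k\in\Z$ (using the conventions $\vertHSP_k=0$ and $\ddelta(k,\VC)=0$ outside $[0,n-d]$). Multiplying by $y_k$ and summing over $k\in K$ produces
\[
\sumcond{k:=|A|\in K}\VolB\cdot y_k\e_B \;=\; \sum_{k\in K} y_k\bigl(\vertHSP_{k+1}(\Tiling)-\vertHSP_k(\Tiling)+\ddelta(k,\VC)\bigr).
\]

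Adding the two contributions and collecting the coefficient of $\vertHSP_k(\Tiling)$ as $x_k-y_k$ yields exactly~\eqref{eq:cor_x_k_y_k}. There is no real obstacle: the corollary is an essentially formal consequence of the definition of $\vertHSP_k$ together with \cref{prop:adjacent_levels}, with the only bookkeeping concern being that the index shifts (e.g.\ $\vertHSP_{k+1}$ appearing for $k$ at the boundary of $K$) are handled correctly, which is automatic given the convention that $\vertHSP_k$ and $\ddelta(k,\VC)$ vanish outside $[0,n-d]$.
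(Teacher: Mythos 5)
Your argument is correct and amounts to the same calculation as the paper's: both reduce the claim to \cref{prop:adjacent_levels} plus trivial algebra, with the only difference being that the paper rewrites $x_k\e_A + y_k\e_B$ as $(x_k-y_k)\e_A + y_k(\e_A+\e_B)$ before applying the proposition, whereas you split off $x_k\e_A$ first and then derive the needed identity for the $\e_B$ sum by subtraction. The bookkeeping at the boundary of $K$ is indeed handled automatically by the stated vanishing conventions, as you note.
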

\begin{proof}
This follows by replacing $x_k\e_A+y_k\e_B$ on the left hand side of 
	\eqref{eq:cor_x_k_y_k} with $(x_k-y_k)\e_A+y_k(\e_A+\e_B)$, and applying \cref{prop:adjacent_levels}.
\end{proof}

\def\TilingOP{\Tiling^{\operatorname{op}}}
\def\one_#1{\e_{[#1]}}

\begin{definition}\label{def:op}
Given two disjoint sets $A,B\subset[n]$, let $C:=[n]\setminus(A\sqcup B)$, and denote $\tile_{A,B,C}:=\tile_{A,B}$. For each zonotopal tiling $\Tiling$ of $\ZV$ there exists ``the opposite'' zonotopal tiling $\TilingOP$ of $\ZV$ given by $\TilingOP:=\{\tile_{C,B,A}\mid \tile_{A,B,C}\in\Tiling\}$, see \cref{fig:TilingOP}.
\end{definition}

\begin{theorem}\label{prop:HSP_vert}
Recall the definitions of 
  $\vertFib(\Tiling)$, 
	$\vertFib_k(\Tiling)$, and $\vertGKZ(\Tiling)$
from 	\eqref{eq:vertFibzon},
 \eqref{eq:vertFib_k}, and 
\cref{rem:GKZ}.
We have 	\begin{align}
\label{eq:HSP_vertFib}
  &\vertFib(\Tiling)=\frac1{\Vold(\ZV)} \left( \sum_{k=1}^{n-d} \vertHSP_k(\Tiling) +\frac12\sum_{k=0}^{n-d}\ddelta(k,\VC) \right);\\
\label{eq:HSP_vertFib_k}
  &\vertFib_k(\Tiling)=\frac1{\Voldd(Q_k)} \left( \sum_{r=0}^{d-1} \frac{\Eul(d,r)}{d!}\vertHSP_{k-r}(\Tiling) + \sum_{r=1}^{d-1}\frac{r\cdot \Eul(d-1,r-1)}{d!}\ddelta(k-r,\VC) \right);\\
		\label{eq:GKZ}	& \vertGKZ(\Tiling)=\frac1{(d-1)!} \left( \vertHSP_{1}(\Tiling) + \ddelta(0,\VC) \right); \\
\label{eq:HSP_duality}
  &\vertHSP_{k}(\Tiling)+\vertHSP_{n-d-k+1}(\TilingOP) =\gamma_{k-1}(\VC)\cdot \one_n-\ddelta(k-1,\VC).
\end{align}
\end{theorem}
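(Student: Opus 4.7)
The plan is that all four identities are essentially bookkeeping exercises built on top of Corollary~\ref{cor:x_k_y_k}, which already converts sums of the form $\sum_{\tile_{A,B}\in\Tiling}\VolB(x_{|A|}\e_A+y_{|A|}\e_B)$ into $\vertHSP$-vectors plus correction terms $\ddelta(k,\VC)$. The main work is matching coefficients, with the only genuinely combinatorial input being the Eulerian recurrence $\Eul(d,r)=(r+1)\Eul(d-1,r)+(d-r)\Eul(d-1,r-1)$.

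For \eqref{eq:HSP_vertFib}, I would start from the formula for $\vertFib(\Tiling)$ in \eqref{eq:vertFibzon} and apply Corollary~\ref{cor:x_k_y_k} with $K=[0,n-d]$, $x_k=1$, $y_k=\tfrac12$. The resulting sum is $\sum_{k=0}^{n-d}\bigl(\tfrac12\vertHSP_k(\Tiling)+\tfrac12\vertHSP_{k+1}(\Tiling)+\tfrac12\ddelta(k,\VC)\bigr)$, and collapsing via $\vertHSP_0(\Tiling)=\vertHSP_{n-d+1}(\Tiling)=0$ gives the right-hand side.

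For \eqref{eq:HSP_vertFib_k}, I would open \eqref{eq:vertFib_k}, change index via $j=k-r$, and apply Corollary~\ref{cor:x_k_y_k} with $x_j=d\cdot\Eul(d-1,k-j-1)$ and $y_j=(k-j)\cdot\Eul(d-1,k-j-1)$. The coefficient of $\vertHSP_{k-r}(\Tiling)$ collects contributions from two adjacent values of the Corollary's index and becomes $(d-r)\Eul(d-1,r-1)+(r+1)\Eul(d-1,r)$; the Eulerian recurrence identifies this with $\Eul(d,r)$, which is the key step. The coefficient of $\ddelta(k-r,\VC)$ comes only from the $y_j$ term and is $r\cdot\Eul(d-1,r-1)$, matching the claim. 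Identity \eqref{eq:GKZ} then drops out by specializing \eqref{eq:HSP_vertFib_k} to $k=1$, using that $\vertHSP_{1-r}$ vanishes unless $r=0$ and $\ddelta(1-r,\VC)$ vanishes unless $r=1$, and finally multiplying by $d\cdot\Voldd(Q_1)$ as in~\eqref{eq:GKZ_FibPoly}.

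For the duality identity \eqref{eq:HSP_duality}, the key observation is that a tile $\tile_{A',B'}\in\TilingOP$ with $|A'|=n-d-k+1$ arises from a tile $\tile_{A,B,C}\in\Tiling$ (in the notation of Definition~\ref{def:op}) with $B'=B$, $A'=C$, and therefore $|A|=k-1$. Writing $\e_C=\one_n-\e_A-\e_B$, we get
\[
\vertHSP_{n-d-k+1}(\TilingOP)=\sum_{\tile_{A,B}\in\Tiling,\,|A|=k-1}\VolB\,(\one_n-\e_A-\e_B).
\]
The term $\sum_{|A|=k-1}\VolB\cdot\one_n$ equals $\gamma_{k-1}(\VC)\cdot\one_n$ by \eqref{eq:gamma_k}, and the remaining piece $-\sum_{|A|=k-1}\VolB(\e_A+\e_B)$ is precisely what Proposition~\ref{prop:adjacent_levels} evaluates (at level $k-1$) as $-\vertHSP_k(\Tiling)-\ddelta(k-1,\VC)$. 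Adding $\vertHSP_k(\Tiling)$ to both sides yields \eqref{eq:HSP_duality}. The only place where I expect to have to be careful is keeping the index shifts straight in \eqref{eq:HSP_vertFib_k}, since the Eulerian recurrence has to line up exactly with the re-indexing from Corollary~\ref{cor:x_k_y_k}; everything else is a direct unwinding of definitions.
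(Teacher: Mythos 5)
Your proposal is correct and follows the same route as the paper's proof: all four identities are derived by applying Corollary~\ref{cor:x_k_y_k} with the same choices of $K$, $x_k$, $y_k$ (you just factor the overall $\frac1{\Vold(\ZV)}$ and $\frac1{d!\Voldd(Q_k)}$ scalars out front, which is immaterial), using the identical Eulerian recurrence for \eqref{eq:HSP_vertFib_k}, and deriving \eqref{eq:HSP_duality} from $\e_C=\one_n-\e_A-\e_B$ together with \eqref{eq:gamma_k} and Proposition~\ref{prop:adjacent_levels}. The coefficient bookkeeping in \eqref{eq:HSP_vertFib_k} is carried out correctly, and the specialization to $k=1$ for \eqref{eq:GKZ} matches the paper's use of \eqref{eq:GKZ_FibPoly}.
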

\begin{proof}
Applying \cref{cor:x_k_y_k}  to~\eqref{eq:vertFibzon} 
	with $K=[0,n-d]$, $x_k=\frac1{\Vol(\ZV)}$, and $y_k=\frac1{2\Vol(\ZV)}$ for all $k\in K$, we obtain~\eqref{eq:HSP_vertFib}. 

	Similarly, applying \cref{cor:x_k_y_k}  to~\eqref{eq:vertFib_k} with $K=[k-d+1,k-1]$,  
	$x_{k-r}=\frac{d\cdot \Eul(d-1,r-1)}{d!\Voldd(Q_k)}$, and $y_{k-r}=\frac{r\cdot \Eul(d-1,r-1)}{d!\Voldd(Q_k)}$ for all $r\in [d-1]$, we get
\begin{align*}
  \vertFib_k(\Tiling)&=\frac1{\Voldd(Q_k)} \sum_{r=1}^{d-1}  \left( \frac{(d-r) \cdot \Eul(d-1,r-1)}{d!} \vertHSP_{k-r}(\Tiling)+\frac{r \cdot \Eul(d-1,r-1)}{d!} (\vertHSP_{k-r+1}(\Tiling)+\ddelta(k-r,\VC))\right)\\
   &=\frac1{\Voldd(Q_k)}\sum_{r=0}^{d-1}  \left(  \frac{(d-r)\cdot \Eul(d-1,r-1)+(r+1)\cdot \Eul(d-1,r)}{d!} \vertHSP_{k-r}(\Tiling)+\frac{r\cdot \Eul(d-1,r-1)}{d!}\ddelta(k-r,\VC)\right).
\end{align*}

\noindent Applying the well known recurrence $(d-r)\cdot \Eul(d-1,r-1)+(r+1)\cdot \Eul(d-1,r)=\Eul(d,r)$ for Eulerian numbers yields~\eqref{eq:HSP_vertFib_k}.

	For $k=1$,  combining~\eqref{eq:HSP_vertFib_k} with~\eqref{eq:GKZ_FibPoly} yields~\eqref{eq:GKZ}.

Finally, to show~\eqref{eq:HSP_duality}, we use $|A|+|C|=n-d$ and~\eqref{eq:HSP}
	to write
\begin{align*}
\vertHSP_{n-d-k+1}(\TilingOP)&=\summa{\tile_{C,B,A}\in\TilingOP}{|C|=n-d-k+1}\VolB\cdot \e_C=\sumlevel{k-1}\VolB\cdot (\e_{[n]}-\e_A-\e_B),
\end{align*}
and by~\eqref{eq:gamma_k} and~\eqref{eq:adjacent_levels}, this is equal to
\[
\gamma_{k-1}(\VC)\cdot \e_{[n]}-\sumlevel{k-1}\VolB\cdot (\e_A+\e_B)=\gamma_{k-1}(\VC)\cdot \e_{[n]}-\ddelta(k-1,\VC)-\vertHSP_{k}(\Tiling).\qedhere
\]
\end{proof}

\begin{figure}
\scalebox{0.9}{

\begin{tikzpicture}

\def\gridlw{1pt}
\def\gridop{0.25}
\def\lw{1.5pt}
\def\setscl{0.8}
\def\vscl{1.2}
\def\VCscl{1.5}
\def\tnode#1#2#3{\node[draw,fill,circle,scale=0.4](N#1) at (#2,#3) {};}
\def\tcoord#1#2#3{\coordinate(N#1) at (#2,#3);}
\def\lnode#1#2{\node[anchor=180+#2,scale=\setscl] (A#1) at (N#1.#2) {$#1$};}
\def\lnodv#1#2{\node[anchor=180+#2,scale=\vscl] (A#1) at (N#1.#2) {$\v_{#1}$};}
\def\lnod#1#2#3{\node[anchor=-#2,scale=\setscl] (A#1) at (N#1.#2) {#3};}
\def\stp{6}

\def\xmin{0}
\def\xmax{4}
\def\ymin{0}
\def\ymax{4}
\def\marg{0.7}

\node(B) at (0,0){
  \begin{tikzpicture}
\draw[white] (-\marg,-\marg) rectangle (\xmax+\marg,\ymax+\marg);
\draw[line width=\gridlw,opacity=\gridop] (\xmin,\ymin) grid (\xmax,\ymax);

\tnode{0}10
\tnode{1}31
\tnode{2}21
\tnode{4}01
\tnode{12}42
\tnode{23}22
\tnode{24}12
\tnode{34}02
\tnode{123}43
\tnode{234}13
\tnode{1234}34

\draw[line width=\lw] (N0)--(N1)--(N12)--(N123)--(N1234)--(N234)--(N34)--(N4)--(N0);
\draw[line width=\lw] (N4)--(N24)--(N2)--(N12);
\draw[line width=\lw] (N234)--(N23)--(N123);
\draw[line width=\lw] (N234)--(N24);
\draw[line width=\lw] (N23)--(N2)--(N0);

\lnod{0}{270}{$\emptyset$}
\lnode{1}{-45}
\lnode{12}{0}
\lnode{123}{45}
\lnode{1234}{90}
\lnode{234}{135}
\lnode{34}{180}
\lnode{4}{225}
\lnode{2}{180}
\lnode{24}{-90}
\lnode{23}{-45}
\end{tikzpicture}

};

\node(C) at (\stp,0){
  \begin{tikzpicture}

\draw[white] (-\marg,-\marg) rectangle (\xmax+\marg,\ymax+\marg);
\draw[line width=\gridlw,opacity=\gridop] (\xmin,\ymin) grid (\xmax,\ymax);

\tnode{0}10
\tnode{1}31
\tnode{4}01
\tnode{12}42
\tnode{13}32
\tnode{14}22
\tnode{34}02
\tnode{123}43
\tnode{234}13
\tnode{134}23
\tnode{1234}34

\draw[line width=\lw] (N0)--(N1)--(N12)--(N123)--(N1234)--(N234)--(N34)--(N4)--(N0);
\draw[line width=\lw] (N4)--(N14)--(N1)--(N13);
\draw[line width=\lw] (N34)--(N134)--(N13)--(N123);
\draw[line width=\lw] (N1234)--(N134)--(N14);

\lnod{0}{270}{$\emptyset$}
\lnode{1}{-45}
\lnode{12}{0}
\lnode{123}{45}
\lnode{1234}{90}
\lnode{234}{135}
\lnode{34}{180}
\lnode{4}{225}
\lnode{14}{-90}
\lnode{13}{0}
\lnode{134}{0}
\end{tikzpicture}

};

\node(A) at (-\stp,0){
  \begin{tikzpicture}

\draw[white] (-\marg,-\marg) rectangle (\xmax+\marg,\ymax+\marg);
\draw[line width=\gridlw,opacity=\gridop] (\xmin,\ymin) grid (\xmax,\ymax);

\tnode{0}10
\tcoord{1}31
\tcoord{2}21
\tcoord{3}11
\tcoord{4}01

\draw[line width=\lw,->,>=latex] (N0)--(N1);
\draw[line width=\lw,->,>=latex] (N0)--(N2);
\draw[line width=\lw,->,>=latex] (N0)--(N3);
\draw[line width=\lw,->,>=latex] (N0)--(N4);
\node[anchor=90,scale=\vscl] (A) at (N0.-90) {$0$};
\lnodv{1}{90}
\lnodv{2}{90}
\lnodv{3}{90}
\lnodv{4}{90}

\end{tikzpicture}
};

\node[anchor=north,scale=\VCscl] (AA) at (A.south) {$\VC$};
\node[anchor=north,scale=\VCscl] (AA) at (B.south) {$\Tiling$};
\node[anchor=north,scale=\VCscl] (AA) at (C.south) {$\TilingOP$};

\end{tikzpicture}

}
  \caption{\label{fig:TilingOP} A vector configuration $\VC$, a fine zonotopal tiling $\Tiling$ of $\ZV$, and its ``opposite'' tiling $\TilingOP$ for $d=2$ and $n=4$. We label each vertex $\v_{i_1}+\dots+\v_{i_k}$ by $i_1\cdots i_k$.}
\end{figure}

\begin{example}\label{ex:d23}
	For $d=2$,~\eqref{eq:HSP_vertFib_k} becomes 
\begin{align}
\label{eq:HSP_vertFib_k_d=2}
  \vertFib_k(\Tiling)&=\frac1{2\Vol^1(Q_k)} \left(\vertHSP_{k}(\Tiling) +\vertHSP_{k-1}(\Tiling) +\ddelta(k-1,\VC) \right).
\end{align}
\end{example}

\def\vect#1#2#3#4{(#1,#2,#3,#4)}
\begin{example}\label{ex:gamma}
Consider the case $n=4$, $d=2$, and let $\VC$ be the vector configuration given in \cref{fig:TilingOP} (left), so the vectors $\v_1,\v_2,\v_3,\v_4$ of $\VC$ are the column vectors of the matrix $\begin{pmatrix}
2 & 1 & 0 &-1\\
1 & 1 & 1 & 1
\end{pmatrix}$. If $B=\{i,j\}$ for $1\leq i<j\leq 4$ then $\VolB=j-i$. We have
\[\Vold(\ZV)=10,\quad \Voldd(Q_1)=3,\quad \Voldd(Q_2)=4,\quad \Voldd(Q_3)=3,\]
	where $\Vold(\ZV)$ is the area of $\ZV$ and $\Voldd(Q_k)$ is the length of the horizontal section of $\ZV$ by the line $y_2=k$. By~\eqref{eq:gamma_d=2}, $\gamma_k(\VC)$ is equal to $\Ar_{k+1}=\Voldd(Q_{k+1})$.  Using this to compute $\gamma_k(\VC)$ (and also $\gamma_k(\VC - i)$ for $i=1,2,3,4$), we get
\begin{align*}
\gamma_0(\VC)&=3, &\gamma_1(\VC)&=4, &\gamma_2(\VC)&=3;\\
\ddelta(0,\VC)&=\vect1001,
&\ddelta(1,\VC)&=\vect2112,
&\ddelta(2,\VC)&=\vect3333. 
\end{align*}
Let $\Tiling$ and $\TilingOP$ be as in \cref{fig:TilingOP}. The corresponding vertices of the higher secondary polytopes are given by
\[\vertHSP_1(\Tiling)=\vect0301,\quad 
\vertHSP_2(\Tiling)=\vect0330,\quad 
\vertHSP_1(\TilingOP)=\vect2002,\quad 
\vertHSP_2(\TilingOP)=\vect2031. \]

We would like to verify the formulas from \cref{prop:HSP_vert}. First,~\eqref{eq:HSP_duality} clearly holds: for $k=1$ and $k=2$, we have 
\begin{align*}
\vertHSP_1(\Tiling)+\vertHSP_2(\TilingOP)&=\vect2332, &\gamma_0(\VC)\e_{[n]}-\ddelta(0,\VC)&=3\cdot \vect1111-\vect1001=\vect2332,\\
\vertHSP_2(\Tiling)+\vertHSP_1(\TilingOP)&=\vect2332, &\gamma_1(\VC)\e_{[n]}-\ddelta(1,\VC)&=4\cdot \vect1111-\vect2112=\vect2332.
\end{align*}
\def\ee#1{\e_{\{#1\}}}
Using~\eqref{eq:vertFibzon} and~\eqref{eq:vertFib_k_d=2}, we find
\begin{align*}
  \vertFib(\Tiling)=&\frac1{10} \Biggl(2 \frac{\e_{\{2,4\}}}2+\frac{\e_{\{1,2\}}}2+ \left(\e_4+\frac{\e_{\{2,3\}}}2\right)+\left(\e_2+\frac{\e_{\{3,4\}}}2\right)+2\left(\e_2+\frac{\e_{\{1,3\}}}2\right)
\Biggr.\\
 &\Biggl.+3 \left(\e_{\{2,3\}}+\frac{\e_{\{1,4\}}}2\right)\Biggr)=\frac1{10}\vect3854;\\
  \vertFib_1(\Tiling)=&\frac1{3}   \left(2\frac{\ee{2,4}}2+\frac{\ee{1,2}}2 \right)=\frac16\vect1302;\\
  \vertFib_2(\Tiling)=&\frac1{4}   \left(\left(\e_4+\frac{\e_{\{2,3\}}}2\right)+\left(\e_2+\frac{\e_{\{3,4\}}}2\right)+2\left(\e_2+\frac{\e_{\{1,3\}}}2\right)\right)=\frac18\vect2743;\\
  \vertFib_3(\Tiling)=&\frac13\cdot 3 \left(\e_{\{2,3\}}+\frac{\e_{\{1,4\}}}2\right)=\frac12\vect1221.
\end{align*}
We indeed see that~\eqref{eq:HSP_vertFib}  and~\eqref{eq:HSP_vertFib_k} (which specializes to~\eqref{eq:HSP_vertFib_k_d=2} for $d=2$) hold as well:
\begin{align*} \vertFib(\Tiling)&=\frac1{10}\vect3854=\frac1{10}\left(\vertHSP_1(\Tiling)+\vertHSP_2(\Tiling)+\frac12 \left(\ddelta(0,\VC)+\ddelta(1,\VC)+\ddelta(2,\VC)\right)\right);\\
  \vertFib_1(\Tiling)&=\frac16\vect1302=\frac1{2\cdot 3} \left(\vertHSP_1(\Tiling)+0+\ddelta(0,\VC)\right);\\
  \vertFib_2(\Tiling)&=\frac18\vect2743=\frac1{2\cdot 4} \left(\vertHSP_2(\Tiling)+\vertHSP_1(\Tiling)+\ddelta(1,\VC)\right);\\
  \vertFib_3(\Tiling)&=\frac16\vect3663=\frac1{2\cdot 3} \left(0+\vertHSP_2(\Tiling)+\ddelta(2,\VC)\right).
\end{align*}
\end{example}

\section{Flips of zonotopal tilings}\label{sec:flips-zonot-tilings}
Zonotopal tilings form a poset under refinement whose minimal elements are fine zonotopal tilings. Two fine zonotopal tilings \emph{differ by a flip} (cf. \cref{dfn:flip}) if there exists a zonotopal tiling that covers both of them in this poset.  
In this section we describe 
(see \cref{cor:flip1,cor:flip2}) 
how the vectors 
$\vertHSP_k(\Tiling)$ and $\vertHSP_k(\Tiling')$
differ when the 
 fine zonotopal tilings  $\Tiling$ and $\Tiling'$ differ by a flip.  
This will be useful in \cref{sec:regular} for describing
the $1$-skeleton of a higher secondary polytope.

\def\Bases{\Bcal(\Mcal)}
\def\Circuits{\Ccal(\Mcal)}
\def\sC^#1{\suppC^{(#1)}}
\def\setmin#1#2{#1^{(#2)}}
\def\BMC{\Bcal(\Mcal/\suppC)}

\subsection{Oriented matroids and signed circuits}

Each vector configuration $\VC=(\v_1,\dots,\v_n)$ spanning $\R^d$ defines a
rank $d$ \emph{oriented matroid} $\Mcal=\Mcal_\VC$. We refer to~\cite{RedBook} for the definition of an oriented matroid, but note that it is completely determined by its set $\Circuits$ of \emph{circuits} introduced below. We denote by $\Bases$ the collection of \emph{bases} of $\VC$, that is, 
$d$-element subsets $B\subset [n]$ such that the vectors $\{\v_i\}_{i\in B}$ form a linear basis of $\R^d$. We say that the vector configuration $\VC$ is \emph{generic} if $\Bases={[n]\choose d}:=\{B\subset [n]\mid |B|=d\}$, that is, if every $d$ vectors of $\VC$ form a basis of $\R^d$. An \emph{independent set} is a subset $I\subset [n]$ such that there is a basis $B\in\Bases$ satisfying $I\subset B$.

Let us mention a well known property of fine zonotopal tilings, see \cref{fig:TilingOP} for an example.
\begin{proposition}[{\cite[(56)]{Shephard}}]\label{prop:bases_bijection}
Let $\Tiling$ be a fine zonotopal tiling of $\ZV$. Then the map $\tile_{A,B}\mapsto B$ is a bijection between $\Tiling$ and $\Bases$. In other words, for each basis $B\in\Bases$ of $\VC$, there exists a unique set $A\subset([n]\setminus B)$ such that $\tile_{A,B}$ belongs to $\Tiling$.
\end{proposition}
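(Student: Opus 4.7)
The plan is to combine the polyhedral subdivision property of $\Tiling$ with Shephard's classical volume formula for zonotopes. First I would observe that each labeled tile $\tile_{A,B}\in\Tiling$ is a $d$-dimensional parallelepiped with edge vectors $\{\v_b\}_{b\in B}$, so $\VolB=|\det(\v_i)_{i\in B}|>0$ and $B$ must be a basis. Hence the assignment $\tile_{A,B}\mapsto B$ does land in $\Bases$.

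Set $m_B := \#\{A\subset[n]\setminus B : \tile_{A,B}\in\Tiling\}$ for each $B\in\Bases$. The subdivision axiom of \cref{def:finetiling} lets me sum volumes tile-by-tile to get
\[
\sum_{B\in\Bases} m_B\cdot \VolB \;=\; \Vold(\Zon_\V),
\]
and combining this with Shephard's classical identity $\Vold(\Zon_\V)=\sum_{B\in\Bases}\VolB$ yields
\[
\sum_{B\in\Bases}(m_B-1)\,\VolB \;=\; 0.
\]
Since $\VolB>0$ for every basis, the proof will be complete as soon as surjectivity is established, i.e., $m_B\geq 1$ for every $B\in\Bases$: once that is known, every summand $(m_B-1)\VolB$ is nonnegative, so the vanishing of the sum forces each $m_B=1$, giving both injectivity and surjectivity of the map $\tile_{A,B}\mapsto B$ simultaneously.

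The main obstacle is therefore the surjectivity claim $m_B\geq 1$. The cleanest route I would take is to invoke the Bohne--Dress theorem, which identifies fine zonotopal tilings of $\Zon_\V$ with single-element liftings of the oriented matroid $\Mcal_\V$; under this correspondence, the tiles of any fine zonotopal tiling $\Tiling$ are naturally indexed by the bases of $\Mcal_\V$, which produces, for each $B\in\Bases$, an explicit tile $\tile_{A_B,B}\in\Tiling$. An elementary alternative would be to first verify the bijection by hand for a \emph{coherent} tiling $\Tiling_\h$ associated with a generic vector $\h\in\R^n$ (where the unique $A_B$ can be read off as the set of indices $i\in[n]\setminus B$ satisfying an explicit sign condition coming from $\h$ and the expansion of $\v_i$ in the basis $B$), and then show by a local continuity/degree argument at a generic interior point of the corresponding translate of $\tile_{\emptyset,B}$ that any other fine zonotopal tiling $\Tiling$ must likewise contain a tile of parallelepiped shape $\tile_{\emptyset,B}$. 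Either route closes out Step~3 and, by the preceding paragraph, the proposition.
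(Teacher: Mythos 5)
The paper does not prove this proposition at all; it is stated as a citation to Shephard's 1974 paper (formula (56)), and the uniqueness is later \emph{used} in the proof of Lemma~4.9 (\cref{lemma:ESA}) rather than derived from it. So you are on your own here, and I will evaluate your proof on its merits.

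Your volume bookkeeping is correct and gives a clean reduction: the subdivision property does yield $\sum_B m_B\VolB=\Vold(\ZV)$, Shephard's identity gives $\sum_B(m_B-1)\VolB=0$, and this plus surjectivity would finish. The problem is how you propose to establish surjectivity. Your first route invokes a version of the Bohne--Dress theorem that already \emph{contains} the statement ``tiles are naturally indexed by bases.'' If you are allowed to cite that, the whole proposition follows at once and the volume argument becomes decoration; if you are not (the standard formulation of Bohne--Dress is a bijection between zonotopal tilings and single-element liftings of $\Mcal_\V$, and the tile-basis indexing is a further consequence that itself needs an argument), then you have not actually produced the missing step. Your second route is the real gap: ``verify the bijection for a coherent $\Tiling_\h$ and then argue by continuity/degree at a generic interior point of the corresponding translate of $\tile_{\emptyset,B}$'' cannot work as written, because the unique tile of shape $B$ in a non-coherent $\Tiling$ will in general occupy a \emph{different} translate $\tile_{A,B}$ than it does in $\Tiling_\h$, so looking locally near the coherent tile's location tells you nothing about $\Tiling$. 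You would need a genuinely global argument (for example, the deletion/contraction induction, or the circuit-orientation machinery of \cref{lemma:ESA} run independently of this proposition) to get surjectivity without already knowing the bijection. As it stands, the proposal correctly reduces the claim but does not close it.
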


\begin{definition}\label{def:signedsubset}
	A \emph{signed set} is a pair
	$X=(X^+, X^-)$ of disjoint subsets of $[n]$.  Its
	\emph{support} is $\suppX := X^+ \sqcup X^-$, and we set  $X^0:= [n] \setminus \underline{X}$, thus $[n]=X^+\sqcup X^0\sqcup X^-$.  For  each  
	$j\in [n]$ we write 
\begin{equation}\label{eq:signed_set}
	X_j = \begin{cases}
		 +1, &\text{ if }j \in X^+; \\
		 -1, &\text{ if }j \in X^-; \\
		0, &\text{ if } j \in X^0.
	\end{cases}
\end{equation}
For $j\in \suppX$, we denote $\setmin \suppX j:=\suppX\setminus\{j\}$. We also let $-X:=(X^-,X^+)$ denote the \emph{opposite} signed set.
\end{definition}

\def\alphaC{\bm{\alpha}(C)}
\def\alphaX(#1){\alpha(#1)}
\def\alphaCJ{\bm{\alpha}(C,J)}
\def\alphaXX(#1,#2){\bm{\alpha}(#1,#2)}
\def\alphaCx_#1{\alpha_{#1}(C)}
\def\alphaCJx_#1{\alpha_{#1}(C,J)}
\begin{definition}\label{dfn:alphaC}
A \emph{circuit}  of $\VC$ is a signed set $C=(C^+,C^-)$ such that  $\setmin \suppC j$ is an independent set for each $j\in\suppC$, but there exists a vector $\alphaC\in\R^n$ satisfying
\[\text{$\alphaCx_j>0$ for $j\in C^+$,}\quad \text{$\alphaCx_j<0$ for $j\in C^-$,}\quad \text{$\alphaCx_j=0$ for $j\in C^0$},\quad\text{and}\quad\text{$\sum_{j\in\suppC}\alphaCx_j \v_j=0$.}\]
Such a  vector $\alphaC$ is unique up to multiplication by a positive real number. We denote by $\Circuits$ the collection of all circuits of $\VC$.
\end{definition}

\def\Vert{\operatorname{Vert}}

Throughout, for $A\subset [n]$ and $j\in[n]$, we abbreviate $A\cup j:=A\cup\{j\}$ and $A\eltminus j:=A\setminus\{j\}$.
\subsection{Circuit orientations}\label{sec:circuit-orientations}
A convenient way to work with flips of fine zonotopal tilings is to use the language of \emph{circuit orientations}.
\begin{definition}
	A \emph{circuit orientation}  is a map $\sigma:\Circuits\to \{+1,0,-1\}$ satisfying $\sigma(-C)=-\sigma(C)$ for all $C\in\Circuits$. We say that $\sigma$ is \emph{generic} if $\sigma(C)\in\{+1,-1\}$ for all $C\in\Circuits$.
\end{definition}

 We describe a way to associate a generic circuit orientation (called \emph{colocalization} in~\cite{GaPo} because they are dual to the localizations of 
 \cite[Definition 7.1.5]{RedBook}) to each fine zonotopal tiling $\Tiling$ of $\ZV$. Let $\Tiling$ be such a tiling. Define its set of \emph{vertex labels} (cf. \cref{fig:TilingOP}) by 
\begin{equation}\label{eq:dfn_Vert}
	\Vert(\Tiling):=\{I\subset [n]\mid A\subset I\subset A\sqcup B\text{ for some $\tile_{A,B}\in\Tiling$}\}.\footnote{
 Given a fine zonotopal tiling $\Tiling$, the collection $\Vert(\Tiling)$ defined in~\eqref{eq:dfn_Vert} coincides with the collection defined in~\cite[Eq.~(2.1)]{GaPo}. The two definitions look slightly different because in~\cite{GaPo}, a tiling is a collection of faces
of all different dimensions, whereas here we identify a tiling with its collection
of top-dimensional faces.} 
\end{equation}
Given a set $S\subset [n]$ and a circuit $C\in\Circuits$, we say that $S$ \emph{orients $C$ positively} if $C^+\subset S$ and $C^-\cap S=\emptyset$. Similarly, we say that $S$ \emph{orients $C$ negatively} if $C^-\subset S$ and $C^+\cap S=\emptyset$. We say that a collection $\WS\subset 2^{[n]}$ \emph{orients $C$ positively} if some set in $\WS$ orients $C$ positively but no set in $\WS$ orients $C$ negatively. Similarly, we say that a collection $\WS\subset 2^{[n]}$ \emph{orients $C$ negatively} if some set in $\WS$ orients $C$ negatively but no set in $\WS$ orients $C$ positively.

\begin{proposition}[{\cite[Theorem~2.7 and Corollary~7.22]{GaPo}}]\label{prop:coloc_pm}
Let $\Tiling$ be a fine zonotopal tiling of $\ZV$ and let $C\in\Circuits$. Then the collection $\Vert(\Tiling)$ either orients $C$ positively or orients $C$ negatively (but not both).
\end{proposition}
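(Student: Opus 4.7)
The plan is to deduce this from the Bohne--Dress correspondence between fine zonotopal tilings and one-element liftings of oriented matroids, as developed in the authors' earlier work \cite{GaPo}. By Bohne--Dress, the fine tiling $\Tiling$ of $\ZV$ corresponds to a rank-$(d+1)$ oriented matroid $\tilde\Mcal$ on the ground set $[n]\sqcup\{\ast\}$ whose deletion of the element $\ast$ recovers $\Mcal_\VC$. Under this correspondence, the set $\Vert(\Tiling)$ can be identified with the positive supports (intersected with $[n]$) of exactly those cocircuits of $\tilde\Mcal$ which are positive in position $\ast$.

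Next, each circuit $C\in\Circuits$ of $\Mcal_\VC$ lifts canonically to a circuit $\tilde C$ of $\tilde\Mcal$ by setting $\tilde C_\ast:=0$. Orthogonality between $\tilde C$ and any cocircuit $D$ of $\tilde\Mcal$ with $D_\ast=+1$ then translates into a constraint on how the vertex $I\in\Vert(\Tiling)$ associated to $D$ interacts with $C$: either $I$ orients $C$ positively, orients it negatively, or contains elements of both $C^+$ and $C^-$. The key step to rule out coexistence of positively- and negatively-oriented vertices is a circuit-elimination argument inside $\tilde\Mcal$: if two cocircuits $D,D'$ through $\ast$ oriented $\tilde C$ with opposite signs, eliminating the common element $\ast$ would yield a covector whose sign pattern on $\suppC$ conflicts with $\tilde C$, contradicting the minimality of the circuit $\tilde C$.

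Existence of at least one oriented vertex is routine: fix $j\in\suppC$, set $B:=\suppC\setminus\{j\}$, which is a basis of $\Mcal_\VC$ since every proper subset of $\suppC$ is independent by the definition of a circuit. By \cref{prop:bases_bijection}, there is a unique tile $\tile_{A,B}\in\Tiling$, and one checks directly that one of the vertex labels $A\cup(B\cap C^+)$ or $A\cup(B\cap C^-)$ lies in $\Vert(\Tiling)$ and orients $C$ accordingly. The main obstacle is executing the orthogonality and elimination step rigorously within the Bohne--Dress framework; that work is essentially done in \cite[Theorem~2.7 and Corollary~7.22]{GaPo}, which the final proof will invoke directly.
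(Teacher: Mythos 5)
Both you and the paper ultimately just cite \cite[Theorem~2.7 and Corollary~7.22]{GaPo} for this statement --- the paper gives no independent proof, only notes the alternative route through \cite{RedBook} --- so at that level the proposal matches the paper. However, the sketch you give of the underlying Bohne--Dress argument contains several errors that would prevent it from being turned into an actual proof, so it is worth recording them.

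First, you conflate lifting with extension: you describe $\tilde\Mcal$ as a rank-$(d+1)$ oriented matroid whose \emph{deletion} of $\ast$ recovers $\Mcal_\VC$, but deletion preserves rank. The Bohne--Dress correspondence uses single-element \emph{liftings}, for which $\tilde\Mcal/\ast = \Mcal_\VC$. With the correct notion, the claim that a circuit $C$ of $\Mcal_\VC$ lifts to a circuit $\tilde C$ of $\tilde\Mcal$ with $\tilde C_\ast = 0$ is false. The realizable case shows why: with $\vt_i = (\v_i,h_i)$ and $\ast = \e_{d+1}$, the dependence $\sum_j\alpha_j\v_j = 0$ lifts to $\sum_j\alpha_j\vt_j + \beta\ast = 0$ where $\beta = -\sum_j\alpha_j h_j = -\langle\h,\aalpha(C)\rangle$, which is generically nonzero. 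In fact the sign of $\tilde C_\ast$ is precisely $-\sigma_{\Tiling}(C)$, so it carries exactly the information the proposition is about; setting it to zero discards the whole point.

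Second, the identification of $\Vert(\Tiling)$ with positive supports of \emph{cocircuits} of $\tilde\Mcal$ positive at $\ast$ is not correct --- vertex labels of a fine zonotopal tiling correspond to topes (maximal covectors) of $\tilde\Mcal$ with $\ast$-coordinate $+$, not to cocircuits.

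Third, the existence step sets $B := \suppC\setminus\{j\}$ and asserts it is a basis ``since every proper subset of $\suppC$ is independent.'' Independence is correct, but $B$ is a basis only when $|\suppC| = d+1$. For non-generic $\A$ circuits of smaller support exist (e.g.\ $\v_3 = \v_4$ in the example accompanying \cref{fig:flips}), and your step fails there. The correct maneuver, which appears in the paper's proof of \cref{lemma:ESA}, is to extend $\suppC\setminus\{j\}$ to a basis $B$, take the unique $A$ with $\tile_{A,B}\in\Tiling$, and observe that exactly one of $A\cup C^+$, $A\cup C^-$ lies in $\Vert(\Tiling)$ and orients $C$.
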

\noindent Note that \cref{prop:coloc_pm} can alternatively be deduced by combining Proposition~2.2.11, Theorem~2.2.13, and Proposition~7.1.4 of~\cite{RedBook}. 
We define a generic circuit orientation $\sigma_\Tiling:\Circuits\to\{+1,-1\}$ by setting 
\begin{equation}\label{eq:sigma_tiling}
\sigma_\Tiling(C):=
  \begin{cases}
    +1, &\text{if $\Vert(\Tiling)$ orients $C$ positively,}\\
    -1, &\text{if $\Vert(\Tiling)$ orients $C$ negatively,}\\
  \end{cases}\qquad \text{for all $C\in\Circuits$.}
\end{equation}

\begin{definition}\label{dfn:flip}
Consider two fine zonotopal tilings $\Tiling,\Tiling'$ of $\ZV$, and let $\sigma:=\sigma_\Tiling$, $\sigma':=\sigma_{\Tiling'}$ be the corresponding generic circuit orientations. We say that $\Tiling$ and $\Tiling'$ \emph{differ by a flip} if there exists a circuit $C\in\Circuits$ such that $\sigma(C)=+1$, $\sigma'(C)=-1$ and $\sigma(X)=\sigma'(X)$ for all $X\in\Circuits$ such that $X\neq\pm C$. In this case, we denote this flip by $F:=(\Tiling\to\Tiling')$ and say that $F$ is a flip \emph{along $C$}.
\end{definition}
\noindent Our next goal is to describe the effect of a flip $F=(\Tiling\to\Tiling')$ on the tiles of $\Tiling$ and on $\vertHSP_k(\Tiling)$.

\subsection{Flips for generic vector configurations}
Recall that a vector configuration $\VC$ is called \emph{generic} if $\Bases={[n]\choose d}$. Before proceeding to the general case, we describe flips of zonotopal tilings and their effect on the vertices of higher secondary polytopes in the case when $\VC$ is generic. Thus in this subsection \textbf{we restrict our attention to generic vector configurations}. We postpone the proofs of all results until 
\cref{sec:flips-arbitrary}.

Recall that the vector $\alphaC$ from \cref{dfn:alphaC} is defined up to a positive real constant. We start by fixing a choice for this constant: for each $C\in\Circuits$, define $\alphaC\in\R^n$ by
\begin{equation}\label{eq:alphaC}
\alphaCx_j:=C_j\cdot \VolX(\sC^j)\quad\text{for all $j\in[n]$},   
\end{equation}
where $C_j\in\{+1,0,-1\}$ and $\sC^j\in \Bases$ are given in \cref{def:signedsubset}. As we will see  in \cref{lemma:alpha_dep},  $\alphaC$ satisfies the assumptions of \cref{dfn:alphaC}.

\begin{proposition}\label{prop:flip_generic}
Let $F=(\Tiling\to\Tiling')$ be a flip along $C\in\Circuits$. Then there exists a set $A:=A(F)\subset [n]\setminus \suppC$ such that
\[
  \Tiling\setminus \Tiling'= \left\{\tile_{A\cup j,\sC^j}\right\}_{j\in C^+}\sqcup \left\{\tile_{A,\sC^j}\right\}_{j\in C^-} \quad\text{and}\quad 
 \Tiling'\setminus \Tiling= \left\{\tile_{A,\sC^j}\right\}_{j\in C^+}\sqcup \left\{\tile_{A\cup j,\sC^j}\right\}_{j\in C^-}.\]
\end{proposition}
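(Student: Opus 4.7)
The plan is to leverage the bijection of \cref{prop:bases_bijection} between tiles and bases: for any fine zonotopal tiling $\Tiling$ and basis $B\in\Bases$, denote by $A_B(\Tiling)\subset[n]\setminus B$ the unique set with $\tile_{A_B(\Tiling),B}\in\Tiling$, so that $\Tiling\triangle\Tiling'$ is recorded by the bases $B$ with $A_B(\Tiling)\neq A_B(\Tiling')$. Since $\VC$ is generic, for each basis $B$ and each $i\in[n]\setminus B$ there is a unique signed circuit $C_{B,i}\in\Circuits$ supported in $B\cup\{i\}$, and we fix the orientation with $i\in C_{B,i}^+$. Starting from \cref{prop:coloc_pm} and unpacking the definition~\eqref{eq:dfn_Vert} of $\Vert(\Tiling)$, I would first establish the dictionary $i\in A_B(\Tiling)\Leftrightarrow\sigma_\Tiling(C_{B,i})=+1$: if $i\in A_B(\Tiling)$ then the vertex label $A_B(\Tiling)\cup(C_{B,i}^+\cap B)\in\Vert(\Tiling)$ orients $C_{B,i}$ positively, while if $i\notin A_B(\Tiling)$ then $A_B(\Tiling)\cup(C_{B,i}^-\cap B)$ orients it negatively; the two options are mutually exclusive by \cref{prop:coloc_pm}. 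Hence $A_B(\Tiling)$ is entirely recoverable from $\{\sigma_\Tiling(C_{B,i})\}_{i\notin B}$.

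Because $\sigma_\Tiling$ and $\sigma_{\Tiling'}$ differ only on $\pm C$ (\cref{dfn:flip}), we conclude that $A_B(\Tiling)=A_B(\Tiling')$ unless $C_{B,i}=\pm C$ for some $i\notin B$. Genericity forces $|\suppC|=d+1$, so this occurs exactly when $B\cup\{i\}=\suppC$, i.e., $B=\sC^j$ and $i=j$ for some $j\in\suppC$. For each such $j$, the sets $A_{\sC^j}(\Tiling)$ and $A_{\sC^j}(\Tiling')$ agree outside $\{j\}$ and differ at $j$; comparing signs via $\sigma_\Tiling(C)=+1$, $\sigma_{\Tiling'}(C)=-1$ and the dictionary above shows $j\in A_{\sC^j}(\Tiling)\setminus A_{\sC^j}(\Tiling')$ when $j\in C^+$ and the reverse when $j\in C^-$. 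Setting $A:=A_{\sC^{j_0}}(\Tiling)\setminus\{j_0\}$ for any $j_0\in C^+$ (or $A:=A_{\sC^{j_0}}(\Tiling)$ for $j_0\in C^-$), the inclusion $A\subset[n]\setminus\suppC$ is automatic from $\sC^{j_0}\supset\suppC\setminus\{j_0\}$.

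The main obstacle is then showing that $A$ is well defined, i.e., independent of the choice of $j_0\in\suppC$; equivalently, that all $d+1$ flipped tiles share a common offset. I plan to establish this geometrically: the flipped tiles of $\Tiling$ partition a translated sub-zonotope $Z=\sum_{a\in A}\v_a+\sum_{i\in\suppC}[0,\v_i]$ generated by the $d+1$ generic vectors $\{\v_i\}_{i\in\suppC}$, and such a zonotope admits exactly two fine zonotopal tilings. A direct computation shows that these two tilings of $Z$ are precisely $\{\tile_{A\cup j,\sC^j}\}_{j\in C^+}\sqcup\{\tile_{A,\sC^j}\}_{j\in C^-}$ and $\{\tile_{A,\sC^j}\}_{j\in C^+}\sqcup\{\tile_{A\cup j,\sC^j}\}_{j\in C^-}$ (the two choices of the central vertex, one orienting $C$ positively and the other negatively); their uniqueness forces the flipped portions of $\Tiling$ and $\Tiling'$ to match them with a common $A$, yielding the claim.
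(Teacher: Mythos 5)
Your first two paragraphs track the paper's argument closely: the dictionary $i\in A_B(\Tiling)\Leftrightarrow\sigma_\Tiling(C_{B,i})=+1$ is exactly Lemma~\ref{lemma:ESA} (there packaged via externally semi-active elements), and the consequence that only bases of the form $\sC^j$ can see their offset change under the flip is exactly Corollary~\ref{cor:flip_tiles}. Those steps are sound.

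The third paragraph is where the gap is, and unfortunately it is the entire content of the proposition. You correctly identify the obstacle as showing that the offsets $A_{\sC^j}(\Tiling)\setminus\{j\}$ (for $j\in C^+$) and $A_{\sC^j}(\Tiling)$ (for $j\in C^-$) coincide for different $j$. But your proposed resolution is circular: you write down the sub-zonotope $Z=\sum_{a\in A}\v_a+\sum_{i\in\suppC}[0,\v_i]$ using the very offset $A$ whose well-definedness is in question, and then invoke the fact that $Z$ has exactly two fine tilings. That fact is true, but it only becomes applicable once you already know that the flipped tiles of $\Tiling$ assemble into a tiling of a single translate of $\Zon_{\VC|_{\suppC}}$---which is precisely the statement ``all flipped tiles share a common offset.'' Nothing in the definition of a flip (\cref{dfn:flip}) says this directly; a flip is defined by a sign change of $\sigma_\Tiling$ on a single pair $\pm C$, and the geometric picture of ``excise a little zonotope and retile it'' is a theorem, not a hypothesis. (The informal description of flips in \cref{sec:vert-edges-deform} is presenting the conclusion, not an alternative definition.) The paper closes this gap in the proof of \cref{prop:flip_arb} by a purely combinatorial argument: assuming $i\in A_1\setminus A_2$ for two choices $j_1,j_2\in\suppC$, it produces a circuit $X$ supported in $B_2\cup\{i\}$, applies the circuit elimination axiom to $X$, $C$, and $j_1$ to produce a third circuit $Y\neq\pm C$, and then derives a contradiction with \cref{prop:coloc_pm} by exhibiting vertices in $\Vert(\Tiling)\cup\Vert(\Tiling')$ that orient $Y$ both ways. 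Some argument of this flavor---or a genuinely independent proof that the symmetric difference of the two tilings is geometrically connected and fills a translated sub-zonotope---is needed; your sketch assumes it.
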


\begin{definition}\label{def:level}
Using the notation of 
\cref{prop:flip_generic}.
	we define $\level(F):=|A(F)|+1\in[n-d]$.
\end{definition}

\begin{corollary}\label{cor:flip1}
Let $k\in [n-d]$ and $F=(\Tiling\to\Tiling')$ be a flip along $C\in\Circuits$. Then
\[\vertHSP_k(\Tiling)-\vertHSP_k(\Tiling')=
  \begin{cases}
    \alphaC, &\text{if $\level(F)=k$,}\\
    0,&\text{otherwise.}
  \end{cases} \]
\end{corollary}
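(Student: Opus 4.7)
The plan is a direct bookkeeping computation based on the explicit description of $\Tiling \setminus \Tiling'$ and $\Tiling' \setminus \Tiling$ supplied by \cref{prop:flip_generic}, followed by the observation that the key cancellation is forced by the fact that every $\v_j$ has last coordinate $1$.

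Writing $A := A(F)$, each tile in the symmetric difference has first label either $A$ (contributing to $\vertHSP_{|A|} = \vertHSP_{\level(F)-1}$) or $A \cup j$ for some $j \in \suppC$ (contributing to $\vertHSP_{|A|+1} = \vertHSP_{\level(F)}$). Consequently $\vertHSP_k(\Tiling) - \vertHSP_k(\Tiling') = 0$ whenever $k \notin \{\level(F) - 1,\ \level(F)\}$, so only two cases remain to analyze.

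For $k = \level(F)$, substituting into \eqref{eq:HSP} and using $A \cap \suppC = \emptyset$ (so that $\e_{A \cup j} = \e_A + \e_j$ for $j \in \suppC$) gives
\[
\vertHSP_k(\Tiling) - \vertHSP_k(\Tiling') = \sum_{j \in \suppC} C_j\, \Vol^d(\tile_{\sC^j})\, (\e_A + \e_j) \;=\; \sigma\, \e_A + \alphaC,
\]
where $\sigma := \sum_{j \in \suppC} C_j\, \Vol^d(\tile_{\sC^j}) = \sum_{j \in \suppC} \alphaCx_j$ by the normalization \eqref{eq:alphaC}. A parallel computation for $k = \level(F) - 1$ (where now only the tiles with first label $A$ contribute, with opposite sign assignments) yields $-\sigma\, \e_A$.

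The final step is to show $\sigma = 0$. By \cref{dfn:alphaC} we have $\sum_{j \in \suppC} \alphaCx_j\, \v_j = 0$ in $\R^d$; reading off the last coordinate and using $\v_j = (\a_j, 1)$ from \cref{notation} gives $\sum_{j \in \suppC} \alphaCx_j = 0$, i.e., $\sigma = 0$. Thus the $\e_A$-terms vanish in both cases, producing $\alphaC$ when $k = \level(F)$ and $0$ when $k = \level(F) - 1$. The only non-routine input is this last observation, which is what confines the difference $\vertHSP_k(\Tiling) - \vertHSP_k(\Tiling')$ to a single level $k = \level(F)$; without the lift structure on $\VC$ the $\e_A$-term at level $\level(F) - 1$ would not cancel.
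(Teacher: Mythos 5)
Your proof is correct and follows essentially the same route as the paper's. The paper proves the general version (\cref{cor:flip2}) and \cref{cor:flip1} is its specialization to the generic case where $\BMC = \{\emptyset\}$; you instead give the specialization directly. Both arguments hinge on the same two ingredients: the explicit tile-level description of the flip (\cref{prop:flip_generic} for you, \cref{prop:flip_arb} for the paper), and the volume-balance identity $\sum_{j\in C^+}\VolX(\sC^j) = \sum_{j\in C^-}\VolX(\sC^j)$, which in the paper is \eqref{eq:diff_0} and in your write-up is $\sigma = 0$. In both cases the identity comes from reading off the last coordinate $1$ of each $\v_j$ in the linear dependence $\sum_j \alphaCx_j \v_j = 0$. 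One small attribution note: the fact that the explicit $\alphaC$ from \eqref{eq:alphaC} actually satisfies the linear dependence of \cref{dfn:alphaC} is the content of \cref{lemma:alpha_dep} (in the generic case $J=\emptyset$), so that lemma is the thing doing the work when you ``read off the last coordinate,'' not \cref{dfn:alphaC} alone. Your closing remark correctly isolates the cancellation at level $\level(F)-1$ as the essential consequence of the lift structure.
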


\begin{figure}
\scalebox{0.637}{

\begin{tikzpicture}

\def\gridlw{1pt}
\def\gridop{0.25}
\def\lw{1.5pt}
\def\setscl{1}
\def\vscl{1.2}
\def\VCscl{1.5}
\def\tnode#1#2#3{\node[draw,fill,circle,scale=0.4](N#1) at (#2,#3) {};}
\def\tcoord#1#2#3{\coordinate(N#1) at (#2,#3);}
\def\lnode#1#2{\node[anchor=180+#2,scale=\setscl] (A#1) at (N#1.#2) {$#1$};}
\def\lnodeRED#1#2{\node[anchor=180+#2,scale=\setscl] (A#1) at (N#1.#2) {\textcolor{red}{$\mathbf{#1}$}};}
\def\lnodv#1#2{\node[anchor=180+#2,scale=\vscl] (A#1) at (N#1.#2) {$\v_{#1}$};}
\def\lnod#1#2#3{\node[anchor=-#2,scale=\setscl] (A#1) at (N#1.#2) {#3};}
\def\lnodRED#1#2#3{\node[anchor=180+#2,scale=\setscl] (A#1) at (N#1.#2) {\textcolor{red}{$\mathbf{#3}$}};}
\def\stp{6.5}
\def\stpp{7}
\def\delt{2*\stp}

\def\arrlw{2pt}
\def\tablw{1pt}

\def\xmin{0}
\def\xmax{4}
\def\ymin{0}
\def\ymax{4}
\def\marg{0.7}

\node(B) at (0,0){
  \begin{tikzpicture}
\draw[white] (-\marg,-\marg) rectangle (\xmax+\marg,\ymax+\marg);

\tnode{2}21
\tnode{12}42
\tnode{24}12
\tnode{123}43
\tnode{234}13
\tnode{1234}34

\draw[draw=white,fill=blue!15] (N2.center)--(N12.center)--(N123.center)--(N1234.center)--(N234.center)--(N24.center)--cycle;

\draw[line width=\gridlw,opacity=\gridop] (\xmin,\ymin) grid (\xmax,\ymax);

\tnode{0}10
\tnode{1}31
\tnode{2}21
\tnode{4}01
\tnode{12}42
\tnode{23}22
\tnode{24}12
\tnode{34}02
\tnode{123}43
\tnode{234}13
\tnode{1234}34

\draw[line width=\lw] (N0)--(N1)--(N12)--(N123)--(N1234)--(N234)--(N34)--(N4)--(N0);
\draw[line width=\lw] (N4)--(N24)--(N2)--(N12);
\draw[line width=\lw] (N234)--(N23)--(N123);
\draw[line width=\lw] (N234)--(N24);
\draw[line width=\lw] (N23)--(N2)--(N0);

\lnod{0}{270}{$\emptyset$}
\lnode{1}{-45}
\lnode{12}{0}
\lnode{123}{45}
\lnode{1234}{90}
\lnode{234}{135}
\lnode{34}{180}
\lnode{4}{225}
\lnode{2}{180}
\lnode{24}{-90}
\lnodeRED{23}{-45}
\end{tikzpicture}

};

\node(C) at (\stp,0){
  \begin{tikzpicture}
\draw[white] (-\marg,-\marg) rectangle (\xmax+\marg,\ymax+\marg);

\tnode{2}21
\tnode{12}42
\tnode{24}12
\tnode{123}43
\tnode{234}13
\tnode{1234}34

\draw[draw=white,fill=blue!15] (N2.center)--(N12.center)--(N123.center)--(N1234.center)--(N234.center)--(N24.center)--cycle;

\draw[line width=\gridlw,opacity=\gridop] (\xmin,\ymin) grid (\xmax,\ymax);

\tnode{0}10
\tnode{1}31
\tnode{2}21
\tnode{4}01
\tnode{12}42
\tnode{124}33
% \tnode{23}22
\tnode{24}12
\tnode{34}02
\tnode{123}43
\tnode{234}13
\tnode{1234}34
\draw[line width=\lw] (N0)--(N1)--(N12)--(N123)--(N1234)--(N234)--(N34)--(N4)--(N0);
\draw[line width=\lw] (N4)--(N24)--(N2)--(N12);
% \draw[line width=\lw] (N234)--(N23)--(N123);
\draw[line width=\lw] (N234)--(N24);
\draw[line width=\lw] (N2)--(N0);
\draw[line width=\lw] (N12)--(N124)--(N24);
\draw[line width=\lw] (N124)--(N1234);

\lnod{0}{270}{$\emptyset$}
\lnode{1}{-45}
\lnode{12}{0}
\lnode{123}{45}
\lnode{1234}{90}
\lnode{234}{135}
\lnode{34}{180}
\lnode{4}{225}
\lnode{2}{180}
\lnode{24}{-90}
\lnodeRED{124}{145}
\end{tikzpicture}

};

\def\ymax{5}

\node(D) at (\delt,0){
  \begin{tikzpicture}
\draw[white] (-\marg,-\marg) rectangle (\xmax+\marg,\ymax+\marg);

\tnode{1}31
\tnode{5}01
\tnode{12}42
\tnode{15}22
\tnode{125}33
\tnode{345}03
\tnode{1234}44
\tnode{1345}24
\tnode{125}33
\tnode{12345}35

\draw[draw=white,fill=blue!15] (N5.center)--(N15.center)--(N125.center)--(N12.center)--(N1234.center)--(N12345.center)--(N1345.center)--(N345.center)--cycle;

\draw[line width=\gridlw,opacity=\gridop] (\xmin,\ymin) grid (\xmax,\ymax);

\tnode{125}33
\tnode{1235}34
\tnode{0}10
\tnode{1}31
\tnode{5}01
\tnode{12}42
% \tnode{13}32
\tnode{15}22
\tnode{35}02
\tnode{123}43
% \tnode{134}33
\tnode{135}23
\tnode{345}03
\tnode{1234}44
\tnode{1345}24
\tnode{2345}14
\tnode{12345}35

\draw[line width=\lw](N0)--(N1)--(N12)--(N123)--(N1234)--(N12345)--(N2345)--(N345)--(N35)--(N5)--(N0);
\draw[line width=\lw] (N5)--(N15)--(N125)--(N12)--(N1)--(N15);
\draw[line width=\lw] (N35)--(N135)--(N1235)--(N123);
\draw[line width=\lw] (N345)--(N1345);
\draw[line width=\lw] (N12345)--(N1345)--(N15);
\draw[line width=\lw] (N12345)--(N125);

\lnod{0}{270}{$\emptyset$}
\lnode{1}{-60}
\lnode{12}{-15}
\lnodeRED{123}{0}
\lnode{1234}{15}
\lnode{12345}{90}
\lnode{2345}{135}
\lnode{345}{165}
\lnodeRED{35}{180}
\lnode{5}{195}
\lnode{1345}{107}
\lnodeRED{135}{170}
\lnode{15}{170}
\lnode{125}{0}
\lnodeRED{1235}{62}

\end{tikzpicture}

};

\node(E) at (\delt+\stpp,0){
  \begin{tikzpicture}
\draw[white] (-\marg,-\marg) rectangle (\xmax+\marg,\ymax+\marg);

\tnode{1}31
\tnode{5}01
\tnode{12}42
\tnode{15}22
\tnode{125}33
\tnode{345}03
\tnode{1234}44
\tnode{1345}24
\tnode{125}33
\tnode{12345}35

\draw[draw=white,fill=blue!15] (N5.center)--(N15.center)--(N125.center)--(N12.center)--(N1234.center)--(N12345.center)--(N1345.center)--(N345.center)--cycle;

\draw[line width=\gridlw,opacity=\gridop] (\xmin,\ymin) grid (\xmax,\ymax);

\tnode{125}33
\tnode{1235}34
\tnode{0}10
\tnode{1}31
\tnode{5}01
\tnode{12}42
% \tnode{13}32
\tnode{15}22
\tnode{35}02
\tnode{123}43
% \tnode{134}33
\tnode{135}23
\tnode{345}03
\tnode{1234}44
\tnode{1345}24
\tnode{2345}14
\tnode{12345}35

\draw[line width=\lw](N0)--(N1)--(N12)--(N123)--(N1234)--(N12345)--(N2345)--(N345)--(N35)--(N5)--(N0);
\draw[line width=\lw] (N5)--(N15)--(N125)--(N12)--(N1)--(N15);
\draw[line width=\lw] (N35)--(N135)--(N1235)--(N123);
\draw[line width=\lw] (N345)--(N1345);
\draw[line width=\lw] (N12345)--(N1345)--(N15);
\draw[line width=\lw] (N12345)--(N125);

\lnod{0}{270}{$\emptyset$}
\lnode{1}{-60}
\lnode{12}{-15}
\lnodRED{123}{0}{124}
\lnode{1234}{15}
\lnode{12345}{90}
\lnode{2345}{135}
\lnode{345}{165}
\lnodRED{35}{180}{45}
\lnode{5}{195}
\lnode{1345}{107}
\lnodRED{135}{170}{145}
\lnode{15}{170}
\lnode{125}{0}
\lnodRED{1235}{62}{1245}

\end{tikzpicture}

};

% \node[anchor=north,scale=\VCscl] (AA) at (A.south) {$\VC$};
\node[anchor=north,scale=\VCscl] (AA) at (B.south) {$\Tiling$};
\node[anchor=north,scale=\VCscl] (AA) at (C.south) {$\Tiling'$};
\node[anchor=north,scale=\VCscl] (AA) at (D.south) {$\Tiling$};
\node[anchor=north,scale=\VCscl] (AA) at (E.south) {$\Tiling'$};

\draw[line width=\arrlw,->] (B)--(C);
\draw[line width=\arrlw,->] (D)--(E);
\draw[line width=\tablw] (1.5*\stp,-3.5)--(1.5*\stp,3.5);

\end{tikzpicture}

}
  \caption{\label{fig:flips} A flip for the case when $\VC$ is  generic (left) and non-generic (right).}
\end{figure}

\begin{example}\label{ex:flip}
Let $\VC$ and $\Tiling$ be as in  \cref{ex:gamma}. An example of a flip $F=(\Tiling\to \Tiling')$ is shown in \cref{fig:flips} (left). Here we have $C=(\{3\},\{1,4\})$ and thus $\alphaC=-\e_1+3\e_3-2\e_4=\vect{-1}03{-2}$. We also have $A(F)=\{2\}$ and $\level(F)=2$. Recall from \cref{ex:gamma} that we had $\vertHSP_1(\Tiling)=\vect0301$ and $\vertHSP_2(\Tiling)=\vect0330$. Similarly, we find $\vertHSP_1(\Tiling')=\vect0301$ and $\vertHSP_2(\Tiling')=\vect1302$. Thus $\vertHSP_1(\Tiling)-\vertHSP_1(\Tiling')=0$ and $\vertHSP_2(\Tiling)-\vertHSP_2(\Tiling')=\alphaC$, in agreement with \cref{cor:flip1}.
\end{example}

\subsection{Flips for arbitrary vector configurations}\label{sec:flips-arbitrary}
We generalize the results of the previous subsection to 
vector configurations that are
not necessarily generic.

For a circuit $C\in\Circuits$, denote by
\[\BMC:=\left\{J\subset ([n]\setminus \suppC)\;\middle|\; (J\sqcup \sC^j)\in \Bcal(\Mcal)\text{ for all $j\in\suppC$}\right\} \]
the set of bases of the \emph{contracted oriented matroid} $\Mcal/\suppC$. In other words, $\BMC$ is the set of bases of the vector configuration that is the image of $\VC$ in the quotient space $\R^d/\left< \v_j\mid j \in \suppC\right>$.

For any circuit $C\in\Circuits$ and $J\in\BMC$, define the vector $\alphaCJ\in\R^n$ by
\begin{equation}\label{eq:alphaCJ}
\alphaCJx_j:=C_j\cdot \VolX(\sC^j\sqcup J)\quad\text{for all $j\in[n]$}.
\end{equation}
We also define 
\begin{equation}\label{eq:alpha2}
\alphaC:=\sum_{J\in\BMC} \alphaCJ.
\end{equation}
When $\VC$ is generic, the set $\BMC=\{\emptyset\}$ consists of a single element, and $\alphaXX(C,\emptyset)=\alphaC$ specializes to the vector $\alphaC$ defined in~\eqref{eq:alphaC}.

\begin{lemma}  \label{lemma:alpha_dep}
Let $C\in\Circuits$ be a circuit of $\Mcal$. Then for each $J\in\BMC$, the vector $\alphaCJ$ satisfies the assumptions of \cref{dfn:alphaC}. In particular, the vectors $\{\alphaCJ\mid J\in\BMC\}$ and also
	$\alphaC$ coincide up to rescaling by a positive real number.
\end{lemma}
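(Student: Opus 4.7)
The plan is to verify the three conditions of \cref{dfn:alphaC} for $\alphaCJ$ directly, with the kernel condition being the only substantive one, and then to deduce the coincidence of the vectors $\{\alphaCJ\mid J\in\BMC\}$ and of $\alphaC$ up to positive rescaling by a one-dimensional argument on linear dependences.

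The support and sign conditions follow immediately from~\eqref{eq:alphaCJ}: if $j\in C^0$ then $C_j=0$, so $\alphaCJx_j=0$; if instead $j\in\suppC$, then $\sC^j\sqcup J$ is a basis of $\Mcal$ by the defining property of $J\in\BMC$, so $\VolX(\sC^j\sqcup J)>0$ and $\alphaCJx_j$ has the same sign as $C_j$. In particular $\alphaCJ\neq 0$.

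For the kernel condition $\sum_j \alphaCJx_j\v_j=0$, I would consider the set $S:=J\sqcup \suppC\subset[n]$, which has size $d+1$. The key observation is that $\suppC$ is the \emph{unique} circuit of $\Mcal$ contained in $S$: indeed $S\setminus j$ is a basis of $\Mcal$ for every $j\in\suppC$ by the assumption on $J$, so every circuit contained in $S$ must contain all of $\suppC$, while $\suppC$ is itself a circuit, so minimality of a circuit forces equality. It follows that the space of linear dependences among $\{\v_i\}_{i\in S}$ is one-dimensional and supported on $\suppC$. On the other hand, the standard identity
\[
\sum_{k=1}^{d+1} (-1)^{k-1}\det(\v_{s_i})_{i\neq k}\cdot \v_{s_k}=0,
\]
valid for any $d+1$ vectors $\v_{s_1},\dots,\v_{s_{d+1}}\in \R^d$ (each coordinate is a $(d+1)\times(d+1)$ determinant with a repeated row), exhibits such a dependence. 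The coefficient of $\v_{s_k}$ vanishes whenever $s_k\in J$ (since then $S\setminus s_k\supset\suppC$ is dependent), while for $s_k\in\suppC$ its absolute value equals $\VolX(\sC^{s_k}\sqcup J)$. By the one-dimensionality just noted, the sign pattern of these nonzero coefficients must agree with $(C_{s_k})_{s_k\in\suppC}$ up to a single global sign $\epsilon\in\{\pm1\}$, and multiplying the Cramer identity by $\epsilon$ yields exactly $\sum_j \alphaCJx_j\v_j=0$.

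For the final assertion, any two vectors satisfying the conditions of \cref{dfn:alphaC} lie on a common positive ray by the same one-dimensionality, so the vectors $\alphaCJ$ for different $J\in\BMC$ are positive scalar multiples of one another, and $\alphaC=\sum_J\alphaCJ$ inherits this property as a positive linear combination. The main obstacle will be the sign reconciliation in the Cramer identity, but this reduces to the one-dimensionality of the dependence space once uniqueness of the circuit in $S$ is in hand.
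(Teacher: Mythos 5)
Your proof is correct and follows essentially the same route as the paper: both reduce the check to the kernel condition and verify it via the Cramer/Pl\"ucker-coordinate description of the one-dimensional kernel of the $d\times(d+1)$ matrix whose columns are indexed by $\suppC\sqcup J$. The one-dimensionality argument you supply (via uniqueness of the circuit inside $\suppC\sqcup J$) to reconcile the Cramer signs with $C_j$ up to a global $\pm1$ is a welcome explication of a point the paper asserts but does not spell out.
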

\begin{proof}
By~\eqref{eq:alphaCJ}, we only need to check that $\alphaCJ$ gives a linear dependence between the vectors of $\VC$, i.e., $\sum_{j\in\suppC}\alphaCJx_j\v_j=0$. Let $I:=\suppC\sqcup J=\{j_1<\dots<j_{d+1}\}$. The kernel of the $d\times (d+1)$ matrix $M$ with columns $\v_{j_1},\dots,\v_{j_{d+1}}$ is given by $\sum_{i\in[d+1]} (-1)^i \Delta_{I\setminus j_i}(M)\cdot \e_i\in\R^{d+1}$, where $\Delta_{I\setminus j_i}(M):=\det(\v_{j_i})_{i\in I\setminus j_i}$ denotes the corresponding Pl\"ucker coordinate of $M$. If $j_i\in J$ then $\Delta_{I\setminus j_i}(M)=0$. If $j_i\in \suppC$ then $|\Delta_{I\setminus j_i}(M)|=|\alphaCJx_{j_i}|$, and the sign agrees with $C_j$.
\end{proof}

We now show the following generalization of \cref{prop:flip_generic}, see \cref{fig:flips} (right) for an example. 
\begin{proposition}\label{prop:flip_arb}
Let $F=(\Tiling\to\Tiling')$ be a flip along $C\in\Circuits$. Then for each $J\in\BMC$, there exists a set $A(F,J)\subset [n]\setminus (\suppC\sqcup J)$ such that
\begin{align*}
	\Tiling\setminus \Tiling'&= \bigsqcup_{J\in\BMC} \left(\left\{\tile_{A(F,J)\cup j,\sC^j\sqcup J}\right\}_{j\in C^+}\sqcup \left\{\tile_{A(F,J),\sC^j\sqcup J}\right\}_{j\in C^-}\right), \text{ and }\\
 \Tiling'\setminus \Tiling&= \bigsqcup_{J\in\BMC} \left(\left\{\tile_{A(F,J),\sC^j\sqcup J}\right\}_{j\in C^+}\sqcup \left\{\tile_{A(F,J)\cup j,\sC^j\sqcup J}\right\}_{j\in C^-}\right).
\end{align*}
\end{proposition}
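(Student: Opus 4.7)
The plan is to reduce the description of the flip to a local analysis keyed by $J\in\BMC$, exploiting the fact that $\Tiling$ is completely determined by its circuit orientation $\sigma_\Tiling$. By \cref{prop:bases_bijection}, for each basis $B\in\Bases$ there is a unique $A_\Tiling(B)\subset [n]\setminus B$ with $\tile_{A_\Tiling(B),B}\in\Tiling$, and similarly $A_{\Tiling'}(B)$; the proposition then amounts to identifying the bases $B$ for which $A_\Tiling(B)\neq A_{\Tiling'}(B)$ and pinning down these two sets.

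First I would identify the ``affected'' bases. For each basis $B$ and each $a\in [n]\setminus B$, the set $B\cup\{a\}$ is dependent, so there is a (unique up to sign) circuit $X_{B,a}\in\Circuits$ supported in it and containing $a$; the membership $a\in A_\Tiling(B)$ is dictated by $\sigma_\Tiling(X_{B,a})$ together with the sign of $a$ in $X_{B,a}$, via the circuit-orientation characterization of tilings (\cref{prop:coloc_pm}). Since $\sigma_\Tiling$ and $\sigma_{\Tiling'}$ differ only on $\pm C$, the membership of $a$ can change only when $X_{B,a}=\pm C$, i.e.\ $\suppC\subset B\cup\{a\}$. Equivalently, $B\supset \sC^j$ for some $j\in\suppC$, so $B=\sC^j\sqcup J$ with $J\in\BMC$.

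Next I would analyze each such basis. For $B=\sC^j\sqcup J$, because $\sC^j=\suppC\setminus\{j\}\subset B$, the only element of $\suppC$ that can lie in $A_\Tiling(B)$ is $j$ itself; hence $A_\Tiling(B)\cap\suppC\in\{\emptyset,\{j\}\}$. The affected circuit $C$ ties this basis to the sibling basis $\sC^{j'}\sqcup J$ for each $j'\in\suppC\setminus\{j\}$, and the flip toggles precisely whether $j\in A_\Tiling(B)$. The direction of the toggle (inclusion for $j\in C^+$ and exclusion for $j\in C^-$ in $\Tiling$, reversed in $\Tiling'$) is fixed by $\sigma_\Tiling(C)=+1$, $\sigma_{\Tiling'}(C)=-1$ and the sign of $j$ in $C$.

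The main obstacle is showing that the ``outside part'' $A_\Tiling(\sC^j\sqcup J)\setminus\{j\}\subset [n]\setminus(\suppC\sqcup J)$ is independent of $j\in\suppC$; this common set is $A(F,J)$. I would prove this by fixing $a\notin\suppC\sqcup J$ and $j,j'\in\suppC$, and applying the oriented-matroid circuit elimination axiom to $C$ together with $X_{\sC^j\sqcup J,\,a}$ and $X_{\sC^{j'}\sqcup J,\,a}$: any sign pattern compatible with $\sigma_\Tiling$ on all three forces $a\in A_\Tiling(\sC^j\sqcup J)\Leftrightarrow a\in A_\Tiling(\sC^{j'}\sqcup J)$. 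Once $A(F,J)$ is constructed, the description of the four families of tiles in the statement is immediate, and the count $|\Tiling\setminus\Tiling'|=|\suppC|\cdot |\BMC|=|\Tiling'\setminus\Tiling|$ shows the claimed equalities are sharp.
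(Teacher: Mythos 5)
Your overall plan matches the paper's: use \cref{prop:bases_bijection} and the circuit-orientation encoding of tilings (\cref{lemma:ESA} in the paper) to reduce to the bases of the form $\sC^j\sqcup J$, analyze the toggle of the element $j$ at each such basis via \cref{cor:flip_tiles}, and then show that the remaining part $A(F,J)=\ExtXX_{\sigma_\Tiling}(\sC^j\sqcup J)\eltminus j$ is independent of $j\in\suppC$. The first two steps are correctly identified, and the final count is right.

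The gap is in the third step, which is the genuinely nontrivial part. You fix $a\notin\suppC\sqcup J$, form the circuits $X_1:=X_{\sC^{j}\sqcup J,\,a}$ and $X_2:=X_{\sC^{j'}\sqcup J,\,a}$, and assert that ``any sign pattern compatible with $\sigma_\Tiling$ on all three [$C,X_1,X_2$] forces $a\in A_\Tiling(\sC^{j}\sqcup J)\Leftrightarrow a\in A_\Tiling(\sC^{j'}\sqcup J)$.'' This does not follow from the circuit elimination axiom alone: elimination is an existence axiom (it produces a third circuit from two circuits conflicting on an element), and it places no constraint on the values of a circuit orientation $\sigma$ on those three circuits. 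In the vector/regular case, one can write $\alphaX(X_2)=\lambda\,\alphaX(X_1)+\mu\,\alphaC$ with $\lambda>0$, and then the desired equivalence of signs is forced \emph{only} by the additional information that $\<\h,\alphaC\>$ changes sign between the two height vectors realizing $\Tiling$ and $\Tiling'$, while $\<\h,\alphaX(X_1)\>$ and $\<\h,\alphaX(X_2)\>$ do not; in other words, the argument must use $\Tiling'$ (or equivalently $\sigma_{\Tiling'}$) as input, not just $\sigma_\Tiling$. For non-regular tilings, which \cref{prop:flip_arb} also covers, even this vector argument is unavailable, and no pure consequence of circuit elimination on the triple $(C,X_1,X_2)$ closes the gap.

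The paper's proof supplies precisely this missing input by working with the combined vertex-label collection $\WS=\Vert(\Tiling)\cup\Vert(\Tiling')$. It observes (via \cref{prop:coloc_pm} and the definition of a flip) that $\WS$ orients each circuit $X\neq\pm C$ consistently, and then establishes the key inclusion $A_1\sqcup I,\,A_2\sqcup I\in\WS$ for all $I\subset\suppC\sqcup J$ as a direct consequence of \cref{cor:flip_tiles} applied to both tilings. The contradiction is obtained by exhibiting two sets in $\WS$ that orient a circuit $X$ (or a circuit $Y$ obtained from $X$ and $C$ by elimination) oppositely. Circuit elimination is used, but only as a tool to produce the relevant $Y$ once $j_1\in X^+$; the driving engine is the set $\WS$. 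Without introducing $\Vert(\Tiling')$ (or some equivalent extra structure) your argument cannot rule out the bad sign pattern, so as stated it does not prove the proposition.
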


\def\ESA{\operatorname{Ext}}
\def\ExtB{\ESA_{\sigma}(B)}
\def\ExtXX_#1(#2){\ESA_{#1}(#2)}
Before proving \cref{prop:flip_arb}, we explain how to reconstruct a fine zonotopal tiling $\Tiling$ from the associated generic circuit orientation $\sigma_\Tiling$ defined in 
\eqref{eq:sigma_tiling}.
Consider a generic circuit orientation $\sigma:\Circuits\to \{+1,-1\}$ and a basis $B\in\Bases$ of $\VC$. Given $j\in[n]\setminus B$, there exists a unique circuit $C\in\Circuits$ such that $j\in C^+$ and $\suppC\subset B\sqcup\{j\}$. Following~\cite{LiPo}, we say that $j$ is \emph{externally semi-active} (with respect to $\sigma$ and $B$) if $\sigma(C)=+1$, and we denote by $\ExtB\subset([n]\setminus B)$ the set of all externally semi-active $j$. Define a collection $\Tiling_\sigma$ of tiles by
\begin{equation}\label{eq:ESA}
\Tiling_\sigma:=\left\{\tile_{A,B} \middle| B\in\Bases,\ A=\ExtB\right\}.
\end{equation}
\begin{lemma}\label{lemma:ESA}
Let $\Tiling$ be a fine zonotopal tiling of $\ZV$ and let $\sigma:=\sigma_\Tiling$ be the associated generic circuit orientation. Then $\Tiling=\Tiling_\sigma$.
\end{lemma}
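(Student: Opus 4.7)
The plan is to reduce the equality $\Tiling = \Tiling_\sigma$ to a pointwise comparison, basis by basis. By \cref{prop:bases_bijection}, for each $B\in\Bases$ there is a unique set $A_B\subset[n]\setminus B$ with $\tile_{A_B,B}\in\Tiling$. Since $\Tiling_\sigma$ is also indexed by bases via \eqref{eq:ESA}, it suffices to verify that $A_B = \ExtXX_{\sigma}(B)$ for every $B\in\Bases$, where $\sigma:=\sigma_\Tiling$.

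Fix $B\in\Bases$ and $j\in[n]\setminus B$, and let $C\in\Circuits$ be the unique circuit with $j\in C^+$ and $\suppC\subset B\cup\{j\}$; in particular $C^-\subset B$ and $C^+\setminus\{j\}\subset B$. By \cref{prop:coloc_pm}, $\Vert(\Tiling)$ orients $C$ either positively or negatively, and this sign is $\sigma(C)$ by \eqref{eq:sigma_tiling}. The plan is to produce, inside the vertex set of the single tile $\tile_{A_B,B}$, an explicit witness for this sign depending on whether $j\in A_B$:
\begin{itemize}
\item If $j\in A_B$, set $I := A_B\cup(C^+\setminus\{j\})$. Then $A_B\subset I\subset A_B\cup B$, so $I\in\Vert(\Tiling)$, and one checks $C^+\subset I$ while $C^-\cap I=\emptyset$ (using that $C^-\subset B$ is disjoint from $A_B$ and from $C^+$). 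So $I$ orients $C$ positively, forcing $\sigma(C)=+1$.
\item If $j\notin A_B$, set $I := A_B\cup C^-$. Then again $A_B\subset I\subset A_B\cup B$, so $I\in\Vert(\Tiling)$, and $C^-\subset I$ while $C^+\cap I=\emptyset$ (as $j\notin I$ and $C^+\setminus\{j\}\subset B$ is disjoint from $A_B$). So $I$ orients $C$ negatively, forcing $\sigma(C)=-1$.
\end{itemize}

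Combining the two cases gives $j\in A_B \iff \sigma(C) = +1 \iff j\in\ExtXX_\sigma(B)$, hence $A_B = \ExtXX_\sigma(B)$ and $\Tiling=\Tiling_\sigma$. The only real ingredient is \cref{prop:coloc_pm}, which tells us that $\sigma(C)$ is well-defined; once that is available, the argument is a short case check using nothing more than the description \eqref{eq:dfn_Vert} of $\Vert(\Tiling)$. I expect the only mild subtlety is bookkeeping the disjointness relations $C^+\cap C^- = \emptyset$, $A_B\cap B=\emptyset$, and $\suppC\subset B\cup\{j\}$, which together guarantee that the witnesses $I$ have exactly the signs claimed.
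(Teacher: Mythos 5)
Your proof is correct and follows essentially the same approach as the paper's: reduce to the basis-by-basis equality $A_B=\ExtXX_\sigma(B)$ via \cref{prop:bases_bijection}, and then for each $j\notin B$ exhibit a vertex label of $\tile_{A_B,B}$ witnessing the sign of $\sigma(C)$. Your witness $I=A_B\cup(C^+\setminus\{j\})$ in the first case equals the paper's $A\cup C^+$ (since $j\in A_B$), so the two arguments coincide.
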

\begin{proof}
Let $B\in\Bases$ be a basis of $\VC$. By \cref{prop:bases_bijection}, there exists a unique $A\subset([n]\setminus B)$ such that $\tile_{A,B}\in \Tiling$. It suffices to show that $A=\ExtB$. Let $j\in ([n]\setminus B)$ be any element, and let $C\in\Circuits$ be the unique circuit such that $\suppC\subset B\cup j$ and $j\in C^+$. We would like to show that $j\in A$ if and only if $\sigma(C)=+1$.

Suppose that $j\in A$. Then $C^+\eltminus j$ is an independent set contained in $B$ and thus $A\cup C^+=A\sqcup (C^+\eltminus j)$ belongs to $\Vert(\Tiling)$, see~\eqref{eq:dfn_Vert}. We also see that $(A\cup C^+)\cap C^-=\emptyset$, so  $A\cup C^+$ orients $C$ positively, and thus $\sigma(C)=+1$.

Conversely, suppose that $j\notin A$. Then $C^-\subset \sC^j$ is an independent set contained in $B$ and thus $A\cup C^-\in \Vert(\Tiling)$. But now $A\cup C^-$ orients $C$ negatively, and thus $\sigma(C)=-1$.
\end{proof}

\begin{corollary}\label{cor:flip_tiles}
Let $F=(\Tiling\to\Tiling')$ be a flip along $C\in\Circuits$, and let $\tile_{A,B}\in\Tiling$. Then:
\begin{itemize}
\item if $B=\sC^j\sqcup J$ for some $j\in C^+$ and $J\in\BMC$ then $j\in A$ and $\tile_{A\eltminus j,B}\in\Tiling'$;
\item if $B=\sC^j\sqcup J$ for some $j\in C^-$ and $J\in\BMC$ then $j\notin A$ and $\tile_{A\cup j,B}\in\Tiling'$;
\item otherwise, $\tile_{A,B}\in\Tiling'$.
\end{itemize}
\end{corollary}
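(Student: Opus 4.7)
The plan is to deduce \cref{cor:flip_tiles} directly from \cref{lemma:ESA}. Set $\sigma := \sigma_\Tiling$ and $\sigma' := \sigma_{\Tiling'}$; then $A = \ExtXX_\sigma(B)$ and the set $A'$ paired with $B$ in $\Tiling'$ equals $\ExtXX_{\sigma'}(B)$. Because $\sigma$ and $\sigma'$ agree off the pair $\{C,-C\}$, the symmetric difference $A \triangle A'$ consists exactly of those $j' \in [n] \setminus B$ for which the canonical circuit $C_{j'}$ (the unique circuit with $j' \in C_{j'}^+$ and $\suppC_{j'} \subset B \cup j'$) equals $\pm C$.

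Next I would classify the pairs $(B,j')$ with $C_{j'} = \pm C$. If $j' \notin \suppC$, then $\suppC \subset B$, contradicting independence of $B$. Hence $j' \in \suppC$, and the equality $\suppC_{j'} = \suppC$ forces $\sC^{j'} \subset B$. Writing $J := B \setminus \sC^{j'} \subset [n] \setminus \suppC$, one checks that $J \in \BMC$: for every $\ell \in \suppC$, the set $\suppC \setminus \ell$ is independent of the same cardinality, so basis exchange inside $\suppC \sqcup J$ yields that $\sC^{\ell} \sqcup J$ is again a basis. Conversely, every $J \in \BMC$ together with $j' \in \suppC$ produces a basis $B = \sC^{j'} \sqcup J$ with $C_{j'} = \pm C$.

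The corollary then follows by cases. If $B$ is not of the form $\sC^j \sqcup J$ with $J \in \BMC$ and $j \in \suppC$, no such $j'$ exists, so $A = A'$ and $\tile_{A,B} \in \Tiling'$. If $B = \sC^j \sqcup J$ with $j \in C^+$, the only element of $[n] \setminus B$ lying in $\suppC$ is $j$ itself, and $C_j = C$; since $\sigma(C) = +1$ but $\sigma'(C) = -1$, we obtain $j \in A \setminus A'$, while all other elements of $[n] \setminus B$ have canonical circuits different from $\pm C$ and hence contribute equally to $A$ and $A'$, yielding $A' = A \setminus j$. The case $j \in C^-$ is symmetric: now $C_j = -C$, so $\sigma(-C) = -1$ forces $j \notin A$ and $\sigma'(-C) = +1$ forces $j \in A'$, giving $A' = A \cup j$.

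The main (and mild) obstacle is the verification that $J \in \BMC$, that is, that $\sC^\ell \sqcup J$ is a basis for \emph{every} $\ell \in \suppC$ and not only for the particular $j$ at hand; this requires a careful invocation of the exchange property in the contraction $\Mcal / \suppC$. Everything else is routine bookkeeping with \cref{lemma:ESA} and the definition of external semi-activity.
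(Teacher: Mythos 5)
Your proof is correct and follows essentially the same route as the paper's: both reduce via \cref{lemma:ESA} to comparing external semi-activity with respect to $\sigma_\Tiling$ and $\sigma_{\Tiling'}$, which differ only on $\pm C$, and then classify each $j\in[n]\setminus B$ according to whether its canonical circuit equals $\pm C$. Your explicit verification that $J:=B\setminus\sC^j$ lies in $\BMC$ (using that each $\sC^\ell$ spans the same subspace) fills a step the paper leaves implicit.
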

\begin{proof}
By \cref{prop:bases_bijection}, there exists a unique set $A'$ such that $\tile_{A',B}\in\Tiling'$. By \cref{lemma:ESA}, we have $A=\ExtB$ and $A'=\ExtXX_{\sigma'}(B)$, where 
	$\sigma:=\sigma_\Tiling$ and $\sigma':=\sigma_{\Tiling'}$. 
	By \cref{dfn:flip}, the values of 
	$\sigma$ and $\sigma'$ 
	only differ on $\pm C$. By~\eqref{eq:ESA}, for each $j\in ([n]\setminus B)$ such that $\suppC\not\subset (B\cup j)$, we have $j\in A$ if and only $j\in A'$. If $\suppC\subset B\cup j$ then we have $B=\sC^j\sqcup J$ for some $J\in \BMC$, and depending on whether $j\in C^+$ or $j\in C^-$, we either get $j\in A\setminus A'$ or $j\in A'\setminus A$, respectively.
\end{proof}

\begin{proof}[Proof of \cref{prop:flip_arb}.]
Fix $J\in\BMC$ 
	and let $\sigma:=\sigma_\Tiling$.  By \cref{cor:flip_tiles}, in order to prove \cref{prop:flip_arb}, it suffices to show that 
\begin{equation}\label{eq:AFJ}
\text{for any 
	$j \in \suppC$, if we let 
	 $B:=\sC^{j}\sqcup J$, then $\ExtXX_{\sigma}(B) \eltminus j$ is 
		independent of $j$.}
\end{equation}
Indeed, in this case, the set $A(F,J):=\ExtXX_{\sigma}(B) \eltminus j$ clearly satisfies the assumptions of \cref{prop:flip_arb}. 

To prove~\eqref{eq:AFJ}, choose any
	 $j_1,j_2\in \suppC$, and 
	let $B_1:=\sC^{j_1}\sqcup J$, $B_2:=\sC^{j_2}\sqcup J$, $A_1:=\ExtXX_{\sigma}(B_1) \eltminus j_1$, $A_2:=\ExtXX_{\sigma}(B_2)\eltminus j_2$. We need to show that $A_1=A_2$.

Let $\WS:=\Vert(\Tiling)\cup \Vert(\Tiling')$. By \cref{prop:coloc_pm} and \cref{dfn:flip}, for any $X\in\Circuits$ such that $X\neq \pm C$,  $\WS$ orients $X$ either positively or negatively (but not both). Next, we have 
\begin{equation}\label{eq:flip_I}
A_1\sqcup I,A_2\sqcup I\in \WS\quad \text{for all $I\subset (\suppC\sqcup J)$.}
\end{equation}
Indeed,  by \cref{cor:flip_tiles}, we either have $A_1\sqcup (I\setminus j_1)\in\Vert(\Tiling)$ and $A_1\sqcup (I\cup j_1)\in\Vert(\Tiling')$ or vice versa, and the argument for $A_2$ is completely similar.

We would like to show $A_1\subset A_2$. Otherwise, assume that $i\in A_1\setminus A_2$. Let $X\in \Circuits$ be the unique circuit satisfying $\suppX\subset B_2\cup i$ and $i\in X^+$. Then $X\neq \pm C$ and $X^-\subset B_2$. By~\eqref{eq:flip_I}, we have $A_2\sqcup X^-\in \WS$. Since $i\notin A_2$, we have $A_2\cap X^+=\emptyset$, thus $\WS$ orients $X$ negatively.

Suppose that $j_1\notin X^+$. By~\eqref{eq:flip_I}, $A_1\cup X^+=A_1\sqcup (X^+\eltminus i)$ belongs to $\WS$, thus $\WS$ orients $X$ positively, and we get a contradiction.

Thus $j_1\in X^+$. After possibly switching the direction of the flip $F$ (which amounts to replacing $C$ with $-C$), we may assume that $j_1\in C^-$. Applying the \emph{circuit elimination axiom}~\cite[Definition~3.2.1~(C3)]{RedBook} to $X$, $C$, and $j_1$, we see that there exists $Y\in \Circuits$ satisfying 
\[Y^+\subset (X^+\cup C^+)\setminus\{j_1\},\quad Y^-\subset (X^-\cup C^-)\setminus\{j_1\}.\]
We have $Y\neq \pm C$ and $i\notin Y^-$. By~\eqref{eq:flip_I}, the sets $A_1\cup Y^+=A_1\sqcup (Y^+\eltminus i)$ and $A_2\sqcup Y^-$ both belong to $\WS$. Moreover, $A_1\cup Y^+$ orients $Y$ positively while $A_2\sqcup Y^-$ orients $Y$ negatively. We arrive at a contradiction, which shows $A_1\subset A_2$. By symmetry, we get $A_1\supset A_2$, therefore $A_1=A_2$.
\end{proof}

\begin{definition}\label{def2:level}
Using the notation of  
	\cref{prop:flip_arb},
for $J\in\BMC$, we define $\level(F,J):=|A(F,J)|+1$.
\end{definition}

\begin{corollary}\label{cor:flip2}
Let $k\in [n-d]$ and $F=(\Tiling\to\Tiling')$ be a flip along $C\in\Circuits$. Then
\[\vertHSP_k(\Tiling)-\vertHSP_k(\Tiling')=\summa{J\in\BMC}{\level(F,J)=k} \alphaCJ.\]
\end{corollary}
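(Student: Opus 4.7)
The plan is to apply \cref{prop:flip_arb} to enumerate the tiles in $\Tiling \setminus \Tiling'$ and $\Tiling' \setminus \Tiling$, and then directly compute the contribution of each to $\vertHSP_k(\Tiling) - \vertHSP_k(\Tiling')$. Fix $J \in \BMC$ and abbreviate $\ell := \level(F,J) = |A(F,J)| + 1$. On the $\Tiling$-side, the tiles at $J$ have $|A| = \ell$ when $j \in C^+$ and $|A| = \ell - 1$ when $j \in C^-$; on the $\Tiling'$-side these roles are swapped. Consequently only those $J$ with $\ell \in \{k, k+1\}$ contribute to $\vertHSP_k(\Tiling) - \vertHSP_k(\Tiling')$.

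For $J$ with $\ell = k$, combining the $j \in C^+$ contribution from $\Tiling$ and the $j \in C^-$ contribution from $\Tiling'$ gives, via the definition~\eqref{eq:alphaCJ} of $\alphaCJ$,
\[
\sum_{j \in \suppC} C_j\, \VolX(\sC^j \sqcup J)\, \e_{A(F,J) \cup j} \;=\; \Bigl(\sum_{j \in \suppC} \alphaCJx_j\Bigr)\e_{A(F,J)} + \alphaCJ.
\]
For $J$ with $\ell = k+1$, the analogous calculation (where the roles of $j \in C^+$ and $j \in C^-$ are switched and the common factor is $\e_{A(F,J)}$ rather than $\e_{A(F,J) \cup j}$) yields simply $-\bigl(\sum_{j \in \suppC} \alphaCJx_j\bigr)\e_{A(F,J)}$.

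The identity that makes everything collapse is $\sum_{j \in \suppC} \alphaCJx_j = 0$. By \cref{lemma:alpha_dep}, $\alphaCJ$ encodes the linear dependence $\sum_{j \in \suppC} \alphaCJx_j\, \v_j = 0$ among $\{\v_j\}_{j \in \suppC}$; because $\VC$ is the lift of $\A$, each $\v_j$ has last coordinate equal to $1$, so reading off this last coordinate gives the required vanishing. Thus the $\ell = k+1$ contributions disappear, each $\ell = k$ contribution collapses to $\alphaCJ$, and summing over $J \in \BMC$ gives the claimed formula. The substantive combinatorial input is \cref{prop:flip_arb}; the main obstacle here is the level bookkeeping needed to see that exactly the two adjacent levels $k$ and $k+1$ produce non-trivial contributions and that the stray $\e_{A(F,J)}$ terms cancel via the lift hypothesis.
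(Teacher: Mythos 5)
Your proof is correct and follows essentially the same route as the paper's: apply \cref{prop:flip_arb} to enumerate the tiles in the symmetric difference, split the contributions by level (only $\level(F,J)\in\{k,k+1\}$ matter), and collapse everything using the identity $\sum_{j\in\suppC}\alphaCJx_j=0$, which the paper records as equation~\eqref{eq:diff_0} and derives the same way from the lift having all last coordinates equal to $1$. The bookkeeping you do via $\e_{A(F,J)\cup j}=\e_{A(F,J)}+\e_j$ is just a compact rewriting of the paper's computation, which instead introduces an explicit constant vector $\u$ and keeps the $C^+$/$C^-$ terms separate.
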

\begin{proof}
Recall from \cref{lemma:alpha_dep} that $\sum_{j\in\suppC}\alphaCJx_j\v_j=0$. 
Since the last coordinate of each $\v_j$ is equal to $1$, \eqref{eq:alphaCJ} implies that 
\begin{equation}\label{eq:diff_0}
  \sum_{j\in C^+} \VolX(\sC^j\sqcup J)=\sum_{j\in C^-} \VolX(\sC^j\sqcup J).
\end{equation}
Combining~\eqref{eq:HSP} with \cref{prop:flip_arb}, we see that there exists $\u\in\R^n$ such that
\begin{align*}
  \vertHSP_k(\Tiling)&=\u+\summa{J\in\BMC}{\level(F,J)=k}  \sum_{j\in C^+}\VolX(\sC^j\sqcup J) \e_{A(F,J)\cup j}+ \summa{J\in\BMC}{\level(F,J)=k+1}  \sum_{j\in C^-}\VolX(\sC^j\sqcup J) \e_{A(F,J)},\\
  \vertHSP_k(\Tiling')&=\u+\summa{J\in\BMC}{\level(F,J)=k}  \sum_{j\in C^-}\VolX(\sC^j\sqcup J) \e_{A(F,J)\cup j}+ \summa{J\in\BMC}{\level(F,J)=k+1}  \sum_{j\in C^+}\VolX(\sC^j\sqcup J) \e_{A(F,J)}.\\
\end{align*}
  By~\eqref{eq:diff_0}, the difference of the right hand sides equals to 
\[\summa{J\in\BMC}{\level(F,J)=k} \left(\sum_{j\in C^+} \VolX(\sC^j\sqcup J)\e_j-\sum_{j\in C^-} \VolX(\sC^j\sqcup J)\e_j\right)=\summa{J\in\BMC}{\level(F,J)=k}\alphaCJ.\qedhere\]
\end{proof}
\begin{example}
\def\vectt#1#2#3#4#5{(#1,#2,#3,#4,#5)}
Let $n=5$, $d=2$, and let $\VC$ consist of the column vectors of the matrix $\begin{pmatrix}
2 & 1 & 0&0 &-1\\
1 & 1 & 1&1 & 1
\end{pmatrix}$, as shown in 
\cref{fig:flips} (right). 
Thus $\v_3=\v_4$, and let $C=(\{3\},\{4\})$. We have $\BMC=\{\{1\},\{2\},\{5\}\}$.

 An example of a flip $F=(\Tiling\to \Tiling')$ along $C$ is shown in 
	\cref{fig:flips} (right). 
	Geometrically, the tiling has not changed, but some vertex labels have changed, replacing $3$ with $4$. The values of $\alphaCJ$, $A(F,J)$, $\level(F,J)$ for various $J\in\BMC$, as well as the values of $\vertHSP_k(\Tiling)$, $\vertHSP_k(\Tiling')$, $\vertHSP_k(\Tiling)-\vertHSP_k(\Tiling')$ for various $k\in[n-d]$, are given in the following tables.
	\begin{center}
\begin{tabular}{cc}
\begin{tabular}{|c|c|c|c|}\hline
$J$ & $\alphaCJ$ & $A(F,J)$ & $\level(F,J)$\\\hhline{|=|=|=|=|}
$\{1\}$ & $2(\e_3-\e_4)$ & $\{5\}$ & $2$\\\hline
$\{2\}$ & $\e_3-\e_4$ & $\{1,5\}$ &$3$\\\hline
$\{5\}$ & $\e_3-\e_4$ & $\{1,2\}$ &$3$\\\hline
\end{tabular}
&
\begin{tabular}{|c|c|c|c|}\hline
$k$ & $\vertHSP_k(\Tiling)$& $\vertHSP_k(\Tiling')$& $\vertHSP_k(\Tiling)-\vertHSP_k(\Tiling')$\\\hhline{|=|=|=|=|}
$1$ & $\vectt20002$ & $\vectt20002$ & $0$ \\\hline
$2$ & $\vectt21203$ & $\vectt21023$ & $2(\e_3-\e_4)$ \\\hline
$3$ & $\vectt21312$ & $\vectt21132$ & $2(\e_3-\e_4)$ \\\hline
\end{tabular}
\end{tabular}
\end{center}
\noindent This again agrees with \cref{cor:flip2}.
\end{example}

\section{Regular zonotopal tilings and higher secondary polytopes}\label{sec:regular}

In this section we start by introducing \emph{regular fine zonotopal tilings.}
We then define higher secondary polytopes, compute their dimension, and 
prove \cref{thm:sigma}. 

Let $\A$, $\V$, and $Q = \conv \A$ 
be as in \cref{notation}, and   
let $\h=(h_1,\dots,h_n)\in \R^n$ be a  \emph{height vector}. 
Then the upper boundary of the polyhedron $\conv\{(\a_i,h_i-t)\mid i\in[n],\ t\geq0\}\subset \R^d$ projects piecewise-linearly onto $Q$, and projections of its facets give rise to a polyhedral 
subdivision  of $Q$.   Such a subdivision is called \emph{regular,} and in particular, the 
$\Acal$-triangulations that can be obtained this way from a height vector $\h$ are called \emph{regular $\A$-triangulations.} Again, the notion of a regular $\A$-triangulation coincides with the notion of a regular fine $\pi$-induced subdivision from \cref{dfn:pi_regular}.

\def\H{{\mathcal{H_\V}}}
\def\vt{{\tilde{\bm{v}}}}
\def\tZon{{\widetilde\Zon}}
\def\tPi{{\widetilde{\tile}}}
\def\<{\langle}
\def\>{\rangle}
\def\GHV{\R^n\setminus \H}
\subsection{Regular zonotopal tilings}\label{sec:regular_zon_tilings}
Let $\VC$ be a vector configuration in $\R^d$ as above. First, we define the notion of a \emph{generic height vector} $\h\in\R^n$. 
Recall the vector $\alphaC$ from \eqref{eq:alpha2}, 
which by \cref{lemma:alpha_dep} satisfies
 the assumptions of \cref{dfn:alphaC}. 
Let $\<\cdot,\cdot\>$ denote the standard inner product on $\R^n$, and define the \emph{secondary hyperplane arrangement}
\begin{equation}\label{eq:H_sec_arr}
\H:=\{\h\in\R^n\mid \<\h,\alphaC\>=0\ \text{for some $C\in\Circuits$}\}.
\end{equation}
\begin{definition}
	We say that a height vector $\h\in\R^n$ is \emph{generic} (for $\V$) if it does not belong to $\H$. In this case, we write $\h\in\GHV$.
\end{definition}
\noindent For $\h\in\GHV$, let $\sigma_\h:\Circuits\to\{+1,-1\}$ be the generic circuit signature given by 
\begin{equation}\label{eq:circuit}
	\sigma_\h(C):=
  \begin{cases}
    +1, &\text{if $\<\h,\alphaC\>>0$,}\\
    -1, &\text{if $\<\h,\alphaC\><0$,}\\
  \end{cases}\qquad \text{for all $C\in\Circuits$.} 
\end{equation}

Recall from~\eqref{eq:sigma_tiling} that each fine zonotopal tiling $\Tiling$ gives rise to a generic circuit signature ${\sigma_\Tiling:\Circuits\to\{+1,-1\}}$.

\begin{proposition}\label{prop:circuit}
Let $\h=(h_1,\dots,h_n)\in \GHV$ be a generic height vector. Then $\Tiling:=\Tiling_\h$ from \cref{dfn:pi_regular} is the unique fine zonotopal tiling of $\ZV$ satisfying $\sigma_\Tiling=\sigma_\h$.
\end{proposition}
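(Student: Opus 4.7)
The plan is to realize $\Tiling_\h$ as the projection to $\ZV$ of the upper envelope of a lifted zonotope, read off its vertex labels, and then match its circuit orientation with $\sigma_\h$ via a Farkas-type computation; uniqueness will then follow from \cref{lemma:ESA}. Concretely, set $\widetilde\v_i := (\v_i, h_i) \in \R^{d+1}$ and consider the zonotope $\Zon_{\widetilde\V} := \sum_{i=1}^n [0, \widetilde\v_i]$. Unwinding \cref{dfn:pi_regular} one sees that $\Tiling_\h$ is the image, under the projection onto the first $d$ coordinates, of the collection of upper facets of $\Zon_{\widetilde\V}$ (those whose outer normal has positive $(d{+}1)$-st coordinate). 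Since $\h \notin \H$, every such upper facet is combinatorially a $d$-dimensional cube face $\square_{A,B}$, projecting to the parallelepiped $\tile_{A,B}$; this proves $\Tiling_\h$ is a fine zonotopal tiling. A short dualization identifies the vertex labels as
\[\Vert(\Tiling_\h) \;=\; \{\, I_\u : \u \in \R^d,\ \<\u, \v_i\> + h_i \neq 0 \text{ for all } i \,\}, \qquad I_\u := \{\,i \in [n] : \<\u, \v_i\> + h_i > 0\,\},\]
since an upper vertex of $\Zon_{\widetilde\V}$ is precisely a strict maximizer of some $(\u, 1) \in \R^d \times \R$.

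Next I would verify $\sigma_{\Tiling_\h} = \sigma_\h$ on each circuit $C \in \Circuits$. By \eqref{eq:sigma_tiling}, $\sigma_{\Tiling_\h}(C) = +1$ iff some $I_\u$ orients $C$ positively, which is the strict linear system
\[C_j\bigl(\<\u, \v_j\> + h_j\bigr) > 0 \qquad (j \in \suppC).\]
Writing $\w_j := C_j \v_j$ and $c_j := -C_j h_j$, this becomes $\<\u, \w_j\> > c_j$. By \cref{lemma:alpha_dep} and the defining property of a circuit (every proper subset of $\suppC$ is independent), the kernel of $\mu \mapsto \sum_{j \in \suppC} \mu_j \w_j$ is one-dimensional, and its intersection with the non-negative orthant is the ray $\{\, t \cdot (|\alphaCx_j|)_{j \in \suppC} : t \geq 0\,\}$; along this ray $\sum_j \mu_j c_j = -t \<\h, \alphaC\>$. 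The Farkas lemma for strict inequalities then yields feasibility iff $\<\h, \alphaC\> > 0$, which is precisely $\sigma_\h(C) = +1$ by \eqref{eq:circuit}. The genericity condition $\h \notin \H$ ensures $\<\h, \alphaC\> \neq 0$, and the antisymmetry $\sigma(-C) = -\sigma(C)$ on both sides completes the matching on all of $\Circuits$.

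For uniqueness, \cref{lemma:ESA} recovers any fine zonotopal tiling $\Tiling$ from its generic circuit orientation via $\Tiling = \Tiling_{\sigma_\Tiling}$, so $\sigma_\Tiling = \sigma_\h$ forces $\Tiling = \Tiling_\h$. The main technical obstacle I foresee is the Farkas step: one must invoke the strict-inequality version of Farkas' lemma and carefully use the one-dimensionality of the kernel of $\{\w_j\}_{j \in \suppC}$, together with the mixed signs $C_j$, to pin down that $\mu = t|\alphaC|$ is the unique non-negative certificate. Everything else is a careful unpacking of the definitions of $\Tiling_\h$, $\sigma_\h$, $\Vert(\Tiling_\h)$, and the lifted zonotope $\Zon_{\widetilde\V}$.
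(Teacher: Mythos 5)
Your proof is correct, but it takes a genuinely different route from the paper's. The paper argues directly in the cube $\cube_n$: assuming $\sigma_\Tiling(C)=-1$ while $\sigma_\h(C)=+1$, it picks $S\in\Vert(\Tiling_\h)$ orienting $C$ negatively, observes that $\e_S+\epsilon\alphaC$ stays in the fiber $\cube_n\cap\pi^{-1}(\pi(\e_S))$ for small $\epsilon>0$ (because $\pi(\alphaC)=0$ and the sign pattern of $\alphaC$ is opposite to where $\e_S$ sits at $0$ or $1$), and notes $\<\e_S+\epsilon\alphaC,\h\> > \<\e_S,\h\>$, contradicting the optimality built into \cref{dfn:pi_regular}. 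This is a one-line perturbation argument with no duality. You instead lift to $\Zon_{\widetilde\V}$, identify $\Vert(\Tiling_\h)$ with the strict sign sets $I_\u$, and run Farkas over the one-dimensional kernel of $\{\w_j\}_{j\in\suppC}$. The two are genuinely different: the paper's argument is shorter and more elementary, while yours is more structural and has the side benefit of directly establishing (via the $C$ and $-C$ cases and $\<\h,\alphaC\>\ne 0$) that $\Vert(\Tiling_\h)$ orients each circuit exactly one way, so you never need to cite \cref{prop:coloc_pm} for regular tilings. Your description of $\Vert(\Tiling_\h)$ by sign vectors $I_\u$ is essentially the same computation the paper performs later in the proof of \cref{prop:soliton}.

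Two small points you should tighten. First, a solution $\u_0$ of the strict system indexed by $\suppC$ need not satisfy $\<\u_0,\v_i\>+h_i\ne 0$ for $i\notin\suppC$; you should note that a small perturbation of $\u_0$ achieves this without disturbing the strict inequalities, so that $I_{\u_0}\in\Vert(\Tiling_\h)$. Second, the sentence ``Since $\h\notin\H$, every such upper facet is combinatorially a $d$-dimensional cube face'' deserves a line of justification: if an upper facet's generating set $B$ contained a circuit $C$, the facet normal $(\u,1)$ would give $\<\u,\v_b\>+h_b=0$ for all $b\in B\supset\suppC$, and pairing the dependence $\sum\alphaCx_b\v_b=0$ with $\u$ forces $\<\h,\alphaC\>=0$, contradicting genericity; this also rules out $|B|>d$.
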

\begin{proof}
The uniqueness part follows from \cref{lemma:ESA}. Consider the $\pi$-induced subdivision $\Tiling:=\Tiling_\h$ from \cref{dfn:pi_regular}. Since $\h$ is generic, it follows that $\Tiling$ is a fine zonotopal tiling of $\ZV$.

It remains to show that $\sigma_\Tiling=\sigma_\h$. Otherwise, suppose that $C\in\Circuits$ is a circuit such that $\sigma_\Tiling(C)=-1$ and $\sigma_\h(C)=+1$. Then there must exist a set $S\in\Vert(\Tiling)$ that orients $C$ negatively, so $C^-\subset S$ and $C^+\cap S=\emptyset$. By \cref{dfn:pi_regular}, having $S\in\Vert(\Tiling)$ implies that $\<\e_S,\h\>\geq \<\x,\h\>$ for all $\x\in\cube_n\cap \pi^{-1}(\pi(\e_S))$. On the other hand, since 
 $\alphaC$ satisfies the assumptions of \cref{dfn:alphaC}, and $S$ orients $C$ negatively,
	it is clear that $\e_S+\epsilon \alphaC$ belongs to $\cube_n\cap \pi^{-1}(\pi(\e_S))$ for all sufficiently small $\epsilon>0$.  But now because $\sigma_\h(C)=+1$ is equivalent to $\<\alphaC,\h\>>0$, we get a contradiction. 
\end{proof}

\begin{definition}\label{dfn:regular_tiling}
A fine zonotopal tiling $\Tiling$ of $\ZV$ is called \emph{regular} if $\Tiling=\Tiling_\h$ for some $\h\in\GHV$.
\end{definition}
\noindent Thus regular fine zonotopal tilings are precisely the regular fine $\pi$-induced subdivisions for the case $\pi:\cube_n\to \ZV$. 

\def\Vt{{\widetilde{\V}}}
\def\ZVt{\Zon_{\Vt}}
\def\vt{\widetilde\v}
\begin{remark}\label{rmk:upper}
  The usual definition of $\Tiling_\h$ makes use of the zonotope $\ZVt$ associated with the vector configuration $\Vt=(\vt_1,\dots,\vt_n)$ in $\R^{d+1}$ given by $\vt_i:=(\v,h_i)$. Namely, $\Tiling_\h$ is obtained by projecting the upper boundary of $\ZVt$ down to $\ZV$ via a map that forgets the last coordinate. (Here the \emph{upper boundary} is defined as the set of all points $\x$ on the boundary of $\ZVt$ such that $\x+\epsilon \e_{d+1}\notin \ZVt$ for all $\epsilon>0$.)  It is straightforward to see that this construction gives rise to the same tiling, see~\cite[Lemma~4.2]{BS}.
\end{remark}

The following result is well known, see e.g. \cite[Corollary~4.2]{BS}. We include a proof since we will use a similar construction later in the proof of \cref{prop:regular}. 

\begin{lemma}\label{lemma:regular_flips}
Any two regular fine zonotopal tilings $\Tiling,\Tiling'$ can be connected by a sequence of flips. 
\end{lemma}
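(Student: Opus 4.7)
The plan is to invoke the correspondence between regular fine zonotopal tilings and chambers of the secondary hyperplane arrangement $\H$, and then to interpolate between the two height vectors.

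First, by \cref{dfn:regular_tiling}, write $\Tiling = \Tiling_\h$ and $\Tiling' = \Tiling_{\h'}$ for some generic height vectors $\h,\h'\in\GHV$. By \cref{prop:circuit}, the tiling $\Tiling_\h$ depends on $\h$ only through the signs $\sigma_\h(C) = \operatorname{sgn}\<\h,\alphaC\>$ for $C\in\Circuits$, so $\Tiling_\h = \Tiling_{\h''}$ whenever $\h$ and $\h''$ lie in the same chamber of the arrangement $\H$ defined in~\eqref{eq:H_sec_arr}.

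Next, consider the line segment $\h_t := (1-t)\h + t\h'$ for $t\in[0,1]$. After perturbing $\h'$ within its chamber by an arbitrarily small amount (which does not change $\Tiling'$), we may assume that this segment is in general position with respect to $\H$: it avoids every codimension-$2$ flat $\{\<\h,\alphaC\>=\<\h,\alphaX(X)\>=0\}$ for non-parallel pairs $C,X\in\Circuits$, and it avoids the origin of each hyperplane. Since $\H$ is a finite arrangement, the segment then crosses the hyperplanes of $\H$ at finitely many parameter values $0 < t_1 < t_2 < \dots < t_N < 1$, and at each such $t_i$ it crosses exactly one hyperplane $\{\<\h,\alphaXX(C_i,J_i)\> = 0\}$, or equivalently (by \cref{lemma:alpha_dep}) exactly one hyperplane $\{\<\h,\alphaX(C_i)\> = 0\}$ for a unique circuit $C_i\in\Circuits$ up to sign.

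Let $\h^{(0)} := \h, \h^{(1)}, \dots, \h^{(N)} := \h'$ be generic points chosen one from each chamber traversed (so $\h^{(i)}$ lies in the open segment between $t_i$ and $t_{i+1}$, with the obvious endpoint conventions). Set $\Tiling^{(i)} := \Tiling_{\h^{(i)}}$; each $\Tiling^{(i)}$ is a regular fine zonotopal tiling. Between $\h^{(i-1)}$ and $\h^{(i)}$, exactly one of the signs $\sigma_{\h^{(i)}}(C_i)$ flips while all others remain constant. By the construction~\eqref{eq:sigma_tiling} and \cref{prop:circuit}, this means that the generic circuit orientations $\sigma_{\Tiling^{(i-1)}}$ and $\sigma_{\Tiling^{(i)}}$ agree everywhere except at $\pm C_i$, which is exactly the condition of \cref{dfn:flip}. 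Hence $\Tiling^{(i-1)}$ and $\Tiling^{(i)}$ differ by a flip along $C_i$, and the sequence $\Tiling = \Tiling^{(0)} \to \Tiling^{(1)} \to \cdots \to \Tiling^{(N)} = \Tiling'$ is the desired sequence of flips.

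The only subtle point is the general-position argument, which ensures that only one hyperplane is crossed at a time; this is a standard transversality statement for finite real hyperplane arrangements and is the main technical step to verify. Everything else reduces to the dictionary between circuit orientations and tilings established in \cref{lemma:ESA} and \cref{prop:circuit}.
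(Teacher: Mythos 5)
Your argument is correct and follows the same route as the paper's proof: both interpolate linearly between two generic height vectors, arrange (by perturbation or by choice of representatives) that the segment crosses the hyperplanes of $\H$ one at a time, and observe that each crossing is a flip via the circuit-orientation dictionary of \cref{prop:circuit}. The paper states this more tersely (it simply chooses $\h,\h'$ so the segment meets at most one hyperplane at a time) but the underlying argument is identical.
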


\begin{proof}
In order to construct the desired sequence of flips, we first choose generic $\h,\h'\in\GHV$ such that $\Tiling=\Tiling_\h$, $\Tiling'=\Tiling_{\h'}$, and the line segment $\h(t):=t\h+(1-t)\h'$ connecting them intersects at most one hyperplane in $\H$ at a time. (That is, for each $0\leq t\leq 1$, $\h(t)$ is orthogonal to $\alphaC$ for at most one pair $\pm C$ of opposite circuits.) Then the (finite) sequence $\Tiling_{\h(t)}$, defined for all $0\leq t\leq 1$ such that $\h(t)\in\GHV$, connects $\Tiling$ to $\Tiling'$ by flips.
\end{proof}
\noindent We also note that if $\Tiling=\Tiling_\h$ for some $\h\in\GHV$ then $\Tiling_{-\h}=\TilingOP$
(see \cref{def:op}).

\def\Dep{I}
\def\dep{i}
\def\Edep{E}

\subsection{Higher secondary polytopes}\label{sec:higher}
We use the conventions of \cref{notation}.  Recall from 
\cref{def:HSP}
that for each $k\in[n-d]$, the \emph{higher secondary polytope} $\Sigma_{\A,k}$ is defined  as the convex hull
\[\Sigma_{\A,k}:=\Conv\left\{\vertHSP_k(\Tiling)\;\middle|\;  \text{$\Tiling$ is a fine \emph{regular} zonotopal tiling of $\Zon_\VC$} \right\},\]
where the vector $\vertHSP_k(\Tiling)$ is defined in~\eqref{eq:HSP}. As mentioned in \cref{sec:main-results}, we expect that the word \emph{regular} can be omitted from the above definition.
\begin{conjecture}\label{mainconj}
The higher secondary polytope $\Sigma_{\A,k}$ is equal to
\[\Sigma_{\A,k}=\Conv\left\{\vertHSP_k(\Tiling)\;\middle|\;  \text{$\Tiling$ is a fine  zonotopal tiling of $\Zon_\VC$} \right\}.\]
That is, for each (not necessarily regular) fine zonotopal tiling $\Tiling$, the vector $\vertHSP_k(\Tiling)$ lies in $\Sigma_{\A,k}$.
\end{conjecture}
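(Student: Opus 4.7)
The plan is to mimic the classical Billera--Sturmfels phenomenon captured in \cref{cor:BS_tiling}: their fiber polytope contains $\vertFib(\Tiling)$ for every fine $\pi$-induced subdivision, not only the regular ones. I want the analogous statement for $\Sigma_{\A,k}$. The difficulty is that, unlike a fiber polytope, $\Sigma_{\A,k}$ has no obvious Minkowski-integral description. The first thing I would attempt is to find one: realize $\Sigma_{\A,k}$ as a fiber of a larger fiber polytope, as an image under a natural projection, or as a ``restricted'' Minkowski integral that only records contributions at level $k$. If such a description exists, the conjecture becomes a direct application of the Minkowski-integral-over-sections argument underlying \cref{cor:BS_tiling}.

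Failing that, the plan is to use linear programming duality. The containment $\vertHSP_k(\Tiling)\in\Sigma_{\A,k}$ is equivalent to $\langle\h,\vertHSP_k(\Tiling)\rangle\leq \max_{\Tiling' \text{ regular}}\langle\h,\vertHSP_k(\Tiling')\rangle$ for every $\h\in\R^n$. So it suffices to exhibit, for each $\h$, a regular tiling $\Tiling'$ with $\langle\h,\vertHSP_k(\Tiling')\rangle\geq \langle\h,\vertHSP_k(\Tiling)\rangle$. A naive candidate is $\Tiling_\h$ itself, which works for the fiber zonotope since $\vertFib$ is the sum of all $\vertHSP_k$ (see \cref{eq:HSP_vertFib}), but does not obviously extract an individual summand. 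I would try a level-dependent modification: construct $\h^{(k)}\in\R^n$ encoding only the ``level-$k$ part'' of $\h$ (for example, a scaled version that only responds to flips of level $k$ via \cref{cor:flip2}) and take $\Tiling'=\Tiling_{\h^{(k)}}$.

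The key technical input for either route is \cref{cor:flip2}: it shows that $\vertHSP_k$ is invariant under flips of level $\ne k$, and under flips of level $k$ it changes by a sum of explicit vectors $\alphaCJ$ that are all parallel to $\alphaC$. Consequently, defining
\[\Sigma_{\A,k}^{\mathrm{all}}:=\Conv\{\vertHSP_k(\Tiling)\mid \Tiling\text{ any fine zonotopal tiling of }\ZV\},\]
its 1-skeleton can be described purely through level-$k$ flip classes, in exact analogy with the description of $\Sigma_{\A,k}$ in \cref{prop:deform}. The natural strategy is then to show that every vertex of $\Sigma_{\A,k}^{\mathrm{all}}$ is achieved by a regular tiling, or more generally, that each $k$-equivalence class of fine zonotopal tilings contains a regular representative with the same value of $\vertHSP_k$.

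The main obstacle I expect is precisely this last step. It is not generally known that every $k$-equivalence class of fine zonotopal tilings contains a regular tiling; this is essentially a level-refined version of the Generalized Baues Problem, which is open and in fact fails in related settings (Santos's counterexamples). If that route is blocked, one must instead show directly, by writing an explicit convex combination, that $\vertHSP_k(\Tiling)$ for a non-regular $\Tiling$ lies in the convex hull of the regular $\vertHSP_k(\Tiling')$'s, which appears to require essentially new combinatorial tools beyond those developed in the paper.
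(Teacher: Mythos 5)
This statement is not a theorem in the paper; it is Conjecture~\ref{mainconj}, and the paper offers no proof of it. The authors explicitly flag it as open when they write, just after Definition~\ref{def:HSP}, ``We expect that the word \emph{regular} can be omitted from the above definition, see \cref{mainconj}.'' So there is no ``paper's own proof'' to compare against, and any blind attempt was bound to stall.

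That said, your diagnosis of where and why the attempt stalls is accurate and aligns with the structure the paper does develop. You correctly note that the usual Billera--Sturmfels mechanism --- via \cref{cor:BS_tiling} and the Minkowski-integral description --- gives the non-regular containment for free for \emph{fiber} polytopes, but that $\Sigma_{\A,k}$ is not a priori a fiber polytope; it is a parallel deformation of one (cf.\ \cref{item:deform_deform}), and parallel deformation does not transport the integral description. You also correctly see that \cref{cor:flip2} localizes the dependence of $\vertHSP_k$ to level-$k$ flips, so the natural reduction is to show that each $k$-equivalence class of (possibly non-regular) fine zonotopal tilings contains a regular representative, or at least that the value of $\vertHSP_k$ on a non-regular class is a convex combination of values on regular ones. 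This is exactly the Baues-type obstruction you name: the connectivity and realizability questions for the flip graph restricted by level are refinements of the Generalized Baues Problem and are not settled by the tools in Sections 4--5 of the paper. Your linear-programming reformulation (find, for each $\h$, a regular $\Tiling'$ with $\langle\h,\vertHSP_k(\Tiling')\rangle \geq \langle\h,\vertHSP_k(\Tiling)\rangle$) is the right dual phrasing; the failure point is precisely that $\Tiling_\h$ optimizes the full fiber functional $\vertFib$ but not, in any obvious way, the single summand $\vertHSP_k$, and there is no known construction of a ``level-$k$ height'' $\h^{(k)}$ that would make $\Tiling_{\h^{(k)}}$ optimal for $\vertHSP_k$ against arbitrary non-regular tilings. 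In short: no gap in your reasoning so much as a correct identification that the statement remains open and that the paper's machinery, as is, does not close it.
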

\noindent See \cref{fig:Gr_3_6} for an illustration.

We start by computing the dimension of $\Sigma_{\A,k}$.
\begin{proposition}\label{prop:HSP_dim}
The dimension of $\Sigma_{\A,k}$ is equal to $n-d$.
\end{proposition}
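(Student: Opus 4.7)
My plan is to prove the dimension is exactly $n-d$ via matching bounds, both controlled by the projection $\pi\colon \R^n \to \R^d$ sending $\e_i\mapsto \v_i$. Since $\VC$ linearly spans $\R^d$, the map $\pi$ has $d$-dimensional image and $\dim\ker\pi = n-d$, so the target dimension is exactly the codimension of $\pi$.

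\textbf{Upper bound.} First I would show that $\pi(\vertHSP_k(\Tiling))$ does not depend on the regular tiling $\Tiling$. By \cref{lemma:regular_flips}, any two regular fine zonotopal tilings are connected by flips, so it suffices to check invariance across a single flip $F = (\Tiling\to\Tiling')$ along a circuit $C$. By \cref{cor:flip2},
\[
\vertHSP_k(\Tiling) - \vertHSP_k(\Tiling') \;=\; \sum_{J\in\BMC,\ \level(F,J)=k}\alphaCJ,
\]
and by \cref{lemma:alpha_dep} each $\alphaCJ$ represents the (up to scaling) linear dependence $\sum_j \alphaCJx_j \v_j = 0$, so $\alphaCJ\in\ker\pi$. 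Hence $\Sigma_{\A,k}$ lies in an affine subspace of codimension $d$, giving $\dim\Sigma_{\A,k}\le n-d$.

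\textbf{Lower bound.} By the same flip-path argument, the linear span of differences $\vertHSP_k(\Tiling)-\vertHSP_k(\Tiling_0)$ is contained in the span of the flip differences $\sum_{J\colon \level(F,J)=k}\alphaCJ$, each of which is a scalar multiple of $\alphaC$ (again by \cref{lemma:alpha_dep}). Since the family $\{\alphaC\}_{C\in\Circuits}$ spans $\ker\pi$ (the circuits span the space of linear dependences of the $\v_i$), the lower bound reduces to the following claim: \emph{for every circuit $C\in\Circuits$ and every $k\in[n-d]$, some flip $F$ between regular fine zonotopal tilings is along $C$ with $\level(F,J)=k$ for some $J\in\BMC$.}

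\textbf{Main obstacle.} Proving this existence claim is the crux. My approach is constructive. Fix any $J\in\BMC$; since $|\suppC\sqcup J|=d+1$ we have $|[n]\setminus(\suppC\sqcup J)|=n-d-1\ge k-1$, so we may choose $A\subset [n]\setminus(\suppC\sqcup J)$ with $|A|=k-1$. The vector $\e_A$ lies on the wall $\alphaC^\perp$ (because $A\cap\suppC=\emptyset$ forces every nonzero coordinate of $\alphaC$ to be outside $A$, so $\langle\e_A,\alphaC\rangle=0$). Hence height vectors of the form $\h=M\e_A + \epsilon\alphaC + (\text{small generic term})$ with $M$ large and $\epsilon$ tiny of either sign produce regular tilings $\Tiling_\pm$ on either side of this wall that differ by a flip along $C$. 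Using the upper-envelope description of \cref{rmk:upper} together with \cref{lemma:ESA} (which identifies $A(F,J)$ via externally semi-active elements with respect to $\sigma_\h$), one then verifies that $A(F,J)=A$, so $\level(F,J)=|A|+1=k$. The delicate point is choosing the small generic term so that $\h$ is truly generic while the large $M\e_A$ coefficient still dictates which elements become semi-active at the basis $\sC^j\sqcup J$; this height-function engineering is the technical heart of the argument.
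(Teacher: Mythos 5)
Your overall strategy matches the paper's: the upper bound via the observation that flip differences lie in $W=\ker\pi$ (the span of the $\alphaC$'s), and the lower bound by showing that for each circuit $C$ and each $k\in[n-d]$, some flip along $C$ occurs at level $k$. The upper bound argument is correct. However, the constructed height vector in your lower bound has a genuine gap.

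You set $T:=[n]\eltminus(\suppC\sqcup J\sqcup A)$ and propose $\h=M\e_A+\epsilon\alphaC+(\text{small generic term})$. This forces the elements of $A$ to be externally semi-active with respect to $B:=\sC^j\sqcup J$ (the large $h_s$, $s\in A$, dominates $\<\h,\alphaX(C_s)\>$), but it does \emph{not} control the elements of $T$. For $t\in T$, the unique circuit $C_t$ with $t\in C_t^+$ and $\suppC_t\subset B\cup t$ is supported entirely on $B\cup t$, and your $\h$ is small on all of $B\cup t$ (it is supported on $A\sqcup\suppC$ up to a small generic perturbation). Hence the sign of $\<\h,\alphaX(C_t)\>$ is dictated by the uncontrolled small generic term, and $t$ may become externally semi-active, giving $A(F,J)\supsetneq A$ and $\level(F,J)>k$. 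The fix, which is what the paper does, is to additionally force $h_t<0$ with $|h_t|\gg|h_b|$ for all $t\in T$ and $b\in\suppC\sqcup J$ (so something like $\h=M\e_A-M\e_T+\cdots$), thereby pinning $\sigma(C_t)=-1$ for $t\in T$ simultaneously with $\sigma(C_s)=+1$ for $s\in A$. Your phrase ``the large $M\e_A$ coefficient still dictates which elements become semi-active'' is precisely the false step: it dictates that elements of $A$ are semi-active but says nothing about $T$.
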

\begin{proof}
Let $M$ be the $d\times n$ matrix whose columns are $\v_1,\dots,\v_n$. Then the row span $U$ of $M$ is a $d$-dimensional subspace of $\R^n$. Let $W\subset \R^n$ be the $(n-d)$-dimensional subspace spanned by the vectors $\alphaC$ for all $C\in\Circuits$. It is clear that $U$ and $W$ are orthogonal subspaces and $\R^n=U\oplus W$. By \cref{cor:flip2},
\cref{lemma:alpha_dep},
	and \cref{lemma:regular_flips}, we see that all edge directions 
	of $\Sigma_{\A,k}$ belong to $W$. Thus $\dim(\Sigma_{\A,k})\leq n-d$.

	By \cref{cor:flip2}, it remains to show that for each circuit $C\in\Circuits$, there exists a flip $F=(\Tiling\to\Tiling')$ along $C$ and $J\in\BMC$ such that $\level(F,J)=k$, that is, $|A(F,J)|=k-1$. Choose any $J\in\BMC$ and any $(k-1)$-element set $S\subset ([n]\setminus (\suppC\sqcup J))$, and let $T:=[n]\setminus (\suppC\sqcup J\sqcup S)$. Choose any height vector $\h=(h_1,\dots,h_n)\in\R^n$ such that $\<\h,\alphaC\>=0$, $\<\h,\alphaX(X)\>\neq0$ for all $X\neq \pm C$, and for all $s\in S$, $b\in \suppC\sqcup J$, and $t\in T$, we have $h_s>0$, $h_t<0$,  and $|h_s|,|h_t|\gg |h_b|$. Let $\h^+,\h^-\in\GHV$ be generic height vectors given by $\h^+:=\h+\epsilon\cdot \alphaC$, $\h^-:=\h-\epsilon\cdot \alphaC$ for some small $\epsilon>0$, and let $\Tiling:=\Tiling_{\h^+}$, $\Tiling':=\Tiling_{\h^-}$. Then $F:=(\Tiling\to\Tiling')$ is a flip along $C$
	(recall  
	\cref{dfn:flip}, \eqref{eq:circuit}, and \cref{prop:circuit}), 
	 and it is easy to see from~\eqref{eq:ESA} and~\eqref{eq:AFJ} using $\sigma_{\h^+}=\sigma_\Tiling$ that $A(F,J)=S$, thus $\level(F,J)=k$.
\end{proof}

\begin{example}
For the case $d=1$ from \cref{ex:d1}, we have a circuit $C=(\{i\},\{j\})$ for all $1\leq i\neq j\leq n$. We see that for each $k\in[n-d]$, the higher secondary polytope $\Sigma_{\A,k}=\Delta_{k,n}$ contains an edge parallel to $\e_i-\e_j$ for all $i\neq j$, in agreement with the proof of \cref{prop:HSP_dim}.
\end{example}

We now proceed to proving \cref{thm:sigma}. Recall from \cref{dfn:pi_regular} that for a polytope $P\subset \R^n$ and a vector $\h\in\R^n$, $(P)^\h$ is the face of $P$ that maximizes the scalar product with $\h$.
\begin{proposition}\label{prop:regular}
Let $\h\in\GHV$ be a generic height vector, and let $\Tiling_\h$ be the corresponding regular fine zonotopal tiling of $\ZV$. 
Recall the definitions of 
  $\vertFib(\Tiling)$, 
	$\vertFib_k(\Tiling)$, and $\vertGKZ(\Tiling)$
from 	\eqref{eq:vertFibzon},
 \eqref{eq:vertFib_k}, and 
\cref{rem:GKZ}.
\begin{theoremlist}
\item\label{item:regular:GKZ} $(\SigmaGKZ)^\h=\vertGKZ(\Tiling_\h)$.  
\item\label{item:regular:cube}   $(\FibPoly(\cube_n\pito \Zon_\VC))^\h=\vertFib(\Tiling_\h)$.
\item\label{item:regular:delta}   $(\FibPoly(\Delta_{k,n}\pito Q_k))^\h=\vertFib_k(\Tiling_\h)$ for all $k\in[n-1]$.
\item\label{item:regular:sigma} $(\Sigma_{\A,k})^\h=\vertHSP_k(\Tiling_\h)$ for all $k\in[n-d]$.
\end{theoremlist}
\end{proposition}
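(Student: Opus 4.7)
The plan is to first prove part \cref{item:regular:sigma} directly by a flip-path argument, and then deduce parts \cref{item:regular:GKZ}, \cref{item:regular:cube}, and \cref{item:regular:delta} from it using the linear identities of \cref{prop:HSP_vert}.

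For \cref{item:regular:sigma}, I would fix an arbitrary regular fine zonotopal tiling $\Tiling' = \Tiling_{\h'}$ of $\ZV$ distinct from $\Tiling_\h$ and imitate the path construction from the proof of \cref{lemma:regular_flips}. After a small perturbation of $\h'$, the line segment $\h(t) := (1-t)\h + t\h'$, $t \in [0,1]$, meets the hyperplanes of $\H$ one at a time, yielding a sequence of flips $\Tiling_\h = \Tiling_0 \to \Tiling_1 \to \cdots \to \Tiling_N = \Tiling'$ occurring at times $0 < t_1 < \cdots < t_N < 1$, where $F_i := (\Tiling_i \to \Tiling_{i+1})$ is a flip along some circuit $C_i$ with $\langle \h(t_i), \bm{\alpha}(C_i)\rangle = 0$. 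The key observation is that $\langle \h, \bm{\alpha}(C_i)\rangle > 0$ for every $i$: by \cref{prop:circuit} and \eqref{eq:circuit}, $\sigma_{\Tiling_i}(C_i) = +1$, so $\langle \h(t), \bm{\alpha}(C_i)\rangle$ is strictly positive on $(t_{i-1}, t_i)$; being a linear function of $t$ that vanishes at $t_i$, it must therefore be strictly positive on all of $[0, t_i)$, and in particular at $t = 0$. Combining this with \cref{cor:flip2} and \cref{lemma:alpha_dep} (which says each $\alphaXX(C_i,J)$ is a positive scalar multiple of $\bm{\alpha}(C_i)$), I would obtain $\langle \h, \vertHSP_k(\Tiling_i) - \vertHSP_k(\Tiling_{i+1})\rangle \geq 0$ for every $i$, with strict inequality whenever $\vertHSP_k(\Tiling_i) \neq \vertHSP_k(\Tiling_{i+1})$. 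Telescoping then gives $\langle \h, \vertHSP_k(\Tiling_\h)\rangle \geq \langle \h, \vertHSP_k(\Tiling')\rangle$, with strict inequality unless the two points coincide, so $\vertHSP_k(\Tiling_\h)$ is the unique maximizer of $\langle \h, \cdot \rangle$ on the vertex set of $\Sigma_{\A,k}$.

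Once \cref{item:regular:sigma} is in hand, parts \cref{item:regular:cube} and \cref{item:regular:delta} will follow immediately from the identities \eqref{eq:HSP_vertFib} and \eqref{eq:HSP_vertFib_k}, which express $\vertFib(\Tiling)$ and $\vertFib_k(\Tiling)$ as nonnegative linear combinations of the vectors $\vertHSP_j(\Tiling)$ (for $j \in [n-d]$) plus terms independent of $\Tiling$. Since $\Tiling_\h$ simultaneously maximizes $\langle \h, \vertHSP_j(\cdot)\rangle$ for every $j$, it will also maximize these linear combinations over all regular fine zonotopal tilings; applying \cref{cor:BS_tiling} to $\pi:\cube_n\to\ZV$ and $\pi:\Delta_{k,n}\to Q_k$ then identifies the resulting maxima with the faces $(\FibPoly(\cube_n\pito\ZV))^\h$ and $(\FibPoly(\Delta_{k,n}\pito Q_k))^\h$. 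Part \cref{item:regular:GKZ} is then immediate from either \eqref{eq:GKZ_FibPoly} and \cref{item:regular:delta} at $k=1$, or from \eqref{eq:GKZ} and \cref{item:regular:sigma}.

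The main obstacle I anticipate lies in the sign-tracking step in the proof of \cref{item:regular:sigma}: one has to verify that the flip orientation convention $\sigma_{\Tiling_i}(C_i) = +1$, together with the linearity of $\langle \h(t), \bm{\alpha}(C_i)\rangle$ and its sign change at $t = t_i$, truly forces $\langle \h, \bm{\alpha}(C_i)\rangle > 0$ at the initial time $t = 0$ for every flip in the sequence, not merely for the first one. The delicate point is that the relevant positivity at $t = 0$ must be deduced solely from the local sign information at $t_i$, relying on the linearity of $\h(t)$ and the transversality arranged by the generic perturbation of $\h'$. Once this is in place, the telescoping estimate and the deductions of parts \cref{item:regular:GKZ}--\cref{item:regular:delta} will be routine.
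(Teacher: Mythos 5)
Your proof is correct. The core flip-path argument you give for part \itemref{item:regular:sigma} is essentially the same as the paper's: the paper parametrizes its path as the ray $\{\h' + t\h \mid t\geq 0\}$ and extracts the sign $\<\h,\alphaC\>>0$ from the difference quotient across the crossing, while you parametrize by the segment $(1-t)\h + t\h'$ and extract the same sign from linearity of $t\mapsto\<\h(t),\alphaC\>$ on the subinterval $[0,t_i)$. These are the same idea in two disguises, and the worry you flag at the end — that positivity at $t=0$ must be deduced from sign data localized at the later crossing — is correctly resolved by exactly the linearity observation you describe. (There is a cosmetic off-by-one in your indexing — with $\Tiling_0 = \Tiling_\h$ on the initial subinterval, the flip $F_i = (\Tiling_i\to\Tiling_{i+1})$ occurs at the crossing time $t_{i+1}$, not $t_i$ — but this does not affect the argument.)

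Where you genuinely depart from the paper is in the treatment of parts \itemref{item:regular:GKZ}--\itemref{item:regular:delta}. The paper simply cites these as known from Billera--Sturmfels and Ziegler and uses them as input. You instead prove \itemref{item:regular:sigma} first and then derive \itemref{item:regular:GKZ}--\itemref{item:regular:delta} as corollaries, using the identities \eqref{eq:HSP_vertFib}, \eqref{eq:HSP_vertFib_k}, \eqref{eq:GKZ} from \cref{prop:HSP_vert}, which express $\vertFib$, $\vertFib_k$, $\vertGKZ$ as nonnegative linear combinations (over $j$) of the $\vertHSP_j$ plus $\Tiling$-independent shifts. This works: since $\Tiling_\h$ simultaneously and uniquely maximizes $\<\h, \vertHSP_j(\cdot)\>$ for every $j$, any tiling $\Tiling'$ attaining the same value of the combined functional must attain equality for each $j$, hence $\vertHSP_j(\Tiling') = \vertHSP_j(\Tiling_\h)$ for all $j$, hence $\vertFib(\Tiling') = \vertFib(\Tiling_\h)$ (and similarly for $\vertFib_k$ and $\vertGKZ$). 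Combined with \cref{cor:BS_tiling}, this pins down the face $(\cdot)^\h$ as a single vertex. Your route has the virtue of being self-contained modulo \cref{prop:HSP_vert}, rather than relying on the fiber-polytope literature; the paper's route is shorter where it can defer to known results. Both are valid.
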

\begin{proof}
Parts~\itemref{item:regular:GKZ}--\itemref{item:regular:delta} are well known, see~\cite[Proposition~1.2, the proof of Theorem~2.5, Corollary~4.2]{BS}, or 
\cite[the proof of Theorem~9.6]{Ziegler}.  To prove~\itemref{item:regular:sigma}, we need to show that for any regular fine zonotopal tiling $\Tiling':=\Tiling_{\h'}$ of $\ZV$ (where $\h'\in\GHV$), we have $\<\h,\vertHSP_k(\Tiling_\h)\>\geq\<\h,\vertHSP_k(\Tiling')\>$. We proceed as in the proof of \cref{lemma:regular_flips}. After slightly modifying $\h'$ without changing $\Tiling_{\h'}$, we may assume that every point of the ray $\{\h'+t\h\mid t\geq0\}$ is orthogonal to $\alphaC$ for at most one pair $\pm C$ of opposite circuits. The corresponding finite sequence of flips connects $\Tiling'$ to $\Tiling$. Suppose that for some $t>0$ and $C\in\Circuits$, we have $\<\h'+t\h,\alphaC\>=0$. Choose a small positive $\epsilon$ so that the tilings  $\Tiling_-:=\Tiling_{\h'+(t-\epsilon)\h}$ and $\Tiling_+:=\Tiling_{\h'+(t+\epsilon)\h}$ differ by a flip $F=(\Tiling_+\to\Tiling_-)$ along $C$. 
	By \cref{dfn:flip} and \cref{prop:circuit},
	$\<\h,\alphaC\>>0$. By \cref{cor:flip2}, $\vertHSP_k(\Tiling_+)-\vertHSP_k(\Tiling_-)$ is a positive scalar multiple of $\alphaC$, so $\<\h,\vertHSP_k(\Tiling_+)\>>\<\h,\vertHSP_k(\Tiling_-)\>$. Thus the dot product of $\vertHSP_k(\Tiling_{\h'+t\h})$ with $\h$ increases weakly as $t$ grows from $0$ to $\infty$, and when $t$ is sufficiently large, we obviously have $\Tiling_{\h'+t\h}=\Tiling_{\h}$.
\end{proof}

\begin{proof}[Proof of \cref{thm:sigma}.]
All four parts of \cref{thm:sigma} follow from \cref{prop:HSP_vert}, \cref{prop:regular},  and \eqref{eq:BS_tiling_reg}.  
 Explicitly,  the polytopes in question are related as follows:
\begin{equation}\label{eq:proof}
\begin{aligned}
  \SigmaGKZ=& \frac1{(d-1)!} \left(\Sigma_{\A,1}+ \ddelta(0,\VC)\right);\\
  \FibPoly(\cube_n\pito \Zon_\VC)=&\frac1{\Vold(\ZV)} \left( \Sigma_{\A,1} + \cdots + \Sigma_{\A,n-d} +\frac12\sum_{k=0}^{n-d}\ddelta(k,\VC) \right);\\
  \FibPoly(\Delta_{k,n}\pito Q_k)=&\frac1{\Voldd(Q_k)} \Biggl( p_{0,d}\Sigma_{\A,k}+p_{1,d}\Sigma_{\A,k-1}+ \dots + p_{d-1,d}\Sigma_{\A,k-d+1} \Biggr.\\
&+\Biggl.\sum_{r=1}^{d-1}\frac{r}{d} \cdot p_{r-1,d-1}\ddelta(k-r,\VC) \Biggr)\qquad \text{for all $k\in[n-1]$};\\
 \Sigma_{\A,k}=&-\Sigma_{\A,n-d-k+1}+\gamma_{k-1}(\VC)\cdot \one_n-\ddelta(k-1,\VC)\qquad \text{for all $k\in[n-d]$}.
\end{aligned}
\end{equation}
\noindent Here we set $p_{r,d}=\frac{\Eul(d,r)}{d!}$ as before.
\end{proof}

\subsection{Vertices, edges, and deformations}
\label{sec2:vert-edges-deform}
In this section, we prove \cref{prop:deform}. We state it more generally for 
point configurations
that are not necessarily generic. 
For a flip $F=(\Tiling\to\Tiling')$ along a circuit $C$ and $J\in \BMC$, recall the definition of $\level(F,J)\in[n-d]$ 
from \cref{def2:level}.
Let us write $\Level(F):=\{\level(F,J)\mid J\in \BMC\}$.

Extending the definitions of \cref{sec:vert-edges-deform}, we say that two fine zonotopal tilings $\Tiling$ and $\Tiling'$ of $\ZV$ are \emph{$k$-equivalent} if they can be connected by flips $F$ such that $k\notin \Level(F)$. Similarly, we say that two flips $F=(\Tiling_1\to\Tiling_2)$ and $F'=(\Tiling'_1\to\Tiling'_2)$ are \emph{$k$-equivalent} if $\Tiling_1$ is $k$-equivalent to $\Tiling_1'$ and $\Tiling_2$ is $k$-equivalent to $\Tiling_2'$.

\begin{proposition}
Let $\A$ be an arbitrary configuration of $n$ points in $\R^{d-1}$, and let $k\in[n-d]$.
\begin{theoremlist}
\item\label{item:deform_vert2} The vertices of the higher secondary polytope $\Sigma_{\A,k}$ are in bijection with $k$-equivalence classes of regular fine zonotopal tilings of $\ZV$.
\item\label{item:deform_edge2} The edges of $\Sigma_{\A,k}$ correspond to $k$-equivalence classes of flips $F$ such that $k\in\Level(F)$.
\item\label{item:deform_deform2} For any nonnegative real numbers $x_1,\dots,x_{n-d}$, the Minkowski sum
  \[\frac1{\Vold(\ZV)} \left(x_1 \Sigma_{\A,1}+\dots + x_{n-d} \Sigma_{\A,n-d}\right)\]
is a parallel deformation of the fiber zonotope $\FibPoly(\cube_n\pito \ZV)$, where an edge corresponding to a flip $F$ along $C\in\Circuits$ is rescaled by $\sum_{J\in\BMC}x_{\level(F,J)}$.
\end{theoremlist}  
\end{proposition}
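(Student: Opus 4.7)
The strategy is to exploit the normal-fan structure. Recall that the normal fan of the fiber zonotope $\FibPoly(\cube_n\pito\ZV)$ is the secondary fan, i.e., the fan of the hyperplane arrangement $\H$ defined in~\eqref{eq:H_sec_arr}. Its full-dimensional chambers correspond bijectively to regular fine zonotopal tilings of $\ZV$ (\cref{prop:circuit}, \cref{dfn:regular_tiling}), and its codimension-one walls correspond to flips in the sense of \cref{dfn:flip} (cf.\ the proof of \cref{lemma:regular_flips}). By \cref{prop:regular}\itemref{item:regular:sigma}, $(\Sigma_{\A,k})^{\h}=\vertHSP_k(\Tiling_{\h})$ for every generic $\h\in\GHV$, so each face of $\Sigma_{\A,k}$ pulls back to a union of chambers of the secondary fan, and in particular the normal fan of $\Sigma_{\A,k}$ is a coarsening of the secondary fan.

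For parts \itemref{item:deform_vert2} and \itemref{item:deform_edge2}, the remaining task is to determine \emph{which} adjacent chambers merge in the normal fan of $\Sigma_{\A,k}$. If a wall between chambers $\h,\h'\in\GHV$ corresponds to a flip $F=(\Tiling\to\Tiling')$ along a circuit $C$, then the chambers merge iff $\vertHSP_k(\Tiling)=\vertHSP_k(\Tiling')$. By \cref{cor:flip2}, this difference equals $\sum_{J\in\BMC,\;\level(F,J)=k}\alphaCJ$, and since by \cref{lemma:alpha_dep} all $\alphaCJ$ are positive scalar multiples of a common direction, this vector is zero iff no $J$ satisfies $\level(F,J)=k$, i.e., iff $k\notin\Level(F)$. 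This is exactly the single-wall version of $k$-equivalence of tilings; propagating along sequences of walls that connect two chambers yields \itemref{item:deform_vert2}. For \itemref{item:deform_edge2}, the surviving walls are exactly those with $k\in\Level(F)$, and two such walls become the same edge of $\Sigma_{\A,k}$ precisely when they can be connected by a sequence of non-surviving (i.e.\ $k$-preserving) walls on both of their endpoints, which is exactly the definition of $k$-equivalence of flips.

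For part~\itemref{item:deform_deform2} I would use the standard fact that the normal fan of a Minkowski sum of polytopes is the common refinement of their normal fans, so if each summand has normal fan coarsening a common fan $\mathcal{F}$, then so does the Minkowski sum. Applied with $\mathcal{F}$ the secondary fan, this immediately shows that $P(\mathbf{x}):=\frac{1}{\Vold(\ZV)}\sum_k x_k\Sigma_{\A,k}$ is a parallel deformation of $\FibPoly(\cube_n\pito\ZV)$. The edge vector of $P(\mathbf{x})$ corresponding to a flip $F$ along $C$ is the Minkowski sum of the edge vectors of the rescaled summands, namely
\[
\tfrac{1}{\Vold(\ZV)}\sum_{k=1}^{n-d} x_k\bigl(\vertHSP_k(\Tiling)-\vertHSP_k(\Tiling')\bigr)=\tfrac{1}{\Vold(\ZV)}\sum_{J\in\BMC} x_{\level(F,J)}\,\alphaCJ,
\]
by \cref{cor:flip2} and exchange of summation; since all $\alphaCJ$ are parallel to $\alphaC$ (\cref{lemma:alpha_dep}), this exhibits the edge of $P(\mathbf{x})$ as a scalar multiple of the corresponding edge of $\FibPoly(\cube_n\pito\ZV)$, with the coefficient absorbing the stated factor $\sum_{J\in\BMC}x_{\level(F,J)}$.

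The main technical point, which is also the principal departure from the generic case of \cref{prop:deform}, is that a single flip $F$ can now receive contributions from several $J\in\BMC$ with potentially different values of $\level(F,J)$; this is why \cref{cor:flip2} (not its generic specialization) is needed, and why the scaling in~\itemref{item:deform_deform2} involves the sum over $\BMC$. The fact that these contributions all point in a common direction (\cref{lemma:alpha_dep}) is what keeps the relevant edge vectors one-dimensional, so that vanishing of the sum is equivalent to emptiness of the index set $\{J:\level(F,J)=k\}$; this is the step where the argument would fail without \cref{lemma:alpha_dep}.
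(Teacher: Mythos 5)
Your proposal is correct and uses essentially the same ingredients as the paper's own (very terse) proof: \cref{prop:regular} to identify normal fans with the secondary fan, \cref{cor:flip2} together with \cref{lemma:alpha_dep} to compute edge vectors and detect which walls collapse, and the common-refinement fact for Minkowski sums. The only structural difference is that the paper derives parts~\itemref{item:deform_vert2} and~\itemref{item:deform_edge2} as specializations of~\itemref{item:deform_deform2}, whereas you establish them directly from the normal-fan picture; the mathematical content is the same, and your filled-in details (in particular the explicit appeal to \cref{cor:flip2} and \cref{lemma:alpha_dep} to keep all $\alphaCJ$ parallel) are precisely what the paper's one-line citation of \cref{prop:regular} is silently relying on.
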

\begin{proof}
Parts~\itemref{item:deform_vert2} and~\itemref{item:deform_edge2} follow from part~\itemref{item:deform_deform2}.  As for 
part~\itemref{item:deform_deform2}, the statement about the parallel deformation
	is an immediate consequence of 
\cref{thm:FibPoly}, together with the fact (\cite[Proposition 7.12]{Ziegler}) that the normal fan of a Minkowski sum of two polytopes 
	is the common refinement of 
	the individual normal fans.  
The statement about the edges follows from 
	\cref{prop:regular}.
\end{proof}

\def\face{F}

\def\Bpt{Black-partite\xspace}
\def\Wpt{White-partite\xspace}
\def\bpt{black-partite\xspace}
\def\wpt{white-partite\xspace}

\def\bptop{{\operatorname{bpt}}}
\def\wptop{{\operatorname{wpt}}}

\def\Gbpt{G^\bptop}
\def\Gwpt{G^\wptop}

\section{Higher associahedra and plabic graphs}\label{sec:higherassoc}

In this section, we give background on plabic graphs, and explain the relation
between plabic graphs and \emph{higher associahedra}, which are the higher secondary polytopes in the case that $d=3$ and $\A$ is the set of vertices of a convex $n$-gon in $\R^2$. We then prove 
\cref{thm:regular} and discuss several combinatorial notions arising from our construction.

\subsection{Background on plabic graphs}\label{sec:backgr-plab-graphs}

Recall the definition of a plabic graph $G$ and its bipartite version $\Gbip$ from \cref{sec:high-assoc-plab}. We always assume that plabic graphs have no interior vertices of degree $1$ or $2$. A \emph{strand} in a plabic graph $G$ is a directed path $p$ defined as follows:
\begin{itemize}
\item $p$ starts and ends at a boundary vertex of $G$;
\item at each black interior vertex of $G$, $p$ turns ``maximally right'';\footnote{Here by a \emph{maximally right (resp., left) turn} we mean that if an interior vertex $w$ of $G$ is incident to edges $e_1,\dots,e_m$ in clockwise order and $p$ passes through $e_i$ and then through $w$, it must then pass through $e_{i-1}$ (resp., $e_{i+1}$), where the indices are taken modulo $n$.}
\item at each white interior vertex of $G$, $p$ turns ``maximally left''.
\end{itemize}

\begin{figure}
\def\boundnoderadius{1.5pt}
\scalebox{0.8}{
\begin{tikzpicture}[scale=1.0]
\def\strandcolor{green!60!black}

\node[draw, ellipse, black, fill=white, scale=0.9,inner sep=1.0pt] (node123) at (8.00,7.00) {$1\,\textcolor{\strandcolor}{\bm{2}}\,3$};
\node[draw, ellipse, black, fill=white, scale=0.9,inner sep=1.0pt] (node126) at (9.00,4.00) {$1\,\textcolor{\strandcolor}{\bm{2}}\,6$};
\node[draw, ellipse, black, fill=white, scale=0.9,inner sep=1.0pt] (node136) at (7.00,4.00) {$1\,3\,6$};
\node[draw, ellipse, black, fill=white, scale=0.9,inner sep=1.0pt] (node146) at (6.00,3.00) {$1\,4\,6$};
\node[draw, ellipse, black, fill=white, scale=0.9,inner sep=1.0pt] (node156) at (6.00,1.00) {$1\,5\,6$};
\node[draw, ellipse, black, fill=white, scale=0.9,inner sep=1.0pt] (node234) at (4.00,8.00) {$\textcolor{\strandcolor}{\bm{2}}\,3\,4$};
\node[draw, ellipse, black, fill=white, scale=0.9,inner sep=1.0pt] (node236) at (6.00,6.00) {$\textcolor{\strandcolor}{\bm{2}}\,3\,6$};
\node[draw, ellipse, black, fill=white, scale=0.9,inner sep=1.0pt] (node345) at (1.00,5.00) {$3\,4\,5$};
\node[draw, ellipse, black, fill=white, scale=0.9,inner sep=1.0pt] (node346) at (3.00,5.00) {$3\,4\,6$};
\node[draw, ellipse, black, fill=white, scale=0.9,inner sep=1.0pt] (node456) at (2.00,2.00) {$4\,5\,6$};
\fill [opacity=0.2,black] (node123.center) -- (node126.center) -- (node236.center) -- cycle;
\coordinate (bnode1236n1) at (7.67,5.67);
\coordinate (wnode23n1) at (6.00,7.00);
\coordinate (wnode16n1) at (7.33,3.67);
\fill [opacity=0.2,black] (node126.center) -- (node136.center) -- (node236.center) -- cycle;
\coordinate (bnode1236n2) at (7.33,4.67);
\coordinate (wnode16n2) at (7.00,2.67);
\fill [opacity=0.2,black] (node136.center) -- (node146.center) -- (node346.center) -- cycle;
\coordinate (bnode1346n1) at (5.33,4.00);
\coordinate (wnode36n1) at (5.33,5.00);
\fill [opacity=0.2,black] (node146.center) -- (node156.center) -- (node456.center) -- cycle;
\coordinate (bnode1456n1) at (4.67,2.00);
\coordinate (wnode46n1) at (3.67,3.33);
\fill [opacity=0.2,black] (node234.center) -- (node236.center) -- (node346.center) -- cycle;
\coordinate (bnode2346n1) at (4.33,6.33);
\coordinate (wnode34n1) at (2.67,6.00);
\fill [opacity=0.2,black] (node345.center) -- (node346.center) -- (node456.center) -- cycle;
\coordinate (bnode3456n1) at (2.00,4.00);
\draw[line width=0.40mm,black] (node123) -- (node126);
\draw[line width=0.40mm,black] (node123) -- (node234);
\draw[line width=0.40mm,black] (node123) -- (node236);
\draw[line width=0.40mm,black] (node126) -- (node123);
\draw[line width=0.40mm,black] (node126) -- (node136);

\draw[line width=0.40mm,black] (node126) -- (node156);
\draw[line width=0.40mm,black] (node126) -- (node236);
\draw[line width=0.40mm,black] (node136) -- (node126);
\draw[line width=0.40mm,black] (node136) -- (node146);
\draw[line width=0.40mm,black] (node136) -- (node236);
\draw[line width=0.40mm,black] (node136) -- (node346);

\draw[line width=0.40mm,black] (node146) -- (node136);
\draw[line width=0.40mm,black] (node146) -- (node156);
\draw[line width=0.40mm,black] (node146) -- (node346);
\draw[line width=0.40mm,black] (node146) -- (node456);
\draw[line width=0.40mm,black] (node156) -- (node126);
\draw[line width=0.40mm,black] (node156) -- (node146);
\draw[line width=0.40mm,black] (node156) -- (node456);
\draw[line width=0.40mm,black] (node234) -- (node123);
\draw[line width=0.40mm,black] (node234) -- (node236);
\draw[line width=0.40mm,black] (node234) -- (node345);
\draw[line width=0.40mm,black] (node234) -- (node346);
\draw[line width=0.40mm,black] (node236) -- (node123);
\draw[line width=0.40mm,black] (node236) -- (node126);
\draw[line width=0.40mm,black] (node236) -- (node136);
\draw[line width=0.40mm,black] (node236) -- (node234);
\draw[line width=0.40mm,black] (node236) -- (node346);
\draw[line width=0.40mm,black] (node345) -- (node234);
\draw[line width=0.40mm,black] (node345) -- (node346);
\draw[line width=0.40mm,black] (node345) -- (node456);
\draw[line width=0.40mm,black] (node346) -- (node136);
\draw[line width=0.40mm,black] (node346) -- (node146);
\draw[line width=0.40mm,black] (node346) -- (node234);
\draw[line width=0.40mm,black] (node346) -- (node236);
\draw[line width=0.40mm,black] (node346) -- (node345);
\draw[line width=0.40mm,black] (node346) -- (node456);
\draw[line width=0.40mm,black] (node456) -- (node146);
\draw[line width=0.40mm,black] (node456) -- (node156);
\draw[line width=0.40mm,black] (node456) -- (node345);
\draw[line width=0.40mm,black] (node456) -- (node346);
\draw[dashed, line width=1pt,black!70] (5.00,4.50) circle (4.60);
\coordinate (bound_6_1 2_3) at (9.40,5.84);
\node[scale=1,color=black] (boundNode3) at (9.84,5.97) {$\boundLabel_{3}$};
\coordinate (bound_1_2 3_4) at (6.55,8.83);
\node[scale=1,color=black] (boundNode4) at (6.70,9.27) {$\boundLabel_{4}$};
\coordinate (bound_2_3 4_5) at (1.62,7.63);
\node[scale=1,color=black] (boundNode5) at (1.29,7.94) {$\boundLabel_{5}$};
\coordinate (bound_3_4 5_6) at (0.49,3.59);
\node[scale=1,color=black] (boundNode6) at (0.04,3.50) {$\boundLabel_{6}$};
\coordinate (bound_4_5 6_1) at (3.96,0.02);
\node[scale=1,color=black] (boundNode1) at (3.86,-0.43) {$\boundLabel_{1}$};
\coordinate (bound_5_1 6_2) at (8.76,1.84);
\node[scale=1,color={\strandcolor}] (boundNode2) at (9.13,1.58) {$\bm{\boundLabel_{2}}$};
\coordinate (node12) at (9.40,5.84);
\coordinate (node16) at (7.00,3.00);
\coordinate (node23) at (6.00,7.00);
\coordinate (node34) at (2.67,6.00);
\coordinate (node36) at (5.33,5.00);
\coordinate (node45) at (0.49,3.59);
\coordinate (node46) at (3.67,3.33);
\coordinate (node56) at (3.96,0.02);
\coordinate (node1234) at (6.55,8.83);
\coordinate (node1236) at (7.50,5.25);
\coordinate (node1256) at (8.76,1.84);
\coordinate (node1346) at (5.33,4.00);
\coordinate (node1456) at (4.67,2.00);
\coordinate (node2345) at (1.62,7.63);
\coordinate (node2346) at (4.33,6.33);
\coordinate (node3456) at (2.00,4.00);

\draw[blue, line width=\plabiclw] (bnode1236n1) to[bend right=-30] (bnode1236n2);

\draw[blue, line width=\plabiclw] (node12).. controls (8.50,5.50) .. (bnode1236n1);
\draw[blue, line width=\plabiclw] (node16).. controls (8.00,4.00) .. (bnode1236n2);
\draw[blue, line width=\plabiclw] (node23).. controls (7.00,6.50) .. (bnode1236n1);
\draw[blue, line width=\plabiclw] (node36).. controls (6.50,5.00) .. (bnode1236n2);

% \draw[blue, line width=\plabiclw] (node12).. controls (8.50,5.50) .. (node1236);
% \draw[blue, line width=\plabiclw] (node16).. controls (8.00,4.00) .. (node1236);
\draw[blue, line width=\plabiclw] (node16).. controls (6.50,3.50) .. (node1346);
\draw[blue, line width=\plabiclw] (node16).. controls (6.00,2.00) .. (node1456);
\draw[blue, line width=\plabiclw] (node16).. controls (7.50,2.50) .. (node1256);
\draw[blue, line width=\plabiclw] (node23).. controls (6.00,7.50) .. (node1234);
\draw[blue, line width=\plabiclw] (node23).. controls (5.00,7.00) .. (node2346);
% \draw[blue, line width=\plabiclw] (node23).. controls (7.00,6.50) .. (node1236);
\draw[blue, line width=\plabiclw] (node34).. controls (2.50,6.50) .. (node2345);
\draw[blue, line width=\plabiclw] (node34).. controls (2.00,5.00) .. (node3456);
\draw[blue, line width=\plabiclw] (node34).. controls (3.50,6.50) .. (node2346);
% \draw[blue, line width=\plabiclw] (node36).. controls (6.50,5.00) .. (node1236);
\draw[blue, line width=\plabiclw] (node36).. controls (4.50,5.50) .. (node2346);
\draw[blue, line width=\plabiclw] (node36).. controls (5.00,4.50) .. (node1346);
\draw[blue, line width=\plabiclw] (node45).. controls (1.50,3.50) .. (node3456);
\draw[blue, line width=\plabiclw] (node46).. controls (4.50,4.00) .. (node1346);
\draw[blue, line width=\plabiclw] (node46).. controls (2.50,3.50) .. (node3456);
\draw[blue, line width=\plabiclw] (node46).. controls (4.00,2.50) .. (node1456);
\draw[blue, line width=\plabiclw] (node56).. controls (4.00,1.50) .. (node1456);
\draw[blue, line width=\plabiclw] (node1234).. controls (6.00,7.50) .. (node23);
% \draw[blue, line width=\plabiclw] (node1236).. controls (8.50,5.50) .. (node12);
% \draw[blue, line width=\plabiclw] (node1236).. controls (8.00,4.00) .. (node16);
% \draw[blue, line width=\plabiclw] (node1236).. controls (6.50,5.00) .. (node36);
% \draw[blue, line width=\plabiclw] (node1236).. controls (7.00,6.50) .. (node23);
\draw[blue, line width=\plabiclw] (node1256).. controls (7.50,2.50) .. (node16);
\draw[blue, line width=\plabiclw] (node1346).. controls (6.50,3.50) .. (node16);
\draw[blue, line width=\plabiclw] (node1346).. controls (4.50,4.00) .. (node46);
\draw[blue, line width=\plabiclw] (node1346).. controls (5.00,4.50) .. (node36);
\draw[blue, line width=\plabiclw] (node1456).. controls (6.00,2.00) .. (node16);
\draw[blue, line width=\plabiclw] (node1456).. controls (4.00,1.50) .. (node56);
\draw[blue, line width=\plabiclw] (node1456).. controls (4.00,2.50) .. (node46);
\draw[blue, line width=\plabiclw] (node2345).. controls (2.50,6.50) .. (node34);
\draw[blue, line width=\plabiclw] (node2346).. controls (5.00,7.00) .. (node23);
\draw[blue, line width=\plabiclw] (node2346).. controls (4.50,5.50) .. (node36);
\draw[blue, line width=\plabiclw] (node2346).. controls (3.50,6.50) .. (node34);
\draw[blue, line width=\plabiclw] (node3456).. controls (2.00,5.00) .. (node34);
\draw[blue, line width=\plabiclw] (node3456).. controls (2.50,3.50) .. (node46);
\draw[blue, line width=\plabiclw] (node3456).. controls (1.50,3.50) .. (node45);
\draw[->,color={\strandcolor},line width=1.5pt] (1.15,6.99).. controls (2.50,6.50) .. (3.00,6.50);
\draw[->,color={\strandcolor},line width=1.5pt] (3.00,6.50).. controls (3.50,6.50) .. (4.00,6.00);
\draw[->,color={\strandcolor},line width=1.5pt] (4.00,6.00).. controls (4.50,5.50) .. (5.50,5.25);
\draw[->,color={\strandcolor},line width=1.5pt] (5.50,5.25).. controls (6.50,5.00) .. (7.25,4.50);
\draw[->,color={\strandcolor},line width=1.5pt] (7.25,4.50).. controls (8.00,4.00) .. (7.75,3.25);
\draw[->,color={\strandcolor},line width=1.5pt] (7.75,3.25).. controls (7.50,2.50) .. (8.24,1.26);
% \draw[blue, line width=\plabiclw,fill=white] (node12) circle (3.0pt);
\draw[blue, line width=\plabiclw,fill=white] (node16) circle (3.0pt);
\draw[blue, line width=\plabiclw,fill=white] (node23) circle (3.0pt);
\draw[blue, line width=\plabiclw,fill=white] (node34) circle (3.0pt);
\draw[blue, line width=\plabiclw,fill=white] (node36) circle (3.0pt);
% \draw[blue, line width=\plabiclw,fill=white] (node45) circle (3.0pt);
\draw[blue, line width=\plabiclw,fill=white] (node46) circle (3.0pt);
% \draw[blue, line width=\plabiclw,fill=white] (node56) circle (3.0pt);
% \draw[blue, line width=\plabiclw,fill=white] (node1234) circle (3.0pt);
% \draw[blue, line width=\plabiclw,fill=blue] (node1236) circle (3.0pt);
% \draw[blue, line width=\plabiclw,fill=white] (node1256) circle (3.0pt);
\draw[blue, line width=\plabiclw,fill=blue] (node1346) circle (3.0pt);
\draw[blue, line width=\plabiclw,fill=blue] (node1456) circle (3.0pt);
% \draw[blue, line width=\plabiclw,fill=white] (node2345) circle (3.0pt);
\draw[blue, line width=\plabiclw,fill=blue] (node2346) circle (3.0pt);
\draw[blue, line width=\plabiclw,fill=blue] (node3456) circle (3.0pt);

\draw[blue, line width=\plabiclw,fill=blue] (bnode1236n1) circle (3pt);
\draw[blue, line width=\plabiclw,fill=blue] (bnode1236n2) circle (3pt);

\boundnode(node12)
\boundnode(node45)
\boundnode(node56)
\boundnode(node1234)
\boundnode(node2345)
\boundnode(node1256)

\node[draw, ellipse, black, fill=white, scale=0.9,inner sep=1.0pt] (node123) at (8.00,7.00) {$1\,\textcolor{\strandcolor}{\bm{2}}\,3$};
\node[draw, ellipse, black, fill=white, scale=0.9,inner sep=1.0pt] (node126) at (9.00,4.00) {$1\,\textcolor{\strandcolor}{\bm{2}}\,6$};
\node[draw, ellipse, black, fill=white, scale=0.9,inner sep=1.0pt] (node136) at (7.00,4.00) {$1\,3\,6$};
\node[draw, ellipse, black, fill=white, scale=0.9,inner sep=1.0pt] (node146) at (6.00,3.00) {$1\,4\,6$};
\node[draw, ellipse, black, fill=white, scale=0.9,inner sep=1.0pt] (node156) at (6.00,1.00) {$1\,5\,6$};
\node[draw, ellipse, black, fill=white, scale=0.9,inner sep=1.0pt] (node234) at (4.00,8.00) {$\textcolor{\strandcolor}{\bm{2}}\,3\,4$};
\node[draw, ellipse, black, fill=white, scale=0.9,inner sep=1.0pt] (node236) at (6.00,6.00) {$\textcolor{\strandcolor}{\bm{2}}\,3\,6$};
\node[draw, ellipse, black, fill=white, scale=0.9,inner sep=1.0pt] (node345) at (1.00,5.00) {$3\,4\,5$};
\node[draw, ellipse, black, fill=white, scale=0.9,inner sep=1.0pt] (node346) at (3.00,5.00) {$3\,4\,6$};
\node[draw, ellipse, black, fill=white, scale=0.9,inner sep=1.0pt] (node456) at (2.00,2.00) {$4\,5\,6$};
\end{tikzpicture}}

\def\boundnoderadius{1pt}

\caption{\label{fig:plabic_tiling} A plabic tiling of a 
hexagon $Q_3$, with vertices of $Q_3$
 labeled by the cyclic intervals
of size $3$. The dual graph is a (neither trivalent nor bipartite) $(3,6)$-plabic graph. The strand from $5$ to $2$ is shown in green. A face label contains $2$ if and only if it is to the left of this strand.}
\end{figure}
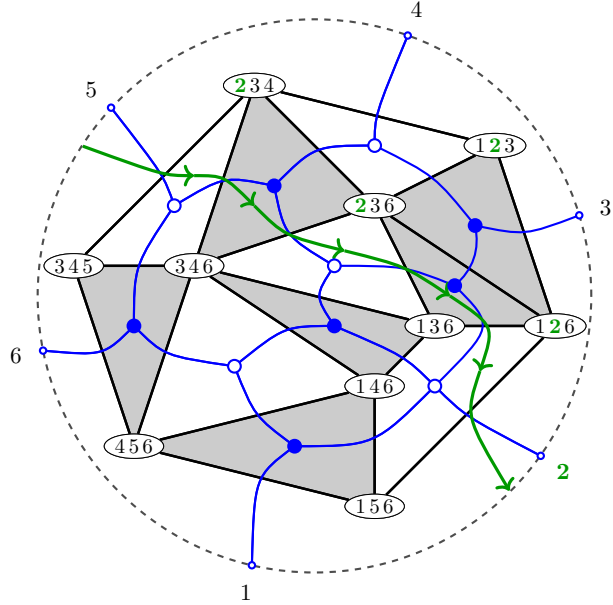

From now on, fix $n$ and $1\leq k\leq n-1$.

\begin{definition}\label{def:kn}
A \emph{$(k,n)$-plabic graph} is a plabic graph $G$ with $n$ boundary vertices such that:
	\begin{enumerate}
\item\label{item:boundary_perm} for each $i\in[n]$, the strand starting at vertex $i$ ends at vertex $i+k$ (modulo $n$);
\item\label{item:number_of_faces} $G$ has $k(n-k)+1$ faces.
\end{enumerate}
\end{definition}

Condition~\eqref{item:number_of_faces} could be replaced by describing several forbidden patterns for the way the strands in $G$ may look, see~\cite[Theorem~13.2]{Pos06}.
Note that $k(n-k)+1$ is the minimal  number of faces a plabic graph satisfying condition~\eqref{item:boundary_perm} can have. 
We label the faces of a plabic graph as follows.

\def\fl#1{S(#1)}
\def\FL#1{\mathcal{F}(#1)}

\begin{definition}\label{def:faces}
Given a $(k,n)$-plabic graph $G$, we label each face $\face$ of $G$ by a set $\fl{\face}\subset[n]$, defined by the condition that for each $i\in[n]$, $\fl{\face}$ contains $i$ if and only if $\face$ is to the left of the unique strand in $G$ that ends at vertex $i$.  
\end{definition}
\noindent It turns out~\cite{Pos06} that $\fl{\face}$ has size $k$. Let $\FL{G}:=\{\fl{\face} \ | \ \face \text{ a face of }G\} \subset {[n] \choose k}.$ 

\subsection{Plabic graphs from fine zonotopal tilings}\label{sec:plabic_graphs_from_tilings}
Throughout the rest of \cref{sec:higherassoc},
we fix $d=3$.  We also
fix a configuration $\A=(\a_1,\dots,\a_n)$ of vertices of a convex $n$-gon in $\R^2$, and 
let $\VC$, 
$\ZV$, $Q_k$, and $\pi$ be as in 
\cref{notation}. 
Recall that we have a projection 
$\Delta_{k,n}\pito Q_k$ from the 
hypersimplex to the $k$-th horizontal section of $\ZV$. 
In this section we recall how to obtain plabic graphs from fine zonotopal tilings, based on 
results of \cite{Gal} and \cite[Section 11]{PosICM}.

Given a subset $S \subset [n]$,
we let 
$$\v_S:= \sum_{i \in S} \v_i.$$
Clearly $Q_k$ is a convex $n$-gon in the affine plane $\Hyp_k = \{(y_1,y_2,y_3) \ | \ y_3=k\}$, with vertices
$\v_{[1,k]}, \v_{[2,k+1]}, \dots, \v_{[n,k-1]}$, corresponding to all consecutive cyclic intervals of size $k$ in $[n]$.
Each two-dimensional face $F$ of $\Delta_{k,n}$ is a triangle with vertices $\e_S,\e_T,\e_R$ for some $S,T,R\in{[n]\choose k}$. Moreover, we have either $|S\cap T\cap R|=k-1$ or $|S\cup T\cup R|=k+1$, in which case we say that $F$ is \emph{isomorphic} to $\Delta_{1,3}$, or $\Delta_{2,3}$, respectively. The fine $\pi$-induced subdivisions of $Q_k$ come from collections of two-dimensional faces of $\Delta_{k,n}$.   Moreover, the fine $\pi$-induced 
subdivisions are in bijection with the tilings of the $n$-gon $Q_k$ by triangles, such that:
\begin{itemize}
	\item  Each vertex has the form $\v_S$ for some $S\in {[n]\choose k}$.
	\item  Each edge has the form $[\v_S, \v_T]$ for two $k$-element subsets $S$ and $T$ such that $|S\cap T|=k-1$.
	\item  Each face is a triangle which is the projection of a two-dimensional face of $\Delta_{k,n}$ isomorphic to either 
		$\Delta_{1,3}$ or $\Delta_{2,3}$ (in which case we say that the face is white, or black, respectively).
\end{itemize}
Such a tiling of $Q_k$ is called a \emph{triangulated plabic tiling}, and its dual graph $G$ (which has white and black vertices
corresponding to the white and black faces of the tiling) is a trivalent plabic graph, 
see \cref{fig:plabic_tiling}.

In the other direction, 
given a $(k,n)$-plabic graph $G$, the corresponding \emph{plabic tiling} $\PT(G)$ 
is a polyhedral subdivision of $Q_k$ into convex polygons colored black and white: 
for each black (resp., white) vertex $w$ of $G$ that is adjacent to faces $\face_1,\dots,\face_m$ in clockwise order, $\PT(G)$ contains a black (resp., white) polygon with boundary vertices $\v_{\fl{\face_1}},\dots,\v_{\fl{\face_m}}$. By the results\footnote{The authors of~\cite{OPS} only work with bipartite $(k,n)$-plabic graphs. For general $(k,n)$-plabic graphs, one needs to ``uncontract'' some interior vertices of $G$ and add some diagonals to the corresponding faces of $\PT(G)$.} of~\cite{OPS}, $\PT(G)$ is the planar dual of $G$: the vertices/edges/faces of $\PT(G)$ correspond to the faces/edges/vertices of $G$, respectively, see \cref{fig:plabic_tiling}.

\def\Scomma{}
\def\ScommaEmpty{\emptyset}

\def\plabicxscl{0.8}
\setlength{\tabcolsep}{3pt}
\def\zoom{0.7}
\begin{figure}
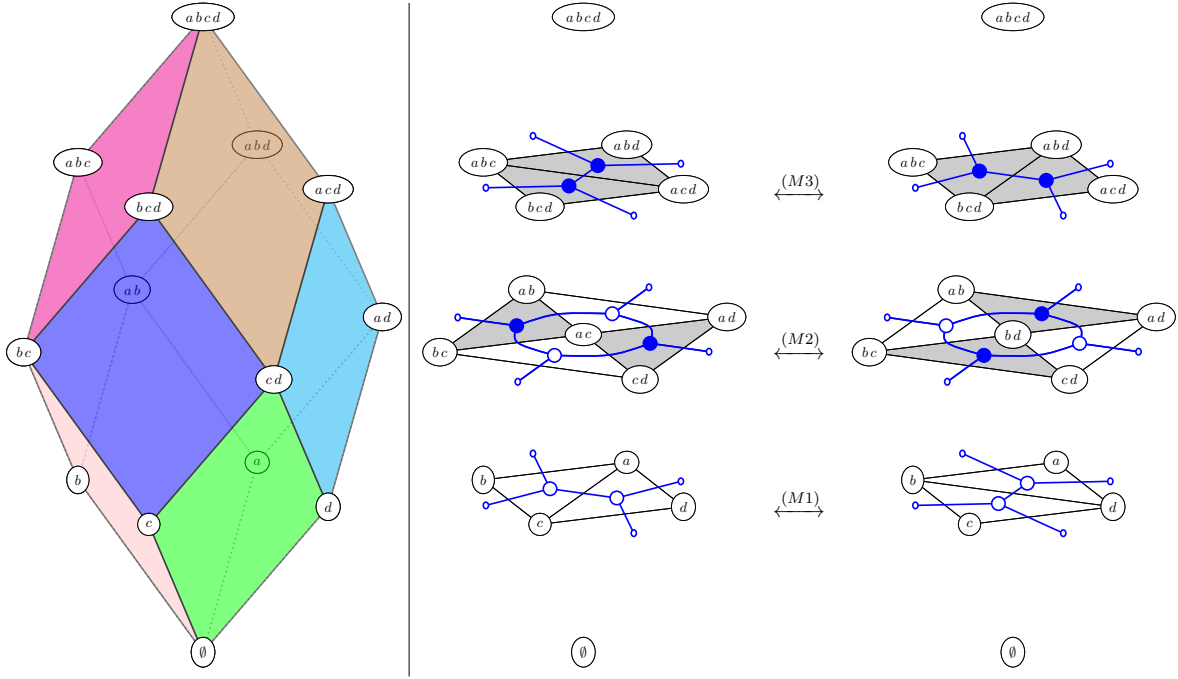

\def\boundnoderadius{2pt}
\makebox[\textwidth]{
\scalebox{0.88}{
% [inline block 1: 1 envs, 27282 chars -> data_tex | \begin{tabular}{c|ccc} \scalebox{\zoom}{...]

}
}

\def\boundnoderadius{1pt}

  \caption{\label{fig:sections} The zonotope $\ZV$ associated to 
  $\V = (\v_a,\v_b,\v_c,\v_d)$ has precisely
  two fine zonotopal tilings, which differ by a flip.
  The horizontal sections give
  rise to triangulated plabic tilings and, dually, to trivalent $(k,n)$-plabic
  graphs for $n=4$ and $k=1,2,3$ (from bottom to top).
  The flip corresponds to applying the moves (M1), (M2), (M3) on plabic
  graphs, as in 
	\cref{thm:flips_moves}.}
\end{figure}

\begin{theorem}[{\cite[Theorem 1.2]{Gal}}]\label{thm:Gal}
\ 
\begin{theoremlist}
\item For each trivalent $(k,n)$-plabic graph $G$, the triangulated plabic tiling $\PT(G)$ coincides with the horizontal section $\Tiling\cap \Hyp_k$ of some fine zonotopal tiling $\Tiling$ of $\ZV$.
\item\label{item:Tiling_to_trivalent} For each fine zonotopal tiling $\Tiling$ of $\ZV$,  the intersection $\Tiling\cap \Hyp_k$ coincides with $\PT(G)$ for a unique trivalent $(k,n)$-plabic graph $G$.
\end{theoremlist}
\end{theorem}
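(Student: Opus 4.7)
The plan is to establish part~(ii) first by direct construction and then deduce part~(i) via connectivity of plabic graphs under the moves (M1)--(M3). Starting with a fine zonotopal tiling $\Tiling$ of $\ZV$, each tile $\tile_{A,B}\in\Tiling$ is a $3$-dimensional parallelepiped with bottom vertex $\v_A\in\Hyp_{|A|}$ and top vertex $\v_{A\cup B}\in\Hyp_{|A|+3}$, so $\tile_{A,B}\cap\Hyp_k$ has a $2$-dimensional interior only when $k\in\{|A|+1,|A|+2\}$. If $|A|=k-1$, the intersection is a white triangle with vertex set $\{\v_{A\cup\{i\}}\mid i\in B\}$, isomorphic to $\Delta_{1,3}$; if $|A|=k-2$, it is a black triangle with vertex set $\{\v_{A\cup B\setminus\{i\}}\mid i\in B\}$, isomorphic to $\Delta_{2,3}$. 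By \cref{def:finetiling}, these triangles form a polyhedral subdivision of $Q_k$, giving a triangulated plabic tiling whose dual is a unique trivalent plabic graph $G$.

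To complete part~(ii), I need to check that $G$ satisfies the two defining conditions of a $(k,n)$-plabic graph. The face count of $G$ equals the total number of vertices of $\Tiling\cap\Hyp_k$; from the $n$ boundary vertices $\v_{[a,a+k-1]}$ ($a\in[n]$) plus an interior count arising from \cref{prop:bases_bijection} and \cref{lemma:Eul}, this total is $k(n-k)+1$, matching the required number. For the strand property, I would use the ``pseudo-line'' structure of fine zonotopal tilings: for each $i\in[n]$, the union $W_i\subset\ZV$ of all codimension-one faces of $\Tiling$ that are translates of a common $2$-face of $\cube_n$ of the form $\sum_{b\in B\setminus\{i\}}[0,\v_b]$, taken across all tiles $\tile_{A,B}$ with $i\in B$, is a topological disk. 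The intersection $W_i\cap\Hyp_k$ is a curve in $Q_k$ separating tiles whose labels contain $i$ from those whose labels do not, and by analyzing which cyclic intervals of size $k$ contain $i$, its endpoints land exactly at boundary positions $i$ and $i+k$; this identifies $W_i\cap\Hyp_k$ with the strand of $G$ from $i$ to $i+k$.

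For part~(i), I would first exhibit a specific trivalent $(k,n)$-plabic graph $G_0$ arising from an explicit fine zonotopal tiling $\Tiling_0$---for instance, $\Tiling_0:=\Tiling_{\h_0}$ for a generic height vector $\h_0\in\R^n$ with rapidly decreasing coordinates, whose horizontal section at height $k$ is an easily described ``lex'' triangulation of $Q_k$. By~\cite[Theorem~13.4]{Pos06}, any trivalent $(k,n)$-plabic graph $G$ is reached from $G_0$ by a sequence of (M1)--(M3) moves. It then suffices to lift each such move to a flip $F$ of fine zonotopal tilings: a flip of $\ZV$ involves a circuit of size~$4$ in the underlying oriented matroid and occurs at a prescribed level $|A(F)|+1$, and its effect on $\Tiling\cap\Hyp_k$ is (a) trivial when $k\notin\{|A(F)|+1,|A(F)|+2\}$, or (b) one of the three plabic moves, depending on the relative positions of $k$, $|A(F)|+1$ and $|A(F)|+2$, as illustrated in \cref{fig:sections}. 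Composing these flips along a path of plabic moves from $G_0$ to $G$ realizes $G$ as the horizontal section of some $\Tiling$.

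The hard part is making the flip/move correspondence in part~(i), and correspondingly the strand analysis in part~(ii), fully rigorous. Both require a careful case analysis of how a single codimension-one modification in $\ZV$ (a flip along a circuit of size~$4$) restricts to each horizontal slice: which of (M1)--(M3) is induced at levels $|A(F)|+1$ and $|A(F)|+2$, and that each plabic move at level $k$ genuinely arises from such a flip. Once this dictionary between local configurations in $\ZV$ and plabic moves is tabulated, the remaining steps---counting faces, identifying strands, and iterating flips---reduce to routine bookkeeping; this is the technical core that pushes the argument beyond the straightforward local description of paragraph~1.
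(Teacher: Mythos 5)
The paper does not prove this theorem; it imports it as \cite[Theorem~1.2]{Gal}. The proof in [Gal] proceeds by a direct ``lifting'' construction: a triangulated plabic tiling of $Q_k$ is shown, via the theory of weak separation (building on Oh--Postnikov--Speyer), to extend uniquely to a fine zonotopal tiling of $\ZV$, and this single construction yields both parts at once. Your proposal instead constructs the section directly (part~ii) and then reaches arbitrary graphs via moves (part~i). The two strategies are genuinely different, and yours would, if completed, also recover \cref{thm:flips_moves} along the way.

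The main gap is in part~(i), at the ``move lifts to flip'' step. \cref{thm:flips_moves} gives the direction flip $\Rightarrow$ move: a flip $F$ of $\Tiling$ induces (M1), (M2), (M3) at levels $\level(F)$, $\level(F)+1$, $\level(F)+2$. Your induction needs the converse: given $\Tiling$ with $G=G_k(\Tiling)$ and a move applicable to $G$, you need a flip of $\Tiling$ (or of some other tiling with the same level-$k$ section) realizing that move. For a square move (M2) at level $k$, the four tiles of $\Tiling$ contributing the four triangles around the square vertex $\v_{Aac}$ can all be read off from $\PT(G)$, and one can check by a volume argument that they tile a translate of $\Zon_{\VC'}$, so the flip is available. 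But for (M1) at level $k$ (flip at level $k$) and (M3) at level $k$ (flip at level $k-2$), only \emph{two} of the four tiles in the would-be flip have $2$-dimensional intersection with $\Hyp_k$. You see the two white (resp.\ black) triangles in $\PT(G)$, but the remaining two tiles of the flip live entirely above (resp.\ below) the slice, and it is not automatic from the data at level $k$ that $\Tiling$ contains precisely those tiles in the configuration of a little zonotope $\v_A+\Zon_{\VC'}$. A priori $\Tiling$ could have tiles with bases $\{b,c,d\}$ and $\{a,b,d\}$ sitting elsewhere, in which case no flip is available and the induction halts. Ruling this out requires an argument about the three-dimensional structure of $\Tiling$ near the slice --- essentially what the weak-separation/membrane machinery of [Gal] supplies --- and you have not indicated how to get it from the slice alone. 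Until this is filled, part~(i) is not established. The rest of your outline (white/black triangles from tiles, face count from \cref{prop:bases_bijection} and \cref{lemma:Eul}, strand identification via the pseudohyperplanes $W_i$) is reasonable and close to what actually works for part~(ii), though the strand analysis still needs to be carried out.
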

\noindent For a fine zonotopal tiling $\Tiling$ of $\ZV$, we denote by $G_k(\Tiling)$ the trivalent $(k,n)$-plabic graph $G$ from \cref{item:Tiling_to_trivalent}, and we let $\Gbip_k(\Tiling)$ denote its bipartite version.

Recall that $(k,n)$-plabic graphs are connected by moves (M1)--(M3) from \cref{fig:3moves}. For the following result, 
  illustrated in \cref{fig:sections},
see~\cite[Section~3]{Gal}.
\begin{theorem}\label{thm:flips_moves}
  Suppose that $\face=(\Tiling\to\Tiling')$ is a flip and $\level(\face)=k$.
\begin{itemize}
\item We have $G_r(\Tiling)=G_r(\Tiling')$ for all $r\neq k,k+1,k+2$;
\item the graphs $G_k(\Tiling)$ and $G_k(\Tiling')$ are related by move (M1);
\item the graphs $G_{k+1}(\Tiling)$ and $G_{k+1}(\Tiling')$ are related by move (M2);
\item the graphs $G_{k+2}(\Tiling)$ and $G_{k+2}(\Tiling')$ are related by move (M3).
\end{itemize}
\end{theorem}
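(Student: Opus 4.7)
The plan is to reduce the statement to a local calculation inside a single shifted copy of a $3$-dimensional zonotope on $d+1=4$ vectors, and then verify the three cases $r\in\{k,k+1,k+2\}$ by direct inspection of the plabic tilings appearing as horizontal sections.

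First, by \cref{prop:flip_arb}, the flip $F=(\Tiling\to\Tiling')$ along the circuit $C$ only alters tiles inside shifted copies of $\Zon_{\V'}$, where $\V':=\{\v_j:j\in\suppC\}$ is a $4$-vector configuration (since $|\suppC|=d+1=4$), and the shift is by $\v_{A(F,J)}$ for each $J\in\BMC$. Because the last coordinate of every $\v_j$ equals $1$, each shifted copy of $\Zon_{\V'}$ occupies vertical levels in $[|A(F,J)|,\,|A(F,J)|+4]$, with the endpoint levels degenerating to a single vertex. Since $\level(F)=k$ means $|A(F,J)|=k-1$ for the relevant $J$'s, each shifted copy only intersects the horizontal hyperplanes $\Hyp_r$ top-dimensionally for $r\in\{k,k+1,k+2\}$. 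It follows immediately that $\Tiling\cap \Hyp_r=\Tiling'\cap \Hyp_r$ as labeled plabic tilings, hence $G_r(\Tiling)=G_r(\Tiling')$, for every $r\notin\{k,k+1,k+2\}$.

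Next, I would carry out the core local computation: analyze the two fine zonotopal tilings of $\Zon_{\V'}$ and the triangulated plabic tilings they induce at local levels $1,2,3$, as depicted in \cref{fig:sections}. At local level $1$, each section is a triangulation of the convex quadrilateral $\conv\{\v_j:j\in\suppC\}$ into two white $\Delta_{1,3}$-type triangles (since all three vertex labels of such a triangle share the common $(k-1)$-set $A(F,J)$); the two tilings yield the two possible diagonals, and passing to the plabic dual one sees two white trivalent vertices joined by the edge dual to the diagonal, so the transition is precisely the move (M1). By the evident up-down symmetry of $\Zon_{\V'}$, which swaps the black and white color scheme, local level $3$ behaves identically with colors reversed, giving (M3). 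At local level $2$, the section consists of four triangles alternating in color around a single interior vertex of the form $\v_i+\v_j$ with $\{i,j\}\subset\suppC$; the two tilings produce the two opposite choices of interior vertex, and the dual trivalent plabic subgraph is the alternating $4$-cycle on the left of (M2) in \cref{fig:3moves}, with the square move being exactly this interchange.

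The main obstacle is verifying that the local changes glue consistently into the global graphs $G_r(\Tiling)$ and $G_r(\Tiling')$: the boundary of the shifted copy of $\Zon_{\V'}$, together with the labels of its boundary vertices, must be independent of which of the two local tilings is chosen, so that the edges of the plabic graph exiting the flip region attach identically on both sides. This follows, as in~\cite[Section~3]{Gal}, from the combinatorial observation that the boundary of the shifted copy and the labels on its boundary vertices are determined solely by the pair $(A(F,J),\suppC)$, which is unchanged by the flip.
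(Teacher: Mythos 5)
Your proof is a correct reconstruction of the argument in~\cite[Section~3]{Gal}, to which the paper defers for \cref{thm:flips_moves}; the local picture you describe is exactly what \cref{fig:sections} depicts. The localization via \cref{prop:flip_generic} to a single shifted copy of $\Zon_{\V'}$ based at $\v_{A(F)}$, the observation that this copy has two-dimensional horizontal sections only at heights $k,k+1,k+2$ once $|A(F)|=k-1$, the color bookkeeping (only white triangles at local level $1$, since every contributing tile $\tile_{A',B'}$ has $|A'|=0$; only black triangles at local level $3$, since $|A'|=n-d=1$ there; two white and two black triangles alternating around a single interior vertex $\v_{A(F)}+\v_i+\v_j$ with $\{i,j\}$ a diagonal of $\suppC$, at local level $2$), and the gluing observation that the boundary of the shifted copy together with its vertex labels is shared by both tilings, all check out against the formulas for which tiles change in \cref{prop:flip_generic}.

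One small thing to tighten: in the setting of \cref{thm:flips_moves} we have $d=3$ and $\A$ equal to the vertex set of a strictly convex $n$-gon, so $\V$ is generic, every circuit has $|\suppC|=4$, $\BMC=\{\emptyset\}$ is a singleton, and there is exactly one shifted copy of $\Zon_{\V'}$. Your phrasing "shifted copies'' (plural) and "for the relevant $J$'s'' invokes the non-generic apparatus of \cref{prop:flip_arb}, which is not needed here and slightly obscures the argument; with $\BMC=\{\emptyset\}$ fixed, \cref{prop:flip_generic} is the relevant statement and the proof reads more cleanly. That said, the generic case is what the theorem actually covers, so this is a presentational issue rather than a gap.
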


\def\Area{\operatorname{Area}}
\def\flw#1{S^\cap(#1)}
\def\ar#1{\Area(\dualface{#1})}
\def\dualface#1{#1^\ast}

\subsection{Vertices of higher associahedra}
Each fine zonotopal tiling $\Tiling$ of $\ZV$ gives rise to a point $\vertHSP_k(\Tiling)\in\R^n$ and to a bipartite $(k+1,n)$-plabic graph $\Gbip:=\Gbip_{k+1}(\Tiling)$. The definition~\eqref{eq:HSP} of $\vertHSP_k(\Tiling)$ can be expressed in a simple way in terms of $\PT(\Gbip)$, which we now explain.

Recall that $\PT(\Gbip)$ consists of black and white polygons corresponding to black and white vertices of $\Gbip$ (cf. \cref{fig:plabic_tiling}). Let $w$ be a white interior vertex of $\Gbip$, and let $\face_1,\dots,\face_m$ be the faces of $\Gbip$ adjacent to it. By the construction of face labels in \cref{sec:backgr-plab-graphs}, we see that the face labels $\fl{\face_1},\dots,\fl{\face_m}\in{[n]\choose k+1}$ have intersection $\flw{w}:=\bigcap_{i=1}^m \fl{\face_i}$ of size $k$, see 
\cref{fig:white_labels} (left).
Thus every white face 
$\dualface{w}$ of 
$\PT(\Gbip)$ is naturally labeled by a set $\flw{w}$ of size $k$. Let  $\ar{w}$ denote 
the area of this white face $\dualface{w}$ (viewed as a metric convex polygon inside $\Hyp_{k+1}\cong \R^2$), 
see \cref{fig:white_labels} (right).

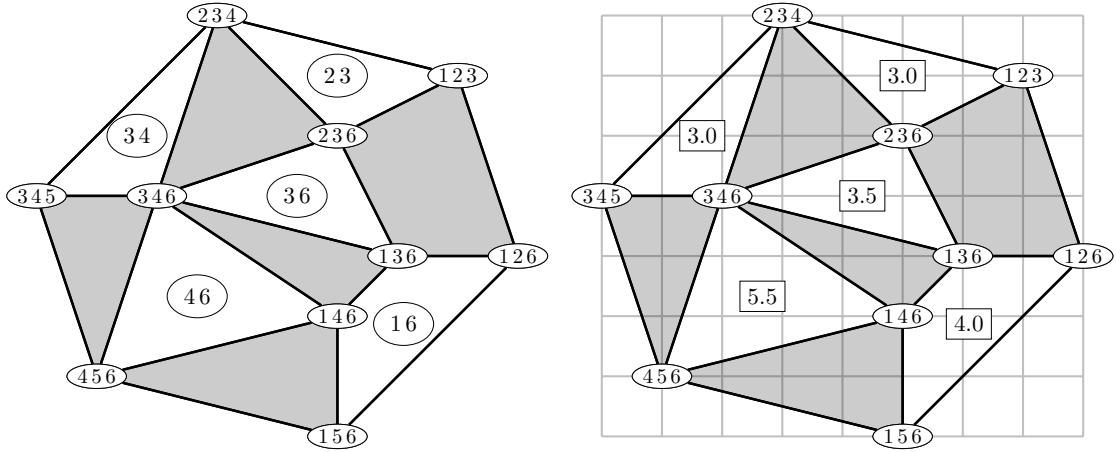
\begin{figure}

\begin{tabular}{cc}
\scalebox{0.8}{
\begin{tikzpicture}[scale=1.0]
\def\whiteshape{ellipse}
% \draw [line width=\gridlw,opacity=\gridop] (1.00,1.00) grid (9.00,8.00); 
\node[draw, ellipse, black, fill=white, scale=0.9,inner sep=1.0pt] (node123) at (8.00,7.00) {$1\,2\,3$};
\node[draw, ellipse, black, fill=white, scale=0.9,inner sep=1.0pt] (node126) at (9.00,4.00) {$1\,2\,6$};
\node[draw, ellipse, black, fill=white, scale=0.9,inner sep=1.0pt] (node136) at (7.00,4.00) {$1\,3\,6$};
\node[draw, ellipse, black, fill=white, scale=0.9,inner sep=1.0pt] (node146) at (6.00,3.00) {$1\,4\,6$};
\node[draw, ellipse, black, fill=white, scale=0.9,inner sep=1.0pt] (node156) at (6.00,1.00) {$1\,5\,6$};
\node[draw, ellipse, black, fill=white, scale=0.9,inner sep=1.0pt] (node234) at (4.00,8.00) {$2\,3\,4$};
\node[draw, ellipse, black, fill=white, scale=0.9,inner sep=1.0pt] (node236) at (6.00,6.00) {$2\,3\,6$};
\node[draw, ellipse, black, fill=white, scale=0.9,inner sep=1.0pt] (node345) at (1.00,5.00) {$3\,4\,5$};
\node[draw, ellipse, black, fill=white, scale=0.9,inner sep=1.0pt] (node346) at (3.00,5.00) {$3\,4\,6$};
\node[draw, ellipse, black, fill=white, scale=0.9,inner sep=1.0pt] (node456) at (2.00,2.00) {$4\,5\,6$};
\fill [opacity=0.2,black] (node123.center) -- (node126.center) -- (node236.center) -- cycle;
\coordinate (bnode1236n1) at (7.67,5.67);
\coordinate (wnode23n1) at (6.00,7.00);
\node[draw,\whiteshape,scale=1,fill=white] (Awnode23) at (6.00,7.00) {$2\,3$};
\coordinate (wnode16n1) at (7.33,3.67);
% \node[draw,\whiteshape,scale=1,fill=white] (Awnode16) at (7.33,3.67) {$1\,6$};
\fill [opacity=0.2,black] (node126.center) -- (node136.center) -- (node236.center) -- cycle;
\coordinate (bnode1236n2) at (7.33,4.67);
\coordinate (wnode16n2) at (7.00,2.67);
\node[draw,\whiteshape,scale=1,fill=white] (Awnode16) at (7.10,2.87) {$1\,6$};
\fill [opacity=0.2,black] (node136.center) -- (node146.center) -- (node346.center) -- cycle;
\coordinate (bnode1346n1) at (5.33,4.00);
\coordinate (wnode36n1) at (5.33,5.00);
\node[draw,\whiteshape,scale=1,fill=white] (Awnode36) at (5.33,5.00) {$3\,6$};
\fill [opacity=0.2,black] (node146.center) -- (node156.center) -- (node456.center) -- cycle;
\coordinate (bnode1456n1) at (4.67,2.00);
\coordinate (wnode46n1) at (3.67,3.33);
\node[draw,\whiteshape,scale=1,fill=white] (Awnode46) at (3.67,3.33) {$4\,6$};
\fill [opacity=0.2,black] (node234.center) -- (node236.center) -- (node346.center) -- cycle;
\coordinate (bnode2346n1) at (4.33,6.33);
\coordinate (wnode34n1) at (2.67,6.00);
\node[draw,\whiteshape,scale=1,fill=white] (Awnode34) at (2.67,6.00) {$3\,4$};
\fill [opacity=0.2,black] (node345.center) -- (node346.center) -- (node456.center) -- cycle;
\coordinate (bnode3456n1) at (2.00,4.00);
\draw[line width=0.40mm,black] (node123) -- (node126);
\draw[line width=0.40mm,black] (node123) -- (node234);
\draw[line width=0.40mm,black] (node123) -- (node236);
\draw[line width=0.40mm,black] (node126) -- (node123);
\draw[line width=0.40mm,black] (node126) -- (node136);
% \draw[line width=0.40mm,black] (node126) -- (node146);
\draw[line width=0.40mm,black] (node126) -- (node156);
% \draw[line width=0.40mm,black] (node126) -- (node236);
\draw[line width=0.40mm,black] (node136) -- (node126);
\draw[line width=0.40mm,black] (node136) -- (node146);
\draw[line width=0.40mm,black] (node136) -- (node236);
\draw[line width=0.40mm,black] (node136) -- (node346);
% \draw[line width=0.40mm,black] (node146) -- (node126);
\draw[line width=0.40mm,black] (node146) -- (node136);
\draw[line width=0.40mm,black] (node146) -- (node156);
\draw[line width=0.40mm,black] (node146) -- (node346);
\draw[line width=0.40mm,black] (node146) -- (node456);
\draw[line width=0.40mm,black] (node156) -- (node126);
\draw[line width=0.40mm,black] (node156) -- (node146);
\draw[line width=0.40mm,black] (node156) -- (node456);
\draw[line width=0.40mm,black] (node234) -- (node123);
\draw[line width=0.40mm,black] (node234) -- (node236);
\draw[line width=0.40mm,black] (node234) -- (node345);
\draw[line width=0.40mm,black] (node234) -- (node346);
\draw[line width=0.40mm,black] (node236) -- (node123);
% \draw[line width=0.40mm,black] (node236) -- (node126);
\draw[line width=0.40mm,black] (node236) -- (node136);
\draw[line width=0.40mm,black] (node236) -- (node234);
\draw[line width=0.40mm,black] (node236) -- (node346);
\draw[line width=0.40mm,black] (node345) -- (node234);
\draw[line width=0.40mm,black] (node345) -- (node346);
\draw[line width=0.40mm,black] (node345) -- (node456);
\draw[line width=0.40mm,black] (node346) -- (node136);
\draw[line width=0.40mm,black] (node346) -- (node146);
\draw[line width=0.40mm,black] (node346) -- (node234);
\draw[line width=0.40mm,black] (node346) -- (node236);
\draw[line width=0.40mm,black] (node346) -- (node345);
\draw[line width=0.40mm,black] (node346) -- (node456);
\draw[line width=0.40mm,black] (node456) -- (node146);
\draw[line width=0.40mm,black] (node456) -- (node156);
\draw[line width=0.40mm,black] (node456) -- (node345);
\draw[line width=0.40mm,black] (node456) -- (node346);
\node[draw, ellipse, black, fill=white, scale=0.9,inner sep=1.0pt] (node123) at (8.00,7.00) {$1\,2\,3$};
\node[draw, ellipse, black, fill=white, scale=0.9,inner sep=1.0pt] (node126) at (9.00,4.00) {$1\,2\,6$};
\node[draw, ellipse, black, fill=white, scale=0.9,inner sep=1.0pt] (node136) at (7.00,4.00) {$1\,3\,6$};
\node[draw, ellipse, black, fill=white, scale=0.9,inner sep=1.0pt] (node146) at (6.00,3.00) {$1\,4\,6$};
\node[draw, ellipse, black, fill=white, scale=0.9,inner sep=1.0pt] (node156) at (6.00,1.00) {$1\,5\,6$};
\node[draw, ellipse, black, fill=white, scale=0.9,inner sep=1.0pt] (node234) at (4.00,8.00) {$2\,3\,4$};
\node[draw, ellipse, black, fill=white, scale=0.9,inner sep=1.0pt] (node236) at (6.00,6.00) {$2\,3\,6$};
\node[draw, ellipse, black, fill=white, scale=0.9,inner sep=1.0pt] (node345) at (1.00,5.00) {$3\,4\,5$};
\node[draw, ellipse, black, fill=white, scale=0.9,inner sep=1.0pt] (node346) at (3.00,5.00) {$3\,4\,6$};
\node[draw, ellipse, black, fill=white, scale=0.9,inner sep=1.0pt] (node456) at (2.00,2.00) {$4\,5\,6$};
\end{tikzpicture}}
&
\scalebox{0.8}{
\begin{tikzpicture}[scale=1.0]
\draw [line width=\gridlw,opacity=\gridop] (1.00,1.00) grid (9.00,8.00); \node[draw, ellipse, black, fill=white, scale=0.9,inner sep=1.0pt] (node123) at (8.00,7.00) {$1\,2\,3$};
\node[draw, ellipse, black, fill=white, scale=0.9,inner sep=1.0pt] (node126) at (9.00,4.00) {$1\,2\,6$};
\node[draw, ellipse, black, fill=white, scale=0.9,inner sep=1.0pt] (node136) at (7.00,4.00) {$1\,3\,6$};
\node[draw, ellipse, black, fill=white, scale=0.9,inner sep=1.0pt] (node146) at (6.00,3.00) {$1\,4\,6$};
\node[draw, ellipse, black, fill=white, scale=0.9,inner sep=1.0pt] (node156) at (6.00,1.00) {$1\,5\,6$};
\node[draw, ellipse, black, fill=white, scale=0.9,inner sep=1.0pt] (node234) at (4.00,8.00) {$2\,3\,4$};
\node[draw, ellipse, black, fill=white, scale=0.9,inner sep=1.0pt] (node236) at (6.00,6.00) {$2\,3\,6$};
\node[draw, ellipse, black, fill=white, scale=0.9,inner sep=1.0pt] (node345) at (1.00,5.00) {$3\,4\,5$};
\node[draw, ellipse, black, fill=white, scale=0.9,inner sep=1.0pt] (node346) at (3.00,5.00) {$3\,4\,6$};
\node[draw, ellipse, black, fill=white, scale=0.9,inner sep=1.0pt] (node456) at (2.00,2.00) {$4\,5\,6$};
\fill [opacity=0.2,black] (node123.center) -- (node126.center) -- (node236.center) -- cycle;
\coordinate (bnode1236n1) at (7.67,5.67);
\coordinate (wnode23n1) at (6.00,7.00);
\node[draw,rectangle,scale=1,fill=white] (AREAnode23) at (6.00,7.00) {$3.0$};
\coordinate (wnode16n1) at (7.33,3.67);
% \node[draw,rectangle,scale=1,fill=white] (AREAnode16) at (7.33,3.67) {$1.0$};
\fill [opacity=0.2,black] (node126.center) -- (node136.center) -- (node236.center) -- cycle;
\coordinate (bnode1236n2) at (7.33,4.67);
\coordinate (wnode16n2) at (7.00,2.67);
\node[draw,rectangle,scale=1,fill=white] (AREAnode16) at (7.10,2.87) {$4.0$};
\fill [opacity=0.2,black] (node136.center) -- (node146.center) -- (node346.center) -- cycle;
\coordinate (bnode1346n1) at (5.33,4.00);
\coordinate (wnode36n1) at (5.33,5.00);
\node[draw,rectangle,scale=1,fill=white] (AREAnode36) at (5.33,5.00) {$3.5$};
\fill [opacity=0.2,black] (node146.center) -- (node156.center) -- (node456.center) -- cycle;
\coordinate (bnode1456n1) at (4.67,2.00);
\coordinate (wnode46n1) at (3.67,3.33);
\node[draw,rectangle,scale=1,fill=white] (AREAnode46) at (3.67,3.33) {$5.5$};
\fill [opacity=0.2,black] (node234.center) -- (node236.center) -- (node346.center) -- cycle;
\coordinate (bnode2346n1) at (4.33,6.33);
\coordinate (wnode34n1) at (2.67,6.00);
\node[draw,rectangle,scale=1,fill=white] (AREAnode34) at (2.67,6.00) {$3.0$};
\fill [opacity=0.2,black] (node345.center) -- (node346.center) -- (node456.center) -- cycle;
\coordinate (bnode3456n1) at (2.00,4.00);
\draw[line width=0.40mm,black] (node123) -- (node126);
\draw[line width=0.40mm,black] (node123) -- (node234);
\draw[line width=0.40mm,black] (node123) -- (node236);
\draw[line width=0.40mm,black] (node126) -- (node123);
\draw[line width=0.40mm,black] (node126) -- (node136);
% \draw[line width=0.40mm,black] (node126) -- (node146);
\draw[line width=0.40mm,black] (node126) -- (node156);
% \draw[line width=0.40mm,black] (node126) -- (node236);
\draw[line width=0.40mm,black] (node136) -- (node126);
\draw[line width=0.40mm,black] (node136) -- (node146);
\draw[line width=0.40mm,black] (node136) -- (node236);
\draw[line width=0.40mm,black] (node136) -- (node346);
% \draw[line width=0.40mm,black] (node146) -- (node126);
\draw[line width=0.40mm,black] (node146) -- (node136);
\draw[line width=0.40mm,black] (node146) -- (node156);
\draw[line width=0.40mm,black] (node146) -- (node346);
\draw[line width=0.40mm,black] (node146) -- (node456);
\draw[line width=0.40mm,black] (node156) -- (node126);
\draw[line width=0.40mm,black] (node156) -- (node146);
\draw[line width=0.40mm,black] (node156) -- (node456);
\draw[line width=0.40mm,black] (node234) -- (node123);
\draw[line width=0.40mm,black] (node234) -- (node236);
\draw[line width=0.40mm,black] (node234) -- (node345);
\draw[line width=0.40mm,black] (node234) -- (node346);
\draw[line width=0.40mm,black] (node236) -- (node123);
% \draw[line width=0.40mm,black] (node236) -- (node126);
\draw[line width=0.40mm,black] (node236) -- (node136);
\draw[line width=0.40mm,black] (node236) -- (node234);
\draw[line width=0.40mm,black] (node236) -- (node346);
\draw[line width=0.40mm,black] (node345) -- (node234);
\draw[line width=0.40mm,black] (node345) -- (node346);
\draw[line width=0.40mm,black] (node345) -- (node456);
\draw[line width=0.40mm,black] (node346) -- (node136);
\draw[line width=0.40mm,black] (node346) -- (node146);
\draw[line width=0.40mm,black] (node346) -- (node234);
\draw[line width=0.40mm,black] (node346) -- (node236);
\draw[line width=0.40mm,black] (node346) -- (node345);
\draw[line width=0.40mm,black] (node346) -- (node456);
\draw[line width=0.40mm,black] (node456) -- (node146);
\draw[line width=0.40mm,black] (node456) -- (node156);
\draw[line width=0.40mm,black] (node456) -- (node345);
\draw[line width=0.40mm,black] (node456) -- (node346);
\node[draw, ellipse, black, fill=white, scale=0.9,inner sep=1.0pt] (node123) at (8.00,7.00) {$1\,2\,3$};
\node[draw, ellipse, black, fill=white, scale=0.9,inner sep=1.0pt] (node126) at (9.00,4.00) {$1\,2\,6$};
\node[draw, ellipse, black, fill=white, scale=0.9,inner sep=1.0pt] (node136) at (7.00,4.00) {$1\,3\,6$};
\node[draw, ellipse, black, fill=white, scale=0.9,inner sep=1.0pt] (node146) at (6.00,3.00) {$1\,4\,6$};
\node[draw, ellipse, black, fill=white, scale=0.9,inner sep=1.0pt] (node156) at (6.00,1.00) {$1\,5\,6$};
\node[draw, ellipse, black, fill=white, scale=0.9,inner sep=1.0pt] (node234) at (4.00,8.00) {$2\,3\,4$};
\node[draw, ellipse, black, fill=white, scale=0.9,inner sep=1.0pt] (node236) at (6.00,6.00) {$2\,3\,6$};
\node[draw, ellipse, black, fill=white, scale=0.9,inner sep=1.0pt] (node345) at (1.00,5.00) {$3\,4\,5$};
\node[draw, ellipse, black, fill=white, scale=0.9,inner sep=1.0pt] (node346) at (3.00,5.00) {$3\,4\,6$};
\node[draw, ellipse, black, fill=white, scale=0.9,inner sep=1.0pt] (node456) at (2.00,2.00) {$4\,5\,6$};
\end{tikzpicture}}
\\

\end{tabular}

\caption{\label{fig:white_labels} A plabic tiling associated to a 
bipartite $(k+1,n)$-plabic graph, with $k+1=3$ and $n=6$.  The labeling
of white faces by $k$-element sets is shown at the left, while the areas of the white faces
are shown at the right.}
\end{figure}

\begin{proposition}\label{prop:area}
Let $\Tiling$ be a fine zonotopal tiling of $\ZV$ and let $\Gbip:=\Gbip_{k+1}(\Tiling)$ be the corresponding bipartite $(k+1,n)$-plabic graph. Then 
\begin{equation}\label{eq:vertHSP_area}
\vertHSP_k(\Tiling)=2\sum_{w} \ar{w}\cdot \e_{\flw{w}},
\end{equation}
where the sum is taken over all \emph{white} interior vertices $w$ of $\Gbip$.
\end{proposition}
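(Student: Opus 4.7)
The plan is to match the definition $\vertHSP_k(\Tiling)=\sum_{|A|=k}\VolB\,\e_A$ tile-by-tile with the right-hand side by tracking what happens to each summand under the horizontal slice $\Tiling\cap \Hyp_{k+1}$. By \cref{thm:Gal}, this slice coincides with the plabic tiling $\PT(G_{k+1}(\Tiling))$ of the trivalent graph; passing to the bipartite version $\Gbip=\Gbip_{k+1}(\Tiling)$ amounts to merging monochromatic triangles of $\PT(G_{k+1}(\Tiling))$ into the polygons $\dualface{w}$ of $\PT(\Gbip)$.

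First I would analyze which tiles contribute a full-dimensional slice at height $k+1$. Each tile $\tile_{A,B}$ is a parallelepiped spanning heights $|A|$ through $|A|+3$, so its intersection with $\Hyp_{k+1}$ is $2$-dimensional precisely when $|A|\in\{k-1,k\}$. In the $|A|=k$ case the slice is a $\Delta_{1,3}$-triangle (a \emph{white} small triangle of $\PT(G_{k+1}(\Tiling))$) with vertices $\v_A+\v_b$ for $b\in B$; in the $|A|=k-1$ case it is a $\Delta_{2,3}$-triangle (black). Thus the tiles actually recorded in $\vertHSP_k(\Tiling)$ are in bijection with the small white triangles of $\PT(G_{k+1}(\Tiling))$.

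Next I would identify which tiles get grouped together into a single white face $\dualface{w}$ of $\PT(\Gbip)$. Two adjacent small white triangles in $\PT(G_{k+1}(\Tiling))$ fuse along a shared edge precisely when the corresponding white vertices of $G_{k+1}(\Tiling)$ are joined by a white–white edge, which is exactly what gets contracted in forming $\Gbip$. A shared edge means two of the three vertex labels $A_i\cup\{b\}$ (size $k+1$) agree across the two triangles; intersecting these two shared labels gives a $k$-element set that must coincide with both $A_i$'s, forcing $A_1=A_2$. Iterating over the connected monochromatic cluster, every small white triangle inside $\dualface{w}$ carries the same bottom set $A$, and this common set is exactly $\flw{w}=\bigcap_{i}\fl{\face_i}$ by construction of the face labeling in \cref{def:faces,def:kn}. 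So the contribution to $\vertHSP_k(\Tiling)$ from the white vertex $w$ is $\bigl(\sum_i \VolX({A_i,B_i})\bigr)\,\e_{\flw{w}}$, summed over the small white triangles inside $\dualface{w}$.

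Finally I would convert $3$-dimensional tile volume to $2$-dimensional cross-section area. Since $\v_b=(\a_b,1)$, the $3\times 3$ determinant $\det(\v_{b_1},\v_{b_2},\v_{b_3})$ equals the $2\times 2$ determinant $\det(\a_{b_2}-\a_{b_1},\a_{b_3}-\a_{b_1})$, which is twice the Euclidean area of the triangle $\{\v_A+\v_{b_1},\v_A+\v_{b_2},\v_A+\v_{b_3}\}$ inside $\Hyp_{k+1}$. Hence $\VolX(A,B)=2\cdot(\text{area of its slice})$, and summing these slice-areas over the triangles comprising $\dualface{w}$ gives $2\,\ar{w}$. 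Substituting into the previous paragraph yields the claimed formula. The main obstacle is the combinatorial step: verifying rigorously that the common bottom set of all small white triangles in $\dualface{w}$ equals $\flw{w}$; everything else is straightforward from the definitions and the volume-versus-area identity for the lift.
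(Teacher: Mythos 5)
Your proof is correct and follows essentially the same approach as the paper's: slice at height $k+1$, identify tiles $\tile_{A,B}$ with $|A|=k$ with the small white triangles of $\PT(G_{k+1}(\Tiling))$, group those triangles into the white faces of $\PT(\Gbip)$, and convert volume to area. The paper's proof is quite terse at the grouping step (it speaks of "a white triangle $\dualface{w}$" even though $\dualface{w}$ can be a larger polygon); your version spells out the key check that all white triangles merged into a single face $\dualface{w}$ share the same bottom set $A$, and that this common $A$ equals $\flw{w}$ — a worthwhile elaboration of what the paper leaves implicit.
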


\begin{proof}
We use \eqref{eq:HSP}.  
It is not hard to see that each tile $\tile_{A,B} \in \Tiling$ 
gives rise to a white triangle 
$\dualface{w}$  in the plane $y_3 = |A|+1$ whose face label is 
	$\flw{w}=A$. Moreover every 
	white face in the plabic tilings associated to $\Tiling$
	comes from a tile of $\Tiling$.
	Therefore in \eqref{eq:HSP}, instead of summing
	over tiles $\tile_{A,B}$ with $|A|=k$, 
	we can sum over white triangles in the plane $y_3 = k+1$.
	Also note that we can relate the volume of $\tile_{A,B}$ to 
	the area of the corresponding white triangle, using  
	the normalization of our volume 
form given in the discussion 
preceding \cref{rem:notobvious}.
The result follows.
\end{proof}

\begin{example}
Applying \cref{prop:area} to the zonotopal tiling whose horizontal section
is shown in 
	\cref{fig:white_labels},
we obtain 
\begin{equation*}
\def\eij#1#2{\e_{\{#1,#2\}}}
	\vertHSP_2(\Tiling)=8 \eij 16+11\eij 46+7\eij 36+6\eij 34+6\eij 23=(8,6,19,17,0,26).
\end{equation*}
\end{example}

\def\Gh{G_{k,\h}}
\def\Ghp{G_{k,\h'}}

\subsection{Regular plabic graphs}
Recall  from \cref{sec:high-assoc-plab} that \emph{$\A$-regular} trivalent $(k,n)$-plabic graphs are by definition the horizontal sections of regular fine zonotopal tilings of $\ZV$, while $\A$-regular bipartite $(k,n)$-plabic graphs are those that are obtained from $\A$-regular trivalent ones by contracting edges. Let us give an explicit algorithm of reconstructing a trivalent (resp., bipartite) $\A$-regular $(k,n)$-plabic graph $\Gh$ (resp., $\bip \Gh$) from a given height function $\h$. In order to do so, we specialize some general constructions from \cref{sec:flips-zonot-tilings,sec:regular}.

\def\muh{\mu_\h}
If $\VC$ is a configuration of $n$ vectors in $\R^3$ such that their endpoints are vertices $\A$ of a convex $n$-gon in $\Hyp_1\cong \R^2$, then the circuits of $\VC$ are given by 
\[\Circuits=\pm\left\{\left(\{a,c\},\{b,d\}\right)\mid 1\leq a<b<c<d\leq n\right\}.\]
For each circuit $C=(\{a,c\},\{b,d\})$, we have a (unique up to rescaling by a positive real number) vector 
\begin{equation}\label{eq:circuit2}
	\alphaC=x_a\e_a-x_b\e_b+x_c\e_c-x_d\e_d
\end{equation}
	whose coordinates are the coefficients of the linear dependence $x_a \v_a-x_b \v_b+x_c\v_c-x_d\v_d=0$. (Here $x_a,x_b,x_c,x_d>0$.) Given a generic height vector $\h\in\GHV$, we define (as in \eqref{eq:circuit}) the generic circuit
	signature $\sigma_\h(C):=\pm1$ depending on whether $\muh(a,b,c,d):=x_a h_a-x_b h_b+x_ch_c-x_dh_d\in\R\setminus \{0\}$ is positive or negative (it cannot be $0$ precisely because $\h$ is generic).

\def\Compat{\mathcal{F}(\A,k,\h)}
\def\Compatk#1{\mathcal{F}(\A,#1,\h)}
\begin{definition}
We say that $I\subset[n]$ is \emph{$(\A,\h)$-compatible} if for all $1\leq a<b<c<d\leq n$, we have:
\begin{itemize}
\item if $a,c\in I$ and $b,d\notin I$ then $\muh(a,b,c,d)>0$;
\item if $a,c\notin I$ and $b,d\in I$ then $\muh(a,b,c,d)<0$.
\end{itemize}
We denote  $\Compat:=\left\{I\in{[n]\choose k}\;\middle|\; \text{$I$ is $(\A,\h)$-compatible} \right\}$.
\end{definition}

By \cref{prop:circuit}, the regular zonotopal tiling $\Tiling:=\Tiling_\h$ satisfies 
$\sigma_\Tiling=\sigma_\h$.  But now by 
\eqref{eq:sigma_tiling}, we see that the $k$-element sets in  
    $\Vert(\Tiling)$ are precisely the elements of $\Compat$.
Therefore by 
\cref{thm:Gal}, $\Compat$ is the set of labels of some triangulated
plabic tiling, and hence by 
\cite{OPS}, $\Compat$ coincides with 
$\FL{\bip \Gh}$ 
for a unique bipartite $(k,n)$-plabic graph $\bip \Gh$, and this graph $\bip \Gh$ can be explicitly reconstructed from 
$\Compat$ 
as in~\cite[Section~9]{OPS}. To find the unique trivalent $(k,n)$-plabic graph $\Gh$, we use~\cite[Proposition~4.6]{Gal}: the face labels of $\Gh$ are given by $\FL{\Gh}=\Compat$, and two faces labeled by $S,T\in{[n]\choose k}$ are connected by an edge in $\PT(\Gh)$ if and only if $S\cap T\in \Compatk{k-1}$ and $S\cup T\in \Compatk{k+1}$. This completely determines the triangulated plabic tiling $\PT(\Gh)$ from which $\Gh$ can be reconstructed as a planar dual. By 
\cref{thm:Gal},
 $\PT(\Gh)$ is the horizontal section of $\Tiling_\h$ by $\Hyp_k$, and $\PT(\bip \Gh)$ is obtained from it by removing all edges that are adjacent to two faces of the same color.

\begin{example}\label{ex:fivegon}
\begin{figure}
\setlength{\tabcolsep}{9pt}
\scalebox{0.95}{
\begin{tabular}{ccc}
\scalebox{0.7}{
\begin{tikzpicture}[scale=1.7,baseline=(Z.base)]
\coordinate (Z) at (1,1);
\draw [line width=\gridlw,opacity=\gridop] (0,0) grid (2,2);
\newcommand\nd[4]{
\node[draw,circle,fill=black,scale=0.7] (A#3) at (#1,#2) {};
\node[anchor=#4,scale=1.3] (B#3) at (A#3.center) {$\a_{#3}$};
}
\nd 001{45}
\nd 102{90}
\nd 213{180}
\nd 124{-90}
\nd 015{0}
% \draw[line width=0.5pt] (A1)--(A2)--(A3)--(A4)--(A5)--(A1);
\end{tikzpicture}}
&

\scalebox{0.7}{
\begin{tikzpicture}[scale=1.7,baseline=(Z.base)]
\coordinate (Z) at (1.5,1.5);
\draw [line width=\gridlw,opacity=\gridop] (0.00,0.00) grid (3.00,3.00); \node[draw, ellipse, black, fill=white, scale=1.0,inner sep=0.5pt] (node12) at (1.00,0.00) {$1\,2$};
\node[draw, ellipse, black, fill=white, scale=1.0,inner sep=0.5pt] (node13) at (2.00,1.00) {$1\,3$};
\node[draw, ellipse, black, fill=white, scale=1.0,inner sep=0.5pt] (node15) at (0.00,1.00) {$1\,5$};
\node[draw, ellipse, black, fill=white, scale=1.0,inner sep=0.5pt] (node23) at (3.00,1.00) {$2\,3$};
\node[draw, ellipse, black, fill=white, scale=1.0,inner sep=0.5pt] (node34) at (3.00,3.00) {$3\,4$};
\node[draw, ellipse, black, fill=white, scale=1.0,inner sep=0.5pt] (node35) at (2.00,2.00) {$3\,5$};
\node[draw, ellipse, black, fill=white, scale=1.0,inner sep=0.5pt] (node45) at (1.00,3.00) {$4\,5$};
\coordinate (wnode1n1) at (1.00,0.67);
\fill [opacity=0.2,black] (node12.center) -- (node13.center) -- (node23.center) -- cycle;
\coordinate (bnode123n1) at (2.00,0.67);
\fill [opacity=0.2,black] (node13.center) -- (node15.center) -- (node35.center) -- cycle;
\coordinate (bnode135n1) at (1.33,1.33);
\coordinate (wnode3n1) at (2.67,1.67);
\coordinate (wnode3n2) at (2.33,2.00);
\coordinate (wnode5n1) at (1.00,2.00);
\fill [opacity=0.2,black] (node34.center) -- (node35.center) -- (node45.center) -- cycle;
\coordinate (bnode345n1) at (2.00,2.67);
\draw[line width=0.40mm,black] (node12) -- (node13);
\draw[line width=0.40mm,black] (node12) -- (node15);
\draw[line width=0.40mm,black] (node12) -- (node23);
\draw[line width=0.40mm,black] (node13) -- (node12);
\draw[line width=0.40mm,black] (node13) -- (node15);
\draw[line width=0.40mm,black] (node13) -- (node23);
% \draw[line width=0.40mm,black] (node13) -- (node34);
\draw[line width=0.40mm,black] (node13) -- (node35);
\draw[line width=0.40mm,black] (node15) -- (node12);
\draw[line width=0.40mm,black] (node15) -- (node13);
\draw[line width=0.40mm,black] (node15) -- (node35);
\draw[line width=0.40mm,black] (node15) -- (node45);
\draw[line width=0.40mm,black] (node23) -- (node12);
\draw[line width=0.40mm,black] (node23) -- (node13);
\draw[line width=0.40mm,black] (node23) -- (node34);
% \draw[line width=0.40mm,black] (node34) -- (node13);
\draw[line width=0.40mm,black] (node34) -- (node23);
\draw[line width=0.40mm,black] (node34) -- (node35);
\draw[line width=0.40mm,black] (node34) -- (node45);
\draw[line width=0.40mm,black] (node35) -- (node13);
\draw[line width=0.40mm,black] (node35) -- (node15);
\draw[line width=0.40mm,black] (node35) -- (node34);
\draw[line width=0.40mm,black] (node35) -- (node45);
\draw[line width=0.40mm,black] (node45) -- (node15);
\draw[line width=0.40mm,black] (node45) -- (node34);
\draw[line width=0.40mm,black] (node45) -- (node35);
\draw[dashed, line width=1pt,black!70] (1.60,1.60) circle (2.23);
\coordinate (bound_5_1_2) at (0.03,0.03);
\node[scale=1,color=black] (boundNode2) at (-0.18,-0.18) {$2$};
\coordinate (bound_1_2_3) at (2.69,-0.34);
\node[scale=1,color=black] (boundNode3) at (2.83,-0.59) {$3$};
\coordinate (bound_2_3_4) at (3.72,2.27);
\node[scale=1,color=black] (boundNode4) at (4.00,2.36) {$4$};
\coordinate (bound_3_4_5) at (2.27,3.72);
\node[scale=1,color=black] (boundNode5) at (2.36,4.00) {$5$};
\coordinate (bound_4_5_1) at (-0.34,2.69);
\node[scale=1,color=black] (boundNode1) at (-0.59,2.83) {$1$};
\coordinate (node1) at (1.00,0.67);
\coordinate (node2) at (2.69,-0.34);
\coordinate (node3) at (2.50,1.75);
\coordinate (node4) at (2.27,3.72);
\coordinate (node5) at (1.00,2.00);
\coordinate (node123) at (2.00,0.67);
\coordinate (node125) at (0.03,0.03);
\coordinate (node135) at (1.33,1.33);
\coordinate (node145) at (-0.34,2.69);
\coordinate (node234) at (3.72,2.27);
\coordinate (node345) at (2.00,2.67);
\draw[blue, line width=\plabiclw] (node1).. controls (1.50,0.50) .. (node123);
\draw[blue, line width=\plabiclw] (node1).. controls (1.00,1.00) .. (node135);
\draw[blue, line width=\plabiclw] (node1).. controls (0.50,0.50) .. (node125);
\draw[blue, line width=\plabiclw] (node2).. controls (2.00,0.50) .. (node123);
\draw[blue, line width=\plabiclw] (node3).. controls (2.50,1.00) .. (node123);
\draw[blue, line width=\plabiclw] (node3).. controls (3.00,2.00) .. (node234);
\draw[blue, line width=\plabiclw] (node3).. controls (2.50,2.50) .. (node345);
\draw[blue, line width=\plabiclw] (node3).. controls (2.00,1.50) .. (node135);
\draw[blue, line width=\plabiclw] (node4).. controls (2.00,3.00) .. (node345);
\draw[blue, line width=\plabiclw] (node5).. controls (1.00,1.50) .. (node135);
\draw[blue, line width=\plabiclw] (node5).. controls (1.50,2.50) .. (node345);
\draw[blue, line width=\plabiclw] (node5).. controls (0.50,2.00) .. (node145);
\draw[blue, line width=\plabiclw] (node123).. controls (1.50,0.50) .. (node1);
\draw[blue, line width=\plabiclw] (node123).. controls (2.50,1.00) .. (node3);
\draw[blue, line width=\plabiclw] (node123).. controls (2.00,0.50) .. (node2);
\draw[blue, line width=\plabiclw] (node125).. controls (0.50,0.50) .. (node1);
\draw[blue, line width=\plabiclw] (node135).. controls (1.00,1.00) .. (node1);
\draw[blue, line width=\plabiclw] (node135).. controls (1.00,1.50) .. (node5);
\draw[blue, line width=\plabiclw] (node135).. controls (2.00,1.50) .. (node3);
\draw[blue, line width=\plabiclw] (node145).. controls (0.50,2.00) .. (node5);
\draw[blue, line width=\plabiclw] (node234).. controls (3.00,2.00) .. (node3);
\draw[blue, line width=\plabiclw] (node345).. controls (2.50,2.50) .. (node3);
\draw[blue, line width=\plabiclw] (node345).. controls (1.50,2.50) .. (node5);
\draw[blue, line width=\plabiclw] (node345).. controls (2.00,3.00) .. (node4);
\draw[blue, line width=\plabiclw,fill=white] (node1) circle (2.0pt);
% \draw[blue, line width=\plabiclw,fill=white] (node2) circle (2.0pt);
\draw[blue, line width=\plabiclw,fill=white] (node3) circle (2.0pt);
% \draw[blue, line width=\plabiclw,fill=white] (node4) circle (2.0pt);
\draw[blue, line width=\plabiclw,fill=white] (node5) circle (2.0pt);
\draw[blue, line width=\plabiclw,fill=blue] (node123) circle (2.0pt);
% \draw[blue, line width=\plabiclw,fill=white] (node125) circle (2.0pt);
\draw[blue, line width=\plabiclw,fill=blue] (node135) circle (2.0pt);
% \draw[blue, line width=\plabiclw,fill=white] (node145) circle (2.0pt);
% \draw[blue, line width=\plabiclw,fill=white] (node234) circle (2.0pt);
\draw[blue, line width=\plabiclw,fill=blue] (node345) circle (2.0pt);

\boundnode(node2)
\boundnode(node4)
\boundnode(node125)
\boundnode(node145)
\boundnode(node234)

\node[draw, ellipse, black, fill=white, scale=1.0,inner sep=0.5pt] (node12) at (1.00,0.00) {$1\,2$};
\node[draw, ellipse, black, fill=white, scale=1.0,inner sep=0.5pt] (node13) at (2.00,1.00) {$1\,3$};
\node[draw, ellipse, black, fill=white, scale=1.0,inner sep=0.5pt] (node15) at (0.00,1.00) {$1\,5$};
\node[draw, ellipse, black, fill=white, scale=1.0,inner sep=0.5pt] (node23) at (3.00,1.00) {$2\,3$};
\node[draw, ellipse, black, fill=white, scale=1.0,inner sep=0.5pt] (node34) at (3.00,3.00) {$3\,4$};
\node[draw, ellipse, black, fill=white, scale=1.0,inner sep=0.5pt] (node35) at (2.00,2.00) {$3\,5$};
\node[draw, ellipse, black, fill=white, scale=1.0,inner sep=0.5pt] (node45) at (1.00,3.00) {$4\,5$};
\end{tikzpicture}}
 &

\scalebox{0.7}{
\begin{tikzpicture}[scale=1.7,baseline=(Z.base)]
\coordinate (Z) at (1.5,1.5);
\draw [line width=\gridlw,opacity=\gridop] (0.00,0.00) grid (3.00,3.00); \node[draw, ellipse, black, fill=white, scale=1.0,inner sep=0.5pt] (node12) at (1.00,0.00) {$1\,2$};
\node[draw, ellipse, black, fill=white, scale=1.0,inner sep=0.5pt] (node13) at (2.00,1.00) {$1\,3$};
\node[draw, ellipse, black, fill=white, scale=1.0,inner sep=0.5pt] (node15) at (0.00,1.00) {$1\,5$};
\node[draw, ellipse, black, fill=white, scale=1.0,inner sep=0.5pt] (node23) at (3.00,1.00) {$2\,3$};
\node[draw, ellipse, black, fill=white, scale=1.0,inner sep=0.5pt] (node34) at (3.00,3.00) {$3\,4$};
\node[draw, ellipse, black, fill=white, scale=1.0,inner sep=0.5pt] (node35) at (2.00,2.00) {$3\,5$};
\node[draw, ellipse, black, fill=white, scale=1.0,inner sep=0.5pt] (node45) at (1.00,3.00) {$4\,5$};
\coordinate (wnode1n1) at (1.00,0.67);
\fill [opacity=0.2,black] (node12.center) -- (node13.center) -- (node23.center) -- cycle;
\coordinate (bnode123n1) at (2.00,0.67);
\fill [opacity=0.2,black] (node13.center) -- (node15.center) -- (node35.center) -- cycle;
\coordinate (bnode135n1) at (1.33,1.33);
\coordinate (wnode3n1) at (2.67,1.67);
\coordinate (wnode3n2) at (2.33,2.00);
\coordinate (wnode5n1) at (1.00,2.00);
\fill [opacity=0.2,black] (node34.center) -- (node35.center) -- (node45.center) -- cycle;
\coordinate (bnode345n1) at (2.00,2.67);
\draw[line width=0.40mm,black] (node12) -- (node13);
\draw[line width=0.40mm,black] (node12) -- (node15);
\draw[line width=0.40mm,black] (node12) -- (node23);
\draw[line width=0.40mm,black] (node13) -- (node12);
\draw[line width=0.40mm,black] (node13) -- (node15);
\draw[line width=0.40mm,black] (node13) -- (node23);
\draw[line width=0.40mm,black] (node13) -- (node34);
\draw[line width=0.40mm,black] (node13) -- (node35);
\draw[line width=0.40mm,black] (node15) -- (node12);
\draw[line width=0.40mm,black] (node15) -- (node13);
\draw[line width=0.40mm,black] (node15) -- (node35);
\draw[line width=0.40mm,black] (node15) -- (node45);
\draw[line width=0.40mm,black] (node23) -- (node12);
\draw[line width=0.40mm,black] (node23) -- (node13);
\draw[line width=0.40mm,black] (node23) -- (node34);
\draw[line width=0.40mm,black] (node34) -- (node13);
\draw[line width=0.40mm,black] (node34) -- (node23);
\draw[line width=0.40mm,black] (node34) -- (node35);
\draw[line width=0.40mm,black] (node34) -- (node45);
\draw[line width=0.40mm,black] (node35) -- (node13);
\draw[line width=0.40mm,black] (node35) -- (node15);
\draw[line width=0.40mm,black] (node35) -- (node34);
\draw[line width=0.40mm,black] (node35) -- (node45);
\draw[line width=0.40mm,black] (node45) -- (node15);
\draw[line width=0.40mm,black] (node45) -- (node34);
\draw[line width=0.40mm,black] (node45) -- (node35);
\draw[dashed, line width=1pt,black!70] (1.60,1.60) circle (2.23);
\coordinate (bound_5_1_2) at (0.03,0.03);
\node[scale=1,color=black] (boundNode2) at (-0.18,-0.18) {$2$};
\coordinate (bound_1_2_3) at (2.69,-0.34);
\node[scale=1,color=black] (boundNode3) at (2.83,-0.59) {$3$};
\coordinate (bound_2_3_4) at (3.72,2.27);
\node[scale=1,color=black] (boundNode4) at (4.00,2.36) {$4$};
\coordinate (bound_3_4_5) at (2.27,3.72);
\node[scale=1,color=black] (boundNode5) at (2.36,4.00) {$5$};
\coordinate (bound_4_5_1) at (-0.34,2.69);
\node[scale=1,color=black] (boundNode1) at (-0.59,2.83) {$1$};
\coordinate (node1) at (1.00,0.67);
\coordinate (node2) at (2.69,-0.34);
\coordinate (node3) at (2.50,1.75);
\coordinate (node4) at (2.27,3.72);
\coordinate (node5) at (1.00,2.00);
\coordinate (node123) at (2.00,0.67);
\coordinate (node125) at (0.03,0.03);
\coordinate (node135) at (1.33,1.33);
\coordinate (node145) at (-0.34,2.69);
\coordinate (node234) at (3.72,2.27);
\coordinate (node345) at (2.00,2.67);
\draw[blue, line width=\plabiclw] (node1).. controls (1.50,0.50) .. (node123);
\draw[blue, line width=\plabiclw] (node1).. controls (1.00,1.00) .. (node135);
\draw[blue, line width=\plabiclw] (node1).. controls (0.50,0.50) .. (node125);
\draw[blue, line width=\plabiclw] (node2).. controls (2.00,0.50) .. (node123);

\draw[blue, line width=\plabiclw] (wnode3n1).. controls (2.50,1.00) .. (node123);
\draw[blue, line width=\plabiclw] (wnode3n1).. controls (3.00,2.00) .. (node234);
\draw[blue, line width=\plabiclw] (wnode3n2).. controls (2.50,2.50) .. (node345);
\draw[blue, line width=\plabiclw] (wnode3n2).. controls (2.00,1.50) .. (node135);
\draw[blue, line width=\plabiclw] (wnode3n1) to[bend right=20] (wnode3n2);

\draw[blue, line width=\plabiclw] (node4).. controls (2.00,3.00) .. (node345);
\draw[blue, line width=\plabiclw] (node5).. controls (1.00,1.50) .. (node135);
\draw[blue, line width=\plabiclw] (node5).. controls (1.50,2.50) .. (node345);
\draw[blue, line width=\plabiclw] (node5).. controls (0.50,2.00) .. (node145);
\draw[blue, line width=\plabiclw] (node123).. controls (1.50,0.50) .. (node1);
% \draw[blue, line width=\plabiclw] (node123).. controls (2.50,1.00) .. (node3);
\draw[blue, line width=\plabiclw] (node123).. controls (2.00,0.50) .. (node2);
\draw[blue, line width=\plabiclw] (node125).. controls (0.50,0.50) .. (node1);
\draw[blue, line width=\plabiclw] (node135).. controls (1.00,1.00) .. (node1);
\draw[blue, line width=\plabiclw] (node135).. controls (1.00,1.50) .. (node5);
% \draw[blue, line width=\plabiclw] (node135).. controls (2.00,1.50) .. (node3);
\draw[blue, line width=\plabiclw] (node145).. controls (0.50,2.00) .. (node5);
% \draw[blue, line width=\plabiclw] (node234).. controls (3.00,2.00) .. (node3);
% \draw[blue, line width=\plabiclw] (node345).. controls (2.50,2.50) .. (node3);
\draw[blue, line width=\plabiclw] (node345).. controls (1.50,2.50) .. (node5);
\draw[blue, line width=\plabiclw] (node345).. controls (2.00,3.00) .. (node4);
\draw[blue, line width=\plabiclw,fill=white] (node1) circle (2.0pt);
% \draw[blue, line width=\plabiclw,fill=white] (node2) circle (2.0pt);
\draw[blue, line width=\plabiclw,fill=white] (wnode3n1) circle (2.0pt);
\draw[blue, line width=\plabiclw,fill=white] (wnode3n2) circle (2.0pt);
% \draw[blue, line width=\plabiclw,fill=white] (node4) circle (2.0pt);
\draw[blue, line width=\plabiclw,fill=white] (node5) circle (2.0pt);
\draw[blue, line width=\plabiclw,fill=blue] (node123) circle (2.0pt);
% \draw[blue, line width=\plabiclw,fill=white] (node125) circle (2.0pt);
\draw[blue, line width=\plabiclw,fill=blue] (node135) circle (2.0pt);
% \draw[blue, line width=\plabiclw,fill=white] (node145) circle (2.0pt);
% \draw[blue, line width=\plabiclw,fill=white] (node234) circle (2.0pt);
\draw[blue, line width=\plabiclw,fill=blue] (node345) circle (2.0pt);

\boundnode(node2)
\boundnode(node4)
\boundnode(node125)
\boundnode(node145)
\boundnode(node234)

\node[draw, ellipse, black, fill=white, scale=1.0,inner sep=0.5pt] (node12) at (1.00,0.00) {$1\,2$};
\node[draw, ellipse, black, fill=white, scale=1.0,inner sep=0.5pt] (node13) at (2.00,1.00) {$1\,3$};
\node[draw, ellipse, black, fill=white, scale=1.0,inner sep=0.5pt] (node15) at (0.00,1.00) {$1\,5$};
\node[draw, ellipse, black, fill=white, scale=1.0,inner sep=0.5pt] (node23) at (3.00,1.00) {$2\,3$};
\node[draw, ellipse, black, fill=white, scale=1.0,inner sep=0.5pt] (node34) at (3.00,3.00) {$3\,4$};
\node[draw, ellipse, black, fill=white, scale=1.0,inner sep=0.5pt] (node35) at (2.00,2.00) {$3\,5$};
\node[draw, ellipse, black, fill=white, scale=1.0,inner sep=0.5pt] (node45) at (1.00,3.00) {$4\,5$};
\end{tikzpicture}}
\\ && \\
$\Acal$ & $\bip \Gh$& $\Gh$
\end{tabular}
}

  \caption{\label{fig:fivegon} A point configuration $\A$, and the corresponding bipartite and trivalent plabic graphs for $k=2$ and $\h$ as in \cref{ex:fivegon}.}
\end{figure}
Let $\VC=(\v_1,\dots,\v_5)$ be given by the column vectors of $\begin{pmatrix}
0&1&2&1&0\\
0&0&1&2&1\\
1&1&1&1&1
\end{pmatrix}$, thus $\A$ is the point configuration shown in \cref{fig:fivegon} (left). Let $\h:=(1,0,3,0,0)\in \R^n$. For each circuit $C=(\{a,c\},\{b,d\})$ for $a<b<c<d$, the values of $\alphaC$ (computed using~\eqref{eq:alphaC}) and  $\muh(a,b,c,d)$ are given in the following table (which shows that $\h\in\GHV$ is generic).
\begin{center}
\begin{tabular}{|c|c|}\hline
$\alphaC$ & $\muh(a,b,c,d)$\\\hhline{|=|=|}
$(2,-3,2,-1,0)$ & $+8$\\\hline
$(2,-2,1,0,-1)$ & $+5$\\\hline
$(2,-1,0,1,-2)$ & $+2$\\\hline
$(2,0,-1,2,-3)$ & $-1$\\\hline
$(0,1,-1,1,-1)$ & $-3$\\\hline
\end{tabular}
\end{center}

Let $k=2$. We find that $\Compat=\{12,23,34,45,15,13,35\}$, where we abbreviate $\{a,b\}$ as $ab$. Thus the unique bipartite $(k,n)$-plabic graph $\bip \Gh$ with face labels $\FL{\bip \Gh}=\Compat$ is shown in \cref{fig:fivegon} (middle). To find the trivalent plabic graph $\Gh$, observe that $\{1,3,4\}\in \Compatk{k+1}$ while $\{3\}\in\Compatk{k-1}$, so there must be an edge connecting $\{1,3\}$ to $\{3,4\}$ in $\PT(\Gh)$. Thus $\Gh$ is the trivalent plabic graph given in \cref{fig:fivegon} (right).
\end{example}

\def\GA{\bip G_{k+1,\h}}
\def\GB{\bip G_{k+1,\h'}}

\subsection{Proof of \cref{thm:regular}}
\itemref{thm:regular_bip}: Our goal is to show that given two generic height vectors $\h,\h'\in\GHV$, we have $\vertHSP_k(\Tiling_\h)=\vertHSP_k(\Tiling_{\h'})$ if and only if $\GA=\GB$. By~\eqref{eq:vertHSP_area}, if $\GA=\GB$ then clearly $\vertHSP_k(\Tiling_\h)=\vertHSP_k(\Tiling_{\h'})$.  Conversely, assume that $\vertHSP_k(\Tiling_\h)=\vertHSP_k(\Tiling_{\h'})$. Then by \cref{item:deform_vert}, the tilings $\Tiling_\h$ and $\Tiling_{\h'}$ are $k$-equivalent. By \cref{thm:flips_moves}, we see that the trivalent graphs $G_{k+1,\h}$ and $G_{k+1,\h'}$ are related by moves (M1) and (M3), thus their bipartite versions coincide. Similarly, combining \cref{item:deform_edge} with \cref{thm:flips_moves}, we find that the edges of $\Sigma_{\A,k}$ correspond to square moves of $\A$-regular bipartite $(k+1,n)$-plabic graphs.

\itemref{thm:regular_triv}: First, note that 
	by 
\cref{thm:FibPoly_k}, 
the vertices and edges of $\Sigma_{\A, k}+\Sigma_{\A, k-1} + \Sigma_{\A, k-2}$ are in bijection with vertices and edges of $\frac1{\Voldd(Q_k)} \left(\Sigma_{\A, k}+4\Sigma_{\A, k-1} + \Sigma_{\A, k-2}\right)\shifteq  \FibPoly(\Delta_{k,n}\pito Q_k)$.  The statement that the vertices and edges of $\Sigma_{\A, k}+\Sigma_{\A, k-1} + \Sigma_{\A, k-2}$ correspond to trivalent plabic graphs and moves (M1)--(M3) connecting them follows by combining \cref{item:deform_deform} with \cref{thm:flips_moves}. \qed

\begin{figure}
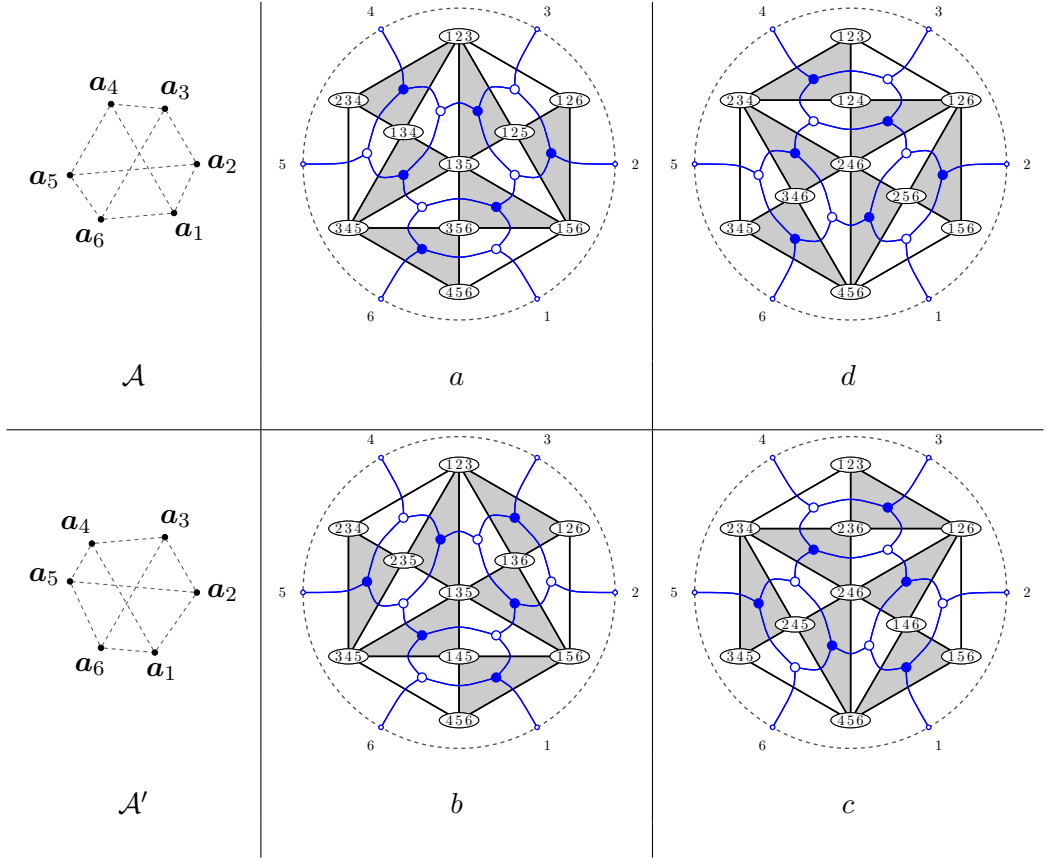


% [inline block 2: 1 envs, 48849 chars -> data_tex | \begin{tabular}{c|c|c} ...]


\caption{\label{fig:hexagons} The four plabic graphs, corresponding to the points in \cref{fig:Gr_3_6} labeled $a,b,c,d$. These are the only $(3,6)$-plabic graphs that are not $\A$-regular for some $\A$, see \cref{ex:hexagons}. Similar figures can be found in~\cite[Figure~18]{KK} or~\cite[Figure~1]{OS}.} 
\end{figure}

\begin{example} \label{ex:hexagons}
Let $n=6$ and $k=3$. An example of a higher associahedron $\Sigma_{\A,k}$, where $\A$ is the point configuration from \cref{fig:hexagons} (top left) is shown in \cref{fig:Gr_3_6}. The plabic graphs corresponding to the points of $\Sigma_{\A,k}$ labeled by $a,b,c,d$ are shown in \cref{fig:hexagons}. The points labeled by $b$ and $c$ belong to the interior of $\Sigma_{\A,k}$. On the other hand, for the point configuration $\A'$ from \cref{fig:hexagons} (bottom left), the points labeled $a$ and $d$ belong to the interior of $\Sigma_{\A',k}$, while the points labeled by $b$ and $c$ are among the vertices of $\Sigma_{\A',k}$. If $\A''$ is such that the three diagonals of the hexagon $Q=\conv\A''$ intersect at a single point then none of the four points $a,b,c,d$ are among the $30$ vertices of $\Sigma_{\A'',k}$. A similar computation can be found in~\cite[Theorem~4.2]{KK}.

The plabic graphs labeled by $b$ and $d$ arose in \cite[Section 8]{RW} in 
the context of mirror symmetry for Grassmannians.  If one considers
the Newton-Okounkov bodies $\Delta_G$ associated to a plabic
graph $G$ for $\Gr(3,6)$, then $32$ of the $34$ plabic graphs
give rise to integral polytopes $\Delta_G$;
$b$ and $d$ label the non-integral ones.
\end{example}

When $\A$ is the set of vertices of a convex $n$-gon, the combinatorics of the associahedron $\Sigma_{\A,1}$ does not depend on the specific choice of this $n$-gon. \Cref{ex:hexagons} shows that this is not the case for higher associahedra. Computational evidence suggests that the following result still holds. 

\begin{conjecture}
Suppose that $\A$ is the set of vertices of a \emph{generic} convex $n$-gon. Then the $f$-vector of $\Sigma_{\A,k}$ depends only  on $n$ and $k$.
\end{conjecture}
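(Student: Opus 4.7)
The plan is to use Proposition~\ref{prop:deform} to reduce the question to a combinatorial statement about the secondary hyperplane arrangement $\H$ from \eqref{eq:H_sec_arr}. Since the faces of $\Sigma_{\A,k}$ are in bijection with $k$-equivalence classes of faces of $\H$, the $f$-vector is determined by how $k$-equivalence groups the chambers of $\H$ and their common boundary cells. The first step is to observe that for any generic convex $n$-gon, the lift $\VC$ realizes the cyclic oriented matroid $C(n,3)$: every $3\times 3$ minor $\det(\v_i,\v_j,\v_k)$ with $i<j<k$ is positive. Consequently the sets $\Bcal(\Mcal)$, $\Ccal(\Mcal)$, the abstract set of fine zonotopal tilings of $\ZV$ in the sense of \cref{prop:bases_bijection}, and the circuit-theoretic definition of $k$-equivalence, all depend only on $n$ (and on $k$), not on the specific position of the $n$-gon.

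The dependence on $\A$ enters solely through the normal vectors $\alphaC$, whose magnitudes come from volumes of parallelepipeds built from $\VC$. These magnitudes determine which chambers of $\H$ are nonempty and how they are arranged; by \cref{dfn:regular_tiling} this is exactly the data that decides which fine zonotopal tilings are regular. The plan is to define a precise notion of ``generic'' (strictly stronger than ``vertices of a convex $n$-gon''--for instance requiring that no collection of diagonals satisfies an exceptional algebraic incidence) and to show that on this open dense stratum of configuration space the full chamber complex of $\H$ has a single combinatorial type. Together with the cyclic-matroid invariance from the first step, this would force the $f$-vector of $\Sigma_{\A,k}$ to be constant on the stratum, depending only on $n$ and $k$.

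The main obstacle is that, as \cref{ex:hexagons} makes explicit, the set of regular tilings genuinely changes even among configurations that look ``generic'': for $\A$ the non-vertex plabic graphs are $\{b,c\}$, while for $\A'$ they are $\{a,d\}$. Therefore the proof cannot match vertex sets pointwise. Instead, one must construct a coherent bijection on $f$-vectors as $\A$ crosses a codimension-one wall in the moduli of $n$-gons. The natural mechanism is monodromy: at each wall, a pair of regular tilings becomes non-regular on one side while a different pair of non-regular tilings becomes regular on the other, and one needs to show that these trades occur in matched pairs preserving incidences in higher dimensions (not just the vertex count). Establishing this wall-crossing symmetry--perhaps by realizing $\Sigma_{\A,k}$ as a slice of a single ``universal'' higher secondary polytope over configuration space and applying a semicontinuity/constructibility argument to the $f$-vector--is, I expect, the heart of the argument and the step most likely to require genuinely new ideas beyond the framework developed in this paper.
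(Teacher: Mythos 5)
This statement is labeled as a \emph{conjecture} in the paper, and the paper offers no proof of it—only computational evidence for $n\leq 7$. So there is no ``paper's own proof'' to compare against, and your proposal does not close that gap either: you say so yourself at the end, flagging the wall-crossing matching as ``the step most likely to require genuinely new ideas.'' That assessment is accurate. What you have written is a reasonable research plan, not a proof, and the honest framing is appropriate.

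A few substantive remarks on the plan. Your first step—that any convex $n$-gon lifts to a realization of the cyclic oriented matroid $C(n,3)$, so the abstract set of fine zonotopal tilings, the circuits, and the combinatorial notion of $k$-equivalence are all independent of $\A$—is correct and is indeed the right starting point. Your identification of where the $\A$-dependence enters (through the actual magnitudes in $\alphaC$, hence through which tilings are regular and which chambers of $\H$ are nonempty) is also correct, and \cref{ex:hexagons} is exactly the phenomenon that rules out the naive ``one fixed combinatorial type'' version of the claim. However, the sentence asserting a bijection between faces of $\Sigma_{\A,k}$ and $k$-equivalence classes of faces of $\H$ overstates what \cref{prop:deform} gives: the cited result only identifies vertices and edges (parts~(i) and~(ii)), and extending that correspondence to all faces is itself nontrivial and would need to be established. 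Finally, the wall-crossing mechanism you sketch—trading regular for non-regular tilings in matched pairs as $\A$ crosses a codimension-one wall in moduli of $n$-gons, while preserving face incidences—is exactly what the conjecture would require, and nothing in the paper's toolkit (flips, circuit orientations, the deformation picture of \cref{sec2:vert-edges-deform}) obviously furnishes it. You would need either an explicit wall-crossing bijection on faces of every dimension, or a global argument (e.g., your ``universal'' polytope over configuration space, if it exists and is a polyhedral fiber bundle over each stratum). Neither is supplied, so the proposal, as you acknowledge, remains an outline with a genuine gap at its center.
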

\noindent For instance, we saw in \cref{ex:hexagons} that $\Sigma_{\A,k}$ has $32$ vertices when $\A$ is generic and $n=6$. The number of vertices of $\Sigma_{\A,k}$ for generic $\A$, $n\leq 7$, and $k\in[n-3]$ is given in the following table.
\begin{center}
\begin{tabular}{c|ccccccc}
n  &   &   &   &  &   &   &  \\\hline
4  &   &   &   & 2 &   &   &  \\
5  &   &   & 5 &   & 5 &   &  \\
6  &   &14 &   & 32&   &14 &  \\
7  &42 &   &231&   &231&   &42\\
\end{tabular}
\end{center}

\def\w{w}
\def\hw{\h^{(\w)}}
\def\Compatw{\mathcal{F}(\A,k,\hw)}
\def\compl#1#2{[#1,#2]^{\operatorname{c}}}
\def\TOP#1#2{\operatorname{top}^{(\w)}_{#1}(#2)}
\def\id{\operatorname{id}}
\subsection{Large heights}
Fix a configuration $\A$ of vertices of a convex $n$-gon in $\R^2$. Let $\w=(\w_1,\dots,\w_n)\in S_n$ be a permutation of $[n]$. Choose a height vector $\hw=(h_1,\dots,h_n)\in\R^n$  satisfying
\begin{equation}\label{eq:hw}
h_{\w_1} \gg h_{\w_2} \gg \dots \gg h_{\w_n}>0.
\end{equation}
In \eqref{eq:hw}, our usage of $\gg$ means that the heights are large  
compared to the coefficients appearing
in \eqref{eq:circuit2} (for all circuits $C$), or more precisely:
for each $4$-tuple
$a<b<c<d$, we have that
$\muh(a,b,c,d) = x_a h_a + x_c h_{c} - x_b h_b - x_d h_{d}>0$
 if and only if $\max(h_a, h_c) > \max(h_b, h_d)$. 
Our goal is to explicitly describe  $\Compatw$. First we need a few definitions.

Fix $n$, and choose $s,t\in[n]$. We let $[s,t)$ be the \emph{cyclic interval} between $s$ and $t-1$:
if $s\leq t$, then $[s,t) := \{s, s+1,\dots, t-1\}$, and if $s>t$, then
$[s,t) := \{s, s+1,\dots, n, 1, 2, \dots, t-1\}$. We similarly define cyclic intervals $(s,t]$ and $[s,t]$.

For $S\subset[n]$ and $0\leq j\leq |S|$, we define $\TOP jS$ to be the $j$-element subset $T$ of $S$ such that $h_t>h_s$ (equivalently, $\w_t<\w_s$) for all $t\in T$ and $s\in S\setminus T$.

\begin{proposition}
\label{prop:WSS}
Let $w\in S_n$ and $\hw$ be as in~\eqref{eq:hw}. Then for each $1 \leq k \leq n$, we have
\begin{equation}
  \Compatw=\bigsqcup_{r=1}^k \left\{[s,t)\sqcup \TOP{k-r}{[t,s)}\;\middle|\; s,t\in[n]\text{ such that }|[s,t)|=r \right\} \sqcup\left\{\TOP{k}{[n]}\right\}.\label{eq:compatw}
\end{equation}

\end{proposition}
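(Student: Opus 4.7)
The plan is to prove equality between $\Compatw$ and the set appearing on the right-hand side of~\eqref{eq:compatw}, which I will refer to as the RHS. The decisive simplification afforded by the height choice~\eqref{eq:hw} is exactly the one recorded just after it: for every circuit $C=(\{a,c\},\{b,d\})$ arising from a 4-tuple $a<b<c<d$ in $[n]$, the sign $\sigma_{\hw}(C)$ equals $+1$ if and only if $\max(h_a,h_c)>\max(h_b,h_d)$. Thus $I\in\Compatw$ precisely when, for every such 4-tuple, $\{a,c\}\subset I$ with $\{b,d\}\cap I=\emptyset$ implies $\max(h_a,h_c)>\max(h_b,h_d)$, and symmetrically upon swapping the roles of $I$ and $I^{c}$.

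For the inclusion RHS$\,\subset\Compatw$, I verify each listed set directly. If $I=\TOP{k}{[n]}$, then any $a,c\in I$ have $h_a,h_c$ among the top $k$ heights while $b,d\in I^{c}$ have $h_b,h_d$ strictly below, giving the required inequality. For $I=[s,t)\sqcup\TOP{k-r}{[t,s)}$ with $r\in[k]$, the key observation is that a pair inside a cyclic interval $J$ cannot interleave cyclically with a pair inside its complement $J^{c}$. Applying this with $J=[s,t)$: a crossing with $\{a,c\}\subset I$ and $\{b,d\}\cap I=\emptyset$ cannot have $\{a,c\}\subset[s,t)$, so at least one of $a,c$ lies in $\TOP{k-r}{[t,s)}$. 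Since every element of this top set strictly dominates, in height, every element of $[t,s)\setminus I$ (and $\{b,d\}\subset I^{c}\subset[t,s)\setminus I$), this forces $\max(h_a,h_c)>\max(h_b,h_d)$. The opposite configuration is handled identically.

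For the reverse inclusion, fix $I\in\Compatw$ with $|I|=k$. If $I=\TOP{k}{[n]}$ we are done; otherwise let $j^{*}\in I$ have minimum height, let $[s,t)\subset I$ be the unique maximal cyclic interval of $I$ containing $j^{*}$, and set $r=|[s,t)|$ and $T=I\setminus[s,t)$. The goal is to show $T=\TOP{k-r}{[t,s)}$, which places $I$ in the RHS under index $(s,t)$. Suppose for contradiction that there exist $i_{0}\in T$ and $j_{0}\in[t,s)\setminus I$ with $h_{j_{0}}>h_{i_{0}}$; then the chain $h_{j^{*}}<h_{i_{0}}<h_{j_{0}}$ holds, and by maximality of $[s,t)$ both the element $t$ and the cyclic predecessor $s-1$ of $s$ belong to $I^{c}$. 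I will then extract from $\{j^{*},i_{0},j_{0},t,s-1\}$ a 4-tuple $a<b<c<d$ for which either $\{a,c\}=\{j^{*},i_{0}\}\subset I$ with $j_{0}\in\{b,d\}\subset I^{c}$, or $\{b,d\}=\{j^{*},i_{0}\}\subset I$ with $j_{0}\in\{a,c\}\subset I^{c}$. In either case the compatibility condition forces the $I$-pair maximum, which equals $h_{i_{0}}$, to exceed the $I^{c}$-pair maximum, which is at least $h_{j_{0}}$; this contradicts $h_{j_{0}}>h_{i_{0}}$.

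The main obstacle is carrying out this extraction uniformly. After a cyclic rotation we may assume $[s,t)$ does not wrap, so $j^{*}\in\{s,\dots,t-1\}$ and each of $i_{0},j_{0}$ lies in $\{1,\dots,s-1\}$ or $\{t,\dots,n\}$, yielding four sub-cases, each splitting further by the relative linear order of $i_{0}$ and $j_{0}$. Each sub-case has a natural 4-tuple: when $i_{0}<j_{0}$ both in $\{t,\dots,n\}$, the tuple $(j^{*},t,i_{0},j_{0})$ works; when $j_{0}<i_{0}$ in the same range, $(s-1,j^{*},j_{0},i_{0})$ does; when $i_{0}<j_{0}$ in $\{1,\dots,s-1\}$, the tuple $(i_{0},j_{0},j^{*},t)$ suffices; when $j_{0}<i_{0}$ in $\{1,\dots,s-1\}$, $(j_{0},i_{0},s-1,j^{*})$; and the two mixed-component cases are handled analogously by taking the 4-tuple to include whichever of $t,s-1$ separates the relevant points in linear order. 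Checking that $t$ and $s-1$ are genuinely distinct from $i_{0},j_{0}$ (using that $t,s-1\in I^{c}$ whereas $i_{0}\in I$) and that the claimed linear orderings hold in each sub-case constitute the main bookkeeping burden of the proof.
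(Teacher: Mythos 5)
Your proof uses the same core idea as the paper's: decompose $I$ into maximal cyclic intervals and invoke compatibility for interleaving $4$-tuples to force every element of $I$ outside the cyclic interval $[s,t)$ containing the minimum-height element $j^*$ to dominate, in height, everything outside $I$. What you do differently is make explicit the steps the paper treats as clear: you verify the forward inclusion via the observation that a pair inside a cyclic interval cannot cyclically interleave with a pair in its complement, and you prove the key domination claim by exhibiting a witnessing circuit in each sub-case, whereas the paper states the interleaving observation abstractly and concludes in two lines. The one genuine imprecision is in your tuple list: $(s-1,j^*,j_0,i_0)$ for the sub-case $j_0<i_0$ with both in $\{t,\dots,n\}$ only has the required linear order $a<b<c<d$ when $s\geq 2$; if $s=1$ the cyclic predecessor of $s$ is $n$, which is the largest rather than the smallest index, and the correct tuple is $(j^*,j_0,i_0,n)$. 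Since under~\eqref{eq:hw} the compatibility condition depends only on cyclic interleaving and membership in $I$, you may normalize the rotation so that $[s,t)\subset\{2,\dots,n-1\}$ (possible whenever $r\leq n-2$, with $r\in\{n-1,n\}$ vacuous), but this normalization should be stated—it is exactly the ``bookkeeping burden'' you flag.
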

\begin{proof}
It is easy to see that each set in the right hand side of~\eqref{eq:compatw} is $(\A,\hw)$-compatible. Conversely, consider $I\in{[n]\choose k}$ and write $I$ as a union of cyclic intervals
        $I_1 \cup \dots \cup I_m$ with $m$ as small as possible.  For example, if
        $I = \{1, 3,4,5, 7,8,10\} \subset [10]$ then we write
        $I = [10,1] \cup [3,5] \cup [7,8]$.
        Clearly, $I$ being $(\A,\hw)$-compatible means that whenever we choose
        $i, i'\in I$ from two distinct cyclic intervals $I_a$ and $I_b$,
        either $h_i$ or $h_{i'}$ is greater than any $h_j$ for
        $j\notin I$.

        Therefore at most one of the cyclic intervals $I_1, \dots, I_m$
        can contain elements whose height is less than the height of
        any element not in $I$. Let $[s,t)$ be that cyclic interval (if it exists, otherwise we must have $I=\TOP{k}{[n]}$), and let $r:=|[s,t)|\leq k$. Since we need all remaining elements of $I$ to have greater
        heights than all elements of $[n]\setminus I$,
        we find $I=[s,t)\sqcup \TOP{k-r}{[t,s)}$.
\end{proof}

\begin{remark}
Note that by 
\cref{prop:WSS}, the set  
	$\Compatw$  explicitly constructed in \cref{prop:WSS}
	depends only on the ordering of the
largest $k$ heights.
\end{remark}

\begin{example}\label{ex:extreme}
Fix $k$ and $n$ and suppose that $\w=w_0:=(n,n-1,\dots,1)$. Then $\Compatw$
	consists of $[n-k+1,n]$  together with 
the $k$-element subsets 
        $[i, i+j) \cup (n-k+j,n]$ for
$1 \leq i \leq n-k$ and $1 \leq j \leq k$. 
Note that if we interpret $k$-element subsets of $[n]$
as Young diagrams contained in a $k \times (n-k)$ rectangle
(by identifying each Young diagram with the path consisting
of unit steps west and south
        from  $(n-k,k)$ to $(0,0)$ which cuts it out and then
reading off the positions of the vertical steps),
then $\Compatw$
	corresponds to the rectangles which fit inside the
$ k \times (n-k)$ rectangle.  This collection was called the
\emph{rectangles cluster} in \cite{RW}
	and comes from the plabic graph associated to the \emph{Le-diagram} of~\cite{Pos06}.

On the other hand, suppose that $\w=\id:=(1,2,\dots,n)$. Then $\Compatw$
 consists of $[k]$ together with the
$k$-element subsets 
$[1,i) \cup [j,j+k-i]$ for
$1 \leq i \leq k$ and $i+1 \leq j \leq n-k+i$.
If we interpret $k$-element subsets of $[n]$
as Young diagrams contained in a $k \times (n-k)$ rectangle
as before, then
$\Compatw$  corresponds to Young diagrams which are
        \emph{complements} of rectangles in the
        $k \times (n-k)$ rectangle.
\end{example}

\subsection{\Bpt and \wpt plabic graphs} 
By \cref{thm:regular}, the vertices of $\Sigma_{\A,k}$ correspond to bipartite plabic graphs, while the vertices of $\Sigma_{\A,k}+\Sigma_{\A,k-1}+\Sigma_{\A,k-2}$ correspond to trivalent plabic graphs. It is thus natural to also consider the polytope $\Sigma_{\A,k}+\Sigma_{\A,k-1}$.

\begin{definition}
A plabic graph $G$ is called \emph{\bpt} if all interior white vertices of $G$ are trivalent, and no edge of $G$ connects two black interior vertices. 
\end{definition}
\noindent We similarly define \emph{\wpt} plabic graphs by switching the roles of black and white in the above definition. For example, for each $n\geq 3$, there is only one \wpt $(1,n)$-plabic graph. As discussed in \cref{ex:Catalan}, there is a Catalan number $C_{n-2}$ of \bpt $(1,n)$-plabic graphs, and the number of \wpt $(2,n)$-plabic graphs is also equal to $C_{n-2}$. As we will show in \cref{prop:bpt_wpt} below, this is not a coincidence.

It follows from~\cite[Theorem~13.4]{Pos06} that any two \bpt $(k,n)$-plabic graphs are related by moves (M1) and (M2), and any two \wpt $(k,n)$-plabic graphs are related by moves (M2) and (M3). We deduce the following surprising bijection from the results of~\cite{Gal}.

\begin{proposition}\label{prop:bpt_wpt}
For $k<n$, \bpt $(k,n)$-plabic graphs are in bijection with \wpt $(k+1,n)$-plabic graphs.
\end{proposition}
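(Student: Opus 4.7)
The plan is to show that both sides of the bijection are canonically determined by the same combinatorial data extracted from any fine zonotopal tiling $\Tiling$ of $\ZV$, namely the sub-collection $T^{(k)}(\Tiling) := \{\tile_{A,B}\in \Tiling : |A|=k-1\}$ of ``middle-level'' tiles. By the description in \cref{sec:plabic_graphs_from_tilings}, each such tile contributes simultaneously a white ($\Delta_{1,3}$-type) triangle at the level-$k$ section of $\Tiling$, with vertices $\v_{A\cup\{b\}}$ for $b\in B$, and a black ($\Delta_{2,3}$-type) triangle at the level-$(k+1)$ section, with vertices $\v_{A\cup(B\setminus\{b\})}$ for $b\in B$. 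These triangles are precisely the trivalent white vertices of $G_k^{\bptop}(\Tiling)$ and the trivalent black vertices of $G_{k+1}^{\wptop}(\Tiling)$, respectively, so $T^{(k)}(\Tiling)$ is in canonical bijection with each of these sets.

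I would define $\Phi$ from black-partite $(k,n)$-plabic graphs to white-partite $(k+1,n)$-plabic graphs as follows: given $G$, use \cref{thm:Gal} to pick any trivalent refinement and lift it to a fine zonotopal tiling $\Tiling$ with $G_k^{\bptop}(\Tiling)=G$, and set $\Phi(G):=G_{k+1}^{\wptop}(\Tiling)$. The collection $T^{(k)}(\Tiling)$ is recoverable from $G$ alone: each white trivalent vertex of $G$ has three face labels $A\cup\{b_1\},A\cup\{b_2\},A\cup\{b_3\}$ which uniquely determine the tile $\tile_{A,\{b_1,b_2,b_3\}}$. Conversely, $G_{k+1}^{\wptop}(\Tiling)$ is recoverable from $T^{(k)}(\Tiling)$: its trivalent black vertices come with face labels $A\cup(B\setminus\{b\})$ directly from the tiles, and its white polygonal faces are forced as the closures of the connected components of $Q_{k+1}$ minus the union of these black triangles. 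Thus any two tilings realizing the same $G$ yield the same $T^{(k)}$, and therefore the same $G_{k+1}^{\wptop}$, so $\Phi$ is well-defined.

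Injectivity is then immediate by symmetry: if $\Phi(G)=\Phi(G')=H$, then $T^{(k)}$ is recovered from $H$ via its trivalent black vertices, and $G=G_k^{\bptop}(\Tiling)$ is in turn determined by $T^{(k)}$ (white trivalent vertices coming directly from the tiles, and black polygonal faces forced as the closures of the connected components of $Q_k$ minus the white triangles). Surjectivity follows from \cref{thm:Gal}: every white-partite graph has a trivalent lift, which equals $G_{k+1}(\Tiling)$ for some $\Tiling$, and then $\Phi(G_k^{\bptop}(\Tiling))$ returns this graph. The main technical point is verifying the combinatorial identity that the non-triangular faces of $\PT(G_k^{\bptop})$ and $\PT(G_{k+1}^{\wptop})$ are precisely the closures of the connected components of $Q_k$ and $Q_{k+1}$ minus the white and black triangles coming from $T^{(k)}$; this should follow from the standard \cite{OPS} framework for plabic tilings, suitably extended to the black-partite/white-partite setting, using that $T^{(k)}$ sits inside a valid fine zonotopal tiling so that all compatibility conditions are automatic.
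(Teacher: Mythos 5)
Your proposal is correct and takes essentially the same approach as the paper: both lift a bpt/wpt plabic graph to a fine zonotopal tiling via \cref{thm:Gal}, and both rest on the observation that a tile $\tile_{A,B}$ with $|A|=k-1$ simultaneously produces a white triangle at level $k$ and a black triangle at level $k+1$. You are slightly more explicit about the final reconstruction step---recovering the non-triangular faces as closures of complementary connected components---which the paper's proof of \cref{lemma:bpt_wpt_bij} leaves implicit when it concludes that $\Gwpt_{k+1}(\Tiling)$ is uniquely determined by $\Gbpt_k(\Tiling)$.
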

\begin{proof}
We describe a construction that gives the desired bijection. Given a plabic graph $G$, denote by $\Gbpt$ (resp., $\Gwpt$) the \bpt (resp., \wpt) plabic graph obtained from $G$ by contracting all edges connecting two black (resp., white) interior vertices. Given a fine zonotopal tiling $\Tiling$ of $\ZV$,  denote by $\Gbpt_k(\Tiling)$ and $\Gwpt_k(\Tiling)$ the \bpt and \wpt $(k,n)$-plabic graphs obtained from the trivalent plabic graph $G_k(\Tiling)$ from \cref{sec:plabic_graphs_from_tilings}. For each $\Tiling$ and each $k<n$, we say that the plabic graphs $\Gbpt_k(\Tiling)$ and $\Gwpt_{k+1}(\Tiling)$ are \emph{linked}.
\begin{lemma}\label{lemma:bpt_wpt_bij}
Every \bpt $(k,n)$-plabic graph is linked with exactly one \wpt $(k+1,n)$-plabic graph, and every \wpt $(k+1,n)$-plabic graph is linked with exactly one \bpt $(k,n)$-plabic graph.
\end{lemma}
\begin{proof}
It follows from the results of~\cite{Gal} that every trivalent $(k,n)$-plabic graph $G$ appears as $G_k(\Tiling)$ for some fine zonotopal tiling $\Tiling$ of $\ZV$. Thus every \bpt $(k,n)$-plabic graph is equal to $\Gbpt_k(\Tiling)$ for some $\Tiling$, and is linked with the graph $\Gwpt_{k+1}(\Tiling)$. Every white vertex of $\Gbpt_k(\Tiling)$ is trivalent, thus the three faces incident to it are labeled by sets $A\cup b_1,A\cup b_2,A\cup b_3$ for some $b_1,b_2,b_3\notin A$, and the horizontal section $\Tiling\cap \Hyp_k$ contains a white triangle with vertex labels $A\cup b_1,A\cup b_2,A\cup b_3$. We find that $\Pi_{A,B}\in\Tiling$ for $B:=\{b_1,b_2,b_3\}$, but then a black triangle with vertex labels $A\cup \{b_1,b_2\}$, $A\cup \{b_1,b_3\}$, $A\cup \{b_2,b_3\}$ appears in $\Tiling\cap \Hyp_{k+1}$. Conversely, every black triangle in $\Tiling\cap \Hyp_{k+1}$ corresponds to a white triangle in $\Tiling\cap \Hyp_k$. We have shown that $\Gwpt_{k+1}(\Tiling)$ is uniquely determined by $\Gbpt_k(\Tiling)$. The proof that  $\Gbpt_{k}(\Tiling)$ is uniquely determined by $\Gwpt_{k+1}(\Tiling)$ is completely analogous.
\end{proof}
It is clear that \cref{lemma:bpt_wpt_bij} gives the desired bijection, finishing the proof of \cref{prop:bpt_wpt}.
\end{proof}

We return to the study of the polytope $\Sigma_{\A,k}+\Sigma_{\A,k-1}$. We say that a \bpt $(k,n)$-plabic graph $G$ is \emph{$\A$-regular} if it can be obtained as $\Gbpt_k(\Tiling)$ for some regular fine zonotopal tiling $\Tiling$ of $\ZV$. We similarly define $\A$-regular \wpt $(k+1,n)$-plabic graphs, and clearly the bijection of \cref{prop:bpt_wpt} restricts to such plabic graphs. Observe also that by \cref{thm:flips_moves}, applying the moves (M1) and (M2) to a \bpt $(k,n)$-plabic graph $G$ corresponds to applying the moves (M2) and (M3) to the unique $(k+1,n)$ \wpt plabic graph  linked with $G$. The proof of the following result is analogous to that of \cref{thm:regular}.

\begin{corollary} Let $d=3$ and $\A\subset\R^2$ be the configuration of vertices of a convex $n$-gon.
\begin{theoremlist}
\item The vertices of $\Sigma_{\A,k}+\Sigma_{\A,k-1}$ are in bijection with $\A$-regular \bpt $(k,n)$-plabic graphs, as well as with $\A$-regular \wpt $(k+1,n)$-plabic graphs.
\item The edges  of $\Sigma_{\A,k}+\Sigma_{\A,k-1}$ correspond to the moves (M1) and (M2) of $\A$-regular \bpt $(k,n)$-plabic graphs,  as well as to the moves (M2) and (M3) of $\A$-regular \wpt $(k+1,n)$-plabic graphs.
\end{theoremlist}
\end{corollary}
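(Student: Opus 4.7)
The plan is to mirror the proof of \cref{thm:regular}, using the parallel-deformation statement \cref{item:deform_deform2} together with the linking bijection of \cref{prop:bpt_wpt}.

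First I would specialize \cref{item:deform_deform2} to $x_{k-1}=x_k=1$ and $x_j=0$ otherwise: this realizes $\frac{1}{\Vold(\ZV)}(\Sigma_{\A,k}+\Sigma_{\A,k-1})$ as a parallel deformation of $\FibPoly(\cube_n\pito\ZV)$ in which precisely the edges of levels $k-1$ and $k$ survive, and all others are contracted to points. Consequently, the vertices of $\Sigma_{\A,k}+\Sigma_{\A,k-1}$ are in bijection with equivalence classes of regular fine zonotopal tilings of $\ZV$ under the relation generated by flips of level $\notin\{k-1,k\}$, and its edges correspond to equivalence classes of flips of level $k-1$ or $k$.

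Next I would show that both $\Tiling\mapsto\Gbpt_k(\Tiling)$ and $\Tiling\mapsto\Gwpt_{k+1}(\Tiling)$ are constant on these equivalence classes. By \cref{thm:flips_moves}, a flip at level $r$ modifies $G_s(\Tiling)$ only for $s\in\{r,r+1,r+2\}$. Hence flips at level $r\notin\{k-2,k-1,k\}$ preserve $G_k$ (and hence $\Gbpt_k$), while flips at level $r\notin\{k-1,k,k+1\}$ preserve $G_{k+1}$ (and hence $\Gwpt_{k+1}$). Within our equivalence relation the two potentially problematic cases are flips at level $k-2$ (which may alter $G_k$ by an (M3) but leave $G_{k+1}$ untouched, hence preserve $\Gwpt_{k+1}$) and flips at level $k+1$ (which may alter $G_{k+1}$ by an (M1) but leave $G_k$ untouched, hence preserve $\Gbpt_k$). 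Since the linking map $\Gbpt_k(\Tiling)\leftrightarrow\Gwpt_{k+1}(\Tiling)$ of \cref{prop:bpt_wpt} is bijective, preservation of either invariant on a given flip forces preservation of the other. Thus both invariants descend to equivalence classes, and the resulting maps onto $\A$-regular \bpt $(k,n)$- (respectively, \wpt $(k+1,n)$-) plabic graphs are surjective by the definition of $\A$-regularity.

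To finish, I would apply \cref{thm:flips_moves} once more: a flip at level $k-1$ (respectively, $k$) induces the move (M2) (respectively, (M1)) on $\Gbpt_k$, and simultaneously the move (M3) (respectively, (M2)) on $\Gwpt_{k+1}$. Combined with the fact (recalled just before \cref{prop:bpt_wpt}) that any two \bpt $(k,n)$-plabic graphs are connected by (M1) and (M2), and any two \wpt $(k+1,n)$-plabic graphs by (M2) and (M3), this yields injectivity of both maps from equivalence classes to plabic graphs, together with the edge correspondence in part (ii).

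The main obstacle is invariance at the boundary levels $k-2$ and $k+1$, where the flip genuinely alters one of $G_k$ or $G_{k+1}$ via a nontrivial (M3) or (M1) move. The symmetry device provided by \cref{prop:bpt_wpt} circumvents any direct tracking of black-versus-white (M3) moves on the plabic tiling: on each side, one of the two linked invariants is preserved for the trivial reason that the corresponding trivalent plabic graph is untouched, and bijectivity of the linking transports this preservation to the other invariant.
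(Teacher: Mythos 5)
Your approach has the same main ingredients as the paper's (the parallel-deformation statement, the flip--move dictionary of \cref{thm:flips_moves}, and the linking bijection of \cref{prop:bpt_wpt}), and the well-definedness and surjectivity parts are sound. Using the linking bijection to transfer invariance between $\Gbpt_k$ and $\Gwpt_{k+1}$ at the boundary levels $k-2$ and $k+1$ is a reasonable workaround, although the direct argument is slightly simpler: a level-$(k-2)$ flip changes $G_k$ by (M3), a move that only slides a black--black edge and hence is invisible after forming $\Gbpt_k$; symmetrically for level $k+1$, (M1), and $\Gwpt_{k+1}$.

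The gap is at the very end. The cited connectedness facts (any two \bpt $(k,n)$-plabic graphs are joined by (M1) and (M2), etc.) say nothing about whether two \emph{distinct} equivalence classes might map to the same plabic graph, so they do not ``yield injectivity.'' The paper's proof of \cref{thm:regular_bip} establishes this direction by the area formula \eqref{eq:vertHSP_area} of \cref{prop:area}, which shows that $\Gbip_{k+1}(\Tiling)$ determines $\vertHSP_k(\Tiling)$; you never invoke it. The analogue you need is: $\Gbpt_k(\Tiling)$ determines $\Gbip_k(\Tiling)$ (contract the remaining white--white edges) and, via the linking bijection, also $\Gwpt_{k+1}(\Tiling)$ and hence $\Gbip_{k+1}(\Tiling)$; applying \cref{prop:area} at indices $k-1$ and $k$ then recovers $\vertHSP_{k-1}(\Tiling)$ and $\vertHSP_k(\Tiling)$, so $\Gbpt_k(\Tiling)$ determines the vertex $\vertHSP_k(\Tiling)+\vertHSP_{k-1}(\Tiling)$ of $\Sigma_{\A,k}+\Sigma_{\A,k-1}$. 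That is the needed injectivity, and it is not a consequence of connectedness of the move graph.
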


\def\X{\bm{X}}
\section{Applications to soliton graphs}

In this section we start by explaining how tropical hypersurfaces are dual to regular subdivisions
of a related zonotope, see 
	\cref{def:soliton}.  We then explain how, when $d=3$, we can  recover
 the construction of \emph{soliton graphs}---contour plots of soliton solutions of the KP equation (see \cref{cor:plabicsoliton} and \cref{def:solitongraph})---and in particular, recover the fact that 
	they are realizations of reduced plabic graphs.  
	We 
conclude with applications of our previous results to soliton graphs.

\subsection{Tropical hypersurfaces and regular zonotopal tilings}
\begin{definition}
	A \emph{tropical polynomial} is a function $F: \R^{d-1} \to \R$ that can be expressed
as the \emph{tropical sum} of a finite number of \emph{tropical monomials}. 
More precisely, if we let $\X$ denote $(X_1,\dots,X_{d-1})$, then a tropical
polynomial $F$ is the maximum
	$$F = \max_{I\in\mathcal{B}} F_I(X_1,\dots, X_{d-1}) = 
	    \max_{I\in \mathcal{B}} F_I(\X)$$ of a
        finite set $\{F_I| I \in \mathcal{B}\}$
        of linear functionals\footnote{In tropical geometry one typically
  uses integer or rational coefficients, because these coefficients come
        from valuations of power series, but in this paper everything
will make sense for real coefficients.}
	$F_I: \R^{d-1} \to\R$.
The \emph{tropical hypersurface} $V(F)$ is the set of points in
	$\R^{d-1}$ where $F$ is non-differentiable.  Equivalently, $V(F)$ is
the set of points where the maximum among the terms of $F$ is achieved
at least twice.
\end{definition}
Note that $V(F)$ is a codimension-one piecewise-linear subset of $\R^{d-1}$.  Moreover,
the complement of $V(F)$ is a collection of (top-dimensional) regions
of $\R^{d-1}$, where each region $R=R(I)$ is naturally associated to
some $I \in \mathcal{B}$; more specifically, we have that
$F_{I}(\X) > F_J(\X)$ for all points
$\X = (X_1,\dots,X_{d-1})\in R(I)$ for all $J \neq I$.

\def\FL{\mathcal{F}}
\def\FLKH{\FL(V_{k,\h})}
\def\Vkh{V_{k,\h}}

\def\Xtq{{\widetilde\X}_q}

We now look at some particularly nice examples of tropical hypersurfaces. Fix positive numbers $n, d$ and $k$, and let $\A=(\a_1,\dots,\a_n)$ be a point configuration in $\R^{d-1}$ as before.

\begin{definition}\label{def:soliton} 
Let $\h\in\R^n$. For $1\leq i \leq n$, define a linear functional
	$f_{i,\h}: \R^{d-1} \to \R$ by 
	\begin{equation}\label{functional}
          f_{i,\h}(\X):=\<\X,\a_i\>+h_i,\quad\text{equivalently,}\quad	f_{i,\h}(X_1,\dots, X_{d-1}) = a_{i,1} X_1 + \dots + a_{i,{d-1}} X_{d-1} + h_i.
	\end{equation}
 For $I \in {[n] \choose k}$, let $F_{I,\h} = \sum_{i\in I} f_{i,\h}.$

We consider the tropical polynomial
	\begin{equation}\label{eq:troppoly}
		F_{k,\h}(\X)=	
		\max_{I \in {[n] \choose k}} F_{I,\h}(\X),
	\end{equation}
and define  $\Vkh$  to be
the tropical hypersurface $V(F_{k,\h})$.  
We denote by $\FLKH\subset {[n]\choose k}$ the collection of all sets $I \in {[n] \choose k}$ that appear as a face labels of regions in the complement of $\Vkh$.
\end{definition}

Recall from \cref{notation} that for a point configuration $\A\subset \R^{d-1}$, $\ZV$ denotes the zonotope associated with the lift $\VC\subset\R^d$ of $\A$. Recall also that each generic height vector $\h\in\GHV$ determines a regular fine zonotopal tiling $\Tiling_\h$ of $\ZV$, and that its set of vertex labels is denoted by $\Vert(\Tiling_\h)\subset 2^{[n]}$, see~\eqref{eq:dfn_Vert}.

\begin{proposition}\label{prop:soliton} 
Let $\A$ and $\VC$ be as above, and let $\h\in\GHV$ be a generic height vector. Then 
\[\FLKH=\Vert(\Tiling_\h)\cap {[n]\choose k}.\]
\end{proposition}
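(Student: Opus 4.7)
The plan is to translate both sides of the identity into a common separation condition expressed in terms of the lifted vector configuration $\widetilde{\V}\subset\R^{d+1}$ with $\widetilde{\v}_i:=(\a_i,1,h_i)$, and its zonotope $\Zon_{\widetilde{\V}}$. By \cref{rmk:upper}, $\Tiling_\h$ is the projection of the upper boundary of $\Zon_{\widetilde{\V}}$ along the last coordinate, so a subset $I\subset[n]$ of size $k$ lies in $\Vert(\Tiling_\h)$ if and only if the zonotope vertex $\widetilde{\v}_I:=\sum_{i\in I}\widetilde{\v}_i$ lies on the upper boundary of $\Zon_{\widetilde{\V}}$. This will be the first reformulation I use.

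I would then unpack the left-hand side. By definition, $I\in\FLKH$ means that there is some $\X\in\R^{d-1}$ for which $F_{I,\h}(\X)>F_{J,\h}(\X)$ holds for every $J\in\binom{[n]}{k}\setminus\{I\}$. Since $F_{J,\h}(\X)=\sum_{j\in J}f_{j,\h}(\X)$ is additive in $J$, maximizing over $k$-element sets $J$ picks out the $k$ indices where $f_{i,\h}(\X)$ is largest. Thus $F_{I,\h}(\X)>F_{J,\h}(\X)$ for all $J\ne I$ is equivalent to the existence of a threshold $-Y\in\R$ strictly separating $\{f_{i,\h}(\X):i\in I\}$ from $\{f_{i,\h}(\X):i\notin I\}$. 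I would thereby conclude that $I\in\FLKH$ if and only if there exist $\X\in\R^{d-1}$ and $Y\in\R$ with $f_{i,\h}(\X)+Y>0$ for $i\in I$ and $f_{i,\h}(\X)+Y<0$ for $i\notin I$.

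Next I would show that $\widetilde{\v}_I$ lying on the upper boundary of $\Zon_{\widetilde{\V}}$ is governed by the same inequalities. The upper-boundary condition on a vertex is that its outer normal cone in $\Zon_{\widetilde{\V}}$ meets the open half-space $\{z_{d+1}>0\}$; after rescaling, such a normal takes the form $(\X,Y,1)$. By the standard description of the normal fan of a zonotope, $\widetilde{\v}_I$ is the unique maximizer of $\langle(\X,Y,1),\cdot\rangle$ on $\Zon_{\widetilde{\V}}$ precisely when $\langle(\X,Y,1),\widetilde{\v}_i\rangle>0$ for $i\in I$ and $<0$ for $i\notin I$. Substituting $\widetilde{\v}_i=(\a_i,1,h_i)$ and using $f_{i,\h}(\X)=\langle\X,\a_i\rangle+h_i$, these inequalities read $f_{i,\h}(\X)+Y>0$ (resp.\ $<0$) and match the ones derived in the previous paragraph. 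Combining the two equivalences yields $\FLKH=\Vert(\Tiling_\h)\cap\binom{[n]}{k}$. The argument is largely formal once this reformulation is in place; the main technical point will be keeping careful track of strict versus weak inequalities and the auxiliary variable $Y$, which is harmless under the genericity assumption $\h\in\GHV$.
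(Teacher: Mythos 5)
Your proposal is correct and follows essentially the same route as the paper's proof. Both arguments use the lifted configuration $\widetilde{\v}_i = (\a_i,1,h_i)$ and \cref{rmk:upper} to characterize $I\in\Vert(\Tiling_\h)$ via the upper boundary of $\Zon_{\widetilde{\V}}$, then identify the separating functional $(\X,Y,1)$ (the paper writes $q$ for your $Y$) and unfold the dot product to obtain exactly the strict threshold condition $f_{i,\h}(\X)+Y>0$ for $i\in I$, $<0$ for $i\notin I$, which is precisely the condition $F_{I,\h}(\X)>F_{J,\h}(\X)$ for all $J\neq I$. Your exposition makes the intermediate step on the tropical side (the additivity and threshold argument) slightly more explicit than the paper does, but the substance is the same.
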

\begin{proof}
Let $\Vt=(\vt_1,\dots,\vt_n)$ be the lift of $\V$ to $\R^{d+1}$ given by $\vt_i:=(\v,h_i)$. Let $I\in{[n]\choose k}$. By \cref{rmk:upper}, $I\in\Vert(\Tiling_\h)$ if and only if $\vt_I:=\sum_{i\in I} \vt_i$ belongs to the upper boundary of 
	$\ZVt$. Equivalently, there exists a vector $\Xtq:=(\X,q,1)\in\R^{d+1}$ (for some $\X\in\R^{d-1}$ and $q\in\R$) such that the dot product with $\Xtq$ is maximized over $\ZVt$ at $\vt_I$. Since $\ZVt=\sum_{i\in[n]} [0,\vt_i]$, we see that this happens precisely when $\<\Xtq,\vt_i\>$ is positive for $i\in I$ and negative for $i\notin I$. Note that $\<\Xtq,\vt_i\>=\<\X,\a_i\>+q+h_i=f_{i,\h}(\X)+q$. We have shown that 
	$I\in\Vert(\Tiling_\h)$ 
	if and only if there exist $\X\in\R^{d-1}$ and $q\in\R$ such that for all $i\in I$ and $j\notin I$, we have $f_{i,\h}(\X)+q>0> f_{j,\h}(\X)+q$. The latter condition can be restated as: 
	there exists $\X\in\R^{d-1}$ such that 
	for all $i\in I$ and $j\notin I$, we have
	$f_{i,\h}(\X)> f_{j,\h}(\X)$, which is equivalent to  
		$F_{I,\h}(\X) > F_{J,\h}(\X)$ for all $J \neq I$.  
	Therefore a $k$-element subset $I$ lies in $\Vert(\Tiling_\h)$ 
if and only 
	$I\in\FLKH$. 
\end{proof}

\def\FA{F_A}
\def\FM{F_{\Mcal}} 

\subsection{Soliton graphs}\label{sec:soliton}

In the case that $d=3$, we recover the \emph{soliton graphs} which were
studied in \cite{KW, KW2} in order to study soliton solutions to the KP equation.
We briefly review that construction here.

The KP equation 
\[
\frac{\partial}{\partial x}\left(-4\frac{\partial u}{\partial t}+6u\frac{\partial u}{\partial x}+\frac{\partial^3u}{\partial x^3}\right)+3\frac{\partial^2 u}{\partial y^2}=0
\]
was proposed by Kadomtsev and Petviashvili in 1970 \cite{KP70}, in order to
study the stability of the soliton solutions of the Korteweg-de Vries (KdV) equation
under the influence of weak transverse perturbations.
The KP equation can be also used to describe  two-dimensional shallow
water wave phenomena (see for example \cite{K10}).   This equation is now considered to be a prototype of
an integrable nonlinear partial differential equation.

Let $\t = (t_3, t_4,\dots, t_n)$ be a vector of ``higher times" (often one
sets $t_4 = \dots = t_n = 0$ and $t_3 = t$, but it will be convenient for us to 
use the higher times.)
There is a well known recipe
(see \cite{H04, CK1})
for using a point $A$ in the real Grassmannian $\Gr(k,n)$ together with 
$n$ real parameters $\kappa_1 < \dots < \kappa_n$ to construct a \emph{$\tau$-function} $\tau_A(x,y,\t)$,   such that a simple transformation of it
\begin{equation*}
u_A(x,y,\t)=2\frac{\partial^2}{\partial x^2}\ln\tau_A(x,y,\t)
\end{equation*}
 is a soliton solution of the KP equation.

 The $\tau$-function is defined as follows.
For $i\in[n]$, set 
\[h_i:=\kappa_i^3 t_3 + \dots + \kappa_i^n t_n\quad\text{and}\quad E_i(x,y,\t):=\exp 
(\kappa_i x + \kappa_i^2 y + h_i).\]
For $I=\{i_1<\dots<i_k\} \in {[n] \choose k}$,  set
\begin{equation}
 K_I:=\prod_{\ell<m}(\kappa_{i_m}-\kappa_{i_{\ell}})\quad\text{and}\quad E_I(x,y,\t):=K_I\cdot E_{i_1}\cdots E_{i_k}.\label{eq:K_E_kappa}
\end{equation}

For $A\in\Gr(k,n)$, we define 
 \begin{equation}\label{tau}
\tau_A(x,y,\t)=\sum_{I\in\binom{[n]}{k}}\Pl_I(A)\,E_I(x,y,\t),
\end{equation}
where $\Pl_I(A)$ is the Pl\"ucker coordinate of $A\in\Gr(k,n)$ indexed by $I$ as before.

If one is interested in the behavior of the soliton solutions when the variables
$(x,y,\t)$ are on a large scale, then, as in \cite[Section 4.2]{KW2}, it is natural to 
 rescale the variables with a small positive number $\epsilon$,
\[
x~\longrightarrow ~\frac{x}{\epsilon},\qquad y~\longrightarrow~\frac{y}{\epsilon},\qquad
\t~\longrightarrow~\frac{\t}{\epsilon}, 
\]
which leads to
\[
\tau_A^{\epsilon}(x,y,\t)
=\sum_{I\in \mathcal{M}}
\exp\left(\frac{1}{\epsilon}\,\sum_{j=1}^k(\kappa_{i_j}x+\kappa_{i_j}^2y+h_{i_j})+\ln(\Pl_I(A)K_I)\right),
\]
where $\mathcal{M} = \mathcal{M}(A) := \{I \ \mid \ \Pl_I(A) \neq 0\} \subset {[n] \choose k}$ and $I=\{i_1<\dots< i_k\}$. 
Then we define a function $\FA(x,y,\t)$ as the limit
\begin{equation}\label{eq:soliton}
\FA(x,y,\t)=\displaystyle{\lim_{\epsilon\to 0}\epsilon\ln\left(\tau^{\epsilon}_A(x,y,\t)\right)}\\[1.5ex]
	=  \underset{I\in \mathcal{M}}\max \left\{
                     \sum_{j=1}^k (\kappa_{i_j} x + \kappa^2_{i_j} y +h_{i_j})\right\}.
\end{equation}
Since the above function depends only on the collection $\mathcal{M}$,
we also denote it as $\FM(x,y,\t)$.

\begin{definition}[\cite{KW, KW2}]\label{contour}
	Fix $\t = (t_3,\dots, t_n) \in \R^{n-2}$. 
  Given a solution $u_A(x,y,\t)$ of the KP equation as above,
	we define its \emph{(asymptotic) contour plot} 
	$\mathcal{C}_{\t}(\mathcal{M})$ to be the
	set of all $(x,y)\in\mathbb{R}^2$ where $\FM(x,y,\t)$ is not linear.  
\end{definition}

The contour plot approximates the locus where the corresponding solution 
of the KP equation has its peaks, and we label each region in 
the complement of 
 $\mathcal{C}_{\t}(\mathcal{M})$ by the $k$-element subset 
 $I$ which achieves the maximum in \eqref{eq:soliton}.

\begin{remark}\label{rem:soliton}
Comparing~\eqref{eq:soliton} with \cref{def:soliton} in the case that $\mathcal{M}={[n]\choose k}$, we see that $\FM(x,y,\t)$ 
is a tropical polynomial for $d=3$, $f_i(x,y) = \kappa_i x + \kappa_i^2 y + h_i$
for $i\in[n]$, and the asymptotic contour plot
$\mathcal{C}_{\t}(\mathcal{M})$ is the tropical hypersurface
$V_k(f_1,\dots,f_n)$.
\end{remark} 

Let $d=3$ and $\mathcal{A} = \{\a_1,\dots,\a_n\}$ for
	$\a_i = (\kappa_i,\kappa_i^2)$. Consider its lift $\VC$ and the zonotope $\ZV\subset\R^3$ as in \cref{notation}. Denote $\h:=(h_1,\dots,h_n)$ where $h_i = \kappa_i^3 t_3 + \dots + \kappa_i^n t_n$, and recall that $\Tiling_\h$ is the regular zonotopal tiling of $\ZV$ induced by $\h$. Applying \cref{prop:soliton} to these contour plots, we obtain the following result.

\begin{corollary}\label{cor:plabicsoliton}
Assume that $\mathcal{M} = {[n] \choose k}$ and $I= \{i_1,\dots,i_k\}\in\Mcal$. Then there exists a point $(x,y)\in\R^2$ lying in the region of the complement of $\mathcal{C}_{\t}(\mathcal{M})$ labeled by $I$ if and only if $\v_{i_1} + \dots + \v_{i_k}$ is a vertex of $\Tiling_\h$.
\end{corollary}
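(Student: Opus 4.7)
The plan is to show that the corollary reduces immediately to \cref{prop:soliton} once one identifies the asymptotic contour plot $\mathcal{C}_{\t}(\mathcal{M})$ with the tropical hypersurface $V_{k,\h}$ of \cref{def:soliton}.

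First I would set $\a_i=(\kappa_i,\kappa_i^2)\in\R^2$ and observe that with the stated choice of $\h$, namely $h_i = \kappa_i^3 t_3+\cdots+\kappa_i^n t_n$, the linear functional appearing in the piecewise-linear function $F_{\Mcal}$ of \eqref{eq:soliton} is exactly
\[
\kappa_i x+\kappa_i^2 y+h_i \;=\; \langle (x,y),\a_i\rangle + h_i \;=\; f_{i,\h}(x,y),
\]
in the notation of \eqref{functional}. When $\Mcal=\binom{[n]}{k}$, the tropical polynomial $F_{\Mcal}$ therefore coincides with $F_{k,\h}$ defined by \eqref{eq:troppoly} for $d=3$. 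Consequently, as already noted in \cref{rem:soliton}, the contour plot $\mathcal{C}_{\t}(\Mcal)$ is exactly the tropical hypersurface $V_{k,\h}$, and moreover the region labels on either side match: a subset $I\in\binom{[n]}{k}$ labels some top-dimensional region of the complement of $\mathcal{C}_{\t}(\Mcal)$ if and only if $I\in\FLKH$, since both conditions say that $F_{I,\h}(\X)>F_{J,\h}(\X)$ for all $J\neq I$ at some point $\X=(x,y)\in\R^2$.

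Next I would invoke \cref{prop:soliton}, which gives $\FLKH=\Vert(\Tiling_\h)\cap\binom{[n]}{k}$. Unwinding the definition \eqref{eq:dfn_Vert} of $\Vert(\Tiling_\h)$, a $k$-element set $I$ lies in $\Vert(\Tiling_\h)$ exactly when the point $\v_I=\sum_{i\in I}\v_i$ is a vertex of the tiling $\Tiling_\h$. Chaining these equivalences yields the statement: $I$ labels some region of the complement of $\mathcal{C}_{\t}(\Mcal)$ iff $\v_{i_1}+\cdots+\v_{i_k}$ is a vertex of $\Tiling_\h$.

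The only point requiring any care is that \cref{prop:soliton} is phrased for a \emph{generic} height vector $\h\in\GHV$, whereas no such assumption is made in the corollary. I would handle this by first establishing the equivalence for generic $\t$ (equivalently, generic $\h$), which is the case of primary interest for the soliton application, and then noting that non-generic $\h$ can be treated by an easy perturbation argument: both the set of labels that appear as face labels of $\mathcal{C}_{\t}(\Mcal)$ and the set of vertex labels of $\Tiling_\h$ vary in a controlled way under small perturbations of $\h$ (in fact both are determined by which circuit inequalities $\langle\h,\alphaC\rangle\gtrless 0$ hold), so the conclusion for non-generic $\h$ follows from the generic case applied to a suitable limit. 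I do not anticipate any genuine obstacle; the content of the corollary is essentially a dictionary between the tropical/soliton language and the zonotopal tiling language already established in \cref{prop:soliton}.
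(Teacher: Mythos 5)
Your proof is correct and follows exactly the same route as the paper: the paper's ``proof'' of \cref{cor:plabicsoliton} is just the remark that it follows by applying \cref{prop:soliton} after identifying $\mathcal{C}_{\t}(\Mcal)$ with the tropical hypersurface $V_{k,\h}$ (via \cref{rem:soliton}) and matching region labels with $\FLKH$. Your discussion of the genericity of $\h$ is a point the paper glosses over, but since the statement implicitly presupposes that $\Tiling_\h$ is a fine tiling (and the soliton application is for generic $\t$ anyway), your resolution is reasonable.
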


Note that \cref{cor:plabicsoliton} is closely related to the discussion in 
\cite[Section 2.3]{KK}.

\begin{definition}[\cite{KW, KW2}]\label{def:solitongraph}
We associate a \emph{soliton graph} $G_{\t}(\mathcal{M})$
to each contour plot $C_{\t}(\mathcal{M})$
by marking any intersection of three line segments by either a white or black vertex,
depending on whether there is a unique line segment directed from the vertex
towards $y \to \infty$ or a unique line segment directed from the vertex 
towards $y \to -\infty$ (it is impossible for a line segment to be parallel
to the $x$-axis).  
\end{definition}

When $\mathcal{M} = {[n] \choose k}$, 
and for	generic times $\t = (t_3,\dots,t_n)$, all intersections of 
line segments are trivalent intersections, and 
by \cite[Corollary 10.9]{KW2},  
the graph $G_{\t}(\mathcal{M})$
is a $(k,n)$-plabic graph, 
see \cref{fig:soliton}.
\cref{cor:plabicsoliton} then says the following (for $\A\subset\R^2$ as above).
\begin{corollary}\label{cor2:plabicsoliton}
Each soliton graph 
$G_{\t}(\mathcal{M})$
associated to $\mathcal{M} = {[n] \choose k}$ is a trivalent  
$\mathcal{A}$-regular $(k,n)$-plabic graph.
\end{corollary}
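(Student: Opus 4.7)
The plan is to identify the soliton graph $G_{\t}(\Mcal)$ with the trivalent $(k,n)$-plabic graph $G_k(\Tiling_{\h})$ associated via \cref{thm:Gal} to the regular fine zonotopal tiling $\Tiling_{\h}$ of $\ZV$, where $\h=(h_1,\dots,h_n)$ is given by $h_i := \kappa_i^3 t_3 + \cdots + \kappa_i^n t_n$. Once this identification is established, $\A$-regularity of $G_{\t}(\Mcal)$ follows immediately by definition, since $\Tiling_{\h}$ is by construction a regular fine zonotopal tiling (for generic $\t$, the vector $\h$ lies in $\GHV$).

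To set up the identification, I would first invoke \cref{rem:soliton}: for $\Mcal = \binom{[n]}{k}$ and with $\a_i = (\kappa_i,\kappa_i^2)$, the function $\FM(x,y,\t)$ from \eqref{eq:soliton} coincides with the tropical polynomial $F_{k,\h}(x,y)$ of \cref{def:soliton}, using the linear functionals $f_{i,\h}(x,y) = \kappa_i x + \kappa_i^2 y + h_i$. Hence $\mathcal{C}_{\t}(\Mcal) = V_{k,\h}$, and \cref{prop:soliton} identifies the labels of the top-dimensional regions of the contour plot with the $k$-element vertex labels of $\Tiling_{\h}$, i.e., with the vertices of the horizontal section $\Tiling_{\h}\cap \Hyp_k = \PT(G_k(\Tiling_{\h}))$ from \cref{thm:Gal}. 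Standard tropical-geometric duality extends this to the full cell decomposition: the $1$-dimensional strata of $V_{k,\h}$ (segments of lines $\{f_{i,\h} = f_{j,\h}\}$) correspond to edges of $\PT(G_k(\Tiling_{\h}))$ between vertex labels differing by the swap $i \leftrightarrow j$, and trivalent vertices of $V_{k,\h}$ correspond to triangular faces of $\PT(G_k(\Tiling_{\h}))$. This yields an identification of $G_{\t}(\Mcal)$ with $G_k(\Tiling_{\h})$ as unoriented planar graphs.

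The main obstacle is to check that the black/white coloring of \cref{def:solitongraph} matches that of $G_k(\Tiling_{\h})$. By \cref{thm:Gal}, the labels of the three regions meeting at a trivalent vertex $p$ of $V_{k,\h}$ are either of the form $\{A\cup\{i\}, A\cup\{j\}, A\cup\{\ell\}\}$ with $|A|=k-1$ (so $p$ is dual to a white triangle, i.e., a white vertex of $G_k(\Tiling_{\h})$) or of the form $\{B\setminus\{b_1\}, B\setminus\{b_2\}, B\setminus\{b_3\}\}$ with $|B|=k+1$ (dual to a black vertex). In the white case with $i<j<\ell$, the edge of $V_{k,\h}$ between the regions labeled $A\cup\{p\}$ and $A\cup\{q\}$ lies along the line $\{f_{p,\h}=f_{q,\h}\}$, which I would parametrize as $x = -(\kappa_p+\kappa_q)y + \mathrm{const}$. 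Substituting into the inequality $f_{r,\h} \le f_{p,\h}$ (where $\{p,q,r\}=\{i,j,\ell\}$) gives the scalar condition
\[
(\kappa_r-\kappa_p)(\kappa_r-\kappa_q)\,y \le \mathrm{const},
\]
so this edge extends to $y\to+\infty$ precisely when $(\kappa_r-\kappa_p)(\kappa_r-\kappa_q)<0$, which holds exactly for $\{p,q\}=\{i,\ell\}$ (then $r=j$ lies strictly between $i$ and $\ell$). Hence exactly one of the three edges at $p$ points upward, so $p$ is white per \cref{def:solitongraph}. The black case is entirely symmetric (two edges up, one down), completing the color-preserving identification $G_{\t}(\Mcal) = G_k(\Tiling_{\h})$ and thus the corollary.
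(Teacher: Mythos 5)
Your proof is correct and takes the same essential route as the paper: identify the soliton graph $G_{\t}(\Mcal)$ with $G_k(\Tiling_\h)$ for $\h$ determined by $\t$, and observe that $\Tiling_\h$ is by construction regular. The paper, however, does essentially no work to establish this identification: it cites [KW2, Corollary 10.9] as a black box for the fact that $G_\t(\Mcal)$ is a trivalent $(k,n)$-plabic graph, and then treats the corollary as immediate from \cref{cor:plabicsoliton}, which only controls the \emph{region labels}, not the full graph structure or the vertex coloring. You fill in the two ingredients that the paper leaves implicit: (i) the full cell-complex duality between the tropical curve $V_{k,\h}$ and the horizontal section $\Tiling_\h\cap\Hyp_k$ (so that edges and trivalent vertices, not just regions, correspond), and (ii) the direct computation that the direction of escaping edges (towards $y\to\pm\infty$) matches the white/black coloring of the dual triangle. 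Your slope computation for the segment $\{f_{p,\h}=f_{q,\h}\}$ and the resulting criterion $(\kappa_r-\kappa_p)(\kappa_r-\kappa_q)<0$ for upward escape are correct, and the white/black dichotomy (one edge up in the white case, one edge down in the black case, after the sign flip coming from comparing $A\cup\{r\}$ versus $B\setminus\{b_r\}$) checks out. This is genuine added value: the paper relies on the external reference [KW2] to certify that the coloring conventions agree, whereas your argument makes the identification $G_\t(\Mcal)=G_k(\Tiling_\h)$ (including colors) self-contained within the paper's framework. One small caveat worth making explicit: you should note that for generic $\t$ the resulting $\h$ is indeed in $\GHV$; this follows because the span of the vectors $(\kappa_1^j,\dots,\kappa_n^j)_{j=3,\dots,n}$ is not contained in any hyperplane $\<\cdot,\alphaC\>=0$ (e.g.\ taking $j=3$ gives a nonzero Vandermonde as in the proof of \cref{prop:time_positive}).
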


\cref{fig:soliton} shows the contour plot associated to the positive Grassmannian
$\Grtp(2,6)$; each region is labeled by an element $I=\{i_1,i_2\} \in {[6] \choose 2}$
which indicates that in that region, $F_I(x,y) = f_{i_1}(x,y)+f_{i_2}(x,y) > F_J(x,y)$ 
for all other $J\in {[6] \choose 2}$.  The trivalent intersections of line
segments are marked by white or black vertices as in \cref{def:solitongraph}.

It is natural to ask how the soliton graph (plabic graph) changes when the higher times 
$\t = (t_3,\dots,t_n)$ evolve. In \cite{KW}, the authors speculated (cf. \cref{fig:mutation}) 
 that the face labels of the soliton graph should change via \emph{cluster transformations}, or in 
other words, via moves (M1)--(M3) of the plabic graph from \cref{fig:3moves}. This is now a consequence of \cref{thm:regular}.

\begin{corollary}
Fix $\mathcal{A}$ and $\mathcal{M}$ as in \cref{cor:plabicsoliton}, and consider the 
associated soliton graphs 
$G_{\t}(\mathcal{M})$.  Then as the higher times $\t=(t_3,\dots,t_n)$ evolve,  
$G_{\t}(\mathcal{M})$ changes via the moves 
from \cref{fig:3moves}.  In particular the face labels change via square moves.
\end{corollary}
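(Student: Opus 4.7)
The plan is to combine \cref{cor2:plabicsoliton}, which identifies the soliton graph $G_\t(\Mcal)$ (for $\Mcal={[n]\choose k}$) with the trivalent $\A$-regular $(k,n)$-plabic graph $G_k(\Tiling_{\h(\t)})$ obtained from the regular fine zonotopal tiling of $\ZV$ corresponding to the height vector $\h(\t)\in\R^n$ with coordinates $h_i(\t)=\kappa_i^3t_3+\dots+\kappa_i^n t_n$, with the flip/move correspondence in \cref{thm:flips_moves}. As the higher times $\t=(t_3,\dots,t_n)$ evolve, $\h(\t)$ traces a path in the $(n-2)$-dimensional linear subspace $L\subset\R^n$ spanned by the vectors $\bm{\kappa}^{(r)}:=(\kappa_1^r,\dots,\kappa_n^r)$ for $r=3,\dots,n$. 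Whenever this path stays inside $\R^n\setminus \H$ (the complement of the secondary hyperplane arrangement from \eqref{eq:H_sec_arr}), $\Tiling_{\h(\t)}$ is a well-defined regular fine zonotopal tiling, so by \cref{prop:circuit} the signature $\sigma_{\h(\t)}$ and hence the graph $G_k(\Tiling_{\h(\t)})$ is locally constant.

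The key technical step is to verify that $L$ is not contained in any hyperplane of $\H$, so that generic evolutions produce nontrivial transitions. For a circuit $C=(\{a,c\},\{b,d\})$ with $a<b<c<d$, write $\alphaC=x_a\e_a-x_b\e_b+x_c\e_c-x_d\e_d$ as in~\eqref{eq:circuit2}, where $x_a,x_b,x_c,x_d>0$. The relation $\sum \pm x_i\v_i=0$ with $\v_i=(\kappa_i,\kappa_i^2,1)$ guarantees that the sequence $s_r:=x_a\kappa_a^r-x_b\kappa_b^r+x_c\kappa_c^r-x_d\kappa_d^r$ vanishes for $r=0,1,2$. The condition $L\subset H_C$ is equivalent to demanding $s_r=0$ also for $r=3,\dots,n$; but then $(x_a,-x_b,x_c,-x_d)$ would be a nonzero element of the kernel of the $(n+1)\times 4$ Vandermonde-type matrix with entries $\kappa_i^r$, which is impossible as the $\kappa_i$ are distinct and $n\geq 3$. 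Hence $L\cap H_C$ is a genuine hyperplane of $L$ for every circuit $C$, and a completely analogous Vandermonde argument shows that distinct pairs $\pm C\neq \pm C'$ yield distinct hyperplanes $L\cap H_C\neq L\cap H_{C'}$ in $L$.

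Having put these hyperplane arrangements of $L$ in general position, a generic smooth path $\t(s)$ in $\R^{n-2}$ induces a path $\h(\t(s))$ in $L$ that crosses the arrangement $\{L\cap H_C\}_C$ transversally and, away from a codimension-$2$ set in $L$, at one hyperplane at a time. At each such transverse crossing through $L\cap H_C$, the circuit signature $\sigma_{\h(\t(s))}$ changes sign on exactly the pair $\pm C$ by \eqref{eq:circuit}, so by \cref{dfn:flip} the tilings on either side differ by a single flip along $C$. Applying \cref{thm:flips_moves} then yields that the associated trivalent $(k,n)$-plabic graphs $G_{k}(\Tiling_{\h(\t(s))})$ change by exactly one of the moves (M1), (M2), (M3); the face labels are preserved under (M1) and (M3) and change precisely under the square move (M2), giving the conjectured cluster transformation picture of~\cite{KW}. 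The main obstacle is the Vandermonde non-degeneracy ensuring $L\not\subset H_C$ and that distinct circuits give distinct induced hyperplanes; without this, the higher-time evolution could be trapped inside the secondary hyperplane arrangement and no flips would occur along the path.
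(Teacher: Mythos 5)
Your proof is correct, and it takes a noticeably more explicit route than the paper's. The paper proves this corollary in one sentence: changing the higher times changes the heights $\h$ continuously, and by \cref{thm:regular} this corresponds to walking around the normal fan of $\Sigma_{\A,k}+\Sigma_{\A,k-1}+\Sigma_{\A,k-2}$, whose vertices are plabic graphs and whose edges are moves. You instead work directly at the level of the secondary hyperplane arrangement $\H$, circuits, and flips, unrolling the content of \cref{thm:regular} and \cref{thm:flips_moves} into an explicit path-crossing argument. The genuine addition in your argument is the Vandermonde non-degeneracy check: you verify that the $(n-2)$-dimensional subspace $L=\operatorname{span}\{\bm{\kappa}^{(r)}:r=3,\dots,n\}$ of realizable height vectors is not contained in any hyperplane $H_C$ of $\H$, and that distinct circuits cut out distinct hyperplanes $L\cap H_C$. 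The paper does not address whether $L$ could be trapped inside $\H$, so your check sharpens the statement: it shows that nontrivial moves do generically occur, and that a generic path in $\t$-space crosses one wall at a time, yielding single moves rather than compound jumps. In the second Vandermonde step, the relevant observation (worth stating explicitly) is that if $L\cap H_C=L\cap H_{C'}$ then for some $\lambda\neq0$ the vector $\alphaC-\lambda\alphaX(C')$ is orthogonal to every $\bm{\kappa}^{(r)}$, $r=0,\dots,n$, hence lies in the kernel of an $(n+1)\times n$ Vandermonde matrix and must vanish, forcing $C=\pm C'$; this is the same trick as the first step but with a larger matrix. Both routes are valid; yours buys the explicit genericity statement, while the paper's gains brevity by appealing to the already-established polytopal characterization.
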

\begin{proof}
Changing the higher times continuously corresponds to changing the heights continuously, which by 
\cref{thm:regular} 
corresponds to walking around the normal fan of 
$\Sigma_{\A, k}+\Sigma_{\A, k-1} + \Sigma_{\A, k-2}$.
\end{proof}

\begin{figure} 
 \begin{center}
   \includegraphics[height=1.5in]{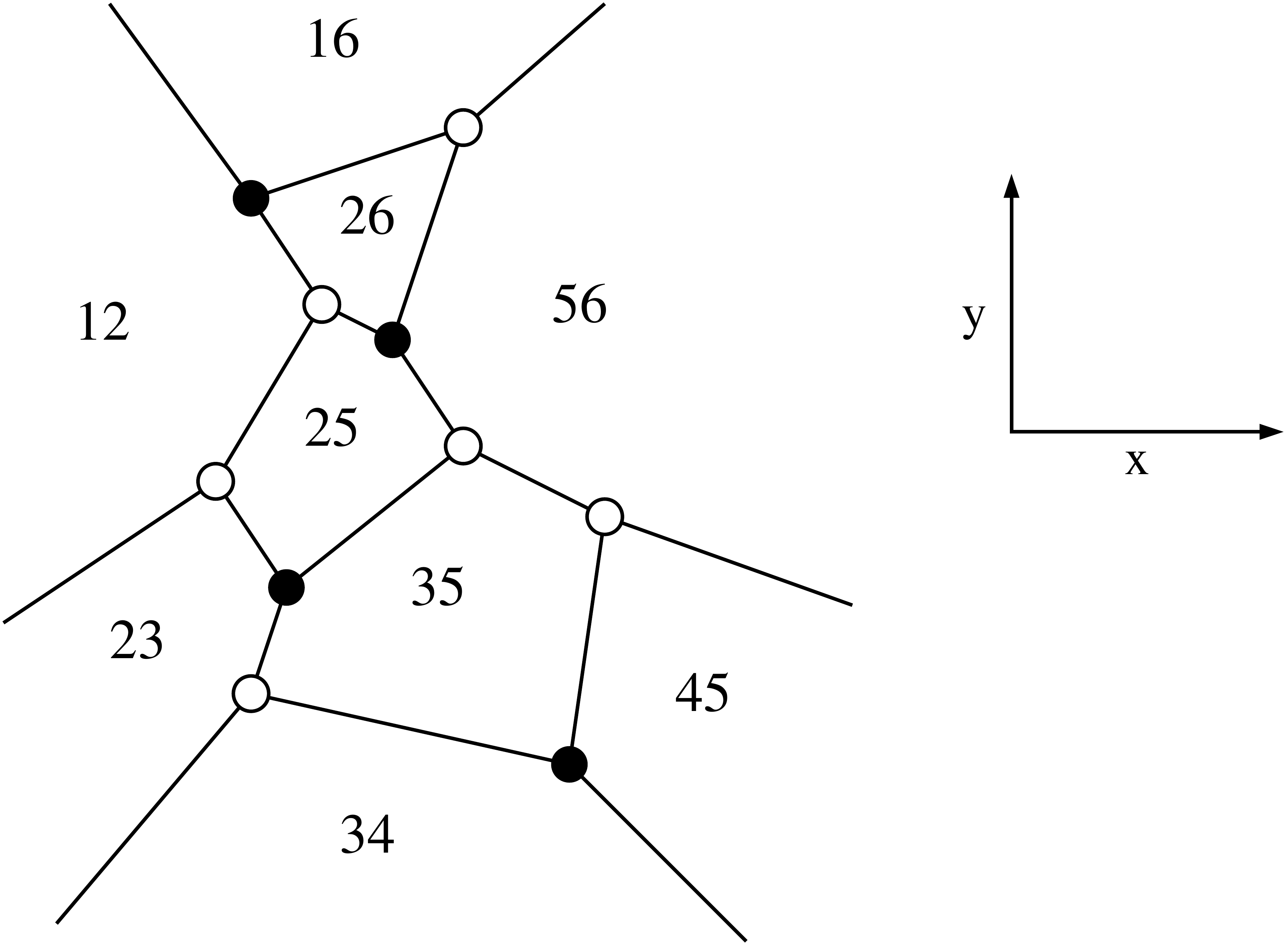}
	 \caption{
	A {soliton graph} $G_{\t}(\mathcal{M})$  coming from
	 $\Gr(2,6)$.}
	 \label{fig:soliton}
\end{center}
\end{figure}

In \cite[Theorem 8.5 and Theorem 8.9]{KW2}, the authors classified the 
contour plots $\mathcal{C}_{\t}(\mathcal{M})$ obtained when 
$\t = (t_3,0,\dots,0)$ and $t_3 \to \pm \infty$.  We can now give a generalization of their
results (cf. \cref{cor:time_positive}) in the case that $\mathcal{M} = {[n] \choose k}$ and the $\kappa_i$'s are positive. Let us write $\t\geq0$ if $t_i\geq0$ for $i=3,\dots,n$.

\def\hij{{h_i^{(j)}}}
\def\hj{{\h^{(j)}}}

\begin{proposition}\label{prop:time_positive}
	Assume that $\mathcal{M} = {[n] \choose k}$, the numbers $\kappa_1<\dots<\kappa_n$ are positive, and that the vector $\t\geq0$ is nonzero. Then $\mathcal{C}_{\t}(\mathcal{M})$ can be identified with the plabic graph associated to the Le-diagram, and its regions are labeled by the elements of $\Compatw$ for $w=w_0$  as in \cref{ex:extreme}. Similarly, the regions of $\mathcal{C}_{-\t}(\mathcal{M})$ are labeled by the elements of $\Compatw$ for $w=\id$.
\end{proposition}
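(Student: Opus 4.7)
The strategy is to combine \cref{cor:plabicsoliton}---which identifies the region labels of $\mathcal{C}_\t(\mathcal{M})$ with the $k$-element vertex labels of the regular zonotopal tiling $\Tiling_\h$, where $h_i := \sum_{j=3}^n \kappa_i^j t_j$---with the explicit description of $\Compatw$ from \cref{prop:WSS}. It suffices to show that $\Compat = \Compatw$ with $w = w_0$ when $\t \geq 0$ is nonzero, and with $w = \id$ when $-\t \geq 0$ is nonzero; the plabic graph is then uniquely recovered from its face labels (cf.\ the discussion following \cref{ex:fivegon}).

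The crux is a sign computation for $\muh(a,b,c,d) = x_a h_a - x_b h_b + x_c h_c - x_d h_d$, where $a < b < c < d$ and $x_a, x_b, x_c, x_d > 0$ are the positive circuit coefficients from \eqref{eq:circuit2}. Since $\v_i = (\kappa_i, \kappa_i^2, 1)$ is a cyclic configuration, the circuit relation enforces $x_a \kappa_a^l + x_c \kappa_c^l = x_b \kappa_b^l + x_d \kappa_d^l$ for $l = 0, 1, 2$, while for $l = j \geq 3$ an expansion along the last row yields
\[
x_b \kappa_b^j + x_d \kappa_d^j - x_a \kappa_a^j - x_c \kappa_c^j
= \det\begin{pmatrix} 1 & 1 & 1 & 1 \\ \kappa_a & \kappa_b & \kappa_c & \kappa_d \\ \kappa_a^2 & \kappa_b^2 & \kappa_c^2 & \kappa_d^2 \\ \kappa_a^j & \kappa_b^j & \kappa_c^j & \kappa_d^j \end{pmatrix},
\]
which factors as the Vandermonde $\prod_{p<q}(\kappa_q - \kappa_p)$ times the Schur polynomial $s_{(j-3)}(\kappa_a, \kappa_b, \kappa_c, \kappa_d)$. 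Both factors are strictly positive since $0 < \kappa_a < \kappa_b < \kappa_c < \kappa_d$, so substituting $h_i = \sum_{j \geq 3} \kappa_i^j t_j$ into $\muh$ produces a sum of strictly negative terms weighted by $t_j \geq 0$ (not all zero). Hence $\muh(a,b,c,d) < 0$ for every $a < b < c < d$; in particular $\h \in \GHV$.

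The remaining step is combinatorial. When $\muh < 0$ for every $4$-tuple, the defining conditions of $\Compat$ collapse to the single requirement that there is no $a < b < c < d$ with $a, c \in I$ and $b, d \notin I$; equivalently, the indicator string of $I$ at positions $1, \ldots, n$ avoids the subsequence pattern $1010$. A short case analysis on the maximal subintervals of $I$ shows that the subsets satisfying this are exactly the single intervals $[s, s+k-1]$ together with the unions $[i, i+j-1] \cup [n-k+j+1, n]$ for $1 \leq i \leq n-k$ and $1 \leq j \leq k-1$, which is precisely $\Compatw$ for $w = w_0$ as described in \cref{ex:extreme}. Replacing $\t$ by $-\t$ flips all signs of $\muh$, so the forbidden subsequence becomes $0101$ and a symmetric analysis recovers $\Compatw$ for $w = \id$. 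The only nontrivial ingredient is the Schur positivity computation above, which is classical; I do not anticipate any substantial obstacle.
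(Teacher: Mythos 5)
Your proof is correct and takes essentially the same approach as the paper's: you reduce to showing $\muh(a,b,c,d)<0$ for every $4$-tuple, prove this by expanding $\mu_{\hj}$ as (minus) a generalized Vandermonde determinant that factors as $t_j \cdot K_{\{a,b,c,d\}} \cdot s_{(j-3)}(\kappa_a,\kappa_b,\kappa_c,\kappa_d)$, and invoke positivity of the Vandermonde and of the complete homogeneous symmetric polynomial when all $\kappa_i>0$. The only stylistic difference is that you re-derive the $1010$-pattern-avoidance characterization of $\Compat$ directly, whereas the paper simply cites \cref{prop:WSS} and \cref{ex:extreme}; this is redundant once you know the sign pattern of $\muh$ matches that of $\hw$ for $w=w_0$, but it is not an error.
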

\begin{proof} Recall that $\v_i = (\kappa_i, \kappa_i^2, 1)$ and $h_i=\kappa_i^3t_3+\dots+\kappa_i^n t_n$. Our goal is to show that $h_n\gg h_{n-1} \gg\dots \gg h_1$ in the sense of~\eqref{eq:hw}. In other words, we need to show that $\muh(a,b,c,d)<0$ for all $1\leq a<b<c<d\leq n$. For $3\leq j\leq n$, let $\hj\in\R^n$ be given by $\hij:=\kappa_i^j t_j$, thus $\h=\sum_{j=3}^n\hj$. It suffices to show $\mu_{\hj}(a,b,c,d)<0$. It follows from~\eqref{eq:circuit2} and~\eqref{eq:alphaC} that 
  \[\mu_{\hj}(a,b,c,d)=-\det \left(\begin{smallmatrix}
1&1&1&1\\
\kappa_a&\kappa_b&\kappa_c&\kappa_d\\
\kappa_a^2&\kappa_b^2&\kappa_c^2&\kappa_d^2\\
t_j\kappa_a^j&t_j\kappa_b^j&t_j\kappa_c^j&t_j\kappa_d^j\\
        \end{smallmatrix}\right)=-t_j\cdot K_{\{a,b,c,d\}}\cdot s_{(j-3)}(\kappa_a, \kappa_b,\kappa_c,\kappa_d),\] 
where $K_{\{a,b,c,d\}}$ was defined in~\eqref{eq:K_E_kappa} and $s_\lambda$ is the Schur polynomial associated with a partition $\lambda=(\lambda_1,\dots,\lambda_m)$, see~\cite[\S7.15]{EC2}. Thus $s_{(j-3)}=h_{j-3}$ is the \emph{complete homogeneous symmetric polynomial}~\cite[\S7.5]{EC2}. Since $\kappa_1<\dots<\kappa_n$, we find $K_{\{a,b,c,d\}}>0$. Since we have also assumed that $\kappa_1,\dots,\kappa_n>0$, we find $s_{(j-3)}(\kappa_a, \kappa_b,\kappa_c,\kappa_d)>0$. We have shown $\mu_{\hj}(a,b,c,d)<0$ for all $j$ such that $t_j>0$, which implies $\muh(a,b,c,d)<0$. For the case of $\mathcal{C}_{-\t}(\mathcal{M})$, the same argument shows  $\muh(a,b,c,d)>0$.
\end{proof}

In \cref{prop:time_positive}, we required the $\kappa$-parameters to be positive. For the case $\t = (t_3,0,\dots,0)$ studied in~\cite{KW2}, this assumption can be lifted.
\begin{corollary}\label{cor:time_positive}
\Cref{prop:time_positive} still holds when the numbers $\kappa_1<\dots<\kappa_n$ are not necessarily positive, provided that $\t = (t_3,0,\dots,0)$ with $t_3>0$.
\end{corollary}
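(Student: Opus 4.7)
The plan is to simply revisit the proof of \cref{prop:time_positive} and observe that the only place where the positivity of the $\kappa_i$'s was used can be bypassed when only $t_3$ is nonzero. Recall that the proof reduced to showing $\muh(a,b,c,d)<0$ for all $1\le a<b<c<d\le n$, and this was established by decomposing $\h=\sum_{j\ge 3}\hj$ with $\hij = \kappa_i^j t_j$ and computing
\[
\mu_{\hj}(a,b,c,d) = -t_j\cdot K_{\{a,b,c,d\}}\cdot s_{(j-3)}(\kappa_a,\kappa_b,\kappa_c,\kappa_d).
\]
The factor $K_{\{a,b,c,d\}} = \prod_{\substack{i<j\\ i,j\in\{a,b,c,d\}}}(\kappa_j-\kappa_i)$ is always positive from $\kappa_1<\dots<\kappa_n$, irrespective of signs, and $t_j\ge 0$ by hypothesis. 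The only obstruction in the general case is the sign of $s_{(j-3)}(\kappa_a,\kappa_b,\kappa_c,\kappa_d)$, which for $j\ge 4$ is a nontrivial complete homogeneous symmetric polynomial that can take either sign when the $\kappa_i$'s are allowed to be negative.

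Under the hypothesis $\t=(t_3,0,\dots,0)$, however, this polynomial only appears for $j=3$, where $s_{(j-3)} = s_{(0)} = 1$. Hence the formula collapses to
\[
\muh(a,b,c,d) = \mu_{\h^{(3)}}(a,b,c,d) = -t_3\cdot K_{\{a,b,c,d\}},
\]
which is strictly negative because $t_3>0$ and $K_{\{a,b,c,d\}}>0$. So $\h\in\GHV$ is a generic height vector satisfying $h_n\gg h_{n-1}\gg\cdots\gg h_1$ in the sense of~\eqref{eq:hw}. Applying \cref{prop:WSS} to $w=w_0$ (together with the identification of asymptotic contour plots with regular zonotopal tilings via \cref{cor:plabicsoliton} and \cref{rem:soliton}) then yields the Le-diagram description of $\mathcal{C}_{\t}(\Mcal)$, exactly as in the proof of \cref{prop:time_positive}.

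The case of $\mathcal{C}_{-\t}(\Mcal)$ is completely analogous: replacing $t_3$ by $-t_3$ flips all the inequalities, giving $\muh(a,b,c,d)>0$ for all $a<b<c<d$ and hence $h_1\gg h_2\gg\cdots\gg h_n$, so \cref{prop:WSS} with $w=\id$ applies. There is no substantive obstacle here since the $t_3=0$ proof goes through verbatim once the harmless observation about $s_{(0)}=1$ is made.
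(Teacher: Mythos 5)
Your proposal is correct and takes essentially the same route as the paper: both observe that when $\t=(t_3,0,\dots,0)$ only the $j=3$ term contributes, where $s_{(j-3)}=s_{(0)}=1$, so $\muh(a,b,c,d)=-t_3\cdot K_{\{a,b,c,d\}}<0$ independent of the signs of the $\kappa_i$'s. Your write-up is just somewhat more explicit about where the positivity assumption entered the original argument and why it can be dropped here.
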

\begin{proof}
Indeed, in this case the polynomial $s_{(j-3)}=s_{(0)}$ from the proof of \cref{prop:time_positive} is equal to $1$, thus we have $\muh(a,b,c,d)<0$ regardless of the sign of the $\kappa$-parameters.
\end{proof}

Since the generic soliton graphs 
 $G_{\t}(\mathcal{M})$
for $\mathcal{M} = {[n] \choose k}$ are trivalent $(k,n)$-plabic graphs, it is natural to ask which $(k,n)$-plabic graphs
are realizable as soliton graphs. Similarly to \cref{sec:high-assoc-plab}, let us say that a bipartite $(k,n)$-plabic graph is \emph{realizable} if it can be obtained from some $G_{\t}(\mathcal{M})$ by contracting unicolored edges. Thus every realizable $(k,n)$-plabic graph is also $\A$-regular for some $\A$. (It is not clear to us whether the converse is true.) In \cite{KW, KW2}, the authors showed that all bipartite $(2,n)$-plabic graphs are realizable.  In \cite{KK}, building on work of \cite{Huang}, Karpman and Kodama showed that for $k=3$ and $n=6,7,8$, every bipartite $(k,n)$-plabic graph  is realizable for \emph{some} choice of $\kappa$- and $\t$-parameters (see however \cref{ex:hexagons} and~\cite[Theorem~4.2]{KK}).

\newcommand{\arxiv}[1]{\href{https://arxiv.org/abs/#1}{\textup{\texttt{arXiv:#1}}}}

\bibliographystyle{alpha}
\bibliography{biblio}

\end{document}